\definecolor{dullmagenta}{rgb}{0.4,0,0.4}
\definecolor{darkblue}{rgb}{0,0,0.4}
\newtheorem{theorem}{Theorem}[section]
\newtheorem{lemma}[theorem]{Lemma}
\newtheorem{proposition}[theorem]{Proposition}
\newtheorem{corollary}[theorem]{Corollary}
\theoremstyle{definition}
\newtheorem{definition}[theorem]{Definition}
\newtheorem{example}[theorem]{Example}
\newtheorem{remark}[theorem]{Remark}
\begin{document}

\title[Boolean FIP]{Boolean FIP ring extensions}

\author[G. Picavet and M. Picavet]{Gabriel Picavet and Martine Picavet-L'Hermitte}
\address{Math\'ematiques \\
8 Rue du Forez, 63670 - Le Cendre\\
 France}
\email{picavet.mathu (at) orange.fr}

\begin{abstract} We characterize extensions of commutative rings $R \subseteq S$ whose sets of subextensions $[R,S]$ are finite ({\it i.e.} $R\subseteq S$ has the FIP property) and are Boolean lattices, that we call Boolean FIP extensions.  Some characterizations involve ``factorial" properties of the poset $[R,S]$. A non trivial result is that each subextension of a Boolean FIP extension is simple (i.e. $R \subseteq S$ is a simple pair).  
\end{abstract}

\subjclass[2010]{Primary:13B02,13B21, 13B22, 06E05, 06C10;  Secondary: 13B30, 12F10, 06E25}

\keywords  {FIP, FCP extension, minimal extension, integral extension, support of a module, lattice, Boolean lattice, algebraic field extension, Galois extension}

\maketitle

\section{Introduction and Notation}

We consider the category of commutative and unital rings, whose    epimorphisms will be involved. If  $R\subseteq S$ is a (ring) extension, we denote  by $[R,S]$ the set of all $R$-subalgebras of $S$ and
  set $]R,S[: =[R,S]\setminus \{R,S\}$ (with a similar definition for $[R,S[$ or $]R,S]$). 
 
  A {\it lattice} is a poset $L$ such that every pair $a,b\in L$ has a supremum and an infimum. 
    For an extension $R\subseteq S$, the poset $([R,S],\subseteq)$ is a {\it complete} lattice where the supremum of any non void subset  is the compositum which we call {\it product} from now on and denote by $\Pi$ when necessary, and the infimum of any non void subset is the intersection. We are aiming  to study some lattice properties of the poset $([R,S], \subseteq)$, mainly the Boolean property.  As a general rule, an extension $R\subseteq S$ is said to have some property of lattices if $[R,S]$ has this property.

The extension $R\subseteq S$ is said to have FIP (for the ``finitely many intermediate algebras property") or an FIP  extension if $[R,S]$ is finite. A {\it chain} of $R$-subalgebras of $S$ is a set of elements of $[R,S]$ that are pairwise comparable with respect to inclusion.  When $[R,S]$ is a chain, the extension $R\subseteq S$ is  called a $\lambda$-{\it extension} by some authors.  We will say that $R\subseteq S$ is chained. We  also say that the extension $R\subseteq S$ has FCP  (or is an FCP extension) if each chain in $[R,S]$ is finite. Clearly,  each extension that satisfies FIP must also satisfy FCP. 
Dobbs and the authors characterized FCP and FIP extensions \cite{DPP2}. 

Let $R\subseteq S$ be an FCP extension, then $[R,S]$ is a complete Noetherian Artinian lattice, with $R$ as the least element and $S$ as the largest element. We use lattice definitions and properties described in \cite{NO}.    

Our main tool are the minimal (ring) extensions, a concept that was introduced by Ferrand-Olivier \cite{FO}. Recall that an extension $R\subset S$ is called {\it minimal} if $[R, S]=\{R,S\}$. 
  An extension $R\subseteq S$ is called a {\it simple} extension if $S=R[t]$ for some $t\in S$ and a {\it simple pair} if $R\subseteq T$ is a simple extension for each $T\in [R,S]$. A minimal extension is simple. The key connection between the above ideas is that if $R\subseteq S$ has FCP, then any maximal (necessarily finite) chain of $R$-subalgebras of $S$, $R=R_0\subset R_1\subset\cdots\subset R_{n-1}\subset R_n=S$, with {\it length} $n <\infty$, results from juxtaposing $n$ minimal extensions $R_i\subset R_{i+1},\ 0\leq i\leq n-1$. 
An FCP extension is finitely generated, and  (module) finite if integral.
For any extension $R\subseteq S$, the {\it length} $\ell[R,S]$ of $[R,S]$ is the supremum of the lengths of chains of $R$-subalgebras of $S$. Notice  that if $R\subseteq S$ has FCP, then there {\it does} exist some maximal chain of $R$-subalgebras of $S$ with length $\ell[R,S]$ \cite[Theorem 4.11]
{DPP3}.

\subsection{ A summary of the main results} Any undefined  material
 is explained at the end of the section or in the next sections. 

Section 2 is  devoted  to some general properties of  lattices $[R,S]$, mainly in the context of  FCP and FIP extensions.  
 Since Boolean extensions are distributive, we have evidently to work on distributive extensions, which is done in this section.
We  discuss the decomposition of elements of $[R,S]$ into irreducible elements. 
    When $[R,S]$ has finitely many atoms and each element of $[R,S]$ is a product of atoms, then Proposition \ref{6.111} shows that $R\subseteq S$ has FIP and is almost-Pr\"ufer,  and Theorem \ref{5.8} shows that $R\subseteq S$ is a simple  pair.  
 
For extensions $R\subseteq S$ of integral domains, Ayache considered  {\it Boolean lattices} 
  (also called Boolean algebras) 
 $([R,S],\cap,\cdot)$, that are distributive lattices such that each $T\in[R,S]$ has a (unique) complement  \cite{A}, \cite{A1} and \cite{A2}. In particular,  \cite[Problem 45]{A1} asked under which conditions $[R,S]$ is a finite Boolean lattice. This question is completely answered in Sections 3 and  4, where we get in 
  Theorem \ref{4.18} 
  a characterization of  Boolean extensions.   
In particular, Theorem \ref{4.0} shows that an FCP Boolean extension $R\subseteq S$ has FIP, each element of $[R,S]$ has a unique representation by a finite product of atoms, and $R\subset S$ a simple pair.
  Section 3 is devoted to the study of arbitrary FIP extensions.   The canonical decomposition of a ring extension  is crucial. It consists of the tower  $R \subseteq {}_S^+R\subseteq {}_S^tR \subseteq \overline R \subseteq S$, where $ {}_S^+R$ (resp. $ {}_S^tR $) is the {\it seminormalization} (resp.  $t$-{\it closure}) of $R$ in $S$ (see  Section~3).
 This decomposition  
allows us to only  consider special  extensions: subintegral, seminormal infra-integral, t-closed  and integrally closed. The t-closed  case is  reduced  to the context of field extensions and is the subject of Section 4. In particular, for a field extension $k\subset L$ with separable  closure $T$ and radicial closure $U$ such that $U,T\not\in\{k,L\}$, Theorem \ref{4.199} shows that $k\subset L$ is Boolean if and only if   $k\subset U$ and $T\subset L$ are minimal, $[k,L]=[k,T]\cup[U,L],\ k\subset T$ and $k\subset U$ are linearly disjoint and  $[k,T]$ is a Boolean lattice. Boolean separable field extensions needs a special study. A striking result is Theorem \ref{4.37}:  A Galois finite extension (hence FIP) $k\subset L$ is Boolean if and only if $k\subset L$ is a cyclic extension whose dimension is square free.
   \subsection{Some conventions and notation} 
  A {\it local} ring is here what is called elsewhere a quasi-local ring. As usual, Spec$(R)$ and Max$(R)$ are the set of prime and maximal ideals of a ring $R$. The support of an $R$-module $E$ is $\mathrm{Supp}_R(E):=\{P\in\mathrm{Spec }(R)\mid E_P\neq 0\}$, and $\mathrm{MSupp}_R(E):=\mathrm{Supp}_R(E)\cap\mathrm{Max}(R)$. When $R 
 \subseteq S$ is an extension, we will set   $\mathrm{Supp}_R(T/R):= \mathrm{Supp}(T/R)$  and $\mathrm{Supp}_R(S/T):= \mathrm{Supp}(S/T)$ for each $T\in [R,S]$, unless otherwise specified.

 If $R\subseteq S$ is a ring extension and $P\in\mathrm{Spec}(R)$, then $S_P$ is both the localization $S_{R\setminus P}$ as a ring and the localization at $P$ of the $R$-module $S$. 
  We denote by $\kappa_R(P)$ the residual field $R_P/PR_P$ at $P$.
   An extension $R\subset S$ is called {\it locally minimal} if $R_P\subset S_P$ is minimal for each $P\in\mathrm{Supp}(S/R)$  or equivalently for each $P\in\mathrm{MSupp}(S/R)$.

We denote by $(R:S)$ the conductor of $R\subseteq S$. The integral closure of $R$ in $S$ is denoted by $\overline R^S$ (or by $\overline R$ if no confusion can occur).

  Recall  that an  extension $R\subseteq S$ is {\it Pr\"ufer} (or  a normal pair) if $R\subseteq T$ is a flat epimorphism for each $T\in[R,S]$. The {\it Pr\"ufer hull} of an extension $R\subseteq S$ is the greatest {\it Pr\"ufer} subextension $\widetilde R$ of $[R,S]$ \cite{Pic 3}. An extension $R\subseteq S$ is called {\it almost-Pr\"ufer} if $\widetilde R\subseteq S$ is integral, or equivalently, when $R\subseteq S$ is FIP, if $S=\widetilde R\overline R$ \cite[Theorem 4.6]{Pic 5}.

 A poset $(X,\leq)$ is called a {\it tree} if $x_1,x_2\leq x_3$ in $X$ implies that $x_1$ and $x_2$ are comparable (with respect to $\leq$). We also say that $(X,\leq)$ is treed. A subset $Y$ of $X$ is called an {\it antichain} if no two distinct elements of $Y$ are comparable. 
  
 Finally,  $|X|$  is the cardinality of a set $X$,   $\subset$ denotes proper inclusion and, for a positive integer $n$, we set $\mathbb{N}_n:=\{1,\ldots,n\}$.  
   The characteristic of an integral domain $k$ is denoted by $\mathrm{c}(k)$.
 For $a,b,c$ in a ring $R$,  if $c$ divides $a-b$, we  write $a\equiv b\ (c)$.
  
\section {Lattices properties of the poset [R,S]}

\subsection {Some definitions on the lattice [R,S]} 

In the context of a lattice $[R,S]$, some  definitions and properties of lattices have the following formulations.

An element $T\in[R,S]$ is called $\cap$-{\it irreducible} (resp$.$; $\Pi$-{\it irreducible}) (see \cite{NO}) if $T=T_1\cap T_2$ (resp$.$; $T=T_1T_2$) implies $T=T _1$ or $T=T_2$. 

 An element $T$ of $[R,S]$ is an {\it atom} (resp.; {\it co-atom}) if and only if $R\subset T$ (resp.; $T\subset S$) is a minimal extension. 
  Therefore, an atom (resp.; co-atom) is $\Pi$-irreducible (resp.; $\cap$-irreducible). We denote by $\mathcal{A}$ the set of atoms of $[R,S]$ and by $\mathcal{CA}$ the set of co-atoms of $[R,S]$. 

Now $R\subset S$ is called: 

(a) {\it atomic} (resp.; {\it atomistic}) if each $T\in ]R,S]$ contains some atom (resp.; is the product of atoms (contained in $T$)) \cite[page 80]{R}. 

(b) {\it co-atomic} (resp.; {\it co-atomistic}) if each $T\in [R,S[$ is contained in some co-atom (resp.; is the intersection  of    co-atoms (containing $T$)).

(c) {\it distributive} if intersection and product are each distributive with respect to the other. Actually, each distributivity implies the other \cite[Exercise 5, page 33]{NO}.

 (d) {\it factorial} (resp.; {\it co-factorial}) if each element of $[R,S]$ has a unique irredundant representation by a finite product of atoms (resp.; a unique irredundant representation by a finite intersection  of co-atoms.) 

An FCP extension is both atomic and co-atomic.

  We introduce  a  definition reminiscent of arithmetic rings  \cite{Pic 4}.

\begin{definition}\label{4.2} A ring extension $R\subseteq S$ is called 
{\it arithmetic} if $[R_P, S_P]$ is a chain for each $P\in\mathrm{Spec}(R)$.
\end{definition}

 \begin{example} The extension $R\subset S$ is arithmetic in the following cases (\cite[Example 5.13]{Pic 4} for (2), (3) and (4)):
\begin{enumerate}
\item $R\subset S$ is locally minimal.

\item $R\subset S$ has FCP and is integrally closed.

\item $R\subset S$ is FIP subintegral and  $|R/M|=\infty$ for each $M\in\mathrm{MSupp}(S/R)$.

\item $R\subset S$ is FIP t-closed integral such that $R_M/MR_M\subset S_M/MR_M$ is radicial for each $M\in\mathrm{MSupp}(S/R)$.

\item We also have examples in \cite[Theorem 6.1]{Pic 6} of arithmetic extensions of length 2 $R\subset S$ when $|[R,S]|=3$.
\end{enumerate}
\end{example}

We proved in \cite[Proposition 5.18]{Pic 4}: 

\begin{proposition}\label{5.4} An arithmetic extension is distributive. 
\end{proposition}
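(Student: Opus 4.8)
The plan is to prove distributivity by descending to the local lattices $[R_P,S_P]$, where distributivity is automatic because a chain (a totally ordered lattice) is always distributive. The whole argument rests on two facts: both lattice operations on $[R,S]$ commute with localization, and two $R$-submodules of $S$ agreeing at every prime are equal. First I would record the localization formulas. For any $T_1,T_2\in[R,S]$ and $P\in\mathrm{Spec}(R)$, intersection localizes, $(T_1\cap T_2)_P=(T_1)_P\cap(T_2)_P$, which follows from flatness of localization applied to the inclusion $T_1\cap T_2\hookrightarrow T_1\oplus T_2$. For the product, I would use that $T_1T_2$, being the smallest subring of $S$ containing $T_1$ and $T_2$, equals the set of finite sums $\sum_i a_ib_i$ with $a_i\in T_1,\ b_i\in T_2$; this set is already a subring since $(a_1b_1)(a_2b_2)=(a_1a_2)(b_1b_2)$ with $a_1a_2\in T_1$ and $b_1b_2\in T_2$, so $T_1T_2$ is the image of the multiplication map $T_1\otimes_R T_2\to S$. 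Localizing this map and using $(T_1\otimes_R T_2)_P=(T_1)_P\otimes_{R_P}(T_2)_P$ gives $(T_1T_2)_P=(T_1)_P(T_2)_P$.

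Next I would observe that for each $P$ the localization $T_P$ of any $T\in[R,S]$ satisfies $R_P\subseteq T_P\subseteq S_P$, so $T_P\in[R_P,S_P]$; the arithmetic hypothesis then makes $[R_P,S_P]$ a chain, hence a distributive lattice. Consequently, for all $T_1,T_2,T_3\in[R,S]$ the identity $(T_1)_P\big((T_2)_P\cap(T_3)_P\big)=\big((T_1)_P(T_2)_P\big)\cap\big((T_1)_P(T_3)_P\big)$ holds in $[R_P,S_P]$. Combining this with the two localization formulas yields $\big(T_1(T_2\cap T_3)\big)_P=\big((T_1T_2)\cap(T_1T_3)\big)_P$ for every $P\in\mathrm{Spec}(R)$. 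By the local-global principle for $R$-submodules of $S$, this forces the global equality $T_1(T_2\cap T_3)=(T_1T_2)\cap(T_1T_3)$. Since each distributive law implies the other by \cite[Exercise 5, page 33]{NO}, this establishes that $R\subseteq S$ is distributive.

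The only genuinely delicate point is the localization of the product: one must justify that the compositum coincides with the image of $T_1\otimes_R T_2\to S$ and that localization, being exact, commutes with taking that image. The intersection formula and the local-global principle are routine exactness arguments, and the distributivity of a chain is immediate, so I expect no further obstacle once the commutation of both operations with localization is in place.
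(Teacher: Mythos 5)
Your argument is correct and is essentially the paper's approach: the paper delegates the proof to \cite[Proposition 5.18]{Pic 4}, but the mechanism is exactly your local-global reduction, which this paper itself packages as Proposition \ref{1.014} (distributivity holds in $[R,S]$ once it holds in every $[R_M,S_M]$), combined with the observation that the chains $[R_P,S_P]$ are trivially distributive. Your extra care in checking that both lattice operations commute with localization --- intersection via exactness, and the compositum via identifying $T_1T_2$ with the image of $T_1\otimes_R T_2\to S$ --- supplies precisely the details the paper leaves implicit, so there is no gap.
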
  

 The following proposition will make easier many proofs.
 
 \begin{proposition}\label{1.014} Let $R\subseteq S$ be a ring  extension. The following statements are equivalent: 
\begin{enumerate}
\item $R\subseteq  S$ is distributive ;

\item $R_M \subseteq S_M$ is distributive for each $M\in\mathrm{MSupp}(S/R)$:

\item $R_P \subseteq S_P$ is distributive for each $P\in\mathrm{Supp}(S/R)$:

\item $R/I\subseteq S/I$ is  distributive  for each  ideal $I$ shared by $R$ and $S$.

\item  $R/I\subseteq S/I$ is  distributive  for some  ideal $I$ shared by $R$ and $S$.
\end{enumerate}
\end{proposition}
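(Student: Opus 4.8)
The plan is to reduce the whole statement to two structural facts describing how the lattice $[R,S]$ behaves under localization and under passage to a quotient by a shared ideal, after which all five equivalences become formal consequences of standard lattice theory. First I would record a \emph{localization lemma}: for $P\in\mathrm{Spec}(R)$ the map $\phi_P\colon [R,S]\to[R_P,S_P]$, $T\mapsto T_P$, preserves both operations, i.e. $(T_1\cap T_2)_P=(T_1)_P\cap (T_2)_P$ and $(T_1T_2)_P=(T_1)_P(T_2)_P$, since localization commutes with finite intersection and with formation of the compositum (the localized compositum is the $R_P$-subalgebra of $S_P$ generated by $(T_1)_P\cup(T_2)_P$). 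Moreover $\phi_P$ is surjective: given $U\in[R_P,S_P]$, the contraction $U\cap S$ lies in $[R,S]$ and satisfies $(U\cap S)_P=U_P\cap S_P=U\cap S_P=U$, because $U$ is already an $R_P$-module contained in $S_P$. Thus $\phi_P$ is a surjective lattice homomorphism. Second I would record a \emph{quotient lemma}: if $I$ is an ideal shared by $R$ and $S$, then $T\mapsto T/I$ is a lattice isomorphism $[R,S]\to[R/I,S/I]$, with inverse given by pulling back along the surjection $S\to S/I$; it plainly preserves inclusions, intersections and composita.

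With these in hand, the equivalences among (1), (2) and (3) follow from the locality of distributivity. For $(1)\Rightarrow(3)$ I would apply $\phi_P$ to the distributive identity holding in $[R,S]$; surjectivity lets me realize any triple in $[R_P,S_P]$ as localizations of elements of $[R,S]$, so the identity descends and $[R_P,S_P]$ is distributive. The implication $(3)\Rightarrow(2)$ is immediate from $\mathrm{MSupp}(S/R)\subseteq\mathrm{Supp}(S/R)$. For $(2)\Rightarrow(1)$ I would fix $T_1,T_2,T_3\in[R,S]$ and aim only at the nontrivial inclusion $T_1\cap(T_2T_3)\subseteq(T_1\cap T_2)(T_1\cap T_3)$, since the reverse inclusion $(T_1\cap T_2)(T_1\cap T_3)\subseteq T_1\cap(T_2T_3)$ holds in any lattice. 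An inclusion of $R$-submodules of $S$ can be tested at each $M\in\mathrm{Max}(R)$, and $\phi_M$ preserves the operations, so it suffices to verify the inclusion inside each $[R_M,S_M]$. For $M\in\mathrm{MSupp}(S/R)$ this is exactly hypothesis (2); for $M\notin\mathrm{MSupp}(S/R)$ one has $S_M=R_M$, so $[R_M,S_M]$ is trivial and the inclusion holds vacuously.

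Finally, the quotient lemma settles (1), (4) and (5) simultaneously. Since $T\mapsto T/I$ is a lattice isomorphism, $[R,S]$ is distributive if and only if $[R/I,S/I]$ is, for \emph{every} shared ideal $I$. Taking $I=0$ (which is always shared) gives $(4)\Rightarrow(1)$, while $(1)\Rightarrow(4)$ covers all $I$ at once; the implication $(4)\Rightarrow(5)$ is trivial, and $(5)\Rightarrow(1)$ uses the isomorphism for the particular $I$ provided. This closes the cycle.

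The step I expect to be the main obstacle is the localization lemma, specifically the two points that $\phi_P$ commutes with the compositum and that it is surjective; the intersection case is routine, but the product requires identifying $(T_1T_2)_P$ with the subalgebra generated by $(T_1)_P\cup(T_2)_P$, and surjectivity rests on the contraction computation $(U\cap S)_P=U$. Once both lemmas are established cleanly, the remaining arguments (a surjective homomorphic image of a distributive lattice is distributive, and inclusions of modules are local) are purely formal.
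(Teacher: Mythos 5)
Your proposal is correct and follows essentially the same route as the paper, whose proof is just a terse version of yours: the implications $(1)\Rightarrow(3)\Rightarrow(2)$ and $(1)\Leftrightarrow(4)\Rightarrow(5)\Rightarrow(1)$ are dismissed as obvious (resting implicitly on your localization and quotient lemmas, i.e.\ that $T\mapsto T_P$ is a surjective lattice homomorphism and $T\mapsto T/I$ a lattice isomorphism), and $(2)\Rightarrow(1)$ is handled by the same local-to-global argument, noting that $R_M=S_M$ for $M\notin\mathrm{MSupp}(S/R)$ and checking the distributive identity at every maximal ideal. Your write-up merely makes explicit the standard facts the paper takes for granted, including the contraction computation $(f^{-1}(U))_P=U$ giving surjectivity.
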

\begin{proof} We have obviously (1) $\Rightarrow$ (3) $\Rightarrow$ (2) and (1) $\Leftrightarrow$ (4) 
$\Rightarrow$ (5) $\Rightarrow$ (1). Conversely, assume that $R_M \subseteq S_M$ is distributive for each $M\in\mathrm{MSupp}(S/R)$. Then, $R_M \subseteq S_M$ is   distributive for each $M\in\mathrm{Max}(R)$. It follows that the distributivity property holds in $[R,S]$ since it holds in any $[R_M,S_M]$. 
\end{proof}
 
 \begin{proposition} \label{1.0} \cite[Theorem 1, p. 172]{G} In a distributive lattice of finite length, all maximal chains between two comparable elements have the same length (the Jordan-H\"older chain condition or condition (JH)).
\end{proposition}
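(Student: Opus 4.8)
The plan is to reduce the statement to the modular case and then run the classical Jordan--H\"older induction driven by Dedekind's transposition principle. Since every distributive lattice is modular (the identity $a\wedge(b\vee c)=(a\wedge b)\vee(a\wedge c)$ specializes, when $c\le a$, to the modular law $a\wedge(b\vee c)=(a\wedge b)\vee c$), it suffices to prove the Jordan--H\"older chain condition for a modular lattice $L$ of finite length. I would fix comparable elements $u\le v$ of $L$ and aim to show that any two maximal chains in the interval $[u,v]$ have the same length.

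The key tool I would establish first is the diamond isomorphism: for any $a,b\in L$ the maps $x\mapsto x\vee b$ and $y\mapsto y\wedge a$ are mutually inverse, order-preserving bijections between the intervals $[a\wedge b,a]$ and $[b,a\vee b]$. The verification is a one-line modular computation: for $x\in[a\wedge b,a]$ one has $(x\vee b)\wedge a=x\vee(b\wedge a)=x$ by modularity together with $a\wedge b\le x\le a$, and symmetrically for the other composite. Being an isomorphism of intervals, it carries covering relations to covering relations; in particular $a\wedge b\prec a$ holds if and only if $b\prec a\vee b$ (Dedekind's transposition principle).

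With this in hand I would prove the proposition by induction on the length $n$ of one maximal chain $u=a_0\prec a_1\prec\cdots\prec a_n=v$, showing that every maximal chain $u=b_0\prec b_1\prec\cdots\prec b_m=v$ satisfies $m=n$. The cases $n\le 1$ are immediate. For the inductive step, if $a_1=b_1$ I simply apply the induction hypothesis to $[a_1,v]$. If $a_1\ne b_1$, then $a_1\wedge b_1=u$ (because $u\prec a_1$ forces $a_1\wedge b_1\in\{u,a_1\}$, and $a_1\le b_1$ would give $a_1=b_1$), so setting $c=a_1\vee b_1$ the transposition principle yields $a_1\prec c$ and $b_1\prec c$. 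Choosing any maximal chain $c\prec c_1\prec\cdots\prec c_p=v$, the chain $a_1\prec c\prec c_1\prec\cdots\prec c_p=v$ is a maximal chain in $[a_1,v]$, so by induction it has the common length $n-1$ of all chains there; reading the very same chain from $b_1$ shows that $[b_1,v]$ possesses a maximal chain of length $n-1$, whence by induction the chain $b_1\prec\cdots\prec b_m$ also has length $n-1$ and $m=n$.

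The main obstacle is the transposition step: one must know not merely that the two intervals $[a\wedge b,a]$ and $[b,a\vee b]$ are abstractly isomorphic, but that the isomorphism is realized by the explicit join and meet maps so that coverings are preserved, and this is exactly where modularity (hence distributivity) is indispensable --- in a non-modular lattice the pentagon already violates the conclusion. Everything else is the bookkeeping of the induction, which the transposition lemma renders routine.
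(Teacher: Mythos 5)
Your proof is correct, but note that the paper itself offers no argument to compare against: Proposition \ref{1.0} is simply quoted from Gr\"atzer \cite[Theorem 1, p. 172]{G}, with the citation standing in for the proof. What you have supplied is the classical Jordan--H\"older argument, and it is sound in every step: distributivity is used only through the one-line reduction to the modular law; the diamond isomorphism is verified correctly (the composite $(x\vee b)\wedge a=x\vee(b\wedge a)=x$ is exactly the modular computation, and since the explicit maps $x\mapsto x\vee b$, $y\mapsto y\wedge a$ are mutually inverse isotone bijections they do carry covers to covers, which is the content of Dedekind's transposition principle); and the induction is applied legitimately, provided one reads the hypothesis as ``an interval possessing some maximal chain of length $n-1$ has all its maximal chains of length $n-1$,'' which is how you in fact use it for $[a_1,v]$ and then for $[b_1,v]$. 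The small points that need the finite-length hypothesis --- that every chain refines to a finite maximal chain, in particular that a maximal chain from $c:=a_1\vee b_1$ to $v$ exists --- are available, and your dichotomy $a_1\wedge b_1\in\{u,a_1\}$ with the covering $u\prec b_1$ correctly rules out $a_1\le b_1$ when $a_1\neq b_1$. Compared with the paper's bare citation, your route buys self-containedness and a strictly stronger statement (condition (JH) for modular lattices of finite length, the natural level of generality, with the pentagon $N_5$ witnessing that modularity cannot be dropped: it has maximal chains of lengths $2$ and $3$ between its bounds), at the cost of reproducing a page of standard lattice theory that the authors chose to outsource to Gr\"atzer.
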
 

 In \cite{DS2}, Dobbs and Shapiro defined a field extension $k\subset L$ 
  of finite length
 to be {\it catenarian} if all maximal chains  of fields between $k$ and $L$ have the same length, and get examples of catenarian field extensions. It follows that a distributive field extension is catenarian.
  
    \begin{proposition} \label{cover} \cite[Remarks, page 9 and Theorem 1.7]{Cal} A distributive extension $R\subset S$ satisfies the {\it upper covering condition} 
     (UCC),
which means that for each $T,U\in[R,S]$ such that $T\cap U\subset T$ is minimal, then $U\subset TU$ is minimal.
\end{proposition}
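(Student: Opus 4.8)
The plan is to deduce (UCC) from the modular law, since a distributive lattice is in particular modular (see the lattice background in \cite{NO}). Concretely, in $[R,S]$ I would use the fact that whenever $X\subseteq Z$, one has $(XY)\cap Z=X(Y\cap Z)$ for all $X,Y,Z\in[R,S]$. Given $T,U\in[R,S]$ with $T\cap U\subset T$ minimal, the goal is then to show that $[U,TU]=\{U,TU\}$, which is precisely the assertion that $U\subset TU$ is minimal.

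First I would record that $U\subset TU$ is a proper extension: if it were not, then $TU=U$, so $T\subseteq U$ and hence $T\cap U=T$, contradicting the minimality (in particular the properness) of $T\cap U\subset T$. Next, I would fix an arbitrary $V\in[U,TU]$ and examine $V\cap T$. From $U\subseteq V\subseteq TU$ we get $T\cap U\subseteq T\cap V\subseteq T$, so the minimality of $T\cap U\subset T$ forces either $T\cap V=T$ or $T\cap V=T\cap U$. In the first case $T\subseteq V$, and together with $U\subseteq V$ this gives $TU\subseteq V$, whence $V=TU$. In the second case, using $U\subseteq V$ and $V\subseteq TU$, the modular law yields $V=TU\cap V=(UT)\cap V=U(T\cap V)=U(T\cap U)=U$. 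Either way $V\in\{U,TU\}$, as required.

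The main difficulty here is conceptual rather than computational: once distributivity is traded for modularity, the argument collapses to a single application of the modular law in each of the two cases, and the only genuine points to get right are the assignment of $X,Y,Z$ (namely $X=U$, $Y=T$, $Z=V$, which is legitimate because $U\subseteq V$) and the observation that the minimality of $T\cap U\subset T$ is exactly what confines $T\cap V$ to two possible values. A more structural alternative would be to establish the transposition isomorphism $[T\cap U,T]\to[U,TU]$, $X\mapsto XU$, with inverse $Y\mapsto Y\cap T$, which is valid in any modular lattice and carries the two-element interval $[T\cap U,T]$ onto $[U,TU]$; but the direct computation above is shorter and avoids verifying that these two maps are mutually inverse order isomorphisms.
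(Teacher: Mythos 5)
Your proof is correct, and it is essentially the paper's own route: the paper disposes of this proposition by citing Calugareanu (the remark that distributive implies modular, plus the theorem that modular lattices satisfy the covering conditions), and your two-case argument --- confining $T\cap V$ to $\{T\cap U,\,T\}$ by minimality and then applying the modular law $(UT)\cap V=U(T\cap V)$ for $U\subseteq V$ --- is precisely the standard proof behind that citation, here made self-contained. One cosmetic simplification: you need not pass through modularity at all, since distributivity gives directly $(TU)\cap V=(T\cap V)(U\cap V)=(T\cap V)U$ when $U\subseteq V$, which is the identity your case analysis uses.
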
  

 \subsection{Some distributive extensions are simple} 
 We are going to show that   some special subextensions of an FCP distributive extension are simple. Before, next lemma is needed.
 
\begin{lemma}\label{1.03}  If $R\subset S$ is a  distributive extension, then $R[x,y]=R[x+y]$ whenever  $y\in S\setminus R$, $x\in S\setminus R[y]$  and $R\subset R[x]$ is minimal.  
\end{lemma}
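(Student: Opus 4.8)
Set $z:=x+y$. Since $z\in R[x,y]$, the inclusion $R[x+y]\subseteq R[x,y]$ is automatic, so the entire content of the lemma is the reverse inclusion $R[x,y]\subseteq R[z]$. For this it plainly suffices to produce $x\in R[z]$, because then $y=z-x\in R[z]$ as well. My plan is to extract $x\in R[z]$ from the distributive law on the lattice $[R,S]$, where the join is the product (compositum) and the meet is intersection, so that distributivity reads $A\cap(BC)=(A\cap B)(A\cap C)$.

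The first, purely formal, step is to observe that the three rings $R[x]$, $R[y]$, $R[z]$ have all three of their pairwise products equal to $R[x,y]$: one has $R[x]R[y]=R[x,y]$, and from $z=x+y$ one also gets $R[x]R[z]=R[x,y]$ and $R[y]R[z]=R[x,y]$. In particular $R[z]\subseteq R[x]R[y]=R[x,y]$, which is the inclusion I will feed into the distributive identity.

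The main step is to apply distributivity to the triple $A=R[z]$, $B=R[x]$, $C=R[y]$. Because $R[z]\subseteq R[x]R[y]$, the left-hand side $R[z]\cap(R[x]R[y])$ is simply $R[z]$, so distributivity gives the identity
\[
R[z]=(R[z]\cap R[x])\,(R[z]\cap R[y]).
\]
Here the hypothesis that $R\subset R[x]$ is minimal becomes decisive: the intermediate ring $R[z]\cap R[x]$ belongs to $[R,R[x]]=\{R,R[x]\}$, hence equals either $R$ or $R[x]$. If it were $R$, the displayed identity would collapse to $R[z]=R[z]\cap R[y]$, i.e. $R[z]\subseteq R[y]$, forcing $x=z-y\in R[y]$ and contradicting the hypothesis $x\in S\setminus R[y]$. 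Therefore $R[z]\cap R[x]=R[x]$, that is $R[x]\subseteq R[z]$, whence $x\in R[z]$ and the proof concludes.

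The step I expect to be the crux is not any computation but the \emph{choice} of applying distributivity to the particular triple $(R[z],R[x],R[y])$: once one sees that the only degenerate possibility left open by the minimality of $R\subset R[x]$ is precisely the one excluded by $x\notin R[y]$, everything is immediate. (I record that the hypothesis $y\in S\setminus R$ is not needed for the argument; it merely ensures the statement is non-trivial, since $y\in R$ would make $R[x,y]=R[x+y]=R[x]$ outright.)
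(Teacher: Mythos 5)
Your proof is correct, and it takes a genuinely different---and more economical---route than the paper's. The paper does not apply the distributive identity to a single well-chosen triple; instead it invokes the upper covering condition (Proposition \ref{cover}), a consequence of distributivity, twice: first, from $R[x]\cap R[y]=R$ (forced by minimality of $R\subset R[x]$ together with $x\notin R[y]$) it deduces that $R[y]\subset R[x,y]$ is minimal; then, from $R[x]\cap\bigl(R[y]\cap R[x+y]\bigr)=R$ and one application of distributivity, that $R[y]\cap R[x+y]\subset R[x]\bigl(R[y]\cap R[x+y]\bigr)=R[x,y]$ is minimal, whence $R[y]=R[y]\cap R[x+y]$, i.e. $R[y]\subseteq R[x+y]$, and the conclusion follows. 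Your argument replaces both covering steps by the single identity $R[z]=(R[z]\cap R[x])(R[z]\cap R[y])$ for $z:=x+y$ (valid because $R[z]\subseteq R[x]R[y]$), and then resolves $R[z]\cap R[x]\in\{R,R[x]\}$ by minimality, the degenerate case $R[z]\cap R[x]=R$ giving $R[z]\subseteq R[y]$ and hence $x=z-y\in R[y]$, which is exactly the excluded situation. Every step checks: the distributive law in $[R,S]$ does read $A\cap(BC)=(A\cap B)(A\cap C)$ by item (c) of the paper's definitions, and $R\cdot T=T$ makes your case analysis airtight. What your route buys is self-containedness (no appeal to Proposition \ref{cover}) together with your correct observation that the hypothesis $y\notin R$ is superfluous; what the paper's route buys is coherence with the covering-condition machinery it reuses elsewhere (e.g. in Proposition \ref{1.4} and the factoriality arguments). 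One small remark: your preliminary identities $R[x]R[z]=R[y]R[z]=R[x,y]$ are true but never used---only the trivial inclusion $R[z]\subseteq R[x]R[y]$ actually enters your argument.
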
   

\begin{proof} Consider the diagram
$$\begin{matrix}
   {}  &        {}      & R[x,y] &       {}       & {}     \\
   {}  & \nearrow & {}        & \nwarrow & {}     \\
R[x] &       {}       & {}       &      {}         & R[y] \\
  {}   & \nwarrow & {}       & \nearrow  & {}      \\
  {}   &      {}        & R       & {}             & {} 
\end{matrix}$$
Since $R[x]R[y]=R[x,y]$ and 
 $R\subset R[x]$ is minimal, we get that 
 $R[y]\subset R[x,y]$ is minimal by  
  UCC. 
 There is another  diagram 
 
\centerline{$\begin{matrix}
   {}  &        {}      & R[x,y] &       {}       & {}                       \\
   {}  & \nearrow & {}        & \nwarrow & {}                       \\
R[x] &       {}       & {}       &      {}         & R[y]\cap R[x+y] \\
  {}   & \nwarrow &{}        & \nearrow  & {}                        \\
  {}   &      {}        & R       & {}             & {} 
\end{matrix}$}
\noindent where $R[x]\cap (R[y]\cap R[x+y])=R$ and $R\subset R[x]$ is minimal. Using again 
 UCC,
 we get that $R[y]\cap R[x+y]\subset R[x](R[y]\cap R[x+y])$ is minimal. But $R[x](R[y]\cap R[x+y])=R[x]R[y]\cap R[x]R[x+y]=R[x,y]$ by distributivity, so that $R[y]\cap R[x+y]\subset R[x,y]$ is minimal. From $R[y]\cap R[x+y]\subseteq R[y]\subset R[x,y]$, we deduce   $R[y]= R[y]\cap R[x+y]$, which implies $R[y]\subset R[x+y]\subseteq R[x,y]$ because $x\not\in R[y]$, and then $R[x+y]= R[x,y]$.   
\end{proof} 

\begin{proposition}\label{1.4}  Let $R\subset S$ be a distributive extension. Let $T\in[R,S]$ be a product of finitely many atoms. Then, $R\subset T$ is simple. More precisely, if $T=\prod_{i=1}^nR[x_i]$, where the $R\subset R[x_i]$ are minimal distinct extensions, then, $T=R[\sum _{i=1}^nx_i]$.  
\end{proposition}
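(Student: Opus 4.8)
The plan is to argue by induction on the number $n$ of atoms, using Lemma \ref{1.03} as the engine that fuses two generators into their sum. For $n=1$ there is nothing to do, since $T=R[x_1]$. Assuming the statement for $n-1$ atoms, set $T':=\prod_{i=1}^{n-1}R[x_i]$ and $y:=\sum_{i=1}^{n-1}x_i$; the $R[x_1],\dots,R[x_{n-1}]$ are again distinct atoms, so the inductive hypothesis gives $T'=R[y]$. Since $T=T'R[x_n]=R[y]R[x_n]=R[y,x_n]$, it will suffice to check that Lemma \ref{1.03} applies to the pair with $x:=x_n$ and $y:=\sum_{i=1}^{n-1}x_i$, for then $T=R[y,x_n]=R[y+x_n]=R[\sum_{i=1}^{n}x_i]$, which is in particular a simple extension of $R$.

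To invoke Lemma \ref{1.03} I must verify three hypotheses: that $R\subset R[x_n]$ is minimal, which holds since $R[x_n]$ is an atom; that $y\notin R$, which is clear because $R[y]=T'\supsetneq R$ as $T'$ is a product of at least one atom (here $n\geq 2$); and — the only substantial point — that $x_n\notin R[y]=T'$.

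The key step, and the main obstacle, is thus to establish $x_n\notin T'$, equivalently $R[x_n]\not\subseteq T'$, and this is precisely where distributivity is used. Suppose for contradiction that $R[x_n]\subseteq T'=\prod_{i=1}^{n-1}R[x_i]$. Intersecting with $R[x_n]$ and applying (finite) distributivity of $\cap$ over the product,
\[
R[x_n]=R[x_n]\cap\prod_{i=1}^{n-1}R[x_i]=\prod_{i=1}^{n-1}\bigl(R[x_n]\cap R[x_i]\bigr).
\]
Because $R\subset R[x_n]$ is minimal, each factor satisfies $R[x_n]\cap R[x_i]\in[R,R[x_n]]=\{R,R[x_n]\}$; they cannot all equal $R$, since otherwise the product would be $R\neq R[x_n]$. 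Hence $R[x_n]\cap R[x_i]=R[x_n]$, that is $R[x_n]\subseteq R[x_i]$, for some $i<n$, and the minimality of $R\subset R[x_i]$ together with $R\subsetneq R[x_n]$ forces $R[x_n]=R[x_i]$, contradicting the distinctness of the atoms. Therefore $x_n\notin T'$, Lemma \ref{1.03} applies, and the induction closes, proving both that $R\subset T$ is simple and the explicit formula $T=R[\sum_{i=1}^{n}x_i]$.
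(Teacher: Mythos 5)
Your proof is correct and follows essentially the same route as the paper: induction on $n$, with the inductive hypothesis giving $T'=R[\sum_{i=1}^{n-1}x_i]$ and Lemma \ref{1.03} closing the step. The only difference is cosmetic: where the paper justifies $x_n\notin T'$ by citing the fact that a $\Pi$-irreducible (here, an atom) below a finite product in a distributive lattice lies below one factor (\cite[Theorem 4.30]{R}), you prove that instance directly from the distributive law $R[x_n]\cap\prod_{i<n}R[x_i]=\prod_{i<n}(R[x_n]\cap R[x_i])$ and minimality, which makes the argument self-contained.
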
 

\begin{proof} We prove the two statements by induction on $n$. There is nothing to prove when $n=1$. Assume that the induction hypothesis holds for $n-1$ and set $T':=\prod_{i=1}^{n-1}R[x_i]\neq R$, so that $T'=R[x]$ with $x:=\sum _{i=1}^{n-1}x_i$. Then, $T=R[x]R[x_n]$, with $x\not\in R$ and $x_n\in S\setminus R[x]$. Deny. Then $R[x_n]\subseteq \prod_{i=1}^{n-1}R[x_i]$ would imply $R[x_n]\subseteq R[x_i]$ for some $i\in\{1,\ldots,n-1\}$, a contradiction \cite[Theorem 4.30]{R}. Now, use Lemma \ref{1.03} to get the result.
\end{proof}

 \subsection {The 
 lattice
 [R,S] for an  FCP or FIP extension } 

 \begin{proposition}\label{5.5} \cite[Proposition 1.4.4]{NO} If $R\subseteq S$ has FCP, then any $T\in [R,S]$ is a finite intersection (resp$.$; product) of $\cap$-irreducible (resp$.$; $\Pi$-irreducible) elements of $[R,S]$.
\end{proposition}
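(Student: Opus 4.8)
The plan is to deduce both statements from the fact, recorded in the introduction, that an FCP extension makes $[R,S]$ a complete Noetherian \emph{and} Artinian lattice. The two assertions are lattice-theoretic duals of one another (with $\cap$ as meet and $\Pi$ as join), so I would prove the intersection statement using the Noetherian (ascending chain) condition and obtain the product statement by the dual argument using the Artinian (descending chain) condition.

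For the intersection statement, I would first note that the top element $S$ is $\cap$-irreducible: if $S=T_1\cap T_2$, then $T_1,T_2\subseteq S$ forces $T_1=T_2=S$, so $S$ is already a one-term intersection of $\cap$-irreducibles. Now let $\mathcal B\subseteq[R,S]$ be the set of those $T$ that are \emph{not} a finite intersection of $\cap$-irreducible elements, and suppose for contradiction that $\mathcal B\neq\varnothing$. Since $[R,S]$ is Noetherian, $\mathcal B$ has a maximal element $T_0$. As $T_0\in\mathcal B$, it is a fortiori not $\cap$-irreducible (a $\cap$-irreducible element is its own one-term decomposition), so there exist $T_1,T_2\in[R,S]$ with $T_0=T_1\cap T_2$ and $T_0\subsetneq T_1$, $T_0\subsetneq T_2$. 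By maximality of $T_0$ in $\mathcal B$, neither $T_1$ nor $T_2$ lies in $\mathcal B$; hence each is a finite intersection of $\cap$-irreducibles, and therefore so is $T_0=T_1\cap T_2$. This contradicts $T_0\in\mathcal B$, so $\mathcal B=\varnothing$, which is the claim.

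The product statement is obtained by reversing the order. Here the bottom element $R$ plays the role of $S$: if $R=T_1T_2$, then $T_1,T_2\supseteq R$ together with $T_1\subseteq T_1T_2$ force $T_1=T_2=R$, so $R$ is $\Pi$-irreducible. Letting $\mathcal B'$ be the set of elements that are not a finite product of $\Pi$-irreducibles and using now the Artinian condition to pick a \emph{minimal} element $T_0$ of $\mathcal B'$ (should it be nonempty), the failure of $\Pi$-irreducibility yields $T_0=T_1T_2$ with $T_1,T_2\subsetneq T_0$; minimality makes $T_1,T_2$ finite products of $\Pi$-irreducibles, whence so is $T_0$, a contradiction.

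I expect no genuine obstacle beyond bookkeeping. The only points requiring care are selecting the correct chain condition for each decomposition (Noetherian for $\cap$, Artinian for $\Pi$) and treating the extreme elements $S$ and $R$, which are respectively $\cap$- and $\Pi$-irreducible and thus account for the trivial one-term decompositions; the finiteness of each decomposition is automatic, since concatenating the two finite decompositions of $T_1$ and $T_2$ produces a finite decomposition of $T_0$.
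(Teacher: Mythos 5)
Your proof is correct: the maximal-counterexample argument under the ascending chain condition for intersections, together with its order-dual under the descending chain condition for products, is exactly the standard decomposition argument, and FCP does supply both chain conditions since every chain in $[R,S]$ is finite, so every nonempty subset of $[R,S]$ has maximal and minimal elements. The paper gives no argument of its own here --- it simply cites \cite[Proposition 1.4.4]{NO} --- and your write-up is precisely the textbook proof behind that citation, including the necessary check that, with the paper's definition of irreducibility, $S$ is $\cap$-irreducible and $R$ is $\Pi$-irreducible, so the extreme elements admit the trivial one-term decompositions.
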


\begin{lemma}\label{5.7} 
If $R \subseteq S$ is a  
 distributive extension  and $T \in [R,S]$ has an irredundant representation $T=T_1\cdots T_m$ 
 by $\Pi$-irreducible elements of $[R,S]$, 
(resp.; $T=U_1\cap \cdots\cap T_r$ by $\cap$-irreducible elements of $[R,S]$) (for example by atoms (resp.; co-atoms)), 
  the representation is unique. 
 
If in addition $R \subseteq S$ has FCP,  $[R,S]$ has exactly $n\ \Pi$-irreducible (resp.; $\cap$-irreducible) elements if and only if $\ell[R,S]=n$. 
 If these  conditions hold,  $R \subseteq S$ has FIP.
 \end{lemma}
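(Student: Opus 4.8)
The plan is to treat the three assertions in turn, proving everything for the $\Pi$-irreducible (product) version and obtaining the $\cap$-irreducible (intersection) version by passing to the opposite lattice, since $[R,S]$ is self-dual as far as distributivity is concerned.

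\emph{Uniqueness.} Suppose $T=T_1\cdots T_m=T_1'\cdots T_k'$ are two irredundant representations by $\Pi$-irreducible elements. I first observe that irredundancy forces $\{T_1,\dots,T_m\}$ to be an antichain: if $T_i\subseteq T_l$ with $i\neq l$, then $T_l$ already absorbs $T_i$, so $\prod_{j\neq i}T_j=T$, contradicting irredundancy. Now fix $i$. Since $T_i\subseteq T$, distributivity gives $T_i=T_i\cap T=T_i\cap\prod_j T_j'=\prod_j(T_i\cap T_j')$. As $T_i$ is $\Pi$-irreducible and each factor $T_i\cap T_j'$ is contained in $T_i$, we get $T_i=T_i\cap T_j'$, i.e. $T_i\subseteq T_j'$, for some $j$; symmetrically $T_j'\subseteq T_l$ for some $l$, whence $T_i\subseteq T_l$ and the antichain property forces $T_i=T_j'$. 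Running this over all indices (and symmetrically) identifies the two families.

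\emph{Counting and FIP.} Assume in addition FCP, and let $\mathcal J$ denote the set of $\Pi$-irreducible elements; for $T\in[R,S]$ put $\mathcal J(T):=\{J\in\mathcal J\mid J\subseteq T\}$. By Proposition \ref{5.5} each $T$ is a finite product of members of $\mathcal J(T)$, while $\prod\mathcal J(T)\subseteq T$ since $[R,S]$ is complete; hence $T=\prod\mathcal J(T)$, and $T\mapsto\mathcal J(T)$ is an order-embedding of $[R,S]$ into the lattice of order ideals (down-sets) of the poset $\mathcal J$. The heart of the matter is the covering lemma: \emph{if $U\subset V$ is minimal, then $\mathcal J(V)\setminus\mathcal J(U)$ is a singleton.} Indeed some $J\in\mathcal J$ satisfies $J\subseteq V$, $J\not\subseteq U$ (else $V=\prod\mathcal J(V)\subseteq U$); then $U\subset UJ\subseteq V$ forces $UJ=V$ by minimality. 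If $J'$ is another such join-irreducible, distributivity yields $J=J\cap V=J\cap(UJ')=(J\cap U)(J\cap J')$. Since FCP makes $[R,S]$ Artinian, the $\Pi$-irreducible $J$ has a unique lower cover $J_{*}$, and $J\not\subseteq U$ gives $J\cap U\subsetneq J$, hence $J\cap U\subseteq J_{*}$; $\Pi$-irreducibility applied to $J=(J\cap U)(J\cap J')$ then forces $J\cap J'=J$, i.e. $J\subseteq J'$, and symmetrically $J'\subseteq J$, so $J=J'$.

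Granting the lemma, I telescope it along a maximal chain $R=R_0\subset\cdots\subset R_p=T$ whose consecutive steps are minimal: exactly one new $\Pi$-irreducible is adjoined at each step, so any such chain has length $|\mathcal J(T)|$, giving $\ell[R,T]=|\mathcal J(T)|$; taking $T=S$ yields $\ell[R,S]=|\mathcal J(S)|=|\mathcal J|$, every $\Pi$-irreducible lying below $S$. This produces the equivalence $|\mathcal J|=n\Leftrightarrow\ell[R,S]=n$ without circularity, since whenever one side is a finite integer so is the other and they agree. When these hold, the embedding $T\mapsto\mathcal J(T)$ realises $[R,S]$ as a family of order ideals of the finite poset $\mathcal J$, so $|[R,S]|\leq 2^{|\mathcal J|}=2^{n}<\infty$, i.e. $R\subseteq S$ has FIP. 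The $\cap$-irreducible statements are the exact duals, obtained by replacing $[R,S]$ with its opposite lattice and $\mathcal J$ with the set of co-atoms' analogues.

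\emph{Main obstacle.} The delicate point is the uniqueness half of the covering lemma: it hinges on the distributive identity $J\cap(UJ')=(J\cap U)(J\cap J')$ together with the fact that a $\Pi$-irreducible element of an Artinian (hence FCP) lattice possesses a \emph{unique} lower cover, which is exactly what lets me push $J\cap U$ below $J_{*}$ and conclude $J=J'$. Establishing $T=\prod\mathcal J(T)$ and the injectivity of the embedding is comparatively routine once Proposition \ref{5.5}, completeness of $[R,S]$, and the uniqueness from the first part are in hand.
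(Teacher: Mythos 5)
Your proof is correct, but it is genuinely more self-contained than the paper's, which disposes of this lemma entirely by citation: uniqueness of irredundant decompositions by $\Pi$-irreducible elements in a distributive lattice is quoted from \cite[Theorem 4.30]{R}, the identification of $\ell[R,S]$ with the number of $\Pi$-irreducible (resp.\ $\cap$-irreducible) elements from \cite[Theorem 147]{Do} and \cite[Lemma 4, page 486]{BM}, and FIP is declared an easy consequence. You instead reconstruct this classical material from scratch. Your antichain/absorption argument for uniqueness is the textbook one and, as in the statement, correctly uses only distributivity and not FCP. Your covering lemma --- each minimal step $U\subset V$ adjoins exactly one new $\Pi$-irreducible, proved via the unique lower cover $J_*$ of a $\Pi$-irreducible $J$ together with the distributive identity $J\cap(UJ')=(J\cap U)(J\cap J')$ --- is precisely the engine behind Birkhoff's correspondence between a distributive lattice of finite length and the down-sets of its poset of join-irreducibles, which is what the cited references encapsulate. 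What your route buys: the telescoping along a maximal chain simultaneously re-derives the Jordan--H\"older property of Proposition \ref{1.0}, shows without circularity that $\ell[R,S]=|\mathcal J|$ whenever either side is finite, and yields the sharper conclusion $|[R,S]|\le 2^n$ through the order-embedding $T\mapsto\mathcal J(T)$ into the Boolean lattice of subsets of $\mathcal J$, which foreshadows the count $|[R,S]|=2^n$ in Theorem \ref{4.0}. Two small points deserve repair, neither a gap: the existence of a lower cover of $J$ above a given $x<J$ comes from the ascending chain condition (Noetherian side of FCP), not from the Artinian property you invoke --- harmless, since FCP gives both --- and you should say explicitly that the bottom element $R$ is excluded from $\mathcal J$ (empty-product convention), since your assertions $\mathcal J(R)=\emptyset$ and $\ell[R,S]=|\mathcal J|$ silently depend on it, in agreement with the convention the lemma's count requires.
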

 
  \begin{proof}   \cite[Theorem 4.30]{R},   \cite[Theorem 147]{Do} and \cite[Lemma 4, page 486]{BM} combine to yield the result. The FIP property is then  an easy consequence. 
  \end{proof}
 
 \begin{theorem}\label{5.8} An extension  $R\subset S$ is   atomistic, distributive  and  FIP if and only if $R\subset S$ is factorial (respectively, is co-factorial). 
   If these  conditions hold, then $R\subset S$ is a simple pair. 
\end{theorem}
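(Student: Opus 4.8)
The statement bundles together an equivalence (with factoriality and, dually, with co-factoriality) and a simple-pair conclusion, and I would prove the three parts in that order, using Lemma \ref{5.7} for one implication and a lattice-theoretic converse for the other. For the implication that an atomistic, distributive, FIP extension is factorial, FIP gives FCP, so Lemma \ref{5.7} is available. Given $T\in\ ]R,S]$, the atomistic hypothesis writes $T$ as a product of atoms; deleting redundant factors one at a time leaves an irredundant product of atoms representing $T$. Atoms are $\Pi$-irreducible, so the uniqueness clause of Lemma \ref{5.7} (where distributivity is used) forces this to be the only irredundant $\Pi$-irreducible representation, a fortiori the only one by atoms. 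With the convention that $R$ is the empty product, every element of $[R,S]$ thus has a unique irredundant representation as a finite product of atoms, i.e. $R\subset S$ is factorial.

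For the converse I would first record a structural fact: in a factorial extension every $\Pi$-irreducible element is an atom. Indeed, if $P$ is $\Pi$-irreducible with irredundant atom-decomposition $P=A_1\cdots A_k$ and $k\geq 2$, then irredundancy gives $A_1\subsetneq P$ and $A_2\cdots A_k\subsetneq P$, which contradicts $\Pi$-irreducibility applied to $P=A_1(A_2\cdots A_k)$; hence $k=1$ and $P=A_1$ is an atom. Consequently the factorial hypothesis is exactly the statement that every element of $[R,S]$ has a unique irredundant decomposition into $\Pi$-irreducibles, and the lattice-theoretic converse of the characterization invoked in Lemma \ref{5.7} (the same circle of results \cite{R}, \cite{Do}, \cite{BM}) yields that $[R,S]$ is distributive. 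Atomicity in the strong (atomistic) form is immediate from factoriality. Finiteness then drops out cleanly: writing the factorial decomposition $S=A_1\cdots A_n$ and using distributivity together with the fact that each $A_i$ is an atom (so $T\cap A_i\in\{R,A_i\}$), one computes $T=T\cap S=\prod_{i=1}^n(T\cap A_i)=\prod_{A_i\subseteq T}A_i$ for every $T\in[R,S]$, so $T\mapsto\{i:A_i\subseteq T\}$ embeds $[R,S]$ into the finite power set of $\{1,\dots,n\}$ and $R\subset S$ has FIP. I expect the genuine obstacle to be precisely the distributivity step, i.e. turning the combinatorial uniqueness of atom-factorizations into distributivity of the lattice; once that is in hand the preceding product formula makes finiteness routine (and excludes non-distributive ``finite product of atoms'' examples such as purely inseparable bi-generated field extensions, where the decomposition of $S$ fails to be unique).

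The co-factorial equivalence I would obtain by duality rather than by a fresh argument. Under distributivity and FIP the poset $[R,S]$ is a finite distributive lattice, and for such a lattice being atomistic is equivalent to being complemented, hence to being Boolean; since Boolean-ness is self-dual, the extension is simultaneously atomistic and co-atomistic. Running the two implications above with co-atoms and intersections in place of atoms and products then yields the equivalence of these conditions with co-factoriality.

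Finally, the simple-pair conclusion is a direct application of Proposition \ref{1.4}. Once the equivalent conditions hold, $R\subset S$ is distributive and, by factoriality, every $T\in[R,S]$ is a product of finitely many atoms; Proposition \ref{1.4} asserts precisely that each such $R\subset T$ is simple, with the explicit generator $T=R[\sum_i x_i]$, so $R\subset S$ is a simple pair.
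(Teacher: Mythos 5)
Your forward implication (via Lemma \ref{5.7}), your observation that factoriality forces every $\Pi$-irreducible element to be an atom, and your final appeal to Proposition \ref{1.4} for the simple-pair conclusion all agree with the paper. The genuine gap sits exactly where you predicted it: the distributivity step. There is no ``lattice-theoretic converse'' of Lemma \ref{5.7} to invoke: uniqueness of irredundant join-decompositions into join-irreducibles does \emph{not} characterize distributive lattices; it characterizes Dilworth's join-distributive (locally distributive) lattices, a strictly larger class. Concretely, take the lattice of subintervals of a three-element chain, with elements $\emptyset$, $\{1\}$, $\{2\}$, $\{3\}$, $\{1,2\}$, $\{2,3\}$, $\{1,2,3\}$, ordered by inclusion, with join the convex hull. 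It is atomistic, every join-irreducible is an atom, and every element has a \emph{unique} irredundant representation as a join of atoms (the top is $\{1\}\vee\{3\}$, since $\{2\}$ is redundant in $\{1\}\vee\{2\}\vee\{3\}$), yet it is not distributive: $\{1\}\vee(\{2\}\wedge\{3\})=\{1\}$ while $(\{1\}\vee\{2\})\wedge(\{1\}\vee\{3\})=\{1,2\}$. So the converse you cite would prove a false statement about abstract lattices, and everything downstream in your write-up is conditioned on distributivity and therefore remains unproven: the product formula $T=\prod_{A_i\subseteq T}A_i$ yielding FIP uses distributivity, and your route to co-factoriality (finite distributive atomistic $\Rightarrow$ Boolean, then self-duality) does too.

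What the paper does at this point is not a citation but a direct verification of the law $T\cap UV=(T\cap U)(T\cap V)$: it writes $T$, $U$, $V$ and $T\cap UV$ as products of atoms and argues that each atom $A_\delta$ occurring in the decomposition of $T\cap UV$ already occurs among the atoms of $T$ and among those of $UV$, hence lies in $T\cap U$ or in $T\cap V$, giving the nontrivial inclusion $T\cap UV\subseteq(T\cap U)(T\cap V)$; FIP then follows because $\mathcal A$ is finite, and co-factoriality is obtained explicitly via the elements $B_\alpha:=\prod_{\beta\neq\alpha}A_\beta$ rather than via Boolean self-duality. Note that the pivotal sub-step there --- an atom contained in a product of atoms must coincide with one of the factors --- is precisely what the interval-lattice example above resists ($\{2\}\subseteq\{1\}\vee\{3\}$), so your instinct that this is the delicate point is well-founded; but to have a proof you must reproduce (and justify) that computation rather than appeal to a general converse that fails.
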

\begin{proof} If $R\subset S$ is an atomistic distributive FIP extension, then $[R,S]$ has finitely many atoms, and then is factorial by Lemma \ref{5.7}.

Conversely, assume that $R\subset S$ is factorial, whence atomistic. Let $T,U,V\in[R,S]$. Obviously, $(T\cap U)(T\cap V)\subseteq T\cap UV$. Since $R\subset S$ is factorial, we can write $T=\prod_{\alpha\in I}A_{\alpha},\ U=\prod_{\beta\in J}A_{\beta}$ and $V=\prod_{\gamma\in K}A_{\gamma}$, for finite subsets $\{A_{\alpha}\mid \alpha\in I\},\ \{A_{\beta}\mid \beta\in J\}$ and $\{A_{\gamma}\mid \gamma\in K\}$ of $\mathcal{A}$. Then, $UV=\prod_{\beta\in J\cup K}A_{\beta}$, so that $T\cap UV=(\prod_{\alpha\in I}A_{\alpha})\cap(\prod_{\beta\in J\cup K}A_{\beta})$. Write $T\cap UV=:\prod_{\delta\in L}A_{\delta}=(\prod_{\alpha\in I}A_{\alpha})\cap(\prod_{\beta\in J\cup K}A_{\beta})$. Then, for any $\delta\in L$, we have $A_{\delta}\subseteq T$ and $A_{\delta}\subseteq\prod_{\beta\in J\cup K}A_{\beta}$, so that there exist some $\alpha\in I$ such that $A_{\delta}=A_{\alpha}$ and some  $\beta\in J\cup K$ such that $A_{\delta}=A_{\beta}$. If $\beta\in J$, then $A_{\delta}\subseteq U$, so that $A_{\delta}\subseteq T\cap U\subseteq(T\cap U)(T\cap V)$. If  $\beta\in K$, then $A_{\delta}\subseteq V$, so that $A_{\delta}\subseteq T\cap V\subseteq(T\cap U)(T\cap V)$. In both cases, $A_{\delta}\subseteq (T\cap U)(T\cap V)$, which yields $T\cap UV\subseteq (T\cap U)(T\cap V)$, and then $T\cap UV= (T\cap U)(T\cap V)$. Therefore, $R\subset S$ is  distributive. Moreover, $R\subset S$ has FIP since  $\mathcal{A}$ is finite  ($S$ is the product of all elements of $\mathcal{A}$).

Now, let $R\subset S$ be factorial, whence distributive. Set $n:=|\mathcal{A}|$, and for $A_{\alpha}\in\mathcal{A}$, set $B_{\alpha}:=\prod_{\beta\neq \alpha}A_{\beta}$. Obviously, $\mathcal{CA}=\{B_{\alpha}\mid \alpha\in\mathbb N_n\}$. Let $T\in [R,S]$, with $T=\prod_{\alpha\in I}A_{\alpha}$. For $J=\mathbb N_n\setminus I$, an easy calculation shows  that $T=\cap_{\beta\in J}B_{\beta}$ in a unique way. Hence,  $R\subset S$ is co-factorial.

To end, assume that $R\subset S$ is co-factorial. We get that $R\subset S$ is factorial and distributive, mimicking the previous proof. It is enough to exchange product and intersection, and atoms and co-atoms. In fact, we use the fact that $R\subset S$ is co-atomistic.
 If these  conditions hold, then $R\subset S$ is a simple pair by Proposition  \ref{1.4}. 
\end{proof}

The following notions and results are  deeply involved in the sequel. 
\begin{definition}\label{crucial 1}\cite[Theorem 4.5]{CDL} An extension $R\subset S$ is called  {\it $M$-crucial} if there is some  unique $M\in\mathrm{Max}(R)$, called the {\bf crucial (maximal) ideal}  $\mathcal{C}(R,S)$ of $R\subset S$, such that  $R_P=S_P$ for each $P\in\mathrm{Spec}(R)\setminus\{M\}$. 
\end{definition}

\begin{theorem}\label{crucial}\cite[Th\'eor\`eme 2.2]{FO} A  minimal extension  is crucial  and is either integral (finite) or a flat epimorphism.
\end{theorem}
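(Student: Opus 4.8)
The plan is to separate the two assertions: first that a minimal extension is crucial, and then the integral/flat-epimorphism dichotomy. Throughout let $R\subset S$ be minimal. Since any $t\in S\setminus R$ gives $R\subseteq R[t]\subseteq S$ with $R[t]\neq R$, minimality forces $S=R[t]$; thus a minimal extension is simple and in particular a finitely generated $R$-algebra. I would first record a localization lemma: for every $P\in\mathrm{Spec}(R)$ the extension $R_P\subseteq S_P$ is either an equality or minimal. Indeed, given $T'\in[R_P,S_P]$, its contraction $T:=\{s\in S\mid s/1\in T'\}$ is a ring in $[R,S]$, hence equals $R$ or $S$, and one checks that $T_P=T'$ (if $x=s/u\in T'$ with $u\in R\setminus P$, then $s/1=ux\in T'$, so $s\in T$ and $x\in T_P$). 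Therefore $[R_P,S_P]=\{R_P,S_P\}$.

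For the crucial ideal I would use a preimage construction. Fix any $P\in\mathrm{Supp}(S/R)$ and set $T:=\{s\in S\mid s/1\in R_P\}$, the preimage of $R_P$ under $S\to S_P$. This is an intermediate ring, and localizing gives $T_P=R_P\subsetneq S_P$, so $T\neq S$; minimality forces $T=R$. Equivalently, the localization map $S/R\to S_P/R_P$ is injective, whence $\mathrm{Supp}(S/R)\subseteq\mathrm{Supp}(S_P/R_P)\subseteq\{Q\in\mathrm{Spec}(R)\mid Q\subseteq P\}$, the last inclusion because $S_P/R_P$ is an $R_P$-module and so any prime not contained in $P$ kills it. Applying this to two primes $P_1,P_2\in\mathrm{Supp}(S/R)$ gives $P_2\subseteq P_1$ and $P_1\subseteq P_2$, so $\mathrm{Supp}(S/R)$ reduces to a single prime $M$. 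Finally $M$ is maximal: if $M\subsetneq M'$ with $M'$ maximal, then $M'\notin\mathrm{Supp}(S/R)$ gives $R_{M'}=S_{M'}$, and localizing this equality at $M\subseteq M'$ would yield $R_M=S_M$, contradicting $M\in\mathrm{Supp}(S/R)$. Thus $R_P=S_P$ for all $P\neq M$, that is, $R\subset S$ is $M$-crucial.

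It remains to prove the dichotomy, and here I would localize at $M$, so that $R$ is local with maximal ideal $M$ and $\mathrm{Supp}(S/R)=\{M\}$; write $S=R[t]$. The split is according to whether $t$ is integral over $R$. If $t$ is integral, then $S=R[t]$ is a finite $R$-module and $R\subset S$ is integral (finite), as required. The substantial case is when $t$ is not integral: here one must show that $R\to S$ is a flat epimorphism. I would establish this by analyzing $S\otimes_R S$ together with the multiplication map $\mu\colon S\otimes_R S\to S$ and the conductor $(R:S)$, showing that non-integrality forces $\mu$ to be an isomorphism (the epimorphism condition) and that $S$ is $R$-flat. This last case is the main obstacle: it is the genuinely arithmetic heart of Ferrand--Olivier's theorem, where the \emph{absence} of an integral relation for $t$ has to be converted into flatness of $S$ and into $\mu$ being an isomorphism, rather than following formally from the lattice-theoretic minimality exploited in the first two steps.
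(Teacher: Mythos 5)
The paper offers no proof of this statement: it is quoted directly from Ferrand--Olivier \cite[Th\'eor\`eme 2.2]{FO}. Your proposal must therefore stand on its own, and it does not, for two reasons. First, the cruciality argument rests on a false step. You assert $\mathrm{Supp}_R(S_P/R_P)\subseteq\{Q\in\mathrm{Spec}(R)\mid Q\subseteq P\}$ ``because $S_P/R_P$ is an $R_P$-module and so any prime not contained in $P$ kills it.'' An $R_P$-module need not vanish when localized at a prime $Q\not\subseteq P$: for $R=\mathbb{Z}$ and $P=(p)$, the $R_P$-module $\mathbb{Z}_{(p)}$ satisfies $(\mathbb{Z}_{(p)})_{(q)}\cong\mathbb{Q}\neq 0$ for every prime $q\neq p$. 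Worse, the error is not repairable within your framework, because the \emph{only} place minimality enters your cruciality proof is to force $T=R$, i.e.\ the injectivity of $S/R\to (S/R)_P$, and that ingredient also holds in non-crucial extensions. Take $\mathbb{Z}\subset\mathbb{Z}[i]$ and any prime $p$: writing $s=a+bi$, the condition $s/1\in\mathbb{Z}_{(p)}$ forces $b=0$ since $\mathbb{Z}[i]_{(p)}$ is free over $\mathbb{Z}_{(p)}$ with basis $1,i$, so the preimage of $R_P$ in $S$ is exactly $\mathbb{Z}$; moreover $S/R\cong\mathbb{Z}$ embeds in $(S/R)_{(p)}\cong\mathbb{Z}_{(p)}$, yet $\mathrm{Supp}(S/R)=\mathrm{Max}(\mathbb{Z})$ is infinite. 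Run verbatim, your argument would thus ``prove'' that $\mathbb{Z}\subset\mathbb{Z}[i]$ is crucial, which is false: injectivity of $N\to N_P$ simply does not bound $\mathrm{Supp}(N)$ (already $N=\mathbb{Z}\hookrightarrow\mathbb{Z}_{(p)}$ shows this). Uniqueness of the crucial maximal ideal genuinely needs Ferrand--Olivier's finer analysis, not this formal localization reasoning. (Your preliminary observations --- minimality implies simplicity, $[R_P,S_P]=\{R_P,S_P\}$, and the final maximality step via factoring localizations --- are all correct.)

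Second, the dichotomy is not proved at all in its substantial half. A cleaner case split than integrality of one fixed generator $t$ is available for free: $\overline{R}\in[R,S]$, so minimality gives $\overline{R}=S$ (then $S=R[t]$ with $t$ integral is module-finite) or $\overline{R}=R$, i.e.\ the extension is integrally closed; note that with your split one must still argue that \emph{no} element of $S\setminus R$ is integral, not merely the chosen $t$. In the integrally closed case one must show the extension is a flat epimorphism, and here you only announce an intended strategy (analyzing $S\otimes_RS$, the multiplication map $\mu$, and the conductor) while yourself labeling it ``the main obstacle.'' That case is precisely the arithmetic content of \cite[Th\'eor\`eme 2.2]{FO}, and it does not follow from the lattice-theoretic minimality used elsewhere in your text; as it stands, neither the cruciality claim nor the dichotomy is established.
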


\begin{lemma}\label{1.9} \cite[Corollary 3.2]{DPP2} If there exists a maximal chain $R=R_0 \subset\cdots\subset  R_i \subset\cdots \subset R_n=S$ of extensions, where $R_i\subset R_{i+1}$ is minimal, then  $\mathrm{Supp}(S/R)=\{ \mathcal C (R_i, R_{i+1})\cap R\mid i=0,\ldots,n-1\}$.
\end{lemma}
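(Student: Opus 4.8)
The plan is to prove the asserted equality by double inclusion, after localizing the given chain. Write $M_i:=\mathcal C(R_i,R_{i+1})\in\mathrm{Max}(R_i)$ for the crucial ideal of the minimal extension $R_i\subset R_{i+1}$ (which exists by Theorem \ref{crucial}), and set $P_i:=M_i\cap R$. For any $P\in\mathrm{Spec}(R)$, localizing the chain at $P$ yields $R_P=(R_0)_P\subseteq(R_1)_P\subseteq\cdots\subseteq(R_n)_P=S_P$, and $P\in\mathrm{Supp}(S/R)$ exactly when $R_P\neq S_P$, that is, when at least one of these inclusions is strict. The facts I would use repeatedly are that localizing a minimal extension produces either an isomorphism or again a minimal extension, and that, since $M_i$ is the crucial ideal, the step $(R_i)_P\subset(R_{i+1})_P$ stays strict precisely when $M_i$ survives in $(R_i)_P$, i.e. when $P_i\subseteq P$.

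For the inclusion $\{P_i\}\subseteq\mathrm{Supp}(S/R)$, I would localize at $P=P_i$. Then $M_i$ survives, so $(R_i)_{P_i}\subsetneq(R_{i+1})_{P_i}$; since $R\subseteq R_i\subseteq R_{i+1}\subseteq S$ gives $R_{P_i}\subseteq(R_i)_{P_i}\subsetneq(R_{i+1})_{P_i}\subseteq S_{P_i}$, we obtain $R_{P_i}\neq S_{P_i}$, whence $P_i\in\mathrm{Supp}(S/R)$.

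The substantive inclusion is $\mathrm{Supp}(S/R)\subseteq\{P_i\}$, which is where I expect the real work to lie. Fix $P\in\mathrm{Supp}(S/R)$, so $R_P\neq S_P$, and let $j$ be the least index for which $(R_j)_P\subsetneq(R_{j+1})_P$. By minimality of $j$ all earlier localized steps are equalities, so $(R_j)_P=(R_0)_P=R_P$; in particular $(R_j)_P$ is local with maximal ideal $PR_P$. The localized extension $(R_j)_P\subset(R_{j+1})_P$ is minimal, so by Definition \ref{crucial 1} its crucial ideal is a maximal ideal of the local ring $(R_j)_P=R_P$, forcing it to equal $PR_P$. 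On the other hand, because the crucial ideal is compatible with the localization at $R\setminus P$ (and $M_j$ survives, the step being strict), this crucial ideal is $M_j(R_j)_P$. Equating the two descriptions gives $M_j(R_j)_P=PR_P$, and contracting back to $R$ yields $P_j=M_j\cap R=M_j(R_j)_P\cap R=PR_P\cap R=P$. Thus $P=P_j\in\{P_i\}$, completing the proof.

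The main obstacle is this last inclusion, and within it the clean bookkeeping of crucial ideals under localization: one must know that a minimal extension localizes to a minimal extension or an isomorphism, that the crucial ideal of the localized extension is the extension of $M_j$, and that a minimal extension of a local ring has the maximal ideal as its crucial ideal. Once these three localization facts are in hand (the first two rest on Theorem \ref{crucial} together with the crucial-ideal formalism of Definition \ref{crucial 1}, the third being immediate from that definition), identifying $P$ with $P_j$ is a routine contraction computation. An alternative organization would first establish additivity of the support along the filtration $R=R_0\subseteq\cdots\subseteq R_n=S$, reducing the problem to the single steps $\mathrm{Supp}_R(R_{i+1}/R_i)$; but this support equals the whole set of primes containing $P_i$ rather than the single point $\{P_i\}$, so one would still need the localization argument above to see that the resulting union collapses to $\{P_i\}$. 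The direct double-inclusion route therefore seems preferable.
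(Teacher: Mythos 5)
Your proposal is correct, but note first that the paper contains no internal proof to compare against: Lemma \ref{1.9} is imported verbatim from \cite[Corollary 3.2]{DPP2}, so your argument is in effect a reconstruction of the proof behind the cited result, and it is essentially the standard one. Both inclusions are sound. The easy inclusion works because $\mathrm{Supp}_{R_i}(R_{i+1}/R_i)=\{M_i\}$ (this is exactly what Definition \ref{crucial 1} encodes) together with $M_i\cap R=P_i$ forces $(R_i)_{P_i}\subsetneq(R_{i+1})_{P_i}$, and the squeeze $R_{P_i}\subseteq (R_i)_{P_i}\subsetneq (R_{i+1})_{P_i}\subseteq S_{P_i}$ does the rest. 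For the hard inclusion, your least-index argument is exactly right: all earlier steps collapse, so the base of the first strict localized step is the local ring $R_P$, the localized step is again minimal with crucial ideal $M_j(R_j)_P$, locality forces $M_j(R_j)_P=PR_P$, and contraction (using that $M_j$ survives, so $M_j(R_j)_P\cap R_j=M_j$) gives $P=P_j$. I checked this against the configuration of Remark \ref{6.8}, where a crucial ideal contracts to a non-maximal prime, and your bookkeeping handles it correctly.

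Two remarks on provenance rather than substance. First, your attribution of the ``three localization facts'' to Theorem \ref{crucial} and Definition \ref{crucial 1} is too thin: neither the statement that a minimal extension localizes to an isomorphism or a minimal extension, nor that the crucial ideal of the localized extension is $M_jA_\Sigma$ when $M_j\cap\Sigma=\emptyset$, follows formally from those quoted statements. In the integral case they follow from the classification in Theorem \ref{minimal} (the residual data $A/M\subset B/MB$ is unchanged when $\Sigma$ misses $M$), and the flat-epimorphism case needs a separate argument; both are lemmas proved in \cite{DPP2} itself, which is presumably why the present paper cites rather than reproves. Second, you deserve explicit credit for the closing observation: the naive route via additivity of support along the chain gives $\mathrm{Supp}_R(R_{i+1}/R_i)=V(P_i)$, so $\mathrm{Supp}(S/R)=\bigcup_i V(P_i)$, and the genuine content of the lemma is that this union collapses to the finite set $\{P_0,\ldots,P_{n-1}\}$ --- i.e., every prime lying over some $P_i$ and in the support is itself a contracted crucial ideal of the chain. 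Many readers miss this, and your least-index localization argument is precisely what proves the collapse.
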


\begin{lemma} \label{1.10} \cite[Lemma 1.8]{Pic 6} Let $R\subset S$ be an FCP extension. and $M\in\mathrm{MSupp}(S/R)$. There is some $T\in[R,S]$ such that $R\subset T$ is minimal and $\mathcal{C}(R,T)=M$. 
\end{lemma}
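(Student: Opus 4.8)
The plan is to isolate the lattice content in one reduction step and then run an induction on the length $\ell[R,S]$, the only delicate point being a ``commutation'' of two consecutive minimal extensions with distinct crucial ideals. First I would record the following reduction, which turns a local statement into a global one: if $T\in[R,S]$ satisfies $T_P=R_P$ for every $P\in\mathrm{Spec}(R)\setminus\{M\}$ and $R_M\subset T_M$ is minimal, then $R\subset T$ is minimal with $\mathcal C(R,T)=M$. Indeed, for $W\in[R,T]$ one has $W_P=R_P$ for $P\neq M$, while $W_M\in\{R_M,T_M\}$ by minimality of $R_M\subset T_M$; since an extension whose associated quotient module has empty support is trivial, $W_M=R_M$ forces $W=R$ and $W_M=T_M$ forces $W=T$, so $[R,T]=\{R,T\}$, and $R\subset T$ is $M$-crucial in the sense of Definition~\ref{crucial 1}.

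Granting this, I would argue by induction on $n:=\ell[R,S]$. If $n=1$ then $R\subset S$ is minimal, hence crucial, so $\mathrm{MSupp}(S/R)=\{\mathcal C(R,S)\}$ and $M=\mathcal C(R,S)$; take $T=S$. If $n\geq 2$, use that an FCP extension is atomic to choose an atom $A\in[R,S]$ and set $N:=\mathcal C(R,A)$ (Theorem~\ref{crucial}). If $N=M$ we are done with $T=A$, so assume $N\neq M$. As $A/R$ is supported only at $N$, we get $A_M=R_M$; localizing $0\to A/R\to S/R\to S/A\to 0$ at $M$ then shows $M\in\mathrm{Supp}_R(S/A)$. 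Since $A_M=R_M$ is local, $A$ has a unique prime $M'$ over $M$, with $A_{M'}=A_M=R_M$ and $S_{M'}=S_M$, whence $M'\in\mathrm{MSupp}_A(S/A)$. Because $\ell[A,S]\leq n-1$, the induction hypothesis applied to $A\subset S$ yields $T_1\in[A,S]$ with $A\subset T_1$ minimal and $\mathcal C(A,T_1)=M'$.

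The heart of the proof — and the step I expect to be the main obstacle — is to ``slide'' this minimal extension down past $R\subset A$, i.e. to produce from $T_1$ an intermediate ring realizing the crucial ideal $M$ directly over $R$. I would set
$$A':=\{x\in T_1\mid x/1\in R_N\ \text{in}\ (T_1)_N\},$$
which is the subring $T_1\times_{(T_1)_N}R_N$ of $T_1$ and contains $R$. Writing $A'$ as the kernel of the $R$-linear map $T_1\to (T_1/R)_N=(T_1)_N/R_N$ lets me compute its localizations: at $N$ one gets $(A')_N=R_N$; and since $(T_1/R)_N$ is an $R$-module supported only at $N$ while $N\not\subseteq P$ for every prime $P\neq N$ (as $N$ is maximal), localizing the kernel at any such $P$ gives $(A')_P=(T_1)_P$. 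Combined with $(A')_N=R_N$ this yields $(A')_P=R_P$ for all $P\neq M$ (off $\{M,N\}$ one has $(T_1)_P=A_P=R_P$ because $T_1/A$ is supported at $M'$ and $A/R$ at $N$), while $(A')_M=(T_1)_M$. Finally $R_M=A_M=A_{M'}\subset (T_1)_{M'}=(T_1)_M$ is minimal, being the localization of the minimal extension $A\subset T_1$ at its crucial ideal $M'$. Thus $A'$ meets the hypotheses of the reduction step, and $R\subset A'$ is minimal with $\mathcal C(R,A')=M$, completing the induction.

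The only genuinely non-formal ingredient is this commutation, and within it the identity $(A')_M=(T_1)_M$; I would emphasize that it rests solely on $\mathrm{Supp}_R((T_1/R)_N)=\{N\}$ together with $N\neq M$ (so $N\not\subseteq M$), and therefore requires no module-finiteness of $T_1/R$. This is the point worth checking carefully, since by Theorem~\ref{crucial} a minimal extension may be a flat epimorphism, for which the associated quotient need not be a finite-length module.
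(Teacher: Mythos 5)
Your proof is correct. Note first that the paper contains no proof of this lemma at all: it is imported verbatim from \cite[Lemma 1.8]{Pic 6}, so any correct self-contained argument is by definition a different route. Yours is a clean one: the globalization step (if $T_P=R_P$ for all $P\neq M$ and $R_M\subset T_M$ is minimal, then $R\subset T$ is minimal and $M$-crucial) is exactly the mechanism the paper itself exploits, but in the opposite logical direction, in Lemma \ref{4.001}, whose proof pulls back along $S\to S_M$ and invokes Lemma \ref{1.10}; you instead use a pullback of the same kind, $A'=T_1\times_{(T_1)_N}R_N$, to \emph{prove} Lemma \ref{1.10}, and you use neither Lemma \ref{4.001} nor Proposition \ref{6.2}, so there is no circularity. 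The ``sliding'' of a minimal extension past an atom with a different crucial ideal is the genuinely nontrivial content, and your kernel description $A'=\ker\bigl(T_1\to(T_1/R)_N\bigr)$ handles it uniformly for integral and flat-epimorphism atoms, which matters since Theorem \ref{crucial} allows both and $(T_1/R)_N$ need not have any finiteness property; your closing remark correctly identifies this as the point to check. What the citation buys the authors is brevity; what your route buys is independence from \cite{Pic 6} and the composite-of-minimal-extensions machinery of \cite{DPPS}.

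Three small points to tighten, none fatal. (i) You use $M'\in\mathrm{Max}(A)$ without proof; it follows since $A/M'$ embeds in $\kappa_R(M)=R/M$ (being the kernel of $A\to A_M/MA_M$) and contains $R/M$, hence equals it. (ii) ``$(T_1/R)_N$ is supported only at $N$'' is \emph{not} a general property of localizations (e.g.\ $\mathrm{Supp}_{\mathbb Z}(\mathbb Z_{(2)})=\mathrm{Spec}(\mathbb Z)$); it holds here because $\mathrm{Supp}_R(T_1/R)\subseteq\{M,N\}$ consists of two distinct maximal ideals: $\bigl((T_1/R)_N\bigr)_P\cong\bigl((T_1/R)_P\bigr)\otimes_RR_N=0$ for $P\neq M,N$, while $\bigl((T_1/R)_N\bigr)_M$ is a localization at a multiplicative set meeting every prime except those contained in $M\cap N$, which lie outside the support. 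Your parenthetical gestures at this, but it should be spelled out, since it is the hinge of $(A')_M=(T_1)_M$. (iii) That $A_{M'}\subset(T_1)_{M'}$ is minimal (localization of a minimal extension at its crucial ideal) deserves a word: any $W\in[A_{M'},(T_1)_{M'}]$ equals $V_{M'}$ for $V:=\{x\in T_1\mid x/1\in W\}\in[A,T_1]=\{A,T_1\}$ — the same pullback trick again — or one can cite the standard fact from \cite{DPP2}.
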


If $R\subseteq S$ has FCP, set $\mathcal T:=\{T\in[R,S] | \mathrm{MSupp}_R(T/R)|= 1\}$ and  
${\mathcal T}_ M:=\{T\in\mathcal T\mid\mathrm{MSupp}_R(T/R)=\{M\}\}$. 
 We are able to give dual results with $\mathcal T^*:=\{T\in[R,S]\mid|\mathrm{MSupp}_R (S/T)|= 1\}$, but they do not appear in this paper because they are not used in the sequel.  

\begin{proposition}\label{6.1} A subextension $R\subset T$ of an  FCP extension $R\subset S$  is $M$-crucial when $T\in {\mathcal T}_ M$ is  such that $\mathrm{Supp}(T/R)\subseteq \mathrm{Max}(R)$. Moreover, $\mathcal A$ is
the set of all minimal elements of $\mathcal T$. 
  
 In case  $R\subset S$ has FIP,  ${\mathcal T}_ M$ has a greatest element $s(M):=\prod_{T\in\mathcal T_M}T$.
\end{proposition}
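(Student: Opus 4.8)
The plan is to dispatch the three assertions separately, each reducing to how supports and composita behave under localization. For the $M$-cruciality statement I would combine the two hypotheses on $T$. That $T\in\mathcal T_M$ says the only \emph{maximal} ideal in $\mathrm{Supp}(T/R)$ is $M$, while $\mathrm{Supp}(T/R)\subseteq\mathrm{Max}(R)$ says every prime of that support is already maximal; together these give $\mathrm{Supp}(T/R)=\mathrm{MSupp}(T/R)=\{M\}$. Cruciality is then immediate from Definition \ref{crucial 1}: for $P\in\mathrm{Spec}(R)\setminus\{M\}$ we have $P\notin\mathrm{Supp}(T/R)$, so $(T/R)_P=0$, i.e. $R_P=T_P$, whereas $(T/R)_M\neq 0$ forces $R_M\neq T_M$; hence $M$ is the unique maximal ideal with $R_P=T_P$ for all $P\neq M$.

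For the assertion that $\mathcal A$ is exactly the set of minimal elements of $\mathcal T$, I would first record that $R\notin\mathcal T$, since $\mathrm{MSupp}(R/R)=\emptyset$ has cardinality $0$. Next, each atom $A$ lies in $\mathcal T$: by Theorem \ref{crucial} the minimal extension $R\subset A$ is crucial with crucial ideal $M:=\mathcal C(R,A)\in\mathrm{Max}(R)$, so $R_P=A_P$ for all $P\neq M$ and thus $\mathrm{MSupp}(A/R)=\{M\}$. As $[R,A[=\{R\}$ and $R\notin\mathcal T$, no element of $\mathcal T$ sits strictly below $A$, so every atom is minimal in $\mathcal T$. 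Conversely, let $T$ be a minimal element of $\mathcal T$, say $T\in\mathcal T_M$, so $M\in\mathrm{MSupp}(T/R)$. Since $R\subset S$ is FCP so is $R\subset T$, and Lemma \ref{1.10} applied to $R\subset T$ furnishes an atom $A\in[R,T]$ with $\mathcal C(R,A)=M$. Then $A\in\mathcal T$ and $A\subseteq T$, so minimality of $T$ forces $A=T$, proving $T$ is an atom.

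For the last assertion I would use that FIP makes $[R,S]$, hence $\mathcal T_M$, finite, and that $\mathcal T_M\neq\emptyset$ (by Lemma \ref{1.10}, as $M\in\mathrm{MSupp}(S/R)$). The strategy is to show that the finite product $s(M)=\prod_{T\in\mathcal T_M}T$ again lies in $\mathcal T_M$; being a product it contains every member of $\mathcal T_M$, and is therefore automatically the greatest element. The key — and the only genuinely computational step — is that localization commutes with the product of subalgebras, so that for any maximal ideal $M'$ the ring $s(M)_{M'}$ is the compositum of the $T_{M'}$ over $T\in\mathcal T_M$. For $M'\neq M$ each $T_{M'}=R_{M'}$ (because $M'\notin\mathrm{MSupp}(T/R)$), whence $s(M)_{M'}=R_{M'}$; and for $M'=M$ some $T_M\neq R_M$ already forces $s(M)_M\neq R_M$ through the embedding $T/R\hookrightarrow s(M)/R$. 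Hence $\mathrm{MSupp}(s(M)/R)=\{M\}$, i.e. $s(M)\in\mathcal T_M$. I expect this compatibility of compositum with localization to be the only delicate point; everything else is a direct unwinding of the definitions of support and crucial ideal together with Theorem \ref{crucial} and Lemma \ref{1.10}.
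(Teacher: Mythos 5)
Your proof is correct and follows essentially the same route as the paper's: part (1) by unwinding $\mathrm{Supp}(T/R)=\mathrm{MSupp}(T/R)=\{M\}$, part (2) by the $\mathrm{MSupp}$ computation showing atoms lie in $\mathcal T$, and part (3) by localizing the compositum $s(M)$ at maximal ideals $M'\neq M$. The only (immaterial) divergence is in the converse of the minimality claim, where you invoke Lemma \ref{1.10} to produce an atom $A\subseteq T$, whereas the paper argues more directly that any $U'$ with $R\subset U'\subset T$ satisfies $\emptyset\neq\mathrm{MSupp}(U'/R)\subseteq\mathrm{MSupp}(T/R)=\{M\}$, hence $U'\in\mathcal T$, contradicting minimality.
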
 

\begin{proof} Let $T\in {\mathcal T}_ M$ 
 be such that $\mathrm{Supp}(T/R)\subseteq \mathrm{Max}(R)$.
 It follows that  $\mathrm{Supp}_R(T/R)=\mathrm{MSupp}(T/R)=\{M\}$ and $R_P=T_P$ for each $P\in\mathrm{Spec}(R)\setminus\{M\}$. 
 Hence $R\subset T$ is $M$-crucial.
If $U\in[R,S]$ is an atom,  $R\subset U$ is  minimal  and $M:={\mathcal C}(R,U)$ with $\mathrm{MSupp}(U/R)=\{M\}$, whence $U\in\mathcal T$. Since $R\subset U$ is  minimal, $U$ is a minimal element of $\mathcal T$. Conversely, let $U$ be  a minimal element of $\mathcal T$. If  $U$ is not an atom,  there is $U'\in [R,S]$ with $R\subset U'\subset U$. Then, $\emptyset\neq\mathrm{MSupp}(U'/R)\subseteq\mathrm{MSupp}(U/R)=\{M\}$ implies $\mathrm {MSupp}(U'/R)=\{M\}$ giving $U'\in\mathcal T$, contradicting  the minimality of $U$ in $\mathcal T$. Therefore, $U$ is an atom. 

 Since $s(M):=\prod_{T\in\mathcal T_M}T$, we get that $T\subseteq s(M)$ for each $T\in\mathcal T_M$ and $R\subset s(M)$, whence  $\mathrm{MSupp}(s(M)/R)\neq \emptyset$. Since $R_P=T_P$ for each $T\in\mathcal T_M$ and $P\in \mathrm{Spec}(R)\setminus \{M\}$, we get $s(M)_P=R_P$, so that $\mathrm{MSupp}(s(M)/R)=\{M\}$ and $s(M)$ is the greatest element of $\mathcal T_M$. 
\end{proof}

 \begin{proposition}\label{6.111} Let $R\subset S$ be an FCP atomistic extension such that   $|\mathcal A|<\infty$, with $\mathrm{MSupp}(S/R)=\{M_1,\ldots,M_n\}$. Set $\mathcal A_M=\mathcal A\cap {\mathcal T}_ M$ and, for each $k\in\mathbb N_n$, $V_k:=\prod_{i=1}^ks(M_i)$ and $V_0:=R$.
 
  Then,  the following statements  hold.   
\begin{enumerate}
\item $R\subset S$ has FIP.

\item For each  $M\in\mathrm{MSupp}(S/R),\  s(M)=\prod_{A\in \mathcal A_M}A$.

\item For each $k\in\mathbb N_n,\ \mathrm{Supp}(V_k/V_{k-1})=\{M_k\}$ and $\{V_k\}_{k=0}^n$ is an increasing chain such that $V_n=S$.

\item For each $T\in]R,S]$, there exists $T_i\in {\mathcal T}_ {M_i}$, for each $M_i\in\mathrm{MSupp}(T/R)$, such that $T=\prod_{M_i\in\mathrm{MSupp}(T/R)}T_i$.

\item $R\subset S$  is almost-Pr\"ufer. 
\end{enumerate}
\end{proposition}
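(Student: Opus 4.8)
The plan is to let the atomistic decomposition $S=\prod_{A\in\mathcal A}A$ drive everything. For (1), I would first note that because $R\subset S$ is atomistic, the map $T\mapsto\{A\in\mathcal A\mid A\subseteq T\}$ is injective on $]R,S]$, since atomicity recovers $T$ as the product of the atoms it contains. Hence $|[R,S]|\le 2^{|\mathcal A|}+1<\infty$ and FIP holds. Having FIP, Proposition \ref{6.1} applies, so $s(M)$ is the greatest element of $\mathcal T_M$ for every $M\in\mathrm{MSupp}(S/R)$; moreover Lemma \ref{1.10} guarantees $\mathcal A_M\neq\emptyset$ for each such $M$, and by Theorem \ref{crucial} each atom $A$ is crucial, so $\mathrm{Supp}(A/R)=\{\mathcal C(R,A)\}$ is a single maximal ideal, placing $A$ in exactly one $\mathcal A_M$.

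For (2), each $A\in\mathcal A_M$ lies in $\mathcal T_M$, so $A\subseteq s(M)$ and thus $\prod_{A\in\mathcal A_M}A\subseteq s(M)$. Conversely, applying atomicity to $s(M)\in\,]R,S]$ and using that the support of a submodule is contained in that of the module, every atom $A\subseteq s(M)$ satisfies $\mathrm{Supp}(A/R)\subseteq\mathrm{Supp}(s(M)/R)=\{M\}$, forcing $A\in\mathcal A_M$; hence $s(M)=\prod_{A\in\mathcal A,\,A\subseteq s(M)}A\subseteq\prod_{A\in\mathcal A_M}A$, giving equality. For (3), the support computation is purely local: localization commutes with products and $\mathrm{Supp}(s(M_i)/R)=\{M_i\}$ (each atom localizes to $R_P$ away from its crucial ideal), so $(V_{k-1})_{M_k}=R_{M_k}$ while $(V_k)_{M_k}=(s(M_k))_{M_k}\neq R_{M_k}$, and $(V_k)_P=(V_{k-1})_P$ for $P\neq M_k$; thus $\mathrm{Supp}(V_k/V_{k-1})=\{M_k\}$. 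The chain is increasing by construction, and $V_n=\prod_i s(M_i)=\prod_i\prod_{A\in\mathcal A_{M_i}}A=\prod_{A\in\mathcal A}A=S$ using (2) and the partition $\mathcal A=\bigsqcup_i\mathcal A_{M_i}$.

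Statement (4) is the same grouping argument applied to an arbitrary $T\in\,]R,S]$: writing $T=\prod_{A\subseteq T}A$ and setting $T_i:=\prod_{A\subseteq T,\,A\in\mathcal A_{M_i}}A$, each nonempty $T_i$ lies in $\mathcal T_{M_i}$ and $T=\prod_i T_i$; checking at $M_i$ shows $T_i\neq R$ exactly when $M_i\in\mathrm{MSupp}(T/R)$, which yields the asserted factorization.

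For (5) I would split the atoms by the Ferrand--Olivier dichotomy of Theorem \ref{crucial}: write $\mathcal A=\mathcal A^{\mathrm{int}}\sqcup\mathcal A^{\mathrm{fe}}$ according to whether $R\subset A$ is integral or a flat epimorphism. An integral atom lies in $\overline R$, while a minimal flat epimorphism is a Pr\"ufer extension (its only subextensions are $R$ and $A$) and so lies in the Pr\"ufer hull $\widetilde R$. Therefore $\overline R\supseteq\prod_{A\in\mathcal A^{\mathrm{int}}}A$ and $\widetilde R\supseteq\prod_{A\in\mathcal A^{\mathrm{fe}}}A$, whence $\widetilde R\,\overline R\supseteq\prod_{A\in\mathcal A}A=S$, i.e. $S=\widetilde R\,\overline R$; by the FIP characterization of almost-Pr\"ufer (available since (1) holds) this gives (5). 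I expect the only genuine subtlety to be (5): the key realization is that the atomistic hypothesis reduces ``almost-Pr\"ufer'' to sorting the finitely many atoms into their integral and flat-epimorphic types and then invoking the $S=\widetilde R\,\overline R$ criterion, rather than attempting a direct local analysis of the extension.
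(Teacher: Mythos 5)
Your proposal is correct, and parts (1)--(4) follow essentially the same path as the paper: finiteness of $[R,S]$ from $|\mathcal A|<\infty$ plus the atomistic decomposition, identification of $s(M)$ with $\prod_{A\in\mathcal A_M}A$ via Proposition \ref{6.1}, the local computation for $\mathrm{Supp}(V_k/V_{k-1})$, and the grouping of the atoms of $T$ by their crucial maximal ideals for (4). (Your converse inclusion in (2), via $\mathrm{Supp}(A/R)\subseteq\mathrm{Supp}(s(M)/R)=\{M\}$, is legitimate here because in the atomistic setting every support is a set of maximal ideals: atoms are crucial by Theorem \ref{crucial}, and the support of a product of subalgebras is the union of the supports.) Where you genuinely diverge is (5). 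The paper sorts atoms by their crucial maximal ideal and invokes \cite[Lemma 1.5]{Pic 6} --- for a fixed $M$, the minimal extensions $R\subset A$ with $A\in\mathcal T_M$ are either all integral or all integrally closed --- to reorder $\mathrm{MSupp}(S/R)$ and exhibit an explicit tower $R\subseteq V_k\subseteq S$ with $R\subseteq V_k$ Pr\"ufer and $V_k\subseteq S$ integral, tying the almost-Pr\"ufer structure to the chain $\{V_k\}$ already built in (3). You instead sort the atoms individually by the Ferrand--Olivier dichotomy, observe that an integral atom lies in $\overline R$ while a minimal flat epimorphism is a Pr\"ufer subextension (its only subextensions being $R$ and $A$) and hence lies in $\widetilde R$, and conclude $S=\widetilde R\,\overline R$, which gives almost-Pr\"ufer by the FIP criterion of \cite[Theorem 4.6]{Pic 5} --- legitimately available since you proved FIP in (1). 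Your route is more economical: it dispenses entirely with \cite[Lemma 1.5]{Pic 6} and the reordering of the $M_i$. What it buys less of is structural information: the paper's argument produces a concrete intermediate ring $V_k$ realizing the Pr\"ufer--integral factorization (in particular showing one can take $\widetilde R\supseteq V_k$ with $V_k\subseteq S$ integral), whereas yours certifies the property abstractly through the hulls.
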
  

 \begin{proof} (1) Since $|\mathcal A|<\infty$ and any element of $[R,S]$ is a product of atoms, then $|[R,S]|<\infty$ and $R\subset S$ has FIP.

(2) Let $M\in\mathrm{MSupp}(S/R)$. Any element of ${\mathcal T}_ M$ is a product of atoms, which are necessarily in ${\mathcal T}_ M$, and then in ${\mathcal A}_ M$. It follows from Proposition \ref{6.1} that $s(M)=\prod_{A\in \mathcal A_M}A$.

(3)  Let $M_j\in \mathrm{MSupp}(S/R)$ for some $j\in\mathbb N_n$. Assume $1<k<n$. Since $V_k=\prod_{i=1}^ks(M_i)$ and $V_{k-1}=\prod_{i=1}^{k-1}s(M_i)$, we have $(V_{k-1})_{M_j}=(V_k)_{M_j}=R_{M_j}$ if $j>k$, so that $M_j\not\in\mathrm{Supp}(V_k/V_{k-1})$. If $j=k>k-1$, then, $(V_{k-1})_{M_k}=R_{M_k}$ and $(V_k)_{M_k}=s(M_k)_{M_k}\neq R_{M_k}$, so that $M_k\in \mathrm{Supp}(V_k/V_{k-1})$. At last, if $j<k$, then, $(V_{k-1})_{M_j}=(V_k)_{M_j}=s(M_j)_{M_j}$, so that $M_j\not\in\mathrm{Supp}(V_k/V_{k-1})$. Hence, $\mathrm{Supp}(V_k/V_{k-1})= \{M_k\}$. If $k\in\{1,n\}$, the same reasoning holds. The end of (3) is obvious.

(4) Let $T\in]R,S]$. Let $I\subset \mathbb N_n$ be such that   $\mathrm{Supp}(T/R)=\{M_i\mid i\in I\}$. Since $T$ is a product of atoms $A_{\alpha}$, for each $i\in I$, set $T_i=\prod[A_{\alpha}\mid A_{\alpha}\in  \mathcal T_{M_i}]$. Then, $T_i\in   \mathcal T_{M_i}$ and  $T=\prod_{M_i\in\mathrm{MSupp}(T/R)}T_i$.

(5) Since  $R\subset A$ is minimal for any $A\in \mathcal A$, either $R\subset A$ is integral or $R\subset A$ is integrally closed. Moreover, by \cite[Lemma 1.5]{Pic 6}, for a given $M\in\mathrm{MSupp}(S/R)$, minimal extensions $R\subset A$, for $A\in \mathcal T_M$, are either all integral, or all integrally closed. Reorder $\mathrm{MSupp}(S/R)$ such that for some $k\in \mathbb N_n,\ R\subset A$ is integrally closed for all $A\in \mathcal T_{M_i}$ and for any $i\leq k$ and $R\subset A$ is integral for all $A\in \mathcal T_{M_i}$ and for any $i> k$. Then, $R\subset V_k$ is Pr\"ufer and $V_k\subset S$ is integral, so that $R\subset S$ is almost-Pr\"ufer.
\end{proof}

\begin{proposition}\label{6.2} Let $R\subset S$ be an FCP extension, such that $(\mathcal T,\subseteq)$ is a tree. Then, 
\begin{enumerate}
\item The elements of $\mathcal T$ are $\Pi$-irreducible in $[R,S]$.

\item For each  $M\in\mathrm{MSupp}(S/R),\ {\mathcal T}_M$ is a chain,  whose least element $i(M)$  is the only $T\in[R,S]$ satisfying  $R\subset T$ is minimal and  $M={\mathcal C}(R,T)$. 

\item Let $M\in\mathrm{MSupp}(S/R)$. For each $T\in[R,S]$ such that $M\in\mathrm{MSupp}(T/R)$, we have $i(M)\subseteq T$.

\end{enumerate}
\end{proposition}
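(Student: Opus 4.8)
The plan is to treat the three assertions in turn, in each case feeding a suitable compositum into the tree hypothesis on $(\mathcal{T},\subseteq)$ and then invoking the atom-characterization of Proposition \ref{6.1} together with the existence result Lemma \ref{1.10}.

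For (1) I would start from $T\in\mathcal{T}$ with $T=T_1T_2$. Since the product is the supremum in $[R,S]$, both $T_i\subseteq T$, so $\mathrm{Supp}(T_i/R)\subseteq\mathrm{Supp}(T/R)$ and hence $\mathrm{MSupp}(T_i/R)\subseteq\mathrm{MSupp}(T/R)=\{M\}$. If some $T_i=R$ then $T$ equals the other factor and we are done; otherwise the local-global principle forces $\mathrm{MSupp}(T_i/R)\neq\emptyset$, whence $\mathrm{MSupp}(T_i/R)=\{M\}$ and $T_1,T_2\in\mathcal{T}$. Now $T_1,T_2\subseteq T$ with all three in $\mathcal{T}$, so the tree property makes $T_1$ and $T_2$ comparable; the larger one is their product $T$, giving $\Pi$-irreducibility.

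The heart of the argument is (2). To see that $\mathcal{T}_M$ is a chain I would take $T_1,T_2\in\mathcal{T}_M$ and exhibit a common upper bound lying in $\mathcal{T}$, the natural candidate being $T_1T_2$. Localizing at a maximal ideal $M'\neq M$ and using that localization commutes with the compositum gives $(T_1T_2)_{M'}=(T_1)_{M'}(T_2)_{M'}=R_{M'}$, while $M$ visibly remains in the support; hence $\mathrm{MSupp}(T_1T_2/R)=\{M\}$ and $T_1T_2\in\mathcal{T}_M\subseteq\mathcal{T}$. The tree property applied to $T_1,T_2\subseteq T_1T_2$ then yields comparability, so $\mathcal{T}_M$ is a chain. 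Since FCP makes every chain finite and $\mathcal{T}_M$ is nonempty (Lemma \ref{1.10} provides an atom with crucial ideal $M$), the chain $\mathcal{T}_M$ has a least element, which I call $i(M)$. To identify it, take the atom $A$ of Lemma \ref{1.10} with $\mathcal{C}(R,A)=M$; cruciality shows $\mathrm{Supp}(A/R)=\{M\}$, so $A\in\mathcal{T}_M$ and thus $i(M)\subseteq A$, and since $R\subset A$ is minimal while $R\subset i(M)$ is proper, this collapses to $i(M)=A$. Uniqueness runs the same way: any $T$ with $R\subset T$ minimal and $\mathcal{C}(R,T)=M$ lies in $\mathcal{T}_M$ and is an atom, hence contains the least element $i(M)$ and, being an atom, equals it.

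Statement (3) is then immediate: the subextension $R\subset T$ is itself FCP, and $M\in\mathrm{MSupp}(T/R)$, so Lemma \ref{1.10} applied inside $[R,T]$ produces an atom $A\in[R,T]$ with $R\subset A$ minimal and $\mathcal{C}(R,A)=M$; by the uniqueness just established $A=i(M)$, whence $i(M)=A\subseteq T$. I expect the only real obstacle to lie in (2): verifying carefully that $T_1T_2$ stays in $\mathcal{T}_M$ through the support/localization computation (this is precisely what licenses the application of the tree hypothesis), and then matching the abstractly-obtained least element $i(M)$ with the concrete atom furnished by Lemma \ref{1.10}. Once these are in place, parts (1) and (3) are short.
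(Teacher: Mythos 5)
Your proposal is correct, and for parts (1) and (2) it follows the paper's proof essentially verbatim: the same reduction of a factorization $T=T_1T_2$ to elements of $\mathcal T$ via support inclusion and the local-global principle, the same verification that $T_1T_2\in\mathcal T_M$ by localizing away from $M$ so that the tree hypothesis yields comparability, and the same identification of the least element of the chain $\mathcal T_M$ with the minimal extension furnished by Lemma \ref{1.10} (your rearrangement---first extracting a least element from the finite chain, then matching it with the atom---is cosmetically different but logically the same). Where you genuinely diverge is part (3): the paper takes a maximal chain $R=R_0\subset\cdots\subset R_n=T$ of minimal steps, invokes Lemma \ref{1.9} to locate the least index $k$ with $M=\mathcal C(R_{k-1},R_k)\cap R$, and only then applies Lemma \ref{1.10} inside $[R,R_k]$; you instead apply Lemma \ref{1.10} directly to the extension $R\subset T$, which is legitimate since any subextension of an FCP extension is FCP and $M\in\mathrm{MSupp}(T/R)$ is exactly the hypothesis of that lemma, and then conclude by the uniqueness from (2) since $[R,T]\subseteq[R,S]$. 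Your route is shorter and arguably cleaner---it shows the paper's chain analysis via Lemma \ref{1.9} is dispensable at this point---while the paper's version makes explicit \emph{where} in a maximal chain the ideal $M$ first becomes crucial, information it reuses in similar arguments elsewhere (e.g., in the proofs of Propositions \ref{6.5} and \ref{6.9}).
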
 

\begin{proof} (1)  Let $T\in\mathcal T$ and $M\in\mathrm{MSupp}(S/R)$ such that $\mathrm{MSupp}(T/R)=\{M\}$. It follows that $R_M\neq T_M$ and $R_{M'}=T_{M'}$ for each $M'\in\mathrm{Max}(R)\setminus\{M\}$. Let $T_1,T_2\in[R,S]$ be such that $T=T_1T_2$. Then we have $R\subseteq T_i\subseteq T$ for $i\in \mathbb{N}_2$, giving $(T_i)_{M'}=R_{M'}=T_{M'}$ for each $ M'\in\mathrm{Max}(R)\setminus\{M\}$, and $R_M\subseteq(T_i)_M\subseteq T_M$ for $i\in\mathbb{N}_2$, giving, for each $i\in\mathbb{N}_2$, either $R=T_i$ (a), or $\mathrm{MSupp}(T_i/R) =\{M\}$ (b). Fix $i$ and let $j\in\mathbb{N}_2\setminus\{i\}$. Case (a) gives $T=T_j$. Case (b) gives that $T_i\in\mathcal T$. In this case, either $T_j=R$, giving $T=T_i$, or $T _j\in\mathcal T$. Hence $T_i$ and $T_j$ are comparable, because $\mathcal T$ is a tree. Therefore, $T$ is equal to the greatest element of $\{T_1,T_ 2\}$ and $T$ is $\Pi$-irreducible. 

(2) Let $M\in\mathrm{MSupp}(S/R)$ and $T_1,T_2\in{\mathcal T}_M$. Set $U:=T_1T _2$, so that $T_i\subseteq U$ for $i\in\mathbb{N}_2$ and then $(T_i)_{M'}=R_{M'}=U_{M'}$ for each $M'\in\mathrm{Max}(R)\setminus \{M\}$, and $R_M\subset(T_i)_M\subseteq U_M$. Then, $\mathrm{MSupp}(U/R)=\{M\}$. Therefore,  $U\in\mathcal T$, so that $T_1$ and $T_2$ are comparable and ${\mathcal T}_M$ is a chain.

 By Lemma \ref{1.10}, there is $T\in[R,S]$ such that $R\subset T$ is minimal with $M=\mathcal C(R,T)$. Obviously, we have $\mathrm{MSupp}(T/R)=\{M\}$, so that 
 $T\in{\mathcal T}_M$. Since ${\mathcal T}_M$ is a chain, $T$ is the least element of ${\mathcal T}_M$, because any $U\in{\mathcal T}_M$ is comparable to $T$, and we cannot have $U\subset T$. Moreover, since any $T'\in[R,S]$ such that $R\subset T'$ is minimal with $M=\mathcal C(R,T')$ satisfies $T'\in{\mathcal T}_M$, we have just proved that there is only one $T'\in[R,S]$ such that $R\subset T'$ is minimal with $M=\mathcal C(R,T')$.
We set $i(M):=T'$. 

(3) Let $T\in[R,S]$ with $M\in\mathrm{MSupp}(T/R)$. For each $M'\in\mathrm{Max}(R) \setminus \{M\}$, we have $R_{M'}=i(M)_{M'}\subseteq T_ {M'}$ and a  maximal chain  $R=R_0 \subset\cdots\subset R_i\subset\cdots\subset R_n=T$, where $R_{i-1}\subset R_i$ is minimal for each $i\in\mathbb{N}_n$. Since $M\in\mathrm{MSupp}(T/R)$, there is some $i\in\mathbb{N}_n$ such that $M=\mathcal C(R_{i-1},R_i)\cap R$,  by Lemma \ref{1.9}. Let $k\in\mathbb{N}_n$ be the least $i$ satisfying this property. Assume $k\neq 1$, so that $M\not\in\mathrm{MSupp}(R_{k-1}/R)$.  Again by Lemma \ref{1.10}, there is $R_1'\in[R,R_k]$ such that $R\subset R'_1$ is minimal with $M=\mathcal C(R,R'_1)$. From (2), we deduce that $R'_1 =i(M)$, so that $i(M)\subseteq T$. If $k=1$, then $R_{k-1}=R$ and $R_k=i(M)\subseteq T$. 
\end{proof}

We will see in Proposition \ref{6.6}, that, under some conditions, for an FCP extension $R\subset S$, any element of $[R,S]$ can be written in a unique way, as a product of elements of $\mathcal T$. But Remark \ref{6.8} shows that this property does not always hold.

 Now, we look at some properties of  arithmetic   FCP extensions. 

\begin{theorem}\label{6.4} Let $R\subset S$ be an arithmetic FCP extension. Then, 
\begin{enumerate}
\item $R\subset S$ has FIP and $|[R,S]|\leq \prod_{M\in\mathrm{MSupp}(S/R)}(1+\ell[R_M,S_M])$.

\item $\mathcal T_M$ 
is a chain for each  $M\in\mathrm{MSupp}(S/R)$ 
and   $(\mathcal T,\subseteq)$
is treed.
\end{enumerate}
\end{theorem}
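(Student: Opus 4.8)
The plan is to push everything down to the local lattices $[R_M,S_M]$ and to transfer conclusions back via the local--global principle for $R$-submodules of $S$. The engine of the whole argument is the elementary fact that for $T,T'\in[R,S]$ one has $T\subseteq T'$ if and only if $T_M\subseteq T'_M$ for every $M\in\mathrm{Max}(R)$; in particular $T$ is determined by the family $(T_M)_{M\in\mathrm{Max}(R)}$. Since $M\in\mathrm{Max}(R)\setminus\mathrm{MSupp}(S/R)$ forces $R_M=S_M$, and hence $T_M=R_M=S_M$ for every $T$, the ring $T$ is in fact already determined by the restricted family $(T_M)_{M\in\mathrm{MSupp}(S/R)}$. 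The arithmetic hypothesis enters only to guarantee that each $[R_M,S_M]$ is a chain.

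For (1), I first note that $\mathrm{MSupp}(S/R)$ is finite, since $\mathrm{Supp}(S/R)$ is finite by Lemma \ref{1.9}, and that each $\ell[R_M,S_M]$ is finite, since a strictly increasing chain in $[R_M,S_M]$ pulls back along $T\mapsto T_M$ to a strictly increasing chain in $[R,S]$, whence $\ell[R_M,S_M]\leq\ell[R,S]<\infty$. Being a chain of length $\ell[R_M,S_M]$, the lattice $[R_M,S_M]$ has exactly $1+\ell[R_M,S_M]$ elements. The map $T\mapsto(T_M)_{M\in\mathrm{MSupp}(S/R)}$, injective by the determination statement above, embeds $[R,S]$ into $\prod_{M\in\mathrm{MSupp}(S/R)}[R_M,S_M]$; this yields simultaneously that $[R,S]$ is finite, so $R\subset S$ has FIP, and the bound $|[R,S]|\leq\prod_{M\in\mathrm{MSupp}(S/R)}(1+\ell[R_M,S_M])$.

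For (2), to see that $\mathcal T_M$ is a chain I take $T_1,T_2\in\mathcal T_M$. At $M$ the localizations $(T_1)_M,(T_2)_M$ lie in the chain $[R_M,S_M]$, so they are comparable, while at every other maximal ideal $M'$ both equal $R_{M'}$ by the defining condition $\mathrm{MSupp}(T_i/R)=\{M\}$. The local--global principle then promotes comparability at $M$ to a global inclusion, so $T_1,T_2$ are comparable. For the tree property, suppose $T_1,T_2\subseteq T_3$ with $T_1,T_2,T_3\in\mathcal T$ and $\mathrm{MSupp}(T_3/R)=\{M\}$. Localizing the exact sequence $0\to T_i/R\to T_3/R\to T_3/T_i\to 0$ shows $\mathrm{Supp}(T_i/R)\subseteq\mathrm{Supp}(T_3/R)$, so the one-point set $\mathrm{MSupp}(T_i/R)$ must equal $\{M\}$; thus $T_1,T_2\in\mathcal T_M$, and the chain property just proved gives their comparability. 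Hence $(\mathcal T,\subseteq)$ is treed.

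The only steps requiring genuine care are the two invocations of commutative-algebra folklore: the local--global principle for inclusions of $R$-submodules of $S$ (on which both the counting in (1) and the comparability arguments in (2) rest), and the pullback of strictly increasing local chains to global ones used to bound $\ell[R_M,S_M]$. Everything else is bookkeeping with the definitions of $\mathcal T$, $\mathcal T_M$, and $\mathrm{MSupp}$, together with the observation that maximal ideals outside $\mathrm{MSupp}(S/R)$ contribute only a trivial factor, legitimizing the restriction of the product to $\mathrm{MSupp}(S/R)$.
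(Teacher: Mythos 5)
Your proof is correct and follows essentially the same route as the paper: both parts rest on localizing at the ideals of $\mathrm{MSupp}(S/R)$, using the injectivity of $T\mapsto (T_M)_M$ to get FIP and the bound $|[R,S]|\leq\prod_M(1+\ell[R_M,S_M])$, and promoting comparability of $(T_1)_M,(T_2)_M$ in the chain $[R_M,S_M]$ to global comparability, with the tree property reduced to $\mathcal{T}_M$ exactly as in the paper via $\mathrm{MSupp}(T_i/R)\subseteq\mathrm{MSupp}(T/R)=\{M\}$. The only difference is cosmetic: where the paper cites \cite[Proposition 3.7 and Theorem 3.6]{DPP2} for local FIP and the counting inequality, you prove these facts directly (pulling back chains along $\varphi^{-1}$ for $\varphi\colon S\to S_M$, and the local--global principle for inclusions of $R$-submodules of $S$), which is sound.
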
 

\begin{proof}  Clearly,   $R_M\subset S_M$ has FCP for each $M\in\mathrm{MSupp}(S/R)$    (\cite[Proposition 3.7(a)]{DPP2}). 

(1) Therefore, $|[R_M,S_M]|<\infty$ follows, since $[R_M,S_M]$ is a chain  and $R_M\subset S_M$ has FIP for each $M\in\mathrm{MSupp}(S/R)$. Hence, $R\subset S$ has FIP \cite[Proposition 3.7(b)]{DPP2}. By \cite[Theorem 3.6 (a)]{DPP2}, we have $|[R,S]|\leq\prod_{M\in\mathrm{MSupp}(S/R)}|[R_M,S_M]|$. But $|[R_M,S_M]|=1+\ell[R_M,S_M]$ holds since $[R_M,S_M]$ is la chain, giving the requested  inequality. 

(2) Let $T_1,T_2\in{\mathcal T_M}$, 
  so that $\{M\}=\mathrm{MSupp}(T_i/R)$ 
 
   for $i\in\mathbb{N}_2$. Then, $(T_ 1)_{M'}=(T_2)_{M'}=R_{M'}$ 
    for each $M'\in\mathrm{MSupp}(S/R) \setminus \{M\}$. Moreover, $(T_i)_M\in[R_M,S_M]$  for $i\in\mathbb{N}_2$, so that 
$(T_1)_M$ and $(T_2)_M$ are comparable, and so are $T_1$ and $T_2$. 
 Then, $\mathcal T_M$ is a chain.

Let $T_1,T_2,T\in{\mathcal T}$ be such that $T_i\subseteq T$ for $i\in\mathbb{N}_2$ and $M\in\mathrm{MSupp}(S/R)$  such that $\mathrm{MSupp}(T/R)=\{M\}$. We get, for $i\in\mathbb{N}_2$, that $\emptyset\neq\mathrm{MSupp}(T_i/R)\subseteq\mathrm {MSupp}(T/R)=\{M\}$, so that $\mathrm{MSupp}(T_i/R)=\{M\}$.
It follows that $T_1,T_2\in{\mathcal T}_M$, which is a chain, and then $T_1$ and $T_2$ are comparable. Therefore,  $\mathcal T$ is a tree.
\end{proof}

 Let $R\subset S$ be an FCP extension and $\mathrm{MSupp}(S/R):=\{M_1,\ldots,M _n\} $. We will use the maps $\varphi:[R,S]\to{\prod}_{i=1}^n[R_{M_i},S_{M_i}]$ defined by $\varphi(T):=(T_{M_1},\ldots,T_{M_n})$ and ${\varphi}_M:[R,S]\to[R_M,S_ M]$ defined by ${\varphi}_M(T):=T_M$, for each $M\in\mathrm{MSupp}(S/R)$. Then $\varphi$ is injective  \cite[Theorem 3.6]{DPP2}. If $\varphi$ is bijective,  $R\subseteq S $ is called a  {\it $\mathcal B$-extension} ($\mathcal B$ stands for bijective). 
 
 \begin{proposition}\label{6.5} An FCP extension  $R \subseteq S$ is a $\mathcal B$-extension if and only if $R/P$ is  local for each $P\in\mathrm{Supp}(S/R)$. 

The above ``local" condition on the factor domains $R/P$ holds in case $\mathrm {Supp}(S/R)\subseteq\mathrm {Max}(R)$, and, in particular, if $R\subset S$ is  integral.
\end{proposition}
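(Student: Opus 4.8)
The plan is to exploit the fact that $\varphi$ is already known to be injective (from \cite[Theorem 3.6]{DPP2}), so that $R\subseteq S$ is a $\mathcal B$-extension if and only if $\varphi$ is surjective, and then to characterize surjectivity. I would first record a support lemma: if $P\in\mathrm{Supp}(S/R)$ and $P\subseteq M\in\mathrm{Max}(R)$, then $S_P=(S_M)_P\neq R_P=(R_M)_P$ forces $S_M\neq R_M$, i.e. $M\in\mathrm{MSupp}(S/R)$. In particular every $P\in\mathrm{Supp}(S/R)$ lies under some $M_i$, and every maximal ideal lying over such a $P$ belongs to $\{M_1,\dots,M_n\}$. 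Consequently, setting $\Sigma_i:=\{P\in\mathrm{Supp}(S/R)\mid P\subseteq M_i\}$, the condition ``$R/P$ is local for each $P\in\mathrm{Supp}(S/R)$'' is exactly the statement that each such $P$ lies under a unique maximal ideal, that is, that the $\Sigma_i$ are pairwise disjoint (their union being all of $\mathrm{Supp}(S/R)$).

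For the ``only if'' direction I would argue by contraposition. If some $P\in\mathrm{Supp}(S/R)$ lies under two distinct maximal ideals $M_1,M_2$ (both in $\mathrm{MSupp}(S/R)$ by the support lemma), consider the tuple whose $M_1$-coordinate is $S_{M_1}$ and whose remaining coordinates are $R_{M_i}$. If some $T\in[R,S]$ realized it, transitivity of localization would give $T_P=(T_{M_1})_P=(S_{M_1})_P=S_P$ and simultaneously $T_P=(T_{M_2})_P=(R_{M_2})_P=R_P$, contradicting $S_P\neq R_P$. Hence $\varphi$ is not surjective, so $R\subseteq S$ is not a $\mathcal B$-extension.

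For the converse, assume the $\Sigma_i$ are pairwise disjoint and let $(T_1,\dots,T_n)\in\prod_i[R_{M_i},S_{M_i}]$ be given. Writing $f_i:S\to S_{M_i}$ for the localization maps, I would set $T:=\bigcap_{i=1}^n f_i^{-1}(T_i)$, which lies in $[R,S]$ since each $f_i^{-1}(T_i)$ is an $R$-subalgebra with $R\subseteq f_i^{-1}(T_i)\subseteq S$. To compute $T_{M_j}$, note that $f_i^{-1}(T_i)$ is the kernel of the $R$-linear composite $S\xrightarrow{f_i}S_{M_i}\to S_{M_i}/T_i$, so by flatness of localization (which commutes with finite intersections and with kernels) one gets $T_{M_j}=\bigcap_i\ker\bigl(S_{M_j}\to(S_{M_i}/T_i)_{M_j}\bigr)$. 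For $i=j$ the corresponding kernel is $T_j$. For $i\neq j$, disjointness of $\Sigma_i$ and $\Sigma_j$ says no prime of $\mathrm{Supp}(S/R)$ is contained in $M_i\cap M_j$, so localizing $S$ at the multiplicative set generated by $(R\setminus M_i)\cup(R\setminus M_j)$ leaves $R$ unchanged, i.e. $(S_{M_i})_{M_j}=(R_{M_i})_{M_j}$; as $R_{M_i}\subseteq T_i\subseteq S_{M_i}$ this forces $(S_{M_i}/T_i)_{M_j}=0$, whence that kernel is all of $S_{M_j}$. Therefore $T_{M_j}=T_j$ for every $j$, and $\varphi$ is surjective.

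The main obstacle is the factor computation for $i\neq j$: making precise that localization turns the preimage $f_i^{-1}(T_i)$ into the kernel $\ker\bigl(S_{M_j}\to(S_{M_i}/T_i)_{M_j}\bigr)$ (flatness) and that disjointness collapses $(S_{M_i})_{M_j}$, $(T_i)_{M_j}$ and $(R_{M_i})_{M_j}$ to one and the same ring, so that this kernel degenerates to $S_{M_j}$. The final two assertions are then immediate: if $\mathrm{Supp}(S/R)\subseteq\mathrm{Max}(R)$ then each $R/P$ is a field, hence local; and if $R\subset S$ is integral then, in a maximal chain of minimal steps, each crucial ideal is maximal in the corresponding subring and contracts to a maximal ideal of $R$ by integrality, so $\mathrm{Supp}(S/R)\subseteq\mathrm{Max}(R)$ by Lemma \ref{1.9} and the ``local'' condition holds automatically.
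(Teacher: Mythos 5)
Your proof is correct, and it takes a genuinely more self-contained route than the paper's. For the direction ``$\mathcal B$-extension $\Rightarrow R/P$ local'', the paper also exhibits a tuple that surjectivity must realize, but it first runs through a maximal chain of minimal subextensions and Lemma \ref{1.9} to produce an intermediate ring $R_i$ with $P\in\mathrm{Supp}(R_i/R)$, takes the tuple with $M_1$-coordinate $(R_i)_{M_1}$ and $R_M$ elsewhere, and reaches the contradiction by a support argument ($P\in\mathrm{Supp}(T/R)$ forces $M_2\in\mathrm{Supp}(T/R)$); your choice of the coordinate $S_{M_1}$ itself, combined with transitivity of localization $T_P\cong (T_{M_1})_{PR_{M_1}}$, yields $S_P=T_P=R_P$ directly and makes the chain machinery unnecessary (your preliminary support lemma, guaranteeing $M_1,M_2\in\mathrm{MSupp}(S/R)$, is exactly the point needed to make the tuple legitimate). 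For the converse the paper gives no argument at all: it defers to the proof of Theorem 3.6(b) of \cite{DPP2}, which rests on Lemma 3.5 there; your construction $T=\bigcap_{i=1}^n f_i^{-1}(T_i)$, with $T_{M_j}=T_j$ computed via exactness of localization and the vanishing $(S_{M_i}/T_i)_{M_j}=0$ forced by disjointness of the sets $\Sigma_i$, is a complete explicit patching argument in the spirit of the Dobbs--Shapiro notion the paper mentions just before Lemma \ref{4.001}. The paper's version is shorter given the cited literature; yours buys independence from \cite{DPP2} and an explicit formula for the preimage of any tuple. Two small wording points, neither a gap: ``leaves $R$ unchanged'' should read $S_U=R_U$, where $U$ is the multiplicative set generated by $(R\setminus M_i)\cup(R\setminus M_j)$ --- the displayed equality $(S_{M_i})_{M_j}=(R_{M_i})_{M_j}$ is the statement you actually prove, via the observation that a prime survives in $R_U$ exactly when it is contained in $M_i\cap M_j$; and your treatment of the final assertions (fields are local; crucial ideals are maximal in each step and contract to maximal ideals of $R$ by integrality, then Lemma \ref{1.9}) coincides with the paper's.
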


\begin{proof} One implication appears in the proof of \cite[Theorem 3.6(b)]{DPP2} which uses \cite[Lemma 3.5]{DPP2}. 

Conversely, assume that $\varphi$ is  bijective and  that there is some $P\in\mathrm{Supp}(S/R)$ contained in two elements  $M_1,M_2\in\mathrm {Max}(R)$. Consider a maximal chain ${\mathcal C}:R=R_0\subset\cdots\subset R_i\subset\cdots\subset R_n=S$, where $R_{i-1}\subset R_i$ is minimal for each $i\in\mathbb{N}_n$. Since $P\in\mathrm{Supp}(S/R)$, there exists some $i\in\mathbb{N}_n$ such that $P=N\cap R$, where $N:=\mathcal C(R_{i-1},R_i)$ by Lemma \ref{1.9}. In particular, $P\in\mathrm{Supp}(R_i/R)$, which implies that $M_j\in\mathrm{Supp}(R_i/R)$ for $j\in\mathbb{N}_2$. Since $\varphi$ is surjective, there is $T\in[R,S]$ such that $T_{M_1}=(R_i)_{M_1}$ and $T_M=R_M$ for each $M\in\mathrm{MSupp}(S/R)\setminus\{M_1\}$. In particular, $M_2\not\in\mathrm{Supp}(T/R)$. Localizing ${\mathcal C}$ at $M_1$, we get that 
$P_{M_1}\in\mathrm{Supp}((R_i)_{M_1}/R_{M_1})=\mathrm{Supp}(T_{M_1}/R_ {M_1})$ so that $P\in\mathrm{Supp}(T/R)$, Now $M_2\in\mathrm{Supp}(T/R)$ is absurd. Then, $R/P$ is a local ring for each $P\in\mathrm {Supp}(S/R)$. 

If $\mathrm{Supp}(S/R)\subseteq\mathrm{Max}(R)$, it follows from Lemma \ref{1.9} that each $\mathcal C(R_{i-1},R_i)$ lies over a maximal ideal of $R$ (and so $R/P$ is a field, hence local for each $P\in\mathrm{Supp}(S/R)$). Finally, it is standard that maximal ideals lie over maximal ideals in any integral extension (cf. \cite[Theorem 44]{K}).
\end{proof} 

 Under an additional assumption, next proposition gives a converse to Theorem \ref{6.4}. Set $\mathrm{MSupp}(S/R) :=\{M_1,\ldots ,M_n\}$ for an FCP extension $R\subset S$. It  follows that the elements  of $\mathrm{MSupp}(T/R)$ are some $M_i$ when $T\in ]R,S]$.

\begin{proposition}\label{6.6} Let $R\subset S$ be an FCP $\mathcal B$-extension.
\begin{enumerate}
\item For each $M\in\mathrm{MSupp}(S/R)$, we have $[R_M,S_M]={\varphi}_M (\mathcal T)\cup\{R_M\}={\varphi}_M(\mathcal T_M)\cup\{R_M\}$.

\item Any  $T\in]R,S]$ is a product of  $|\mathrm{MSupp}(T/R)|$ distinct elements $E_i\in\mathcal T$ in a unique way such that $E_i\in\mathcal T_{M_i}$, for each $M_i\in\mathrm{MSupp}(T/R)$.

\item $R\subset S$ is arithmetic if and only if $\mathcal T$ 
 is a tree.

\item Assume that $R\subset S$ is arithmetic. Then, $\mathcal T$ 
  is the set of $\Pi$-irreducible 
   elements of $]R,S]$.

   Moreover, $|[R,S]|=\prod_{M\in\mathrm{MSupp}(S/R)}(1+\ell[R_M,S_M])$.
\end{enumerate}
\end{proposition}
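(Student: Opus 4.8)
The plan is to run everything through the bijection $\varphi$, reducing statements about $[R,S]$ to the local lattices $[R_{M_i},S_{M_i}]$. For (1), the inclusions $\varphi_M(\mathcal T_M)\cup\{R_M\}\subseteq\varphi_M(\mathcal T)\cup\{R_M\}\subseteq[R_M,S_M]$ are immediate, so only the reverse inclusion needs argument. Given $Q\in[R_M,S_M]$ with $Q\neq R_M$, I would invoke surjectivity of $\varphi$ to produce $T\in[R,S]$ whose $M$-component is $Q$ and whose other components are $R_{M_j}$; then $\mathrm{MSupp}(T/R)=\{M\}$, so $T\in\mathcal T_M$ and $\varphi_M(T)=Q$, giving $[R_M,S_M]\subseteq\varphi_M(\mathcal T_M)\cup\{R_M\}$ and hence the triple equality. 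For (2), writing $\mathrm{MSupp}(T/R)=\{M_i:i\in I\}$, I would define $E_i\in\mathcal T_{M_i}$ again by surjectivity of $\varphi$, as the element with $(E_i)_{M_i}=T_{M_i}$ and $(E_i)_{M_j}=R_{M_j}$ for $j\neq i$. Since localization commutes with products of subalgebras, a componentwise computation of $\varphi\big(\prod_{i\in I}E_i\big)$ recovers $\varphi(T)$ (using $T_{M_j}=R_{M_j}$ for $j\notin I$), so injectivity of $\varphi$ forces $T=\prod_{i\in I}E_i$; running the same computation backwards shows any factorization $T=\prod_{i\in I}E_i'$ with $E_i'\in\mathcal T_{M_i}$ satisfies $\varphi(E_i')=\varphi(E_i)$, whence uniqueness.

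For (3), the direction ``arithmetic $\Rightarrow$ $\mathcal T$ a tree'' is precisely Theorem \ref{6.4}(2) and needs nothing new. For the converse, if $\mathcal T$ is a tree then $\mathcal T_M$ is a chain by Proposition \ref{6.2}(2); as $\varphi_M$ is order-preserving and $R_M$ is the least element of $[R_M,S_M]$, part (1) exhibits $[R_M,S_M]=\varphi_M(\mathcal T_M)\cup\{R_M\}$ as a chain for every $M\in\mathrm{MSupp}(S/R)$. It then remains to promote this from maximal ideals of the support to arbitrary $P\in\mathrm{Spec}(R)$: for $P\notin\mathrm{Supp}(S/R)$ the lattice $[R_P,S_P]$ is trivial, while for $P\in\mathrm{Supp}(S/R)$ the $\mathcal B$-property (Proposition \ref{6.5}) forces $P$ to lie under a unique maximal ideal $M$, and $[R_P,S_P]$ is obtained from $[R_M,S_M]$ by a further localization.

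I expect this last reduction to be the main obstacle. It rests on the fact that, for an FCP extension, the localization map on intermediate algebras is surjective, so that $[R_P,S_P]$ is an order-preserving image of the chain $[R_M,S_M]$ and is therefore itself a chain; establishing (or citing) this surjectivity is the one step that does not reduce to a formal manipulation of $\varphi$.

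For (4), arithmeticity gives that $\mathcal T$ is a tree (Theorem \ref{6.4}(2)), so by Proposition \ref{6.2}(1) every $T\in\mathcal T$ is $\Pi$-irreducible, and each such $T$ lies in $]R,S]$ since $|\mathrm{MSupp}(T/R)|=1$. Conversely, part (2) writes any $\Pi$-irreducible $T\in]R,S]$ as $\prod_{i\in I}E_i$ with $E_i\in\mathcal T_{M_i}$; if $|I|\geq 2$, then choosing $i_0\in I$ displays $T=E_{i_0}\cdot\prod_{i\in I\setminus\{i_0\}}E_i$ as a product of two factors each with support strictly smaller than $\mathrm{MSupp}(T/R)$, hence each properly contained in $T$, contradicting $\Pi$-irreducibility; thus $|\mathrm{MSupp}(T/R)|=1$ and $T\in\mathcal T$, proving $\mathcal T$ is exactly the set of $\Pi$-irreducible elements of $]R,S]$. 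Finally, each $[R_M,S_M]$ being a chain gives $|[R_M,S_M]|=1+\ell[R_M,S_M]$, and bijectivity of $\varphi$ upgrades the inequality of Theorem \ref{6.4}(1) to the equality $|[R,S]|=\prod_{M\in\mathrm{MSupp}(S/R)}(1+\ell[R_M,S_M])$.
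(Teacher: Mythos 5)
Your proposal is correct and follows essentially the same route as the paper: all four parts are run through the bijection $\varphi$, with (1), (2) and the counting in (4) argued exactly as in the published proof. The one place you go beyond the paper --- promoting the chain condition in (3) from $M\in\mathrm{MSupp}(S/R)$ to arbitrary primes $P$, which the paper's proof leaves implicit --- is handled correctly, and the surjectivity of $[R,S]\to[R_P,S_P]$ that you invoke is genuine and in fact needs no FCP hypothesis: for $W\in[R_P,S_P]$ the preimage $T$ of $W$ under $S\to S_P$ lies in $[R,S]$ and satisfies $T_P=W$, so $[R_P,S_P]$ is an order-preserving surjective image of the chain $[R_M,S_M]$ and hence a chain.
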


\begin{proof}   (1) The following inclusions are obvious: ${\varphi}_M(\mathcal T_M)\cup\{R_M\}\subseteq{\varphi}_M(\mathcal T)\cup\{R_M\}\subseteq[R_M,S_M]$. Let $E\in]R_M,S _M]$. Since $\varphi$ is a bijection, there exists $T\in]R,S]$ such that $T_M=E$ and $T_{M'}=R_{M'}$ for each $M'\in\mathrm{MSupp}(S/R)\setminus\{M\}$. The  result follows from $E={\varphi}_M(T)$ and $T\in\mathcal T_M$.

(2) Let $T\in]R,S]$. 
 From $\mathrm{MSupp}(T/R)\neq\emptyset$, we infer that some $M_i\in\mathrm{MSupp}(T/R)$. 
Since $\varphi$ is a bijection, 
there exists a unique $E_i\in[R,S]$ such that $(E_i)_{M_i}=T_{M_i}$ and $(E_i)_M=R_M$ for $M\neq M_i$.  It follows that $E_i\in\mathcal T_{M_i}$  for each $M_i\in\mathrm{MSupp}(T/R)$. 
  Set $E_j=R$ when $M_j\not\in\mathrm{MSupp}(T/R)$ and
 $E:=E_1\cdots E_n$. For each $j\in\mathbb N_n$, we get that $E_{M_j}=T_{M_j}$, 
 so that $E=T$ is   the
product of  $|\mathrm{MSupp}(T/R)|$ distinct elements of $\mathcal T$. The uniqueness of these elements is obvious.

(3) One implication is Theorem \ref{6.4}. Assume that $\mathcal T$ 
is a tree. Let $M\in\mathrm{MSupp}(S/R)$. By Proposition \ref{6.2}, 
  $\mathcal T_M$ 
    is a chain. Since ${\varphi}_M$ preserves order, (1) implies  that $[R_M,S_M]$ is a chain. 

(4) Assume that $R\subset S$ is arithmetic. Then, $\mathcal T$ 
  is a tree. It results from Proposition \ref{6.2} 
  that the elements of $\mathcal T$ 
   are $\Pi$-irreducible. 
    Conversely, let $T\in ]R,S]$ be $\Pi$-irreducible. In view of  (2), $T$ is a product of elements $E_i\in\mathcal T_{M_i}$, where $M_i\in\mathrm {MSupp}(T/R)$, and then, of only one element of $\mathcal T$, so that $T\in\mathcal T$.

Now  $\prod_{M\in\mathrm{MSupp}(S/R)}|[R_M,S_M]|=\prod_{M\in\mathrm{MSupp}(S/R)}(1+\ell[R_M,S_M]) = |[R,S]|$, since $\varphi$ is  bijective and each $R_M\subset S_M$  a chain.
\end{proof}

 For the definition of the Goldie dimension of a 
 distributive lattice, the reader may look at \cite[p. 14 and Exercise 8, p. 33]{NO}. We will use the following results.

\begin{proposition} \label{5.6}\cite[Theorem 1.5.9]{NO} If $R\subseteq S$ is an FCP arithmetic extension, its Goldie dimension  is the integer $n$   such that $R =B_1\cap\cdots\cap B_n$ is an irredundant representation of $R$ by $\cap$-irreducible elements. 
\end{proposition}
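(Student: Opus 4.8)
The plan is to deduce the statement from the purely lattice-theoretic identity \cite[Theorem 1.5.9]{NO}, after checking that $[R,S]$ belongs to the class of lattices to which that result applies. First I would record that, being arithmetic, the extension $R\subseteq S$ is distributive by Proposition \ref{5.4}, so that $[R,S]$ is in particular a modular lattice; and since $R\subseteq S$ has FCP, $[R,S]$ is a complete Noetherian and Artinian lattice, hence of finite length, with least element $R$ and greatest element $S$. Note that of the arithmetic hypothesis only distributivity is actually used, the finiteness of length coming from FCP alone.

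Next I would make the integer $n$ unambiguous. By the $\cap$-irreducible case of Proposition \ref{5.5}, the least element $R$ is a finite intersection of $\cap$-irreducible elements of $[R,S]$; discarding superfluous terms yields an irredundant representation $R=B_1\cap\cdots\cap B_n$. By the $\cap$-irreducible case of Lemma \ref{5.7}, distributivity forces this irredundant representation to be unique, so $n$ is a well-defined invariant of the extension, independent of the chosen decomposition. This is what legitimizes the phrase ``the integer $n$ such that \dots'' in the statement.

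It then remains to identify $n$ with the Goldie dimension, and this is exactly \cite[Theorem 1.5.9]{NO}: in a modular lattice of finite length the Goldie dimension equals the number of terms in an irredundant decomposition of the least element into $\cap$-irreducible elements. Conceptually this is the lattice incarnation of the classical module-theoretic fact that the uniform dimension of $M$ counts the irreducible submodules in an irredundant representation of $0$: each $\cap$-irreducible factor $B_i$ contributes one uniform ``direction'' (the quotient interval $[B_i,S]$ being uniform), while irredundancy guarantees that these directions sit independently and essentially, so that their number is precisely the Goldie dimension. For this step I would simply invoke \cite[Theorem 1.5.9]{NO}.

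The one genuinely nontrivial ingredient is this last equality, i.e.\ the general lattice theorem itself; everything on the ring-theoretic side (distributivity, finiteness of length, and uniqueness of the $\cap$-irreducible decomposition of $R$) is immediate from Proposition \ref{5.4}, Proposition \ref{5.5} and Lemma \ref{5.7}. Thus the main obstacle does not lie in the present proposition but in \cite[Theorem 1.5.9]{NO}. Were one to seek a self-contained argument, the real work would be to introduce the lattice notions of uniform and independent elements and to re-establish the correspondence between irredundant $\cap$-irreducible decompositions of $R$ and maximal independent families of uniform intervals, thereby reproving the cited theorem in this concrete setting.
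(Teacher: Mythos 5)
Your proposal is correct and coincides with the paper's treatment: the paper offers no argument of its own for this proposition, which is stated as a direct citation of \cite[Theorem 1.5.9]{NO}. Your additional bookkeeping---distributivity of $[R,S]$ from Proposition \ref{5.4}, finite length from FCP, and the existence and uniqueness of the irredundant $\cap$-irreducible decomposition of $R$ via Proposition \ref{5.5} and Lemma \ref{5.7}, which makes $n$ well defined---is exactly the implicit verification that the citation requires.
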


 \begin{proposition}\label{6.7} Let $R\subset S$ be an FCP arithmetic $\mathcal B$-extension extension. Then, the Goldie dimension of $[R,S]$ is  $n:=|\mathrm{MSupp}_R(S/R)|$. 
\end{proposition}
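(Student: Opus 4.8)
The plan is to invoke Proposition \ref{5.6}, which reduces the computation of the Goldie dimension to exhibiting a \emph{single} irredundant representation of $R$ as an intersection of $\cap$-irreducible elements of $[R,S]$ and counting its terms. Writing $\mathrm{MSupp}(S/R)=\{M_1,\ldots,M_n\}$, the aim is to produce exactly $n$ such elements, one attached to each $M_i$. These are the natural duals of the elements of $\mathcal T$ used in Proposition \ref{6.6}, namely members of $\mathcal T^*$.

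First I would define the candidate $\cap$-irreducible elements. Because $R\subset S$ is a $\mathcal B$-extension, the map $\varphi$ is bijective, so for each $i\in\mathbb N_n$ there is a unique $B_i\in[R,S]$ with $(B_i)_{M_i}=R_{M_i}$ and $(B_i)_{M_j}=S_{M_j}$ for all $j\neq i$. Localizing the intersection at each $M_k$ gives $(\bigcap_{i=1}^n B_i)_{M_k}=R_{M_k}$, since the factor $i=k$ contributes $R_{M_k}$ while all others contribute $S_{M_k}$. Hence $\varphi(\bigcap_{i=1}^n B_i)=\varphi(R)$, and injectivity of $\varphi$ forces $\bigcap_{i=1}^n B_i=R$.

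The heart of the argument, and the step I expect to be the main obstacle, is showing that each $B_i$ is $\cap$-irreducible; this is the dual of Proposition \ref{6.2}(1) and is exactly where arithmeticity enters. Suppose $B_i=T_1\cap T_2$. Localizing at $M_j$ with $j\neq i$ gives $S_{M_j}=(T_1)_{M_j}\cap(T_2)_{M_j}$, whence $(T_1)_{M_j}=(T_2)_{M_j}=S_{M_j}$. Localizing at $M_i$ gives $R_{M_i}=(T_1)_{M_i}\cap(T_2)_{M_i}$; since $R\subset S$ is arithmetic, $[R_{M_i},S_{M_i}]$ is a chain, so $(T_1)_{M_i}$ and $(T_2)_{M_i}$ are comparable and one of them, say $(T_1)_{M_i}$, equals $R_{M_i}$. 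Then $\varphi(T_1)=\varphi(B_i)$, and injectivity of $\varphi$ yields $T_1=B_i$, as required.

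Finally I would verify irredundancy: for each $k$, localizing $\bigcap_{i\neq k}B_i$ at $M_k$ gives $(\bigcap_{i\neq k}B_i)_{M_k}=S_{M_k}\neq R_{M_k}$, so $\bigcap_{i\neq k}B_i\neq R$ and no factor can be deleted. Thus $R=B_1\cap\cdots\cap B_n$ is an irredundant representation of $R$ by $n$ $\cap$-irreducible elements, and Proposition \ref{5.6} gives that the Goldie dimension of $[R,S]$ equals $n=|\mathrm{MSupp}_R(S/R)|$.
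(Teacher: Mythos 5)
Your proof is correct and follows essentially the same route as the paper: you construct the same elements (the paper's $E'_i$, your $B_i$) via bijectivity of $\varphi$, prove $\cap$-irreducibility by localizing and using that $[R_{M_i},S_{M_i}]$ is a chain, and conclude by Proposition \ref{5.6}. If anything, you are slightly more careful than the paper, which does not explicitly verify irredundancy of the representation $R=\bigcap_{i=1}^n E'_i$, whereas your localization check $(\bigcap_{i\neq k}B_i)_{M_k}=S_{M_k}\neq R_{M_k}$ supplies it.
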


\begin{proof}  For each $M_i\in\mathrm{MSupp}(S/R)$ there exists a unique $E'_i\in[R,S]$ such that $(E'_i)_{M_i}=R_{M_i}$ and $(E'_i)_M=S_M$ for $M\neq M_i$. 
 Set $E ':=E'_1\cap\ldots\cap E'_n$. For each $i$, we get that $(E'_i)_{M_i}=R_{M_i}$, so that $E'=R$ is  the intersection of  $|\mathrm{MSupp}(S/T)|$ distinct elements of $[R,S]$. The uniqueness of these elements is obvious. 
 
 We claim that the $E'_i$ are $\cap$-irreducible. Let $T,T'\in[R,S]$ be such that $E'_i=T\cap T'$. Then we have $R\subset E'_i\subseteq T,T'$, giving $(E'_i)_{M}=S_{M}=T_{M}=T'_{M}\ (*)$ for each $ M\in\mathrm{Max}(R)\setminus\{M_i\}$, and $R_{M_i}=(E'_i)_{M_i}\subseteq T_{M_i},T'_{M_i}$. Since $R\subset S$ is arithmetic, $T_{M_i}$ and $T'_{M_i}$ are comparable, and so are $T$ and $T'$ by $(*)$. It follows that $E'_i$ is the least element of $\{T,T'\}$, and then is $\cap$-irreducible
so that  
 $n$ is the Goldie dimension of $[R,S]$ by Proposition \ref{5.6}.
\end{proof} 

\begin{remark}\label{6.8} Using \cite[Remark 3.4(b)]{DPP2}, we exhibit   a non  $\mathcal B$-extension,  for which  some statements  of Proposition \ref{6.6} do not hold. 

We now summarize the context  of the above quoted remark.

Let $R$ be a two-dimensional Pr\"ufer domain  with exactly two height-$2$ maximal ideals, $N$ and $N '$, each of which containing the unique height $1$ prime ideal $P$ of $R$. Set $R_0:=R,\ R_1:=R_N,\ R'_1:=R_{N'},\ R_2:=R_P$ and  $R_3:=K$, the quotient field of $R$. Since each overring of a Pr\"ufer domain is an intersection of localizations \cite[Theorem 26.2]{MIT}, it is easy to check that $R_0 \subset R_1,\, R_0\subset R'_1,\,R_1\subset R_2,\,R'_1\subset R_2$ and $R_2\subset R_3$ are Pr\"ufer minimal extensions. 

Moreover, $\mathcal{C}(R_0,R_1)=N',\, \mathcal{C}(R_ 1,R_2)=NR_N$ and $\mathcal{C}(R_2,R_3)=P$, which is an ideal of $R_2$ because $P=(R_0:R_2)$. It follows that $\mathrm{Supp}(K/R)=\{P,N,N'\}$ and $\mathrm{MSupp}(K/R)=\{N,N'\}$. By Proposition \ref{6.5}, the map $\varphi$  is not bijective, since $R/P$ is not local. The poset $\mathcal T=\{R_1,R'_1\}$ is a  tree. Moreover, $[R_N,K_N]=\{R_N,R_P,K\}=\{R _1,R_2,R_3\}$ and $[R_{N'},K_{N'}]=\{R_{N'},R_P,K\}=\{R'_1,R_2,R_3\}$ are chains. However, ${\varphi}_N(\mathcal T)=\{R_1,R_2\}$, because $(R_{N'})_N=R_ P$. Then, Proposition \ref{6.6}(1) is not satisfied. In the same way, Proposition \ref{6.6}(2) is not satisfied, because $K$ is not a product of elements of $\mathcal T$. In particular,  some $\Pi$-irrreducible element as $R_3=K$ of $[R,S]$ is not in $\mathcal T$. 
\end{remark}

The conditions of Proposition \ref{6.6} hold in the following context and provide us a structure of Boolean lattices in Proposition \ref{4.0201}.

\begin{proposition}\label{6.9} Let $R\subset S$ be an FCP extension. Assume that $\mathrm{Supp}(T/R)\cap\mathrm{Supp}(S/T)=\emptyset$ for all $T\in [R,S]$. Then:

\begin{enumerate}
\item $\mathrm{Supp}(S/R)\subseteq\mathrm{Max}(R)$ and $R\subseteq S$ is a $\mathcal B$-extension.

\item $\mathcal T$ 
 is an antichain. 
\end{enumerate}
\end{proposition}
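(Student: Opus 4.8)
The plan is to prove (1) first, since the $\mathcal B$-extension assertion and all of (2) follow from the containment $\mathrm{Supp}(S/R)\subseteq\mathrm{Max}(R)$ together with the injectivity of $\varphi$. For that containment I would argue by contradiction: suppose some $P\in\mathrm{Supp}(S/R)$ is not maximal, and choose $M\in\mathrm{Max}(R)$ with $P\subsetneq M$. Localizing, one has $R_P=(R_M)_{PR_M}$ and $S_P=(S_M)_{PR_M}$, so $R_P\neq S_P$ forces $R_M\neq S_M$; hence $M\in\mathrm{MSupp}(S/R)$. By Lemma \ref{1.10} there is $T\in[R,S]$ with $R\subset T$ minimal and $\mathcal C(R,T)=M$. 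Being crucial at $M$, this $T$ satisfies $\mathrm{Supp}(T/R)=\{M\}$, and the hypothesis applied to $T$ gives $M\notin\mathrm{Supp}(S/T)$, i.e. $T_M=S_M$. Since $R_M\subset T_M$ is minimal (it is the localization of $R\subset T$ at its crucial ideal), I conclude that $R_M\subset S_M$ is minimal.

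Now comes the step I expect to carry the weight. A minimal extension is crucial by Theorem \ref{crucial}, with crucial ideal maximal, so $\mathcal C(R_M,S_M)$ must be the unique maximal ideal $MR_M$ of the local ring $R_M$. Consequently $(R_M)_Q=(S_M)_Q$ for every prime $Q\neq MR_M$; taking $Q=PR_M$, which differs from $MR_M$ because $P\subsetneq M$, yields $R_P=S_P$, contradicting $P\in\mathrm{Supp}(S/R)$. Therefore every $P\in\mathrm{Supp}(S/R)$ is maximal, and Proposition \ref{6.5} then gives that $R\subseteq S$ is a $\mathcal B$-extension.

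For (2), suppose $T,T'\in\mathcal T$ are comparable, say $T\subseteq T'$; I aim to show $T=T'$. By (1) the supports lie in $\mathrm{Max}(R)$, so $\mathrm{Supp}(T/R)=\mathrm{MSupp}(T/R)=\{M\}$ and $\mathrm{Supp}(T'/R)=\{M'\}$ are singletons of maximal ideals. As $T/R$ is a submodule of $T'/R$, one has $\{M\}\subseteq\{M'\}$, whence $M=M'$. Applying the hypothesis to $T$ gives $M\notin\mathrm{Supp}(S/T)$, so $T_M=S_M$, and from $T\subseteq T'\subseteq S$ we get $T_M=T'_M=S_M$. At every other maximal ideal $M''\neq M$ both $T$ and $T'$ localize to $R_{M''}$, so $T$ and $T'$ have identical localizations at all members of $\mathrm{MSupp}(S/R)$. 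Since $\varphi$ is injective \cite[Theorem 3.6]{DPP2}, this forces $T=T'$, proving that $\mathcal T$ is an antichain.

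The main obstacle is the core of (1): converting the pointwise hypothesis at $P$ into genuine minimality of $R_M\subset S_M$, and then exploiting that the crucial ideal of a minimal extension is maximal, hence equals $MR_M$, to collapse $R_P\subset S_P$. The remainder is routine bookkeeping with supports of the quotient modules and with the injectivity of $\varphi$.
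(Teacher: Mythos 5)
Your proposal is correct, but your argument for part (1) takes a genuinely different route from the paper's. The paper fixes a maximal chain $R=R_0\subset\cdots\subset R_n=S$ of minimal extensions and uses Lemma \ref{1.9} to realize $P$ and a maximal ideal $M\supset P$ as contractions of crucial ideals $\mathcal{C}(R_{k-1},R_k)$ and $\mathcal{C}(R_{j-1},R_j)$ with $j<k$, then contradicts the hypothesis at $T=R_{k-1}$ by placing $M$ in $\mathrm{Supp}(R_{k-1}/R)\cap\mathrm{Supp}(S/R_{k-1})$. You instead argue locally: Lemma \ref{1.10} gives a minimal $T$ crucial at $M$, the hypothesis forces $T_M=S_M$, hence $R_M\subset S_M$ is minimal, and cruciality of this minimal extension (Theorem \ref{crucial}) at the only available ideal $MR_M$ collapses $R_P=S_P$. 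Your route is shorter and buys more: it proves along the way that $R_M\subset S_M$ is minimal for every $M\in\mathrm{MSupp}(S/R)$, i.e.\ the nontrivial direction of Theorem \ref{6.10} (the hypothesis implies $R\subset S$ is locally minimal), which the paper establishes separately afterwards using Proposition \ref{6.9}, Proposition \ref{6.6} and the existence of complements from \cite{Pic 3}; the paper's chain argument, in exchange, needs only Lemma \ref{1.9} and support bookkeeping. The one step you use without citation --- that a minimal extension localizes to a minimal extension at its crucial ideal --- is standard in this theory and is invoked by the paper itself (e.g.\ in the proof of Proposition \ref{4.022}), so it is not a gap. For part (2) your argument is essentially the paper's: both apply the hypothesis to the smaller of two comparable elements of $\mathcal{T}$ to obtain $T_M=S_M$; the paper then exhibits $M\in\mathrm{Supp}(T'/R)\cap\mathrm{Supp}(S/T')$ for a direct contradiction, while you conclude $T=T'$ by matching localizations at all maximal ideals and invoking injectivity of $\varphi$ from \cite{DPP2} --- a harmless variant, since agreement of $T$ and $T'$ at every maximal ideal of $R$ already forces $T=T'$ even without that citation.
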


\begin{proof} (1) Assume that $\mathrm{Supp}(T/R)\cap\mathrm{Supp}(S/T)=\emptyset$ for all $T\in[R,S]$. We use Proposition \ref{6.5} to show that $\varphi$ is a bijection. Let $P\in\mathrm{Supp}(S/R)$. Consider a maximal chain ${\mathcal C}:R= R_0\subset\cdots\subset R_i\subset\cdots\subset R_n=S$, where $R_{i-1}\subset R_i$ is minimal for each $i\in\mathbb{N}_n$. Since $P\in\mathrm{Supp}(S/R)$, there exists some $k\in\mathbb{N}_n$ such that $P=N\cap R$, where $N:=\mathcal C(R_ {k-1},R _k)$, in view of Lemma \ref{1.9}. In particular, $P\in\mathrm{Supp}(R_k/R)$, and, more precisely, $P\in\mathrm{Supp}(R_k/R_{k-1})\subseteq\mathrm{Supp}(S/R_{k-1})$. Assume that $P$ is not a maximal ideal, and let $M\in\mathrm{Max}(R)$ be such that $P\subset M$. Then, $M\in\mathrm{Supp}(R_k/R)$. Moreover, there is some $j< k$ such that $M=N'\cap R$, where $N':=\mathcal C(R_{j-1},R_j)$. Indeed, $j\neq k$ because $N\cap R=P\neq M$. Therefore  $M\in\mathrm{Supp}(R_j/R)\subseteq\mathrm{Supp}(R_{k-1}/R)$, since $j \leq k-1$ and $M\in\mathrm{Supp}(R_{k-1}/R)\cap\mathrm{Supp}(S/R_{k-1})$ is absurd. Hence, $\mathrm{Supp}(S/R)\subseteq\mathrm{Max}(R)$, and  $\varphi$ is a bijection  by Proposition \ref{6.5}.

(2) Let $T,T'\in\mathcal T$ be such that $T'\subset T$ and $M$ 
 $\in\mathrm{Max}(R)$  such that $\mathrm{Supp}(T/R)=\{M\}$. 
  Since $T'\subset T$, we get that $\mathrm{Supp}(T'/R)\subseteq\mathrm{Supp}(T/R)$, giving $\mathrm {Supp}(T'/R)=\{M\}$. But, $\emptyset\neq\mathrm{Supp}(T/T')\subseteq\mathrm{Supp}(T/R)=\{M\}$ implies that $\{M\}=\mathrm{Supp}(T/T')\subseteq\mathrm{Supp}(S/T')$. Therefore, $M\in\mathrm{Supp}(T'/R)\cap\mathrm{Supp}(S/T')=\emptyset$, a contradiction. Then,  two distinct elements of $\mathcal T$ are incomparable, and $\mathcal T$ is an antichain.  
  \end{proof}

Let $R\subset S$ be an FCP extension. In the following, we will meet the condition that $\mathrm {Supp}(T/R)\cap\mathrm{Supp}(S/T)=\emptyset$ for some $T\in[R,S]$. Here is a theorem which gives a stronger result.

\begin{theorem}\label{6.10} If $R\subset S$ has FCP, then $\mathrm {Supp}(T/R)\cap\mathrm{Supp}(S/T)=\emptyset$ for all $T\in[R,S]$ if and only if  
 $R\subset S$ is locally minimal. 
 In this case, $R\subset S$ is an FIP factorial extension. 
\end{theorem}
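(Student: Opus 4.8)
The plan is to settle the equivalence first and then extract the ``FIP factorial'' conclusion from the $\mathcal B$-extension structure that the hypothesis forces.

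\emph{The easy implication.} Suppose $R\subset S$ is locally minimal and fix $T\in[R,S]$. If some $P$ lay in $\mathrm{Supp}(T/R)\cap\mathrm{Supp}(S/T)$, then $R_P\subsetneq T_P\subsetneq S_P$; in particular $P\in\mathrm{Supp}(S/R)$, so $R_P\subset S_P$ is minimal, which is incompatible with the strictly intermediate ring $T_P$. Hence no such $P$ exists and the support condition holds for every $T$.

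\emph{The hard implication, which I expect to be the main obstacle.} Assume conversely that $\mathrm{Supp}(T/R)\cap\mathrm{Supp}(S/T)=\emptyset$ for all $T$. By Proposition \ref{6.9} this already yields $\mathrm{Supp}(S/R)\subseteq\mathrm{Max}(R)$ and that $R\subseteq S$ is a $\mathcal B$-extension, i.e. the canonical map $\varphi$ is bijective. The crux is to leverage surjectivity of $\varphi$: fix $M\in\mathrm{MSupp}(S/R)$ and suppose, for contradiction, that $R_M\subset S_M$ were \emph{not} minimal, so that some $V$ satisfies $R_M\subsetneq V\subsetneq S_M$. Surjectivity produces $T\in[R,S]$ with $T_M=V$ and $T_{M'}=R_{M'}$ for each other maximal ideal $M'$. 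Since $\mathrm{Supp}(T/R)\subseteq\mathrm{Supp}(S/R)\subseteq\mathrm{Max}(R)$, this forces $\mathrm{Supp}(T/R)=\{M\}$, while $V\subsetneq S_M$ gives $M\in\mathrm{Supp}(S/T)$; thus $M\in\mathrm{Supp}(T/R)\cap\mathrm{Supp}(S/T)$, a contradiction. Therefore every $R_M\subset S_M$ is minimal and $R\subset S$ is locally minimal. The real content here is that the support condition upgrades the injection $\varphi$ to a bijection via Proposition \ref{6.9}, which is exactly what lets me realize the ``bad'' intermediate ring $V$ as the localization of an actual $T\in[R,S]$.

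\emph{FIP and factorial.} A locally minimal extension is arithmetic, since each $[R_P,S_P]$ is either $\{R_P\}$ or $\{R_P,S_P\}$, hence a chain; Theorem \ref{6.4}(1) then gives FIP and Proposition \ref{5.4} gives distributivity. It remains to prove that $R\subset S$ is atomistic, after which Theorem \ref{5.8} delivers ``factorial.'' For atomicity I would determine $\mathcal T_M$: local minimality gives $[R_M,S_M]=\{R_M,S_M\}$, so Proposition \ref{6.6}(1) forces $\varphi_M(\mathcal T_M)=\{S_M\}$, and injectivity of $\varphi$ shows $\mathcal T_M=\{A_M\}$ is a singleton; a direct check, using $[R_M,S_M]=\{R_M,S_M\}$ and $(A_M)_{M'}=R_{M'}$ for $M'\neq M$ together with injectivity of $\varphi$, shows $R\subset A_M$ is minimal, so $A_M$ is an atom. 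Hence $\mathcal A=\mathcal T=\{A_M\mid M\in\mathrm{MSupp}(S/R)\}$ is finite, and Proposition \ref{6.6}(2) expresses every $T\in\,]R,S]$ as the product of the atoms $A_{M_i}$, $M_i\in\mathrm{MSupp}(T/R)$, which is precisely the atomistic property. Feeding atomistic, distributive and FIP into Theorem \ref{5.8} (or, with $|\mathcal A|<\infty$ now in hand, Proposition \ref{6.111}) shows that $R\subset S$ is factorial, completing the proof.
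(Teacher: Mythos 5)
Your proof is correct, and on the main implication it takes a genuinely different route from the paper. For ``support condition $\Rightarrow$ locally minimal'', the paper first combines Propositions \ref{6.9} and \ref{6.6} to conclude that the extension is arithmetic (an antichain being vacuously a tree), and then invokes an external complement lemma from \cite{Pic 3}: each $T\in[R,S]$ admits $U$ with $U\cap T=R$ and $UT=S$, so that localizing in the chain $[R_M,S_M]$ forces $\{U_M,T_M\}=\{R_M,S_M\}$, whence minimality once one knows (via the $\mathcal B$-extension property) that every element of $[R_M,S_M]$ is a localization. You instead exploit only the surjectivity of $\varphi$ supplied by Proposition \ref{6.9}: a hypothetical strictly intermediate $V\in\,]R_M,S_M[$ is realized as $T_M$ for a global $T$ supported exactly at $M$, immediately contradicting $\mathrm{Supp}(T/R)\cap\mathrm{Supp}(S/T)=\emptyset$. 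Your version is more self-contained (no appeal to \cite{Pic 3} and no arithmetic detour at that stage) and isolates precisely where bijectivity of $\varphi$ does the work; the paper's version is shorter on the page and has the side benefit of exhibiting every $T\in[R,S]$ as complemented, which foreshadows the Boolean structure used later in Proposition \ref{4.0201}. For the last assertion the two proofs essentially agree: both get FIP from Theorem \ref{6.4} via arithmeticity, and your identification of each $\mathcal T_M$ as a singleton atom $\{A_M\}$ (using Proposition \ref{6.6}(1) and injectivity of $\varphi$), followed by atomisticity from Proposition \ref{6.6}(2) and Theorem \ref{5.8}, is a careful unwinding of what the paper compresses into the citation ``factorial by Propositions \ref{6.6} and \ref{6.1}''. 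Your easy direction matches the paper's converse, argued directly at primes of $\mathrm{Supp}(S/R)$ rather than after reduction to $\mathrm{MSupp}$, which the paper's definition of locally minimal permits.
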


\begin{proof}  Assume that $\mathrm{Supp}(T/R)\cap\mathrm{Supp}(S/T)=\emptyset$ for all $T\in[R,S]$. By Propositions \ref{6.9} and \ref{6.6}, $R\subset S$ is arithmetic. Let $T\in[R,S]$, there is $U\in[R,S]$ such that $U\cap T=R$ and $UT=S$ \cite[Lemma 3.7]{Pic 3}, which gives $U_M\cap T_M=R_M$ and $U_MT_M=S_M$ for each $M\in\mathrm{MSupp}(S/R)$, so that $\{U_M,T_M\}=\{R_M,S_M\}$, whence,  $R_M\subset S_M$ is  minimal. 

Conversely, if there exists some $N\in\mathrm {MSupp}(T/R)\cap\mathrm{MSupp}(S/T)$ for some $T\in [R,S]$, 
 we get that $T_N\neq R_N,S_N$, so that $|[R_N,S_N]|\geq 2$. But $R_N\subset S_N$ is minimal,
 a contradiction.

If these conditions hold, then $R\subseteq S$ has FIP by Theorem \ref{6.4}, and is factorial by Propositions \ref{6.6} and \ref{6.1}. 
\end{proof}

 \section {Boolean FCP extensions}

\subsection { General properties of Boolean extensions } 
 Let $R \subseteq S$ be an extension and $T\in [R,S]$. Then, $T'\in [R,S]$ is called a {\it complement} of $T$ if $T\cap T'=R$ and $TT'=S$. If  $R \subseteq S$ is    distributive, then $T$ has at most one complement \cite[Exercise 9, page 33]{NO}. We denote this complement by $T^{\circ}$ when it exists.  
 
We recall that $R\subseteq S$ is Boolean if and only if $R\subseteq S$ is distributive and each $T\in[R,S]$ has a (unique) complement $T^{\circ}$ \cite[Definition page 129]{R}.  
In a Boolean FIP extension $R\subseteq S$, any $T\in]R,S[$ is, in a unique way, a product of finitely many atoms and the intersection of finitely many co-atoms. To see this, use Lemma  \ref{5.7}, \cite[Theorems 5.1 and 6.3]{R}, and the fact that $[R,S]$ is a complete Boolean lattice. In particular, if $A$ is an atom, then $A^{\circ}$ is a co-atom \cite[Theorem 3.43]{R}.  Next Theorem  characterizes  Boolean FCP extensions amid  distributive FCP extensions:

\begin{theorem}\label{4.0} Let $R\subseteq S$ be an FCP distributive extension and set $n:=\ell[R,S]$. Then, the following conditions are equivalent:
\begin{enumerate}
\item $R\subseteq S$ is a Boolean extension. 

\item $n=|\mathcal A|$, where $\mathcal A$ is the set of atoms of $[R,S]$.

\item $|[R,S]|=2^n$.
\end{enumerate}
If these conditions hold, then $R\subseteq S$ is FIP factorial 
 and a simple pair.
Moreover, for each $T,U\in[R,S]$ with $T\subseteq U$, all maximal chains of  $[T,U]$ have the same length.

\end{theorem}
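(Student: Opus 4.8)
The plan is to route everything through the count of $\Pi$-irreducible elements. Since $R\subseteq S$ is distributive and FCP, Lemma \ref{5.7} (applied with $n=\ell[R,S]$, which is finite because an FCP lattice is complete Noetherian and Artinian) delivers two facts at once: $R\subseteq S$ has FIP, and $[R,S]$ has exactly $n$ $\Pi$-irreducible elements. I would write $\mathcal I$ for this set, so $|\mathcal I|=n$ and, since each atom is $\Pi$-irreducible, $\mathcal A\subseteq\mathcal I$. Thus condition (2) is precisely the assertion $\mathcal A=\mathcal I$, i.e. that every $\Pi$-irreducible element is an atom. The preparatory observation I would record is that this happens exactly when $R\subseteq S$ is atomistic: if $\mathcal A=\mathcal I$, then Proposition \ref{5.5} writes every $T$ as a product of $\Pi$-irreducibles, hence of atoms; conversely, in an atomistic (hence, by Theorem \ref{5.8}, factorial) extension, a $\Pi$-irreducible $T=\prod_{\alpha\in J}A_\alpha$ with $|J|\ge 2$ splits as $A_{\alpha_0}\cdot\prod_{\alpha\neq\alpha_0}A_\alpha$, and $\Pi$-irreducibility together with uniqueness of the factorial representation forces $|J|=1$, so $T$ is an atom.

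With this in hand the equivalence (1)$\Leftrightarrow$(2) is short. For (2)$\Rightarrow$(1): atomistic $+$ distributive $+$ FIP is factorial by Theorem \ref{5.8}; writing $T=\prod_{\alpha\in I}A_\alpha$ I would set $T^\circ:=\prod_{\alpha\notin I}A_\alpha$ and check $TT^\circ=S$ (the product of all atoms) while $T\cap T^\circ=R$, since by \cite[Theorem 4.30]{R} no atom can lie below both a product over $I$ and a product over its complement; hence every element is complemented and $R\subseteq S$ is Boolean. For (1)$\Rightarrow$(2): a Boolean FIP extension is atomistic (each $T\in]R,S]$ is a product of atoms, with $S$ the product of all atoms), so by the preparatory observation $\mathcal A=\mathcal I$, giving $|\mathcal A|=n$.

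The cardinality equivalence (2)$\Leftrightarrow$(3) is where the real counting happens. Using the unique irredundant factorizations, I would define $\Phi\colon[R,S]\to\mathcal P(\mathcal I)$ sending $T$ to the set of $\Pi$-irreducibles occurring in its (unique, by Lemma \ref{5.7}) irredundant product representation; since $T$ is recovered as the product of $\Phi(T)$, the map $\Phi$ is injective, whence $|[R,S]|\le 2^{|\mathcal I|}=2^n$. If (2) holds then $\mathcal I=\mathcal A$ and every subset of $\mathcal A$ is an irredundant family (again by \cite[Theorem 4.30]{R}), so $\Phi$ is onto and $|[R,S]|=2^n$. Conversely, if $|[R,S]|=2^n$ then $\Phi$ is a bijection, so every two-element subset $\{I_1,I_2\}\subseteq\mathcal I$ lies in its image and is therefore irredundant; this is impossible when $I_1\subsetneq I_2$, so $\mathcal I$ is an antichain, and an antichain of $\Pi$-irreducibles forces each member to be minimal in $]R,S]$, i.e. an atom (otherwise Proposition \ref{5.5} produces a strictly smaller $\Pi$-irreducible), giving $\mathcal A=\mathcal I$ and (2).

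Finally, for the trailing assertions: under these equivalent conditions $R\subseteq S$ is atomistic, distributive and FIP, hence factorial and a simple pair by Theorem \ref{5.8}; and since $[R,S]$, and with it every interval $[T,U]$, is a distributive lattice of finite length, the equality of lengths of all maximal chains of $[T,U]$ is immediate from the Jordan--H\"older condition of Proposition \ref{1.0}. The main obstacle I anticipate is the bookkeeping in (3)$\Rightarrow$(2): turning the numerical identity $|[R,S]|=2^n$ into the statement that $\mathcal I$ is an antichain, and then excluding non-atom $\Pi$-irreducibles. This is exactly where distributivity (via uniqueness of irredundant representations) does the work, and one must be careful that the image of $\Phi$ consists only of irredundant families, so that a comparable pair in $\mathcal I$ genuinely obstructs surjectivity.
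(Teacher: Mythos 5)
Your proof is correct, and it follows the same skeleton as the paper's (Lemma \ref{5.7} for FIP and for the count $|\mathcal I|=n$ of $\Pi$-irreducibles, Theorem \ref{5.8} for the factorial/simple-pair conclusions, Proposition \ref{1.0} for the Jordan--H\"older statement), but it diverges at the heart of the matter: where the paper disposes of the equivalence of (1), (2), (3) in one line by quoting the standard characterizations of finite Boolean lattices among finite distributive lattices in \cite[page 292]{St}, you reconstruct that characterization from scratch. Your preparatory observation that (2) is equivalent to ``every $\Pi$-irreducible is an atom,'' which in turn is equivalent to atomisticity (via Proposition \ref{5.5} one way and the uniqueness of irredundant factorizations from Lemma \ref{5.7} and Theorem \ref{5.8} the other way), together with the injection $\Phi\colon[R,S]\to\mathcal P(\mathcal I)$ and the analysis of when it is surjective, is in effect a self-contained proof of the Birkhoff--Stanley equivalences; your explicit complement $T^{\circ}=\prod_{\alpha\notin I}A_{\alpha}$, justified by join-primeness \cite[Theorem 4.30]{R}, even exhibits $[R,S]\cong\mathcal P(\mathcal A)$ directly. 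What the paper's route buys is brevity; what yours buys is transparency, and you correctly flagged the one genuinely delicate point, namely that the image of $\Phi$ consists exactly of irredundant families, so that in (3)$\Rightarrow$(2) a comparable pair $I_1\subsetneq I_2$ in $\mathcal I$ really does obstruct surjectivity (the pair $\{I_1,I_2\}$ would be a redundant representation of $I_2$). Two steps you leave implicit are harmless but worth noting: in (2)$\Rightarrow$(1) the conclusion $T\cap T^{\circ}=R$ needs an atom below $T\cap T^{\circ}$ when it is $\neq R$, which is supplied by atomicity of FCP extensions (recorded in the paper), and in (1)$\Rightarrow$(2) the fact that a finite Boolean lattice is atomistic is the standard fact the paper itself records just before the theorem with citations to \cite{R}.
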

\begin{proof} To begin with, $R\subseteq S$ has FIP by Lemma \ref{5.7}. Moreover, $[R,S]$ has exactly $n\ \Pi$-irreducible element by Lemma \ref{5.7}. Since any atom is $\Pi$-irreducible, the equivalences of (1), (2) and (3) are a translation of the equivalences given in \cite[page 292]{St}.

If   these conditions hold, then $R\subseteq S$ is factorial  and a simple pair by Theorem \ref{5.8}.
Since a Boolean lattice is a distributive lattice, the last results comes from Proposition  \ref{1.0}. 
\end{proof} 

\begin{example}\label{4.01}  (1) An  extension  $R\subset S$ is Boolean and   chained   if and only if it is  minimal. One implication is obvious. Conversely, assume that   $[R,S]$ is  chained and a  Boolean lattice. If  there is some $T\in]R,S[$, it has a complement $T'$. Then, $T\cap T'=R$ and $TT'=S$ implies $\{T,T'\}=\{R,S\} $ since $T$ and $T'$ are comparable, a contradiction.

 (2)  Let $R\subset S$ be a Boolean FCP extension and    $x\in S\setminus R$.  We intend to compute $R[x]^\circ$. Let $\mathcal CA:=\{B_1,\ldots,B_n\}$.
Then $R[x]=\cap [B_{\alpha}\in Y]$  for a unique family $Y:=\{B_{\alpha}\}$ of co-atoms (\cite[Theorem 6.3]{R}), so that $x\in B_{\alpha}$ for each $B_{\alpha}\in Y$. Let $B_{\beta}\in \mathcal CA$  which does not contain $x$.   Then  $R[x]\not\subseteq B_{\beta}$
and $Y$ is the set of co-atoms containing $x$.   Now, $(R[x])^{\circ}=(\cap [B_{\alpha}\in Y])^{\circ}=\Pi[(B_{\alpha})^{\circ}\mid B_{\alpha}\in Y]$, where the $(B_{\alpha})^{\circ}$ are atoms. Then, $(R[x])^{\circ}$ is the product of atoms which are complements of the co-atoms containing $x$.  But we also have $(R[x])^{\circ}= \cap [B_{\beta}\in \mathcal CA\setminus Y]$. Indeed, set $T:= \cap [B_{\beta}\in \mathcal CA\setminus Y]$. Obviously, $R[x]\cap T=R$ and assume that $R[x]T\neq S$, so that $R[x]T=\cap [B_{\gamma}\in X]$ for some $X\subseteq \mathcal CA$. Let $B_{\gamma}\in X$. Then, $R[x]=\cap [B_{\alpha}\in Y]\subseteq B_{\gamma}$ implies $B_{\alpha}=B_{\gamma}$ for some $B_{\alpha}\in Y$. In the same way, $B_{\beta}=B_{\gamma}$ for some $B_{\beta}\in \mathcal CA\setminus Y$, a contradiction. Then, $R[x]T= S$ and $T=(R[x])^{\circ}$.

(3) By Theorem \ref{4.0}, an FCP Boolean extension $R\subset S$ verifes $|[R,S]|=2^n$, where $n=\ell[R,S]$. But an extension $R\subseteq S$ may be distributive with $|[R,S]|=2^n$ for some integer $n$ without being Boolean. It is enough to  consider a chained extension   of length $2^n-1$.
\end{example}

The following lemma is needed for the next proposition. 
 See the close notion of {\it patching} due to Dobbs-Shapiro \cite {DS1}.

\begin{lemma}\label{4.001} Let $R\subseteq S$ be an FCP extension and $M\in\mathrm{MSupp}(S/R)$. For any $T'\in[R_M,S_M]$ such that $R_M\subset T'$ is minimal, there exists $T\in[R,S]$ such that $R\subset T$ is minimal with $T_M=T'$. 
\end{lemma}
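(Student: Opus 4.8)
The plan is to realize the desired $T$ as a \emph{minimal element} of the set of subalgebras lying over $T'$, and then to force that minimal element to be a minimal extension of $R$ by invoking Lemma \ref{1.10}. First I would record that the target set
$$\mathcal X:=\{T\in[R,S]\mid T_M=T'\}$$
is nonempty. Since $R_M\subset T'$ is minimal, it is simple, so $T'=R_M[\theta]$ for some $\theta\in T'\subseteq S_M$; writing $\theta=s/u$ with $s\in S$ and $u\in R\setminus M$ and noting that $u$ is a unit of $R_M$, we get $T'=R_M[s]$, whence $R[s]\in\mathcal X$. As $R\subseteq S$ has FCP, $[R,S]$ is an Artinian lattice, so $\mathcal X$ has a minimal element $T_0$, which will turn out to be the $T$ we want.

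Next I would establish two facts about $T_0$. On the one hand, $(T_0)_M=T'\neq R_M$ shows $M\in\mathrm{MSupp}(T_0/R)$. On the other hand, every $W\in[R,T_0[$ satisfies $W_M=R_M$: localizing $R\subseteq W\subset T_0$ at $M$ gives $R_M\subseteq W_M\subseteq T'$, and the minimality of $R_M\subset T'$ leaves only the possibilities $W_M=R_M$ or $W_M=T'$; the latter would place $W$ in $\mathcal X$ strictly below $T_0$, contradicting the minimality of $T_0$ in $\mathcal X$. Thus $M$ is ``visible'' on $T_0$ but on no proper subextension of it.

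Finally I would apply Lemma \ref{1.10} to the FCP extension $R\subset T_0$ and the ideal $M\in\mathrm{MSupp}(T_0/R)$: there exists $V\in[R,T_0]$ with $R\subset V$ minimal and $\mathcal C(R,V)=M$. Being $M$-crucial with $M\in\mathrm{Max}(R)$, the extension $R\subset V$ satisfies $R_P=V_P$ for all $P\neq M$, so necessarily $V_M\neq R_M$ (otherwise $V/R$ would have empty support and $V=R$, contradicting $R\subset V$). But then the second fact above forbids $V\in[R,T_0[$, forcing $V=T_0$; hence $R\subset T_0$ is minimal, and since $(T_0)_M=T'$, the ring $T_0$ is the required $T$.

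The only genuine obstacle is the passage from ``minimal in $\mathcal X$'' to ``minimal extension of $R$''. The mechanism resolving it is Lemma \ref{1.10}, which manufactures an honest minimal step $R\subset V$ over $R$ whose crucial ideal is exactly $M$; such a step cannot be buried inside a proper subextension of $T_0$ precisely because those are trivial after localizing at $M$, so the step must coincide with $R\subset T_0$ itself. Everything else reduces to the local–global comparison of supports and the elementary clearing of denominators in the opening step.
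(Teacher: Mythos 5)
Your proof is correct and runs on the same engine as the paper's: both first produce a subring of $S$ that localizes at $M$ into $T'$ (the paper takes the preimage $\varphi^{-1}(T')$ of $T'$ under $S\to S_M$; you take $R[s]$ after clearing denominators) and then invoke Lemma \ref{1.10} to extract a minimal extension $R\subset V$ with $\mathcal{C}(R,V)=M$. Your extra detour through a minimal element $T_0$ of $\mathcal{X}$ is harmless but redundant: since $V_M\neq R_M$ (cruciality at $M$) and $V_M\subseteq T'$, the minimality of $R_M\subset T'$ already forces $V_M=T'$, which is exactly how the paper concludes in one line without any Artinian argument.
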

\begin{proof} Let $\varphi:S\to S_M$ be the canonical ring morphism and set $T'':=\varphi ^{-1}(T')$. Then $T''\in[R,S]$ is such that $T'=T''_M\neq R_M$, so that $M\in\mathrm {MSupp}(T''/R)$. 
 From Lemma \ref{1.10} we deduce  the existence  of some $T\in[R,T'']\subseteq[R,S]$ such that $R\subset T$ is minimal with $M=\mathcal C(R,T)$. Hence, $R_M\subset T_M\subseteq T''_M=T'$ gives $T_M=T'$. 
\end{proof}

\begin{proposition}\label{4.02} Let $R\subseteq S$ be an FCP extension. The following statements are equivalent: 
\begin{enumerate}
\item $R\subseteq  S$ is  Boolean.

\item $R_M \subseteq S_M$ is  Boolean for each $M\in\mathrm{MSupp}(S/R)$.

\item $R_P \subseteq S_P$ is  Boolean  for each $P\in\mathrm{Supp}(S/R)$.

\item $R/I\subseteq S/I$ is  Boolean  for each  ideal $I$ shared by $R$ and $S$.

\item  $R/I\subseteq S/I$ is  Boolean  for some  ideal $I$ shared by $R$ and $S$.
\end{enumerate}
\end{proposition}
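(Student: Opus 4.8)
The plan is to imitate the structure of Proposition \ref{1.014}, disposing of the distributivity half of ``Boolean'' exactly as there and concentrating all the real work on the complement condition. Each of the five statements forces the pertinent extension to be distributive, and distributivity already satisfies all five equivalences by Proposition \ref{1.014}; so at every stage I only have to follow how complements behave, and their uniqueness (guaranteed by distributivity) makes ``has a complement'' an unambiguous condition.

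For the quotient statements I would observe that whenever $I$ is an ideal shared by $R$ and $S$, the assignment $T\mapsto T/I$ is an order isomorphism $[R,S]\to[R/I,S/I]$, with inverse $\overline T\mapsto\pi^{-1}(\overline T)$ for $\pi:S\to S/I$, and that it preserves both intersection and product. Hence it carries complements to complements, so $[R,S]$ is Boolean if and only if $[R/I,S/I]$ is. This gives (1)$\Leftrightarrow$(4); moreover (4)$\Rightarrow$(5) is trivial (the zero ideal, or the conductor $(R:S)$, is shared), and (5)$\Rightarrow$(1) is the same isomorphism run backwards.

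For the localization statements I would prove (1)$\Rightarrow$(3)$\Rightarrow$(2)$\Rightarrow$(1). Here (3)$\Rightarrow$(2) is immediate from $\mathrm{MSupp}(S/R)\subseteq\mathrm{Supp}(S/R)$. For (1)$\Rightarrow$(3), fix $P\in\mathrm{Supp}(S/R)$: distributivity of $R_P\subseteq S_P$ is Proposition \ref{1.014}, and since localization is exact it commutes with finite intersections and with products, so if $T^{\circ}$ is the complement of $T$ then $(T^{\circ})_P$ complements $T_P$. As every $E\in[R_P,S_P]$ has the form $T_P$ (take $T:=j^{-1}(E)$ for the canonical $j:S\to S_P$, which satisfies $T_P=E$), every element of $[R_P,S_P]$ acquires a complement, so $R_P\subseteq S_P$ is Boolean.

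The substance of the argument, and the step I expect to be the main obstacle, is (2)$\Rightarrow$(1): producing a single global complement from the local ones. Distributivity of $R\subseteq S$ is Proposition \ref{1.014}. The key preliminary is that hypothesis (2) forces $\mathrm{Supp}(S/R)\subseteq\mathrm{Max}(R)$. Indeed, for $M\in\mathrm{MSupp}(S/R)$ the Boolean FCP extension $R_M\subseteq S_M$ is factorial, hence atomistic, by Theorem \ref{4.0}; every atom $R_M\subset A$ is crucial with a maximal crucial ideal (Theorem \ref{crucial}), which must be $MR_M$ since $R_M$ is local, so $\mathrm{Supp}(A/R_M)=\{MR_M\}$; as each $T\in\,]R_M,S_M]$ is a product of such atoms, $\mathrm{Supp}(S_M/R_M)=\{MR_M\}$. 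If some $P\in\mathrm{Supp}(S/R)$ were non-maximal, a maximal $M\supseteq P$ would lie in $\mathrm{MSupp}(S/R)$ with $PR_M$ a non-maximal prime in $\mathrm{Supp}(S_M/R_M)$, a contradiction. With $\mathrm{Supp}(S/R)\subseteq\mathrm{Max}(R)$, Proposition \ref{6.5} shows $R\subseteq S$ is a $\mathcal B$-extension, i.e. $\varphi$ is bijective. Given $T\in[R,S]$, I would then form the local complements $C_M:=(T_M)^{\circ}$ in each $[R_M,S_M]$ and take $U\in[R,S]$ with $U_M=C_M$ for all $M\in\mathrm{MSupp}(S/R)$, which exists and is unique by bijectivity of $\varphi$. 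Checking at every maximal ideal (trivially so off the support) gives $(T\cap U)_M=T_M\cap C_M=R_M$ and $(TU)_M=T_M C_M=S_M$, whence $T\cap U=R$ and $TU=S$. Thus every $T$ has a complement and $R\subseteq S$ is Boolean. The delicate point throughout is exactly this passage from local complements to a coherent global one, which is why securing the $\mathcal B$-extension property through $\mathrm{Supp}(S/R)\subseteq\mathrm{Max}(R)$ is the crux of the proof.
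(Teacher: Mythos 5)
Your proof is correct, and away from the key implication it coincides with the paper's: the directions (1)~$\Rightarrow$~(3)~$\Rightarrow$~(2) and (1)~$\Leftrightarrow$~(4)~$\Rightarrow$~(5)~$\Rightarrow$~(1) are dispatched the same way (the paper labels them ``obvious''; your explicit verification that $T:=j^{-1}(E)$ satisfies $T_P=E$ and that localization commutes with intersections and composita is exactly what makes them obvious), and distributivity in (2)~$\Rightarrow$~(1) is taken from Proposition~\ref{1.014} in both. Where you genuinely diverge is the construction of the global complement. The paper works atom by atom: since $[R_M,S_M]$ is Boolean, $(T_M)^{\circ}$ is a product of atoms $R'_{i,M}$, and Lemma~\ref{4.001} (a short consequence of Lemma~\ref{1.10}) lifts each local atom to a global minimal extension $R_{i,M}$ with crucial ideal $M$, so that $(R_{i,M})_{M'}=R_{M'}$ for $M'\neq M$; the complement is then $T':=\prod_{M}\prod_{i\in I_M}R_{i,M}$, checked locally. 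This requires no surjectivity of $\varphi$ and no control of $\mathrm{Supp}(S/R)$. You instead first show that hypothesis (2) forces $\mathrm{Supp}(S/R)\subseteq\mathrm{Max}(R)$ --- via factoriality of the local Boolean lattices (Theorem~\ref{4.0}), cruciality of minimal extensions (Theorem~\ref{crucial}), and specialization-stability of supports, all of which check out, with no circularity since Theorem~\ref{4.0} and Proposition~\ref{6.5} precede Proposition~\ref{4.02} --- and then invoke Proposition~\ref{6.5} to obtain the $\mathcal B$-extension property, which lets you realize the entire tuple of local complements as a single $U\in[R,S]$ in one stroke. The paper's route is the more economical one, needing only the tailor-made lifting lemma; yours is heavier but buys genuine by-products: under (2) the extension automatically satisfies $\mathrm{Supp}(S/R)\subseteq\mathrm{Max}(R)$ and is a $\mathcal B$-extension, structural facts the paper only records separately and in special cases (e.g.\ Corollary~\ref{4.11} for integrally closed extensions).
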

\begin{proof} We have obviously (1) $\Rightarrow$ (3) $\Rightarrow$ (2) and (1) $\Leftrightarrow$ (4) 
$\Rightarrow$ (5) $\Rightarrow$ (1).
 Conversely, assume that $R_M \subseteq S_M$ is  Boolean  for each $M\in\mathrm{MSupp}(S/R)$. Then, $R_M \subseteq S_M$ is  Boolean for each $M\in\mathrm{Max}(R)$. It follows that the distributivity property holds in $[R,S]$ since it holds in any $[R_M,S_M]$. It remains to show that any $T\in[R,S]$ has a complement. Let $M\in\mathrm{MSupp}(S/R)$. Then, $T_M$ has a complement  $(T_M)^{\circ}$  
 in $[R_M,S_M]$ satisfying $T_M\cap (T_M)^{\circ}=R_M\ (*
)$ and $T_M(T_M)^{\circ}=S _M\ (**)$. Since $[R_M,S_M]$ is Boolean, any of its element is a product of its atoms  \cite[Theorem 5.2]{R}. Then, $(T_M)^{\circ}=\prod_{i\in I_M}R'_{i,M}$, where the $R'_{i,M}$ are atoms of $[R_M,S_M]$, that is $R_M\subset R'_{i,M}$ is minimal. By Lemma \ref{4.001}, for each $R'_{i,M}\in[R_M,S_M]$, there is $R_{i,M}\in [R,S]$ such that $R\subset R_{i,M}$ is minimal, with $R'_{i,M}=(R_{i,M})_M$. In particular, $(R_{i,M})_{M'}=R_{M'}$ for each $M'\in\mathrm{MSupp}(S/R)\setminus\{M\}$. Setting $T':=\prod_{M\in\mathrm{MSupp}(S/R)}(\prod_{i\in I_M}R_{i,M})$, we get, for each $M\in\mathrm{MSupp}(S/R)$ that $T'_M=\prod_{i\in I_M}R'_{i,M}=(T_M)^{\circ}$, so that $(*)$ and $(**)$ give $T_M\cap T'_M=R_M$ and $T_MT'_M= S_M$, and, to end, $T\cap T'=R$ and $TT'=S$ showing that $T'$ is the complement of $T$. Therefore, $R\subseteq S$ is  Boolean.
\end{proof}

 \begin{corollary}\label{5.9} 
  Let $R\subset S$ be an  FIP extension.
 The following statements are equivalent: 
\begin{enumerate}
\item $R\subset  S$ is atomistic and arithmetic;

\item $R \subset S$ is  Boolean and arithmetic;

\item $R \subset S$ is  locally minimal.
\end{enumerate} 

Assume that these conditions hold and  let the 
 set of atoms be $\mathcal A=\{A_1,\ldots,A_a\}$ where $a$ is some integer. Then, the complement of any $T=\prod_{i\in I}A_i\in[R,S]$, where $I\subseteq\mathbb{N}_a$, is $T^{\circ}:=\prod_{j\in J}A_j$ where $J:=\mathbb{N}_a\setminus I$. If in addition $R\subset S$ is integral, then $\ell[R,S]=|\mathrm{MSupp}(S/R)|$. 
\end{corollary}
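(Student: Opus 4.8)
The plan is to prove Corollary \ref{5.9} by establishing the cycle of implications $(1)\Rightarrow(2)\Rightarrow(3)\Rightarrow(1)$, after which the explicit formula for the complement and the integral case follow readily. The central tool will be the local characterization supplied by Proposition \ref{4.02} (Boolean is a local property), Proposition \ref{1.014} (distributivity is local), and Theorem \ref{6.10}, which identifies locally minimal extensions with those satisfying the support-disjointness condition $\mathrm{Supp}(T/R)\cap\mathrm{Supp}(S/T)=\emptyset$ and moreover guarantees such extensions are FIP factorial.

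First I would handle $(1)\Rightarrow(2)$. Assuming $R\subset S$ is atomistic and arithmetic, every $T\in[R,S]$ is a product of atoms, so the hypotheses of Theorem \ref{5.8} are in force once distributivity is checked; but arithmetic implies distributive by Proposition \ref{5.4}. Hence $R\subset S$ is factorial, and by Theorem \ref{4.0} (via $\ell[R,S]=|\mathcal A|$, which the factorial structure delivers through Lemma \ref{5.7}) the extension is Boolean. Arithmeticity is retained by hypothesis, giving (2). For $(2)\Rightarrow(3)$: since $R\subset S$ is arithmetic, each $[R_M,S_M]$ is a chain; since it is also Boolean, Proposition \ref{4.02} makes each $[R_M,S_M]$ a Boolean chain. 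By Example \ref{4.01}(1), a chained Boolean extension is minimal, so $R_M\subset S_M$ is minimal for each $M\in\mathrm{MSupp}(S/R)$, which is exactly local minimality. For $(3)\Rightarrow(1)$: local minimality gives arithmetic for free (each $[R_M,S_M]=\{R_M,S_M\}$ is trivially a chain, and one invokes Definition \ref{4.2}); and Theorem \ref{6.10} tells us a locally minimal FCP extension is factorial, hence atomistic, closing the cycle.

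For the complement formula, I would argue locally. Under the equivalent conditions the extension is factorial with finitely many atoms $\mathcal A=\{A_1,\ldots,A_a\}$, and $S=\prod_{i=1}^a A_i$. Given $T=\prod_{i\in I}A_i$ and $T^{\circ}:=\prod_{j\in J}A_j$ with $J=\mathbb{N}_a\setminus I$, one checks $TT^{\circ}=\prod_{i=1}^aA_i=S$ immediately, and $T\cap T^{\circ}=R$ follows from the uniqueness of the factorial representation established in Lemma \ref{5.7}: any atom dividing both $T$ and $T^{\circ}$ would have index in $I\cap J=\emptyset$. This is essentially the computation already carried out in the proof of Theorem \ref{5.8} when passing between factorial and co-factorial descriptions. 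Finally, in the integral case I would use that a minimal integral extension is crucial with crucial ideal a maximal ideal of $R$ (Theorem \ref{crucial} together with Proposition \ref{6.5}), so that the atoms correspond bijectively to the distinct maximal supports; invoking Lemma \ref{1.9} on a maximal chain realizing $\ell[R,S]$ shows each minimal step contributes exactly one element of $\mathrm{MSupp}(S/R)$ and, by local minimality, each $M\in\mathrm{MSupp}(S/R)$ is hit exactly once, yielding $\ell[R,S]=|\mathrm{MSupp}(S/R)|$.

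I expect the main obstacle to be the step $(2)\Rightarrow(3)$, specifically ensuring that the passage to the local extensions $[R_M,S_M]$ simultaneously preserves both the Boolean structure and the chain structure so that Example \ref{4.01}(1) applies cleanly; this hinges on Proposition \ref{4.02} transferring the Boolean property downward to each localization, and on the fact that the arithmetic hypothesis is by definition a statement about the local lattices being chains. The integral-case length formula is a secondary subtlety, since one must verify that no maximal ideal in $\mathrm{MSupp}(S/R)$ is visited more than once along a chain of length $\ell[R,S]$; local minimality is precisely what prevents repetition, as each $R_M\subset S_M$ has length one, so the global length is the number of distinct supporting maximal ideals.
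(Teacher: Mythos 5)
Your proof is correct, and for the three equivalences it runs on essentially the paper's machinery: the paper gets $(1)\Rightarrow(2)$ from Proposition \ref{5.4} plus the Stanley equivalences that underlie Theorem \ref{4.0} (your route through Theorem \ref{5.8} and Lemma \ref{5.7} is the same mechanism spelled out), and it gets $(2)\Leftrightarrow(3)$ exactly as you do, from Proposition \ref{4.02} together with Example \ref{4.01}(1). You deviate in two places. First, you close a cycle with $(3)\Rightarrow(1)$ via Theorem \ref{6.10} (locally minimal FCP implies factorial, hence atomistic), whereas the paper proves $(2)\Rightarrow(1)$ directly from \cite[Theorem 5.2]{R} (every element of a Boolean lattice is a product of atoms); both are one-line appeals, the paper's being slightly more economical since it avoids the support-disjointness detour. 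Second, and more substantively, for the integral case the paper simply quotes the additivity formula $\ell[R,S]=\sum_{M\in\mathrm{MSupp}(S/R)}\ell[R_M,S_M]$ from \cite[Proposition 4.6]{DPP3} and notes that each local length is $1$, while you reprove this special case by hand: you take a maximal chain realizing $\ell[R,S]$, apply Lemma \ref{1.9} and the fact that crucial ideals of integral minimal steps trace to maximal ideals of $R$ (Theorem \ref{crucial}, Proposition \ref{6.5}), and exclude repetition of a maximal ideal along the chain by local minimality. This hands-on version is sound, and it buys self-containedness at the cost of one step you leave implicit: to see that no $M$ is ``hit twice'' you must observe that a minimal step whose crucial ideal lies over $M$ remains a \emph{proper} inclusion after localizing at $M$, so two such steps would give a chain of length at least $2$ inside $[R_M,S_M]$, contradicting minimality of $R_M\subset S_M$; make that localization computation explicit. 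Your complement verification ($TT^{\circ}=S$ and $T\cap T^{\circ}=R$ via the fact that an atom below a product of atoms equals one of them, \cite[Theorem 4.30]{R}) matches the paper's equally terse treatment.
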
 
\begin{proof}  (1) $\Rightarrow$ (2) by Proposition \ref{5.4} and the equivalences given in \cite[page 292]{St}, 
(2) $\Rightarrow$ (1) by \cite[Theorem 5.2]{R} and 
(2) $\Leftrightarrow$ (3) by Proposition \ref{4.02} and Example \ref{4.01}(1). 

Assume that these conditions hold.
We observe that $S$ is the product of all atoms. 
Let $T =\prod_ {i\in I}A_i\in [R,S]$, where $I\subseteq \mathbb{N}_a$. Since $T^{\circ}$ is a product of atoms, the relations $T\cap T^{\circ}=R$ and $TT^{\circ}=S$ give  
 $T^{\circ}:=\prod_{j\in J}A_j$ where $J: = \mathbb{N}_a \setminus I$. 

 In case $R\subseteq S$ is integral, we use $\ell[R,S]=\sum_{M\in\mathrm{MSupp}(S/R)}\ell[R_M,S_M]$ \cite[Proposition 4.6]{DPP3}. 
\end{proof}

 Next proposition uses the notation of \cite[Proposition 10, p.52]{ALCO}.

\begin{proposition} \label{desc} Let $R\subset S$  be a ring extension, $f: R \to R'$ a  faithfully  flat ring morphism and $S':=  R'\otimes_RS$.  Assume that   $ R '\subset S'$  is distributive. Then,  
\begin{enumerate}
 
 \item  $ R \subset S$  is distributive.
  
 \item Let $T\in[R,S]$ be such that $R'T$ is $\Pi$-irreducible (resp. an atom) in $[R',S']$. Then, $T$ is $\Pi$-irreducible (resp. an atom) in $[R,S]$.
 
\item  In case $R'T$ is $\Pi$-irreducible for any $\Pi$-irreducible $T\in[R,S]$ and  $ R '\subset S'$  is an FIP Boolean extension, so is $ R \subset S$. 
 \end{enumerate}
\end{proposition}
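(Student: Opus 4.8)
The plan is to prove the three statements of Proposition \ref{desc} in order, exploiting faithful flatness to transfer lattice-theoretic data between $[R,S]$ and $[R',S']$ via the assignment $T\mapsto R'T=R'\otimes_R T$ (identifying $R'\otimes_R T$ with its image in $S'$, which is legitimate since flatness makes $R'\otimes_R T\to S'$ injective). The fundamental tool throughout is the standard faithfully flat descent fact that for $R$-submodules (here $R$-subalgebras) $T,U\in[R,S]$ one has $R'(T\cap U)=R'T\cap R'U$ and $R'(TU)=(R'T)(R'U)$, together with the injectivity and order-reflecting property of the map $T\mapsto R'T$; in particular $T\subseteq U$ if and only if $R'T\subseteq R'U$, and $R'T=R'U$ forces $T=U$.

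For (1), I would argue directly. Given $T,U,V\in[R,S]$, apply the map $T\mapsto R'T$ to the inclusion one wants to prove (say $T\cap(UV)\supseteq (T\cap U)(T\cap V)$, the nontrivial direction). Using $R'(T\cap UV)=R'T\cap R'(UV)=R'T\cap(R'U)(R'V)$ and the distributivity assumed in $[R',S']$, this equals $(R'T\cap R'U)(R'T\cap R'V)=R'(T\cap U)\cdot R'(T\cap V)=R'\big((T\cap U)(T\cap V)\big)$. Since $T\mapsto R'T$ is injective and order-reflecting, equality of the images yields equality downstairs, so $R\subset S$ is distributive.

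For (2), suppose $R'T$ is $\Pi$-irreducible and write $T=T_1T_2$ in $[R,S]$. Applying the map gives $R'T=(R'T_1)(R'T_2)$, so $\Pi$-irreducibility of $R'T$ forces $R'T=R'T_1$ or $R'T=R'T_2$, whence $T=T_1$ or $T=T_2$ by injectivity; thus $T$ is $\Pi$-irreducible. For the ``atom'' case one additionally checks that if $R'T$ is an atom then $R\subset T$ is minimal: the $\Pi$-irreducible argument already shows $T$ has no nontrivial product decomposition, and minimality follows because any $W$ with $R\subset W\subset T$ would give $R'\subset R'W\subseteq R'T$ with $R'W\neq R',R'T$ by injectivity, contradicting that $R'T$ is an atom (here I use that $R\ne T$ forces $R'\ne R'T$, again by injectivity and faithful flatness, so $R'T$ is a genuine atom rather than $R'$).

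For (3), I would assemble the pieces. Assume $R'T$ is $\Pi$-irreducible for every $\Pi$-irreducible $T$, and that $R'\subset S'$ is FIP and Boolean. By (1), $R\subset S$ is distributive. The FIP property of $R'\subset S'$ should descend: since $T\mapsto R'T$ is injective, $|[R,S]|\le|[R',S']|<\infty$, giving FIP (hence FCP) for $R\subset S$. To get Boolean, I would invoke Theorem \ref{4.0}: it suffices to show $\ell[R,S]=|\mathcal A|$ where $\mathcal A$ is the set of atoms of $[R,S]$. By Lemma \ref{5.7}, for the FCP distributive extension $R\subset S$ the number of $\Pi$-irreducible elements equals $\ell[R,S]$, and similarly downstairs $\ell[R',S']=|\mathcal A'|$ since $R'\subset S'$ is Boolean FCP (using Theorem \ref{4.0} applied to $R'\subset S'$). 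The hypothesis that $R'T$ is $\Pi$-irreducible whenever $T$ is, combined with (2), sets up a bijection between the $\Pi$-irreducibles of $[R,S]$ and a subset of the $\Pi$-irreducibles of $[R',S']$; the main obstacle, which I expect to be the crux of the argument, is controlling this correspondence tightly enough to force $\ell[R,S]=|\mathcal A|$ rather than merely $\le$. Concretely, I would show that the hypothesis guarantees that the assignment sends $\Pi$-irreducibles to $\Pi$-irreducibles injectively and that every atom of $[R,S]$ maps to an atom, so that counting $\Pi$-irreducibles via Lemma \ref{5.7} on both sides and using that the Boolean condition downstairs is equivalent to ``all $\Pi$-irreducibles are atoms'' (condition (2) of Theorem \ref{4.0}) yields the required equality. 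Once $\ell[R,S]=|\mathcal A|$ is established, Theorem \ref{4.0} immediately gives that $R\subset S$ is Boolean with FIP.
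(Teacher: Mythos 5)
Your proposal is correct and takes essentially the same route as the paper: both rest on the faithfully flat descent map $T\mapsto R'T$ (injective, order-reflecting, and commuting with intersection and product, the paper citing Bourbaki for this) to transfer distributivity and $\Pi$-irreducibility, and in part (3) both conclude by pushing each $\Pi$-irreducible of $[R,S]$ forward, noting it is an atom since $[R',S']$ is Boolean, and pulling it back as an atom via (2). The ``crux'' you flag in (3) is in fact already closed by the pieces you list, and your counting detour through $\ell[R,S]=|\mathcal A|$ and Lemma \ref{5.7} is just an equivalent repackaging of the paper's direct appeal to the equivalence ``all $\Pi$-irreducibles are atoms $\Leftrightarrow$ Boolean'' (Theorem \ref{4.0}, Proposition \ref{4.0202}).
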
 
\begin{proof} 

(1) The ring morphism $\varphi : S \to S'$ defines a map $ \theta: [R',S'] \to [R,S]$ while there is a map  $\psi : [R,S] \to [R',S']$,  defined by $\psi (T)= R'\otimes_RT$  and such that $ \theta \circ \psi   $ is the identity of $[R,S]$ by \cite[Proposition 10, p.52]{ALCO} (it is enough to take  $F=S$ and to observe that if $M$ is an $R$-submodule of $S$, then  with the notation of the above reference,  $R'M$ identifies to  $M\otimes_RR'$). In particular, $\psi$ is injective. The same reference shows that $\psi(T\cap U) = \psi(T) \cap  \psi (U) $ for $U, T\in [R,S]$. It is easy to show that $\psi(TU) = \psi(T)\psi(U)$. If $R'\subset S'$ is distributive, it follows that for $T,U,V\in [R,S]$, we get $\psi[T(U\cap V)]=\psi(T)[\psi(U)\cap\psi(V)]=[\psi(T)\psi(U)]\cap[\psi(T)\psi(V)]=\psi[(TU)\cap(TV)]$, giving $T(U\cap V)=(TU)\cap(TV)$. Then, $R\subset S$ is distributive. 

(2) Let $T\in[R,S]$ be such that $R'T$ is $\Pi$-irreducible  in $[R',S']$ and let $U,V\in[R,S]$ be such that $T=UV$. Then, $R'T=(R'U)(R'V)$, so that either $R'T=R'U$, or $R'T=R'V$, which implies either $T=U$, or $T=V$ and $T$ is $\Pi$-irreducible  in $[R,S]$.

Let $T\in[R,S]$ be such that $R'T$ is  an atom in $[R',S']$.   Assume that $T$ is not an atom in $[R,S]$. There exists $U\in]R,T[$ which yields $R'\subset R'U\subset R'T$, a contradiction. Then, $T$ is an atom in $[R,S]$.

(3) Assume that $R'T$ is $\Pi$-irreducible in $[R',S']$ for any $\Pi$-irreducible $T\in[R,S]$. This holds if $\theta$ (or $\psi$) is bijective. If $R'\subset S'$ is an FIP Boolean extension, any $\Pi$-irreducible element of $[R',S']$ is an atom in $[R',S']$ in view of \cite[page 292]{St}. Moreover, $R\subset S$ is also an FIP  extension since $\theta$ is surjective. Let $T\in[R,S]$ be  $\Pi$-irreducible  in $[R,S]$. Then, $R'T$ is $\Pi$-irreducible in $[R',S']$. Then  $R'T$ is an atom in $[R',S']$, so that $T$ is an atom in $[R,S]$ by (2). Since    $\Pi$-irreducible elements of $[R,S]$ are atoms, the same reference shows that $R\subset S$ is Boolean.
\end{proof}

 \begin{proposition}\label{4.1999} An FCP extension $R\subset S$, whose Nagata extension $R(X)\subset S(X)$ has FIP and is Boolean,  has  FIP  and  is Boolean.
 \end{proposition}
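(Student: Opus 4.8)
The plan is to transfer the Boolean property between the Nagata extension $R(X)\subset S(X)$ and the original extension $R\subset S$ by exhibiting $R(X)=R\otimes_R R(X)$ as arising from a faithfully flat base change, and then invoke Proposition \ref{desc}. Indeed, $R\to R(X)$ is faithfully flat (localization of the polynomial ring $R[X]$ at the multiplicative set of polynomials with unit content), and $S(X)=R(X)\otimes_R S$, so the hypotheses of Proposition \ref{desc} are formally satisfied with $R':=R(X)$ and $S':=S(X)$. The crux is that Proposition \ref{desc}(3) requires, in addition to $R'\subset S'$ being FIP Boolean, that $R'T$ be $\Pi$-irreducible in $[R',S']$ for every $\Pi$-irreducible $T\in[R,S]$; equivalently (as noted there) that the canonical maps $\theta,\psi$ between $[R,S]$ and $[R(X),S(X)]$ be mutually inverse bijections.

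First I would record the standard fact that Nagata extension commutes with the formation of intermediate rings, so that $T\mapsto T(X)$ gives an order-isomorphism $[R,S]\xrightarrow{\sim}[R(X),S(X)]$; this is where an earlier cited result on Nagata rings (of the form ``$U\mapsto U(X)$ is a bijection $[R,S]\to[R(X),S(X)]$ for FCP extensions'') should be quoted, and it is exactly the bijectivity of $\psi$. Granting this, the map $\psi$ sends each $T$ to $T(X)=R(X)T$, so $\Pi$-irreducibility is preserved in both directions because $\psi$ is a lattice isomorphism carrying products to products and intersections to intersections (already verified in the proof of Proposition \ref{desc}(1)). In particular every $\Pi$-irreducible $T\in[R,S]$ has $R(X)T=\psi(T)$ $\Pi$-irreducible in $[R(X),S(X)]$, which is precisely the extra hypothesis needed.

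With that bijection in hand, the argument is short. Since $R\subset S$ has FCP and $R(X)\subset S(X)$ has FIP, the order-isomorphism $\psi$ forces $[R,S]$ to be finite as well, so $R\subset S$ has FIP. Because $R(X)\subset S(X)$ is Boolean and the extra $\Pi$-irreducibility hypothesis holds, Proposition \ref{desc}(3) applies verbatim and yields that $R\subset S$ is Boolean and FIP. I would therefore conclude by simply citing Proposition \ref{desc}(3) after verifying its hypotheses.

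The main obstacle is establishing that $\psi\colon[R,S]\to[R(X),S(X)]$, $T\mapsto T(X)$, is a bijection (equivalently that every intermediate ring of $R(X)\subset S(X)$ is of the form $T(X)$ for a unique $T\in[R,S]$). The surjectivity direction is the delicate one and typically rests on a descent or contraction argument for Nagata extensions; I expect this to be supplied by a previously established lemma about Nagata extensions of FCP or FIP extensions rather than proved from scratch here. Once that bijection is available, faithful flatness of $R\to R(X)$ and Proposition \ref{desc} do all the remaining work, so the proof is essentially a verification-and-cite.
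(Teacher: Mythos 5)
Your proposal is correct and follows essentially the same route as the paper: both use $S(X)=R(X)\otimes_R S$ (the paper cites \cite[Corollary 3.5]{DPP3}, valid for FCP extensions) together with the faithfully flat descent result of Proposition \ref{desc}, and both discharge the key hypothesis --- that $R(X)T$ is $\Pi$-irreducible for every $\Pi$-irreducible $T\in[R,S]$ --- via the order-isomorphism $\psi\colon[R,S]\to[R(X),S(X)]$, $T\mapsto T(X)$, which the paper obtains exactly as you anticipated, by citing an earlier result (\cite[Theorem 32]{DPP4}, applicable since $R(X)\subset S(X)$ has FIP) rather than proving surjectivity from scratch.
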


\begin{proof} In view of (\cite[Corollary 3.5]{DPP3}), $R\subset S$ is an FCP extension implies that $S(X)=R(X)\otimes_RS$, so that we can use Proposition \ref{desc} because $R(X)\subset S(X)$ is distributive. Since $R(X)\subset S(X)$ has FIP, the map $\psi:[R,S]\to[R(X),S(X)]$ defined by $\psi(T)=T(X)$ in Proposition \ref{desc} is an order -isomorphism by \cite[Theorem 32]{DPP4}. Let $T\in[R,S]$ be $\Pi$-irreducible.  Then, $\psi(T)=T(X)=TR(X)$ is $\Pi$-irreducible, so that $R\subset S$ is Boolean by Proposition \ref{desc}.
\end{proof} 

\begin{proposition} \label{desc 1} Let $R\subset S$  be a ring extension, $f: R \to R'$ a flat ring epimorphism  and $S':=  R'\otimes_RS$. 
  If  $ R \subset S$  is a distributive extension (resp. a  FIP Boolean extension),  then so is   $R'\subset S'$.
  \end{proposition}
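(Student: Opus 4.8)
The plan is to transport the lattice structure along base change by means of the two natural maps between $[R,S]$ and $[R',S']$. Writing $g\colon S\to S'=R'\otimes_R S$ for the canonical map $s\mapsto 1\otimes s$, I would work with $\psi\colon[R,S]\to[R',S']$, $\psi(T):=R'\otimes_R T$ (which embeds in $S'$ as the $R'$-subalgebra $R'T$, since flatness makes $R'\otimes_R T\hookrightarrow R'\otimes_R S$ injective), together with $\theta\colon[R',S']\to[R,S]$, $\theta(T'):=g^{-1}(T')$. Exactly as in the proof of Proposition \ref{desc}, flatness gives $\psi(T\cap U)=\psi(T)\cap\psi(U)$ and $\psi(TU)=\psi(T)\psi(U)$, so $\psi$ preserves both lattice operations, and clearly $\psi(R)=R'$ and $\psi(S)=S'$. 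The whole statement then reduces to showing that $\psi$ is \emph{surjective}, indeed that $\psi\circ\theta=\mathrm{id}_{[R',S']}$.

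Surjectivity is the crux, and the only place where the epimorphism hypothesis (and not merely flatness) is used. Fix $T'\in[R',S']$ and set $T:=\theta(T')=g^{-1}(T')$, so that $T=\ker(S\xrightarrow{g}S'\to S'/T')$ and hence $S/T\hookrightarrow S'/T'$ as $R$-modules. Applying the exact functor $R'\otimes_R-$ to $0\to T\to S\to S/T\to 0$ yields $S'/R'T\cong R'\otimes_R(S/T)$, while applying it to $S/T\hookrightarrow S'/T'$ yields an injection $R'\otimes_R(S/T)\hookrightarrow R'\otimes_R(S'/T')$. Here the epimorphism property enters: since $S'/T'$ is an $R'$-module, the multiplication map $R'\otimes_R(S'/T')\to S'/T'$ is an isomorphism, so the injection reads $S'/R'T\hookrightarrow S'/T'$. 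One checks that this injection is precisely the map induced on quotients of $S'$ by the inclusion $R'T\subseteq T'$, whence $T'/R'T=0$, that is $T'=R'T=\psi(T)$. This gives $\psi\circ\theta=\mathrm{id}$, and in particular surjectivity.

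With $\psi$ a surjective map preserving $\cap$ and product, the rest is formal. Distributivity descends: any triple in $[R',S']$ has the form $(\psi(T),\psi(U),\psi(V))$, and applying $\psi$ to the identity $T(U\cap V)=(TU)\cap(TV)$ valid in the distributive lattice $[R,S]$ gives the corresponding identity in $[R',S']$; this proves the distributive half. For the Boolean half, surjectivity of $\psi$ forces $|[R',S']|\le|[R,S]|<\infty$, so $R'\subset S'$ has FIP, and for each $T'=\psi(T)$ the element $\psi(T^{\circ})$ is a complement, since $\psi(T)\cap\psi(T^{\circ})=\psi(T\cap T^{\circ})=\psi(R)=R'$ and $\psi(T)\psi(T^{\circ})=\psi(TT^{\circ})=\psi(S)=S'$. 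Being distributive with all complements, $R'\subset S'$ is Boolean. The main obstacle is the surjectivity step; everything afterward is routine lattice bookkeeping once $\psi$ is known to be a surjective homomorphism for $\cap$ and product.
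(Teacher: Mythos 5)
Your proof is correct, but it follows a genuinely different route from the paper's. The paper proves Proposition \ref{desc 1} by localization: a flat epimorphism induces an isomorphism $R_P\to R'_Q$ for each $Q\in\mathrm{Spec}(R')$ lying over $P$, so $R'_Q\subset S'_Q$ identifies with $R_P\subset S_P$, and distributivity and the FIP Boolean property then transfer because both are local properties (Propositions \ref{1.014} and \ref{4.02}, with \cite[Proposition 3.7]{DPP2} and the injectivity of $\mathrm{Spec}(R')\to\mathrm{Spec}(R)$ handling FIP). You instead prove the stronger structural statement that $\psi\colon[R,S]\to[R',S']$, $T\mapsto R'T$, is a surjective lattice homomorphism with section $\theta=g^{-1}(-)$: flatness yields preservation of $\cap$ and products, and the epimorphism hypothesis enters exactly once, via the standard fact that the multiplication map $R'\otimes_R M\to M$ is an isomorphism for every $R'$-module $M$. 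I checked your diagram chase: the composite $S'/R'T\cong R'\otimes_R(S/T)\hookrightarrow R'\otimes_R(S'/T')\cong S'/T'$ does send the class of $r'\otimes s$ to the class of $r'\otimes s$, hence is the canonical projection induced by $R'T\subseteq T'$, and its injectivity forces $T'=R'T$; the remaining lattice bookkeeping (image of a distributive lattice under a surjective homomorphism preserving both operations is distributive, $\psi(T^{\circ})$ is a complement of $\psi(T)$, and $|[R',S']|\le|[R,S]|$ gives FIP) is sound. What your approach buys: $[R',S']$ is exhibited as a retract, hence a quotient lattice, of $[R,S]$, so any property stable under surjective lattice homomorphisms transfers at once, with an explicit cardinality bound, and without invoking the structure theory of flat epimorphisms or the paper's local-global principles. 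What the paper's approach buys: the exact pointwise identification of $R'_Q\subset S'_Q$ with $R_P\subset S_P$, which transfers any locally checkable property (not only lattice-theoretic ones) and meshes with the localization machinery used throughout the paper.
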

  
  \begin{proof}  
The proof is a consequence of the following facts.  Let $f: R\to R'$ be a flat epimorphism and  $Q \in \mathrm{Spec}(R')$, lying over $P$ in $R$, then $R_P \to R'_Q$ is an isomorphism. Moreover,  we have 
  $(R'\otimes_RS)_Q \cong  R'_Q\otimes_{R_P} S_P$, so that $R_P\to S_P$ identifies to $R'_Q \to  (R'\otimes_RS)_Q=S'_Q $. 
  
  Assume that $ R \subset S$  is  distributive (resp.: FIP Boolean). Then, so is $R_P\to S_P$ for each $P \in \mathrm{Spec}(R)$ by Proposition \ref{1.014} (resp. Proposition \ref{4.02} and \cite[Proposition 3.7]{DPP2}).  Let $Q \in \mathrm{Spec}(R')$ and $P:=f^{-1}(Q)\in \mathrm{Spec}(R)$. Since $R_P\to S_P$ identifies to $R'_Q \to  S'_Q $, we get that $ R'_Q \subset S'_Q$  is  distributive (resp.; FIP Boolean)  for each $Q \in \mathrm{Spec}(R')$. It follows that $ R' \subset S'$  is distributive (resp.: FIP Boolean)  by the same references. Indeed, in the FIP Boolean case, since $R\subset S$ has FIP, so has $R'\subset S'$, because $\mathrm{Spec}(R')\to\mathrm{Spec}(R)$ is injective.
\end{proof} 

\begin{proposition}\label{4.0202}  Let $R\subset S$ be an FCP distributive extension. The following statements are equivalent:
\begin{enumerate}
\item $R\subset S$ is Boolean;

\item Any $\Pi$-irreducible element is  an atom;

\item  Any $\cap$-irreducible element is  a co-atom.
\end{enumerate}
\end{proposition}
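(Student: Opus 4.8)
The plan is to reduce everything to the fact that $[R,S]$ is a \emph{finite} distributive lattice and then exploit the irreducible-element counts already available. First I would record the setup: since $R\subseteq S$ is FCP and distributive, Lemma \ref{5.7} gives that it has FIP, so $[R,S]$ is a finite distributive lattice; put $n:=\ell[R,S]$. By Lemma \ref{5.7}, $[R,S]$ has exactly $n$ $\Pi$-irreducible elements and exactly $n$ $\cap$-irreducible elements. Recall also that every atom is $\Pi$-irreducible and every co-atom is $\cap$-irreducible (with the usual convention that $R$ is excluded from the $\Pi$-irreducibles and $S$ from the $\cap$-irreducibles, consistent with these counts). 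At this level of generality the statement is exactly the lattice-theoretic equivalence of \cite[page 292]{St}, translating join/meet-irreducible into $\Pi$/$\cap$-irreducible and atom/co-atom; below I indicate how to obtain it from the paper's own results.

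For (1) $\Leftrightarrow$ (2): the inclusion $\mathcal A\subseteq\{\Pi\text{-irreducibles}\}$ together with $|\{\Pi\text{-irreducibles}\}|=n$ shows that condition (2) is precisely the equality $|\mathcal A|=n$. By the equivalence (1) $\Leftrightarrow$ (2) of Theorem \ref{4.0}, this holds if and only if $R\subseteq S$ is Boolean.

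For (1) $\Rightarrow$ (3): if $R\subseteq S$ is Boolean, complementation $T\mapsto T^{\circ}$ is an order-reversing bijection of $[R,S]$ carrying atoms to co-atoms (as recalled before Theorem \ref{4.0}), whence $|\mathcal{CA}|=|\mathcal A|=n$ by Theorem \ref{4.0}; since $\mathcal{CA}$ sits inside the set of $n$ $\cap$-irreducibles, equality forces every $\cap$-irreducible to be a co-atom, i.e. (3). For (3) $\Rightarrow$ (1): by Proposition \ref{5.5} every element of $[R,S]$ is a finite intersection of $\cap$-irreducibles, which by (3) are co-atoms, so $R\subseteq S$ is co-atomistic, and Lemma \ref{5.7} gives uniqueness of the irredundant representation; thus $R\subseteq S$ is co-factorial. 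By Theorem \ref{5.8}, co-factorial is equivalent to ``atomistic, distributive and FIP'', so $R\subseteq S$ is atomistic; finally a $\Pi$-irreducible element, written as a product of atoms, collapses (by $\Pi$-irreducibility) to a single atom, giving (2), hence (1) by the previous paragraph.

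The step I expect to be the main obstacle is the passage (3) $\Rightarrow$ (1): Theorem \ref{4.0} is phrased in terms of atoms, so one must cross over from the co-atom side to the atom side. I route this through co-factorial $\Leftrightarrow$ factorial (Theorem \ref{5.8}) and the observation that, for a finite distributive lattice, ``atomistic'' is the same as condition (2). One also has to keep the top/bottom conventions straight so that the two counts ``$n$ $\Pi$-irreducibles'' and ``$n$ $\cap$-irreducibles'' of Lemma \ref{5.7} line up with $|\mathcal A|$ and $|\mathcal{CA}|$. All of this can be bypassed by citing \cite[page 292]{St} directly, once $[R,S]$ is known to be finite and distributive.
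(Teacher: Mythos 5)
Your proposal is correct, and it is close in spirit to the paper's proof while being noticeably more self-contained. The paper disposes of (1) $\Rightarrow$ (2) by citing \cite[Theorem 5.2]{R} (in a Boolean lattice the $\Pi$-irreducibles are the atoms), gets (1) $\Rightarrow$ (3) ``using complements'', proves (2) $\Rightarrow$ (1) by the equivalences of \cite[page 292]{St} (FIP coming from Lemma \ref{5.7}, exactly as you note), and settles (3) $\Rightarrow$ (2) with the one-line remark that one may ``exchange products and intersections, and atoms and co-atoms''. You replace the first two citations by a counting argument — $\mathcal A$ sits inside the set of exactly $n=\ell[R,S]$ $\Pi$-irreducibles from Lemma \ref{5.7}, so condition (2) is literally $|\mathcal A|=n$, which is Theorem \ref{4.0}(2); and complementation gives $|\mathcal{CA}|=|\mathcal A|=n$ inside the $n$ $\cap$-irreducibles — and you replace the duality remark for (3) $\Rightarrow$ (1) by an explicit chain: Proposition \ref{5.5} plus (3) gives co-atomisticity, Lemma \ref{5.7} gives uniqueness, hence co-factorial, hence atomistic by Theorem \ref{5.8}, after which a $\Pi$-irreducible product of atoms collapses to a single atom. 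This buys you a proof whose only external input is the Stanley equivalence already packaged in Theorem \ref{4.0}, at the cost of routing through Theorem \ref{5.8}; the paper's version is shorter but leans on \cite{R} and on an unspoken self-duality of the hypotheses (distributivity and Booleanness are self-dual, which is what legitimizes the ``exchange''), a point your argument makes unnecessary. Your attention to the convention that $R$ is excluded from the $\Pi$-irreducibles (and $S$ from the $\cap$-irreducibles), so that the counts in Lemma \ref{5.7} line up with $|\mathcal A|$ and $|\mathcal{CA}|$, is exactly the bookkeeping the paper leaves implicit.
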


\begin{proof}   (1) $\Rightarrow$ (2) and (3). Assume that $R\subset S$ is  Boolean. By  \cite[Theorem 5.2]{R}, the $\Pi$-irreducible elements of $[R,S]$ are the atoms of 
$[R,S]$. Using  complements, we deduce that the $\cap$-irreducible elements of $[R,S]$ are the co-atoms of 
$[R,S]$.

(2) $\Rightarrow$ (1) by the equivalences given in \cite[page 292]{St} since $R\subset S$ has FIP  by  Lemma \ref{5.7}. 

 (3) $\Rightarrow$ (2)  
 It is enough to exchange products  and intersections, and atoms and co-atoms. 
\end{proof}

\begin{theorem}\label{4.0203}  Let $R\subset S$ be an FCP  extension. The following statements are equivalent, each of them implying that $R\subset S$ has FIP:
\begin{enumerate}
\item $R\subset S$ is Boolean;

\item    $R\subset S$ is factorial;

\item   $R\subset S$ is co-factorial.
\end{enumerate}
\end{theorem}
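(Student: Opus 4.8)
The plan is to lean on the two structural results already proved, namely Theorem \ref{5.8} for the factorial/co-factorial conditions and Theorem \ref{4.0} together with Proposition \ref{4.0202} for the Boolean condition, and to organize the argument as (2) $\Leftrightarrow$ (3), then (1) $\Rightarrow$ (2), and finally (2) $\Rightarrow$ (1). First I would dispose of (2) $\Leftrightarrow$ (3): by Theorem \ref{5.8} an extension is factorial \emph{and} co-factorial under exactly the same hypothesis, that it be atomistic, distributive and FIP, so these two conditions coincide; in particular each of them forces FIP. For (1) $\Rightarrow$ (2), I would simply note that a Boolean extension is distributive by definition, so together with the standing FCP hypothesis the extension $R\subset S$ satisfies the hypotheses of Theorem \ref{4.0}; that theorem then delivers both that $R\subset S$ has FIP and that it is factorial, which also settles the FIP claim in the Boolean case.

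The only implication requiring a genuine argument is (2) $\Rightarrow$ (1). Assuming $R\subset S$ factorial, Theorem \ref{5.8} shows it is atomistic, distributive and FIP, hence in particular an FCP distributive extension, so Proposition \ref{4.0202} applies and reduces the goal to showing that every $\Pi$-irreducible element of $[R,S]$ is an atom. So I would take $T\in[R,S]$ to be $\Pi$-irreducible and use atomicity to write its unique irredundant factorization $T=\prod_{\alpha\in I}A_{\alpha}$ into atoms. If $|I|\geq 2$, fix $\alpha_0\in I$ and set $T':=\prod_{\alpha\in I\setminus\{\alpha_0\}}A_{\alpha}$; then $T=A_{\alpha_0}T'$, while irredundancy of the representation forces $A_{\alpha_0}\neq T$ and $T'\neq T$ (each being a product over a proper subset of $I$), contradicting the $\Pi$-irreducibility of $T$. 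Hence $|I|=1$ and $T=A_{\alpha_0}$ is an atom, and Proposition \ref{4.0202} then yields that $R\subset S$ is Boolean, closing the cycle.

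The main obstacle is precisely this last step, converting the combinatorial uniqueness built into the word ``factorial'' into the lattice-theoretic statement that $\Pi$-irreducible elements are atoms; everything else is bookkeeping with the cited results. I would be careful to verify that the factorial hypothesis really supplies an \emph{irredundant} factorization of the given $\Pi$-irreducible element, so that the two proper subproducts $A_{\alpha_0}$ and $T'$ are genuinely distinct from $T$, since it is exactly irredundancy, rather than mere existence of some atom factorization, that powers the contradiction. As an alternative to Proposition \ref{4.0202} one could instead combine Lemma \ref{5.7} (the number of $\Pi$-irreducible elements equals $\ell[R,S]$) with the same ``$\Pi$-irreducible $=$ atom'' fact to check condition (2) of Theorem \ref{4.0}, namely $\ell[R,S]=|\mathcal A|$, but this passes through the identical delicate step and is no shorter.
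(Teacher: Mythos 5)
Your proposal is correct and takes essentially the same route as the paper: the paper also derives (1) $\Rightarrow$ (2) from Theorem \ref{4.0}, handles the co-factorial statement by exchanging products with intersections and atoms with co-atoms (your (2) $\Leftrightarrow$ (3) via Theorem \ref{5.8} is the same content), and proves (2) $\Rightarrow$ (1) by using Theorem \ref{5.8} to get an atomistic distributive FIP extension and then invoking the equivalences of \cite[page 292]{St} --- which is precisely what Proposition \ref{4.0202} packages. Your explicit check that factoriality forces every $\Pi$-irreducible element to be an atom is a correct unwinding of the step the paper absorbs into the citation of \cite{St}, so the two arguments coincide in substance.
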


\begin{proof} (1) $\Rightarrow$ (2) 
 by Theorem \ref{4.0} and (1) $\Rightarrow$ (3) because, 
 by Proposition \ref{5.5}, any $T\in [R,S]$ is a finite   intersection of  $\cap$-irreducible elements, and then a finite  intersection of  co-atoms of $[R,S]$ by Proposition \ref{4.0202}. 

(2)  $\Rightarrow$ (1). By Theorem \ref{5.8}, $R\subset S$ is an atomistic distributive   FIP extension. 
 Then, use the equivalences given in \cite[page 292]{St}.

(3) $\Rightarrow$ (2) 
 It is enough to exchange products  and intersections, and atoms and co-atoms. 
\end{proof}

\begin{proposition}\label{4.021} \cite[Theorems 107 and 158]{Do} Let $R\subset S$ be a Boolean extension. Then, $U\subset T$ is  Boolean  for any $U,T\in[R,S]$ such that $U\subseteq T$ and  the complement of  $V\in[U,T]$ in $[U,T]$ is $U(T\cap V^{\circ})$.
\end{proposition}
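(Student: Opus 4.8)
Proposition \ref{4.021} asserts two things for a Boolean extension $R\subset S$ and a pair $U\subseteq T$ in $[R,S]$: first, that the sublattice $[U,T]$ is again Boolean; second, an explicit formula for the complement of $V\in[U,T]$ computed \emph{inside} $[U,T]$, namely $U(T\cap V^{\circ})$, where $V^{\circ}$ is the complement of $V$ in the ambient lattice $[R,S]$.

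Let me think about how to prove this.

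The cited sources are [Do, Theorems 107 and 158], so this is a purely lattice-theoretic fact about Boolean algebras, transported to the lattice $[R,S]$ via the dictionary: meet $=\cap$, join $=$ product $\Pi$, bottom $=R$, top $=S$, complement $=(\cdot)^{\circ}$. The task is to verify that the interval sublattice $[U,T]$ of a Boolean lattice is Boolean with the stated complement.

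**Strategy.** The plan is to work entirely within the lattice $([R,S],\cap,\Pi)$, which is Boolean by hypothesis, and to check directly that $W:=U(T\cap V^{\circ})$ is a complement of $V$ in the interval $[U,T]$. Since $R\subset S$ is Boolean it is distributive (Theorem \ref{4.0} / the definition), so distributivity of $\cap$ over $\Pi$ and vice versa is available, and by Proposition \ref{4.02}-type reasoning complements are unique. The distributive law is what makes an interval of a Boolean lattice Boolean, so that is the engine of the whole argument.

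First I would record the two defining relations for $V^{\circ}$ in the big lattice: $V\cap V^{\circ}=R$ and $VV^{\circ}=S$. Next I would check that $W$ genuinely lies in $[U,T]$: clearly $U\subseteq W$, and $W=U(T\cap V^{\circ})\subseteq UT\cap U V^{\circ}$; since $U,T\cap V^{\circ}\subseteq T$ we get $W\subseteq T$, so $W\in[U,T]$. Then I would verify the two interval-complement identities. For the meet, using distributivity,
\[
V\cap W = V\cap\bigl(U(T\cap V^{\circ})\bigr) = (V\cap U)(V\cap T\cap V^{\circ}),
\]
and since $V\cap V^{\circ}=R$ the second factor collapses to $R$, leaving $V\cap W = V\cap U = U$ (because $U\subseteq V$). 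For the join I would compute
\[
VW = V\cdot U(T\cap V^{\circ}) = (VU)(V(T\cap V^{\circ})) = V\cdot(V(T\cap V^{\circ})),
\]
and then, distributing product over intersection, $V(T\cap V^{\circ}) = VT\cap VV^{\circ} = VT\cap S = VT$, while $VT=T$ since $V\subseteq T$; hence $VW\supseteq V\cdot T = T$, and combined with $W\subseteq T$, $V\subseteq T$ this gives $VW=T$. Thus $W$ is the complement of $V$ computed in $[U,T]$, whose bottom is $U$ and top is $T$.

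**Main obstacle.** The delicate point is not the complement computation but establishing that $[U,T]$ is distributive as an abstract lattice, since the claim ``$[U,T]$ is Boolean'' requires distributivity there plus existence of complements for \emph{every} $V\in[U,T]$. Distributivity of the interval is inherited from $[R,S]$: meets and joins in $[U,T]$ agree with those in $[R,S]$ (the infimum and supremum of elements lying between $U$ and $T$ stay between $U$ and $T$), so the distributive identities restrict verbatim; I would state this inheritance explicitly and invoke Proposition \ref{1.014}-style locality only if needed, though here it is immediate. Existence of a complement for each $V$ is exactly what the displayed formula furnishes once distributivity is in hand, so the two halves of the proposition are really one argument. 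I would therefore organize the proof as: (i) intervals of distributive lattices are distributive, hence $[U,T]$ is distributive; (ii) the explicit $U(T\cap V^{\circ})$ is a complement of $V$ in $[U,T]$, by the two distributive computations above; conclude $[U,T]$ is Boolean with the stated complement, matching [Do, Theorems 107 and 158].
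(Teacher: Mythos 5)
Your proof is correct, and it is worth noting that the paper offers no argument of its own for this proposition: it is stated with a bare citation to \cite[Theorems 107 and 158]{Do}, the standard lattice-theoretic facts that an interval of a Boolean lattice is Boolean with relative complement $u\vee(t\wedge v')$. What you have written is essentially the textbook proof of those cited theorems, transported to $([R,S],\cap,\Pi)$, and all the steps check: $[U,T]$ is a sublattice of $[R,S]$ (infima and suprema of elements between $U$ and $T$ stay between $U$ and $T$), so distributivity restricts; $W:=U(T\cap V^{\circ})$ lies in $[U,T]$; the meet computation $V\cap W=(V\cap U)(V\cap T\cap V^{\circ})=(V\cap U)\cdot R=U$ and the join computation $VW=V(T\cap V^{\circ})=VT\cap VV^{\circ}=T\cap S=T$ use exactly the two distributive laws plus $V\cap V^{\circ}=R$, $VV^{\circ}=S$, $U\subseteq V\subseteq T$; and uniqueness of the complement in $[U,T]$ follows from distributivity of the interval (in the paper this uniqueness fact is \cite[Exercise 9, page 33]{NO}, not a ``Proposition \ref{4.02}-type'' localization argument --- that attribution is the one cosmetic slip, and your detour through $W\subseteq UT\cap UV^{\circ}$ is unnecessary since $U,\,T\cap V^{\circ}\subseteq T$ already gives $W\subseteq T$). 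The trade-off between the two routes is simply citation versus self-containedness: the paper defers to Donnellan, while your direct verification makes the proposition independent of the reference and exposes precisely where distributivity enters, at the cost of a half page; mathematically the content is the same.
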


We can now generalize Ayache's result \cite[Theorem 7]{A2} in case of an arbitrary ring extension. 

\begin{proposition}\label{4.022} When  $R\subset S$    has a maximal chain of length $n$ from $R$ to $S$  such that  $|\mathrm{Supp}(S/R)|=|\mathrm{MSupp}(S/R)|=n$,   then $R\subset S$ is  FIP Boolean, any maximal chain  of $[R,S]$  has length $n$ and $|[R,S]|=~2^n$.

\end{proposition}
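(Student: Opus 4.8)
The plan is to reduce everything to the \emph{local minimality} of $R\subset S$ and then feed that into the structure theory already developed. First I would fix a maximal chain $R=R_0\subset R_1\subset\cdots\subset R_n=S$ of length $n$ witnessing the hypothesis; each step $R_{i-1}\subset R_i$ is then minimal, hence crucial by Theorem \ref{crucial}, with crucial ideal $\mathcal M_i:=\mathcal C(R_{i-1},R_i)$ lying over $P_i:=\mathcal M_i\cap R$. By Lemma \ref{1.9} we have $\mathrm{Supp}(S/R)=\{P_1,\ldots,P_n\}$, a set of cardinality at most $n$. The two equalities $|\mathrm{Supp}(S/R)|=|\mathrm{MSupp}(S/R)|=n$ then force the $P_i$ to be $n$ pairwise distinct \emph{maximal} ideals of $R$.

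The heart of the argument is to show that these $n$ minimal steps decouple under localization. Since each $R_{i-1}\subset R_i$ is crucial with $P_i$ maximal, the $R$-module $R_i/R_{i-1}$ has support exactly $\{P_i\}$. Consequently, fixing $j$, for every $i\neq j$ one has $P_i\neq P_j$, whence $(R_{i-1})_{P_j}=(R_i)_{P_j}$; only the $j$-th step survives, and localizing the minimal extension $R_{j-1}\subset R_j$ at $P_j$ keeps it minimal. Thus $R_{P_j}=(R_{j-1})_{P_j}\subset (R_j)_{P_j}=S_{P_j}$ is minimal for each $j$, while $R_P=S_P$ for $P\notin\{P_1,\ldots,P_n\}$; that is, $R\subset S$ is locally minimal.

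Next I would establish FIP directly, so as to avoid any circular appeal to FCP-dependent machinery. Any $T\in[R,S]$ is determined by the family $(T_M)_{M\in\mathrm{Max}(R)}$ of its localizations. For $M\notin\{P_1,\ldots,P_n\}$ the value $T_M=R_M=S_M$ is forced, while for $M=P_j$ minimality of $R_{P_j}\subset S_{P_j}$ leaves only the two possibilities $T_{P_j}\in\{R_{P_j},S_{P_j}\}$. Hence $|[R,S]|\leq 2^n<\infty$, so $R\subset S$ has FIP (a fortiori FCP). With FCP now in hand, Corollary \ref{5.9} applies: a locally minimal FIP extension is Boolean (and arithmetic), so $R\subset S$ is FIP Boolean.

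Finally I would read off the numerology from the Boolean structure. Being Boolean, $R\subset S$ is distributive, so the Jordan--H\"older condition of Proposition \ref{1.0} forces all maximal chains between $R$ and $S$ to have a common length; the given chain has length $n$, whence $\ell[R,S]=n$ and every maximal chain of $[R,S]$ has length $n$. Theorem \ref{4.0} then yields $|[R,S]|=2^{\ell[R,S]}=2^n$. The main obstacle is the local-minimality step, which is precisely where both cardinality hypotheses are consumed: maximality of the $P_i$ (from $\mathrm{MSupp}(S/R)=\mathrm{Supp}(S/R)$) pins each support $\mathrm{Supp}_R(R_i/R_{i-1})$ to a single maximal ideal, and distinctness of the $P_i$ (from $|\mathrm{Supp}(S/R)|=n$) guarantees that at most one step survives each localization.
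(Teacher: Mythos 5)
Your proof is correct and follows essentially the same route as the paper's: both extract the $n$ pairwise distinct maximal traces $P_1,\dots,P_n$ of the crucial ideals of the chain steps via Lemma~\ref{1.9}, localize so that only one step survives at each $P_j$ (making $R_{P_j}\subset S_{P_j}$ minimal and $R_P=S_P$ elsewhere), and then conclude Booleanness locally and globalize, finishing with Theorem~\ref{4.0}. Your only departures are cosmetic repackagings: you get FIP by the direct injection $T\mapsto (T_{P_j})_j$ into a set of size $2^n$ instead of citing \cite[Proposition 3.7]{DPP2}, and you invoke Corollary~\ref{5.9} where the paper uses that corollary's own ingredients, Example~\ref{4.01}(1) and Proposition~\ref{4.02}.
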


\begin{proof} Let $R_0:=R\subset\ldots\subset R_i\subset\ldots\subset R_n:=S$ be a maximal chain of $R$-subalgebras of length $n$. For each $i\in\mathbb N_n$, set $M_{i-1}:=\mathcal C(R_{i-1},R_i)\cap R$. Then, $\mathrm{Supp}(S/R)=\{M_i\}_{i= 0}^{n-1}\subseteq\mathrm{Max}(R)$ in view of Lemma \ref{1.9}. It follows that $M_i\neq M_j$ for each $i\neq j$, so that $R_{M_i}=(R_i)_{M_i}\subset(R_{i+1})_{M_i}=S_{M_ i}$ is minimal (and then has FIP), so that $R\subset S$ has FIP by \cite[Proposition 3.7]{DPP2}. Now, $R_{M_i}\subseteq S_{M_i}$ is Boolean (see Example \ref{4.01}(1)), and so is $R\subseteq S$ by Proposition \ref{4.02}. The last results follow from Theorem \ref{4.0}.
\end{proof}

Let $R\subset S$ be an FCP Boolean extension and let $R_0:=R\subset\ldots\subset R_i\subset\ldots\subset R_n:=S$ be a maximal chain of $R$-subalgebras of $S$.  By Proposition \ref{4.021}, $R\subset\ R_{n-1}$   is an Boolean  FCP extension and  $R_{n-1}\subset S$ is minimal. Next theorem gives a kind of converse which allows us to check by induction that an FIP extension is Boolean.
 
\begin{theorem}\label{4.03} An FIP extension $R\subset S$, which is not minimal,   is Boolean if and only if there exist $U,T\in]R,S[$ such that the conditions $\mathrm{(1), (2), (3)}$ and $\mathrm{(4)}$ hold, if and only if there exist $U,T\in]R,S[$ such that the conditions $\mathrm{(1), (4)}$ and $\mathrm{(5)}$ hold:
\begin{enumerate}
\item $[R,S]=[R,T]\cup[U,S]$.

\item $[U,S]=\{UL\mid L\in[R,T]\}$.

\item $L\subset UL$ is a minimal extension for each $L\in[R,T]$.

\item $[R,T]$ is a Boolean lattice.

\item The map $\varphi:[R,T]\to[U,S]$ defined by $\varphi(L)=UL$, for $L\in[R,T]$, is bijective.
\end{enumerate}

Moreover, if these conditions hold,  $U$ is an atom, $T= U^\circ$ is a co-atom.   In fact, these conditions hold for any atom $U'$ and its complement $T'$, and $[R,T']\cap[U',S]=\emptyset$. 
\end{theorem}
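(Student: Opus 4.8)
The plan is to handle the two directions by passing through the lattice isomorphism $[R,S]\cong[R,T]\times\{0,1\}$, where the right-hand factor is the two-element Boolean lattice.

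First I would prove the forward implication, producing $U$ and $T$ from the Boolean hypothesis. Since $R\subset S$ is Boolean and FIP it is factorial (Theorem~\ref{4.0}), so $S$ is the product of its finitely many atoms $\mathcal A=\{A_1,\dots,A_n\}$ and every $V\in[R,S]$ is a unique product of atoms. I would set $U:=A_1$ and $T:=U^{\circ}$, and first check $T=\prod_{i\geq 2}A_i$: distributivity of $\cap$ over product together with $A_1\cap A_i=R$ for $i\neq 1$ gives $U\cap\prod_{i\geq2}A_i=R$, while $U\cdot\prod_{i\geq2}A_i=S$, so $\prod_{i\geq2}A_i$ is the complement of $U$. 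Writing a general $V=\prod_{i\in I}A_i$ and splitting on whether $1\in I$ then yields $[R,S]=[R,T]\cup[U,S]$, that is, (1). Condition (4) is immediate from Proposition~\ref{4.021}. For (2) and (5) I would exhibit the explicit inverse $W\mapsto W\cap T$ of $\varphi$: using $U\cap T=R$, $L\subseteq T$ and $U\subseteq W$ one computes $T\cap UL=(T\cap U)(T\cap L)=L$ and $U(W\cap T)=UW\cap UT=W\cap S=W$ by distributivity, so $\varphi$ is a bijection onto $[U,S]$. Finally (3) follows from UCC (Proposition~\ref{cover}): since $L\subseteq T$ gives $U\cap L\subseteq U\cap T=R$, the minimal extension $U\cap L=R\subset U$ forces $L\subset UL$ to be minimal.

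For the reverse implications I would build the order map $\Phi:[R,T]\times\{0,1\}\to[R,S]$ by $\Phi(L,0)=L$ and $\Phi(L,1)=UL$, and show it is a lattice isomorphism; since $[R,T]$ is Boolean by (4) and $\{0,1\}$ is Boolean, the product is Boolean, whence $[R,S]$ is Boolean. The three facts needed are that $U$ is an atom, that the pieces are disjoint ($[R,T]\cap[U,S]=\emptyset$), and the key identity $(\star)$: $UL\cap T=L$ for every $L\in[R,T]$. Surjectivity of $\Phi$ is (1) together with (2) (which is contained in (5)); granting $(\star)$ and disjointness, a short case analysis shows $\Phi$ both preserves and reflects order, hence is an isomorphism. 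I would verify the three facts separately for each hypothesis set. Under (1)--(4): taking $L=R$ in (3) gives $R\subset U$ minimal, so $U$ is an atom; taking $L=T$ in (3) gives $UT\neq T$, which with $U$ an atom forces $U\cap T=R$ and hence disjointness; and $(\star)$ holds because $UL\cap T$ lies in the two-element interval $[L,UL]$ (minimality, condition (3)) and cannot equal $UL$ by disjointness. Under (1),(4),(5): condition (2) is just surjectivity of $\varphi$; if $U\subseteq T$ then $U\in[R,T]$ and $\varphi(U)=U=\varphi(R)$, so injectivity forces $U=R$, impossible, giving $U\not\subseteq T$ and disjointness; and $(\star)$ is immediate from injectivity, since $L\subseteq UL\cap T\subseteq UL$ yields $U(UL\cap T)=UL=\varphi(L)$, whence $\varphi(UL\cap T)=\varphi(L)$.

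The delicate point, and the one I expect to be the main obstacle, is precisely the identity $(\star)$ and the verification that $\Phi$ reflects order, because the two hypothesis sets behave differently here: with the minimality condition (3) the identity is geometric and immediate, whereas with only the bijectivity condition (5) one cannot invoke distributivity of $[R,S]$ (which is not yet known and, crucially, would involve $U\notin[R,T]$), so one must instead extract $(\star)$ purely from the injectivity of $\varphi$ as above. Once $\Phi$ is shown to be a lattice isomorphism in both cases, $[R,S]$ is Boolean. It remains to record the ``Moreover'' statement. After $\Phi$ is in place, $U=\Phi(R,1)$ is the image of an atom of $[R,T]\times\{0,1\}$, hence an atom of $[R,S]$, and $T=\Phi(T,0)$ is its complement, hence a co-atom; this covers both hypothesis sets uniformly. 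Conversely, the forward implication applies verbatim to an arbitrary atom $U'$ with $T':=(U')^{\circ}$, yielding (1)--(5) for $(U',T')$; and $[R,T']\cap[U',S]=\emptyset$ holds because any $V$ in the intersection would satisfy $U'\subseteq V\subseteq T'=(U')^{\circ}$, forcing $U'=U'\cap(U')^{\circ}=R$, a contradiction.
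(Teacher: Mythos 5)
Your proof is correct, and its backbone coincides with the paper's: the forward direction takes $U$ an atom and $T=U^{\circ}$ the product of the remaining atoms (via Theorem~\ref{4.0}/factoriality), and the reverse direction runs through exactly the product decomposition the paper uses in its final step, where it shows $[R,S]$ is a \emph{decomposable} lattice isomorphic to $[R,T]\times[R,U]$ with $[R,U]=\{R,U\}$ your two-element factor. But your verifications and logical organization differ in ways worth recording. In the forward direction, the paper derives (1), (2) and (3) from the uniqueness of atom factorizations, whereas you get (2) and (5) simultaneously from the explicit inverse $W\mapsto W\cap T$ (the two distributive laws $T\cap UL=(T\cap U)(T\cap L)$ and $U(W\cap T)=UW\cap UT$), and (3) from the upper covering condition of Proposition~\ref{cover} applied to $U\cap L=R\subset U$; both routes are sound. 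In the reverse direction the paper proceeds by chaining: it first proves $(1)+(2)+(3)+(4)\Rightarrow(5)$ and $(1)+(4)+(5)\Rightarrow(2)+(3)$ (its steps (b) and (c), the latter being rather terse about why an intermediate $L'\in\,]L,UL[$ is impossible), and only then, with all five conditions in hand, establishes decomposability and constructs complements of each $L\in[R,S]$ by hand in two cases. You instead isolate the two facts that make $\Phi$ order-reflecting --- disjointness $[R,T]\cap[U,S]=\emptyset$ and the identity $UL\cap T=L$ --- and prove them separately from each hypothesis set, extracting $UL\cap T=L$ under $(1),(4),(5)$ purely from injectivity of $\varphi$ (since $\varphi(UL\cap T)=\varphi(L)$); then Booleanness follows abstractly because an order isomorphism onto a product of Boolean lattices transports the Boolean structure, and the ``Moreover'' clause ($U=\Phi(R,1)$ an atom, $T=\Phi(T,0)$ a co-atom and the complement of $U$) comes for free from the isomorphism. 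What your packaging buys is uniformity (the two condition sets never need to be converted into one another) and the observation, implicit but not stated in the paper, that $U$ being an atom is not an input to the isomorphism argument but an output; what the paper's chaining buys is that the equivalence of the two condition sets is exhibited by direct standalone implications. One small point of care you handled correctly: under $(1),(4),(5)$ alone one cannot yet invoke distributivity of $[R,S]$, so the inverse-map computation from the forward direction is unavailable there, and injectivity of $\varphi$ is indeed the only available source for $UL\cap T=L$.
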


\begin{proof}   Assume  that $R\subset S$  has FIP 
 and is not minimal. Set $\mathcal A:=\{A_1,\ldots,A_n\}$.  
 We will prove the Theorem in four steps: 

(a) $R\subset S$ is  Boolean $\Rightarrow$ (1)+(2)+(3)+(4).

(b) (1)+(2)+(3)+(4)  $\Rightarrow$ (1)+(4)+(5).

(c) (1)+(4)+(5) $\Rightarrow$ (1)+(2)+(3)+(4).

(d) (1)+(2)+(3)+(4)+(5)  $\Rightarrow\ R\subset S$ is  Boolean.
 
 (a) Assume that $R\subset S$ is  Boolean. 
 Set $U:=A_1\in[R,S]$  and $T:=U^{\circ}\in[R,S]$,  the complement of $U$, so that $U\cap T=R$ and $UT=S$. Moreover, $T=\prod_{i=2}^nA_i$ \cite[Theorems 3.43 and 5.1]{R}. 

 (1) Let $L\in [R,S]$ and assume that $L\not\in[U,S]$. We have $U\subseteq L^{\circ}$,  the complement of $L$  \cite[Theorem 5.1]{R} and $U\subseteq L^{\circ}$ implies $L\subseteq T$ by \cite[Theorem 5.1]{R}, so that $L\in[R,T]$. Hence  (1)  is proved. Moreover, $[R,T]\cap[U,S]=\emptyset$ because  $U\subseteq L\subseteq T$, for some $L\in [R,T]\cap[U,S]$ leads to the contradiction $U\cap T=U=R$. 

(2) Each element of $[R,S]$ is a product of some $A_i$s by Theorem \ref{4.0203}. Let $L':=\prod_{i\in I}A_i\in[U,S]$ for some $I\subseteq\mathbb{N}_n$. Then, $1\in I$ because $U\subseteq L'$ (if not, $U\cap L'=R=U$ is absurd). In particular, $L'=UL$, where $L:=\prod_{i\in I\setminus\{1\} }A_i\subseteq T$ and (2) follows since $UL\in[U,S]$.  

  (3) Let $L:=\prod_{i\in I}A_i\in[R,T]$, so that $1\not\in I$ by (1). Then, $UL=\prod_{i\in I\cup\{1\}}A_i$. Let $L'\in[L,UL]$. There exists some $J\subseteq\mathbb{N}_n$ such that $L':=\prod_{i\in J}A_i$. By the uniqueness of this writing,  $I\subseteq J\subseteq I\cup\{1\}$, so that we have either $I=J$ or $J=I\cup\{1\}$, giving either $L'=L$ or $L'=UL$. It follows that $L\subset UL$ is  minimal  and (3) holds. 

 (4) By Proposition \ref{4.021} $R\subset T$ is Boolean. Remark that (1), (2), (3) and (4) hold for any atom $U'$  with complement $T'$.
  
In fact, we have the following diagram:
$\begin{matrix}
  R               & \rightarrow  &       L            & \rightarrow &       T            \\
\downarrow &          {}       & \downarrow  &        {}         & \downarrow \\
 U               & \rightarrow  &         UL        & \rightarrow &       S
\end{matrix}$
  
 (b) 
Assume that (1)+(2)+(3)+(4) holds. Then $\varphi$ is surjective by (2). Let $L,L'\in[R,T]$ be such that $\varphi(L)=\varphi(L')$, that is $UL=UL'$. Since $L,L'\subseteq T$, we get that $LL'\in[R,T]$. Moreover, $U(LL')=(UL)(UL')=UL$. Then, $L,L'\subseteq LL' \subseteq UL=UL'$ gives by (3) either $LL'\neq UL=UL'$, so that $LL'=L=L'$ or $LL'=UL\in[U,S]$. 
But, in this last case, $UL=LL'\subset ULL'=UL$ is minimal, a contradiction, because $LL'\in[R,T]$. 
 Then, 
 $\varphi$ is bijective and (5) holds.

 (c) 
Assume that (1)+(4)+(5) holds. Then (2) holds by (5). Let $L\in[R,T]$. Since $UL\in[U,S]$, we get $L\neq UL$. 
 Deny, so that $U\subseteq L\subseteq T$ yields $S=UT=T$, a contradiction. 
Assume that there exists some $L'\in]L,UL[$. It follows that $UL\subset UL'\subset U(UL)=UL$, a contradiction. Then, 
 $L\subset UL$ is minimal, giving (3).

 (d) Assume 
that 
 (1)+(2)+(3)+(4)+(5)  
holds for some $U,T\in[R,S]$ and  that $[R,T]\cap[U,S]\neq\emptyset$. If  $L\in[R,T]\cap[U,S]$,  then $L= UL$ is a contradiction with (3). So, $[R,T]\cap[U,S]=\emptyset$. We are going to prove that $[R,S]$ is a complemented distributive lattice. We first show that $T$ is the complement of $U$. We get that $R\subset U$ is minimal by (3), so that $R=U\cap T$, because $U\cap T=U$ would imply $U\subseteq T$, a contradiction. Indeed, $UT=S$ by (2) because the map $[R,T]\to [U,S]$ defined by $L\mapsto UL$  is increasing. Then, $T$ is a complement of $U$. 

 Now, conditions (1), (2)  
  and (5) show that $[R,S]$ is a {\it decomposable} lattice, that is for any $L\in[R,S]$, there is a unique pair $(L_1,L_2)\in[R,T]\times [R,U]$ such that $L=L_1L_2$ (see \cite[p.57]{Wh}). Set $L_1:=L$ and $L_2:=R$ when $L\in[R,T]$. Set  $L_2:=U$ and take $L_1\in[R,T]$ such that $L=\varphi(L_1)$ when $L\in [U,S]$. The uniqueness of such $(L_1,L_2)$ is obvious in each case. Then, $[R,S]$ is isomorphic as a lattice to $[R,T]\times [R,U]$. In particular, since $[R,T]$ and $[R,U]$ are each Boolean, and then distributive, so is $[R,S]$ by \cite[Proposition 3.5.1]{Wh}.

Let $L\in[R,S]=[R,T]\cup[U,S]$. Assume first that $L\in[R,T]$ with complement  $L''$  in $[R,T]$ because $R\subseteq T$ is Boolean, so that $L\cap L''=R$ and $LL''=T$. Setting $L':=L''U$, we get  $LL'=LL''U=TU=S$. 
 Moreover $L\cap L'=L\cap L''U=(L\cap L'')(L\cap U)=R$. Then, $L'$ is a complement of $L$ in $[R,S]$. 

Now, assume that $L\in[U,S]$. Set $L''=L\cap T\in[R,T]$, and let $L'\in[R,T]$ be its  complement  in $[R,T]$. Then, $L\cap L'=L\cap L'\cap T=L''\cap L'=R$. Moreover, $LL '=LUL'\supseteq UL''L'=UT=S$ gives that $LL'=S$. Then, $L$ has a complement in $[R,S]$. 

So, any element of $[R,S]$ has a complement in $[R,S]$,  which is unique by distributivity. 
To conclude, $R \subset S$ is  Boolean. 
\end{proof}

We are now in position to generalize and improve Ayache's result \cite[Theorem 38]{A1} for an  arbitrary FIP extension, using a completely different method. We will need  next results.

 \begin{proposition}\label{4.0201} An FCP extension $R\subset S$, such that $\mathrm{Supp}(T/R)\cap\mathrm{Supp}(S/T)=\emptyset$ for all $T\in[R,S]$,  is a Boolean extension. 
\end{proposition}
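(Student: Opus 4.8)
The plan is to reduce everything to results already proved in the paper, in particular Theorem~\ref{6.10} and Theorem~\ref{4.0203}. The hypothesis $\mathrm{Supp}(T/R)\cap\mathrm{Supp}(S/T)=\emptyset$ for all $T\in[R,S]$ is exactly the standing hypothesis of Theorem~\ref{6.10}, so I would invoke that theorem directly.

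\begin{proof}
Since $R\subset S$ has FCP and $\mathrm{Supp}(T/R)\cap\mathrm{Supp}(S/T)=\emptyset$ for all $T\in[R,S]$, Theorem~\ref{6.10} shows that $R\subset S$ is an FIP factorial extension. By Theorem~\ref{4.0203}, the factorial property is equivalent to $R\subset S$ being Boolean. Hence $R\subset S$ is a Boolean extension.
\end{proof}

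The main observation is that no new work is needed: the two cited theorems do all the lifting. Theorem~\ref{6.10} provides the crucial step, as it converts the support-disjointness hypothesis into the combinatorial statement that $R\subset S$ is locally minimal and factorial; from that point the equivalence ``factorial $\Leftrightarrow$ Boolean'' recorded in Theorem~\ref{4.0203} finishes immediately. The only potential subtlety is to confirm that the hypotheses of Theorem~\ref{6.10} match verbatim (they do: both require FCP together with the support-disjointness condition for \emph{all} intermediate $T$), so there is no gap in passing between the two results. An alternative route, equally short, would chain Proposition~\ref{6.9} and Proposition~\ref{6.6} to obtain that $R\subset S$ is arithmetic with $\mathcal T$ an antichain, then deduce local minimality and apply Corollary~\ref{5.9} to get Boolean; but the direct appeal to Theorem~\ref{6.10} is cleaner.

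I expect no genuine obstacle here, since the proposition is essentially a corollary repackaging Theorem~\ref{6.10} together with the factorial/Boolean equivalence. If anything, the only care required is to not reprove local minimality or factoriality by hand, which would duplicate the content of the earlier theorems.
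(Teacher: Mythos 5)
Your proof is correct, and there is no circularity: Theorem~\ref{6.10} (Section~2) and Theorem~\ref{4.0203} both precede this proposition and neither depends on it, and the FCP hypothesis needed in Theorem~\ref{4.0203} is part of the standing assumptions. Your route differs slightly from the paper's, though both pivot on Theorem~\ref{6.10}: the paper uses the \emph{other} half of that theorem's conclusion, namely that $R\subset S$ is locally minimal, so that each $R_M\subset S_M$ is minimal and hence Boolean by Example~\ref{4.01}(1), and then glues these local statements via the local--global Proposition~\ref{4.02}; you instead use the factorial half of the conclusion and the global lattice-theoretic equivalence ``factorial $\Leftrightarrow$ Boolean'' of Theorem~\ref{4.0203}. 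Yours is arguably the more direct reading, since the FIP factorial conclusion is recorded verbatim in Theorem~\ref{6.10} and only one equivalence remains to be cited; the paper's localization argument, on the other hand, exposes the local structure (minimality at each $M\in\mathrm{MSupp}(S/R)$) that is reused in the later characterizations such as Theorem~\ref{4.18}. Your sketched alternative via Propositions~\ref{6.9} and~\ref{6.6} followed by Corollary~\ref{5.9} would essentially re-traverse the proof of Theorem~\ref{6.10}, so you are right to discard it in favor of the direct citation.
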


\begin{proof} By Theorem \ref{6.10}, $R_M\subset S_M$ is  minimal, whence Boolean for $M\in\mathrm{MSupp}(S/R)$, so   that $R\subset S$ is  Boolean   by Proposition \ref{4.02}.
\end{proof} 

\begin{proposition}\label{4.8} Let $R\subset S$ be an FCP extension and let $T\in [R, S]$ such that $\mathrm{MSupp}(S/T)\cap\mathrm{MSupp}(T/R)=\emptyset$. Then $R\subset S$ is  Boolean  if and only if $R \subset T$ and $T\subset S$ are  Boolean. 
\end{proposition}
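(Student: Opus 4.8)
The plan is to treat the two implications separately, handling the easy direction with the hereditary property of Boolean extensions and the hard direction by localizing.

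For the forward implication I would point out that no hypothesis on supports is even needed: if $R\subset S$ is Boolean, then Proposition~\ref{4.021} asserts that $U\subset V$ is Boolean for \emph{every} pair $U\subseteq V$ in $[R,S]$. Applying this to the pairs $R\subseteq T$ and $T\subseteq S$ gives at once that both floors are Boolean.

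For the converse I would reduce everything to a local check via Proposition~\ref{4.02}. Since $R\subset S$ has FCP, so do the subextensions $R\subset T$ and $T\subset S$, so Proposition~\ref{4.02} applies to all three. The key observation is that, localizing the tower $R\subseteq T\subseteq S$ at a maximal ideal $M$ of $R$ (localization being exact), one obtains $R_M\subseteq T_M\subseteq S_M$, and hence $M\in\mathrm{MSupp}(S/R)$ if and only if $M\in\mathrm{MSupp}(T/R)$ or $M\in\mathrm{MSupp}(S/T)$. Combined with the standing hypothesis $\mathrm{MSupp}(S/T)\cap\mathrm{MSupp}(T/R)=\emptyset$, this yields a clean dichotomy: each $M\in\mathrm{MSupp}(S/R)$ lies in exactly one of the two sets. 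If $M\in\mathrm{MSupp}(T/R)$, then $M\notin\mathrm{MSupp}(S/T)$ forces $T_M=S_M$, whence $[R_M,S_M]=[R_M,T_M]$; if instead $M\in\mathrm{MSupp}(S/T)$, then $R_M=T_M$ forces $[R_M,S_M]=[T_M,S_M]$.

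Next I would apply Proposition~\ref{4.02} to the two given Boolean extensions to conclude that $R_M\subset T_M$ is Boolean for each $M\in\mathrm{MSupp}(T/R)$ and that $T_M\subset S_M$ is Boolean for each $M\in\mathrm{MSupp}(S/T)$. By the dichotomy, $R_M\subset S_M$ is therefore Boolean for every $M\in\mathrm{MSupp}(S/R)$, and a final appeal to Proposition~\ref{4.02} lifts this back to the conclusion that $R\subset S$ is Boolean. The only point requiring care — and what I would flag as the crux — is verifying that the disjointness hypothesis is precisely what makes each localized extension $R_M\subset S_M$ coincide with a localization of a single floor, so that Booleanness transports without any interaction between the two floors; the additivity of supports along the tower together with the exactness of localization are the routine ingredients that make this precise.
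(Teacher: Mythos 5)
Your proposal is correct and follows essentially the same route as the paper: the forward direction is exactly Proposition \ref{4.021} (indeed with no need of the support hypothesis), and the converse is the paper's own local dichotomy, namely that each $M\in\mathrm{MSupp}(S/R)$ lies in exactly one of $\mathrm{MSupp}(T/R)$ and $\mathrm{MSupp}(S/T)$, so that $[R_M,S_M]$ coincides with $[R_M,T_M]$ or $[T_M,S_M]$, with Proposition \ref{4.02} used both to localize the Boolean property of the two floors and to globalize it back to $R\subset S$. The only cosmetic difference is that you make explicit the additivity of supports along the tower, which the paper leaves implicit.
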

\begin{proof}  Assume first that $R\subseteq T$ and $T\subseteq S$ are Boolean. Let $M\in\mathrm{MSupp}(S/R)$. Then either $M\in\mathrm{MSupp}(S/T)\ (*)$ or $M\in\mathrm{MSupp}(T/R)$ $\ (**)$. In case $(*)$, $M\not\in\mathrm{MSupp}(T/R)$, so that $T_M=R_M$. Hence, $[R_M,S_M]=[T_M,S_M]$ is Boolean by Proposition \ref{4.02}. In case $(**)$, $M\not\in\mathrm{MSupp}(S/T)$, so that $T_M=S_M$. Then, $[R_M,S_M]=[R_M,T_M]$ is Boolean by Proposition \ref{4.02}. Therefore, $[R_M,S_M]$ is Boolean  for each $M\in\mathrm{MSupp}(S/R)$, and Proposition \ref{4.02} gives that $R\subset S$ is  Boolean.

 The converse is given by Proposition \ref{4.021}.  
 \end{proof} 
 
 \subsection { Characterization of Boolean extensions}  

\begin{theorem}\label{4.9} An FIP extension $R\subset S$ is  Boolean if and only if $\mathrm{(1)}$ and ${(2)}$ hold, in which case $\widetilde R$ is the complement of $\overline R$:
\begin{enumerate}
\item $\mathrm{Supp}(\overline R/R)\cap \mathrm{Supp}(S/\overline R)=\emptyset$.

\item $[R,\overline R]$ and $[\overline R,S]$ are Boolean lattices.
\end{enumerate}
\end{theorem}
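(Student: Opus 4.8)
The plan is to organize everything around the canonical piece $R\subseteq\overline R\subseteq S$, where $R\subseteq\overline R$ is integral and $\overline R\subseteq S$ is integrally closed, and to pass to the local extensions $R_M\subseteq S_M$ via Proposition \ref{4.02}. For the direct implication (Boolean $\Rightarrow$ (1) and (2)), condition (2) is immediate: $[R,\overline R]$ and $[\overline R,S]$ are intervals of the Boolean lattice $[R,S]$, hence Boolean by Proposition \ref{4.021}. For (1), I would first record that $R\subseteq\overline R$ is integral, so $\mathrm{Supp}(\overline R/R)\subseteq\mathrm{Max}(R)$, and that integral closure commutes with localization, giving $(\overline R)_M=\overline{R_M}^{S_M}$. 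Thus it suffices to prove, for each $M\in\mathrm{MSupp}(S/R)$, that $(\overline R)_M\in\{R_M,S_M\}$. Since $R_M\subseteq S_M$ is Boolean (Proposition \ref{4.02}), every element is a product of atoms, and $(\overline R)_M$ is exactly the product of the integral atoms of $[R_M,S_M]$ (an atom $A\subseteq(\overline R)_M$ iff $R_M\subset A$ is integral).

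The crux, and the main obstacle, is the local dichotomy: \emph{in a local FCP Boolean extension all atoms have the same type}. I would prove this by contradiction, assuming an integral atom $A_1$ and a flat-epimorphism atom $A_2$ coexist, the types coming from Theorem \ref{crucial}. Writing $\mathfrak m$ for the maximal ideal of $R_M$, one has $\mathfrak m A_1\neq A_1$ for the integral minimal extension and $\mathfrak m A_2=A_2$ for the flat-epimorphism minimal extension. Both atoms are crucial at $\mathfrak m$, so for every prime $P\neq\mathfrak m$ we get $(A_1)_P=(A_2)_P=R_P$, whence, setting $B:=A_1A_2$, $\mathrm{Supp}_{R_M}(B/R_M)=\{\mathfrak m\}$. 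Distinctness of the atoms gives $A_1\cap A_2=R_M$, so UCC (Proposition \ref{cover}) makes $A_2\subset B$ minimal, and it is integral because $A_1$ is integral over $R_M$. Its crucial ideal $\mathfrak n$ is then a maximal ideal of $A_2$; but $\mathfrak m A_2=A_2$ forces $P:=\mathfrak n\cap R_M\neq\mathfrak m$, so $\mathrm{Supp}_{R_M}(B/A_2)=\{P\}$ with $P\neq\mathfrak m$. Since $\mathrm{Supp}_{R_M}(B/A_2)\subseteq\mathrm{Supp}_{R_M}(B/R_M)=\{\mathfrak m\}$, we obtain $P=\mathfrak m$, a contradiction. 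Consequently all atoms share a type, so $(\overline R)_M$ is either the empty product $R_M$ (no integral atoms) or the product of all atoms, namely $S_M$; this is precisely condition (1).

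For the converse, I would read condition (1) as $\mathrm{MSupp}(\overline R/R)\cap\mathrm{MSupp}(S/\overline R)=\emptyset$, which is legitimate since $\mathrm{Supp}(\overline R/R)\subseteq\mathrm{Max}(R)$. Then Proposition \ref{4.8}, applied to the intermediate ring $T=\overline R$, together with the Booleanity of $[R,\overline R]$ and $[\overline R,S]$ from (2), yields at once that $R\subseteq S$ is Boolean.

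Finally, for the identification $\widetilde R=\overline R^{\circ}$, I would prove the two inclusions. The intersection $\widetilde R\cap\overline R$ is simultaneously a flat-epimorphic subextension of $R$ (it lies in the Pr\"ufer hull $\widetilde R$) and integral over $R$ (it lies in $\overline R$); an integral flat epimorphism being trivial, $\widetilde R\cap\overline R=R$, and in the Boolean lattice $[R,S]$ this meet-with-$\overline R$ equal to the least element forces $\widetilde R\subseteq\overline R^{\circ}$. Conversely, the integral closure of $R$ in $\overline R^{\circ}$ is $\overline R^{\circ}\cap\overline R=R$, so $R\subseteq\overline R^{\circ}$ is an FCP integrally closed extension and hence a Pr\"ufer extension; by maximality of the Pr\"ufer hull, $\overline R^{\circ}\subseteq\widetilde R$. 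Therefore $\overline R^{\circ}=\widetilde R$, completing the proof.
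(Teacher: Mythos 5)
Your proof is correct, but in the forward direction it takes a genuinely different route from the paper's. The paper's argument is essentially a citation: since $\overline R$ has a complement $T$ in the Boolean lattice, $T\cap\overline R=R$ and $T\overline R=S$, and \cite[Proposition 3.6]{Pic 3} then yields at once both the support disjointness (1) and the identification $T=\widetilde R$; condition (2) and the converse are handled by Proposition \ref{4.8} (hence Proposition \ref{4.021}), exactly as you do. You instead avoid the external reference: you prove by hand, via UCC (Proposition \ref{cover}) and crucial ideals (Theorem \ref{crucial}), the local dichotomy that in a local FCP Boolean extension all atoms have the same type, and deduce $(\overline R)_M\in\{R_M,S_M\}$ for each $M$ --- in effect you re-derive directly the content of Corollary \ref{4.93}, which in the paper is obtained \emph{from} Theorem \ref{4.9} together with \cite[Proposition 4.16]{Pic 5}, so your argument is self-contained and avoids any appearance of circularity. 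Similarly, your two-inclusion proof that $\widetilde R=\overline R^{\circ}$ (an integral flat epimorphism is trivial, so $\widetilde R\cap\overline R=R$ forces $\widetilde R\subseteq\overline R^{\circ}$ by distributivity; conversely $R\subseteq\overline R^{\circ}$ is integrally closed FCP, hence Pr\"ufer, hence inside $\widetilde R$) replaces the paper's appeal to \cite[Proposition 3.6]{Pic 3}. One small imprecision in your dichotomy argument: the equality $\mathrm{Supp}_{R_M}(B/A_2)=\{P\}$ is not quite what Lemma \ref{1.9} delivers, since the chains there start at the base ring; what you actually have, and all you need, is $P\in\mathrm{Supp}_{R_M}(B/A_2)$ (the conductor $\mathfrak n=(A_2:B)$ annihilates $B/A_2$ and survives after localizing at $R_M\setminus P$ because $\mathfrak n\cap R_M=P$), and this already contradicts $\mathrm{Supp}_{R_M}(B/A_2)\subseteq\mathrm{Supp}_{R_M}(B/R_M)\subseteq\{\mathfrak m\}$ with $P\neq\mathfrak m$. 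The trade-off: the paper's proof is a few lines but leans on two earlier papers, while yours is longer but elementary within the toolkit developed here.
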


\begin{proof}  If $[R,S]$ is Boolean, set $T:=(\overline R)^{\circ}$. Then $\overline R\cap T=R$ and $\overline RT=S$, so that $\mathrm{Supp}(\overline R/R)\cap \mathrm{Supp}(S/\overline R)=\emptyset$ \cite[Proposition 3.6]{Pic 3}, and (1) holds. The same proposition shows that $\widetilde R=(\overline R)^{\circ}$.
Now (2)  results from Proposition \ref{4.8}.

Conversely, assume that (1) and (2) hold. Then, Proposition \ref{4.8} implies that $[R,S]$ is  Boolean.
\end{proof}

We get a generalization of Ayache's result \cite[Theorem 38]{A1} as a corollary thanks to the following lemma.

\begin{lemma}\label{4.91} Let $R\subset S$ be an FCP extension. Set $X:=\{(T',T'')\in [R,\overline R]\times[\overline R,S]\mid\mathrm{Supp}_{T'}(\overline R/T')\cap\mathrm{Supp}_{T'}(T''/\overline R)=\emptyset\}$. There is a bijection $\varphi:[R,S]\to X$ defined by $\varphi(T):=(T\cap\overline R,\overline RT)$ for each $T\in[R,S]$. In particular, if $R\subset S$ has  FIP, then $|[R,S]|\leq|[R,\overline R]||[\overline R,S]|$. 
\end{lemma}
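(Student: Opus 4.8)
The plan is to exhibit an explicit inverse to $\varphi$ given by the Pr\"ufer hull and to verify the bijection by localizing at the primes of $T'$. Throughout I use that, since $R\subseteq S$ has FCP, $R\subseteq\overline R$ is integral while $\overline R\subseteq S$ is a normal pair (an integrally closed FCP extension being Pr\"ufer). Consequently, for any pair $(T',T'')\in[R,\overline R]\times[\overline R,S]$ the ring $\overline R$ is at once integral over $T'$ and integrally closed in $T''$, so $\overline R=\overline{T'}^{T''}$ is the integral closure of $T'$ in $T''$. For $T\in[R,S]$, writing $T':=T\cap\overline R$ and $T'':=\overline RT$, the ring $T'$ is the integral closure of $R$ in $T$, hence is integrally closed in $T$, so that $T'\subseteq T$ is Pr\"ufer.

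First I would prove that $\varphi$ is well defined, that is $\varphi(T)\in X$. The key observation is that, inside the lattice $[T',T'']$, the ring $T$ is a complement of the integral closure $\overline R=\overline{T'}^{T''}$: indeed $\overline R\cap T=T'$ and $\overline RT=T''$ by construction. Applying \cite[Proposition 3.6]{Pic 3} to the extension $T'\subseteq T''$, its integral closure $\overline R$, and the complement $T$, then yields $\mathrm{Supp}_{T'}(\overline R/T')\cap\mathrm{Supp}_{T'}(T''/\overline R)=\emptyset$, which is exactly the condition $\varphi(T)\in X$. This is the step I expect to be the main obstacle, because it is the only place where the disjointness of supports is \emph{created} rather than merely exploited; note that for a complement that is not the integral closure the conclusion genuinely fails (e.g.\ two linearly disjoint algebraic field extensions), so it is essential that one of the two factors be $\overline R$.

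Next I would define the candidate inverse $\psi\colon X\to[R,S]$ by $\psi(T',T''):=\widetilde{T'}^{T''}$, the Pr\"ufer hull of $T'$ in $T''$ (which lies in $[R,S]$ since $R\subseteq T'\subseteq\psi(T',T'')\subseteq T''\subseteq S$), and check $\psi\circ\varphi=\mathrm{id}$ and $\varphi\circ\psi=\mathrm{id}$ by localizing at each $Q\in\mathrm{Spec}(T')$. Here I use that the Pr\"ufer hull commutes with localization, that the Pr\"ufer hull of an integral (resp.\ Pr\"ufer) extension is the base (resp.\ the whole ring), and that $\widetilde{T'}^{T''}\cap\overline R=T'$. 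The disjointness defining $X$ means every $Q$ avoids at least one of the two supports, giving two cases. If $\overline R_Q=T'_Q$, then $T''_Q=\overline R_QT_Q=T_Q$ and $T'_Q\subseteq T''_Q$ is Pr\"ufer, so the hull recovers $(\widetilde{T'}^{T''})_Q=T''_Q=T_Q$; if instead $T''_Q=\overline R_Q$, then $T_Q\subseteq\overline R_Q$ forces $T_Q=T'_Q$ and the hull of the integral extension $T'_Q\subseteq\overline R_Q$ is $T'_Q=T_Q$. In both cases $(\widetilde{T'}^{T''})_Q=T_Q$, and as equality of $T'$-submodules of $T''$ is local this gives $\psi(\varphi(T))=T$, hence injectivity. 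The same case split, applied to an arbitrary $(T',T'')\in X$ with $T:=\widetilde{T'}^{T''}$, shows $(\overline RT)_Q=T''_Q$ at every $Q$, whence $\overline RT=T''$; together with the automatic identity $T\cap\overline R=T'$ this gives $\varphi(\psi(T',T''))=(T',T'')$, proving surjectivity.

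Finally, the bijection $\varphi\colon[R,S]\to X$ yields $|[R,S]|=|X|\le|[R,\overline R]|\,|[\overline R,S]|$, and when $R\subseteq S$ has FIP all three cardinals are finite, giving the stated inequality.
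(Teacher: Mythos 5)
Your proposal is correct, and its pivotal step is the same as the paper's: both proofs first observe that $\overline R$ is the integral closure of $T'$ in $T''$ for every pair in $[R,\overline R]\times[\overline R,S]$, and both obtain $\varphi(T)\in X$ by viewing $T$ as a complement of $\overline R$ inside $[T',T'']$ and invoking \cite[Proposition 3.6]{Pic 3} --- you rightly single this out as the only place where the disjointness of supports is \emph{created}. After that you diverge: the paper settles injectivity and surjectivity by two further appeals to the same Proposition 3.6, which already packages the existence and uniqueness of the complement of the integral closure under the support condition, whereas you unfold that content by exhibiting the inverse explicitly as $\psi(T',T'')=\widetilde{T'}^{T''}$ and checking $\psi\circ\varphi=\mathrm{id}$ and $\varphi\circ\psi=\mathrm{id}$ prime by prime, via the dichotomy that $T'_Q\subseteq T''_Q$ is either integral or integrally closed (hence Pr\"ufer, being FCP). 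Your route buys an explicit formula for the inverse, anticipating the identification $\widetilde R=(\overline R)^{\circ}$ that the paper only records in Theorem \ref{4.9}, and a transparent local picture; its cost is the unreferenced input that the Pr\"ufer hull commutes with localization. Note that for $\psi\circ\varphi=\mathrm{id}$ you can avoid it entirely: since $T'\subseteq T$ is Pr\"ufer, $T\subseteq\widetilde{T'}\subseteq T''$, and the squeeze $T_Q=T''_Q$ (case $\overline R_Q=T'_Q$), respectively Pr\"ufer-plus-integral is trivial (case $T''_Q=\overline R_Q$), give $(\widetilde{T'})_Q=T_Q$ using only that Pr\"ufer extensions localize. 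By contrast, the case $T'_Q=\overline R_Q$ of $\varphi\circ\psi=\mathrm{id}$ genuinely needs the hard inclusion $(\widetilde{T'})_Q\supseteq\widetilde{T'_Q}$; this does hold for FCP extensions and belongs to the authors' theory of Pr\"ufer hulls (\cite{Pic 3}, \cite{Pic 5}), but it is a nontrivial fact that deserves an explicit citation. Two minor points: you skip the degenerate case $T'=T''(=\overline R)$, which the paper treats separately before applying \cite[Proposition 3.6]{Pic 3} (both supports are then empty, so nothing is lost), and your standing fact that an integrally closed FCP extension is Pr\"ufer is the same \cite{DPP2} material the paper uses implicitly.
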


\begin{proof} Let $(T',T'')\in[R,\overline R]\times[\overline R,S]$. Then, $\overline R$ is also the integral closure $\overline{T'}$ of $T'$ in $T ''$ (and in $S$). 

Let $T\in[R,S]$. Set $T':=T\cap\overline R$ and $T'':=\overline RT$. Then $(T',T'')\in [R,\overline R]\times[\overline R,S]$. If $T'=T''$, then $T'=T''=\overline R$ implies $T=\overline R$ and $\mathrm{Supp}_{T'}(\overline R/T')=\mathrm{Supp}_ {T'}(T''/\overline R)=\emptyset$. Now assume that $T'\neq T''$, then $\mathrm{Supp}_{T'}(\overline R/T')\cap\mathrm{Supp}_{T'}(T''/\overline R)=\emptyset$ \cite[Proposition 3.6]{Pic 3}. Hence we can define $\varphi:[R,S]\to X
$ by $\varphi(T):=(T\cap\overline R,\overline RT)$ for each $T\in[R,S]$.

Now, let $T_1,T_2\in[R,S]$ be such that $\varphi(T_1)=\varphi(T_2)=(T',T'')$. Assume $T'\neq T''$. Another use of \cite[Proposition 3.6]{Pic 3} gives that $T_1=T_2$. If $T'=T''$, then, $T'=T''=\overline R$, so that $T_1=T_2=\overline R$. It follows that $\varphi$ is injective. The same reference gives that $\varphi$ is bijective. 
\end{proof}

\begin{corollary}\label{4.92} The extension  $R\subset S$ is  FIP Boolean if and only if the two following conditions hold:

 \begin{enumerate}
\item $R\subseteq\overline R$ and $\overline R\subseteq S$ are FIP Boolean extensions.
\item $| [R, S] |=|  [R,\overline R]  ||  [\overline R,S]     |$.
\end{enumerate} 
\end{corollary}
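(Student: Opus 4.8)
The plan is to play off the two tools established immediately beforehand: the characterization of Boolean FIP extensions in Theorem~\ref{4.9}, which says that $R\subset S$ is Boolean exactly when $\mathrm{Supp}(\overline R/R)\cap\mathrm{Supp}(S/\overline R)=\emptyset$ and both $[R,\overline R]$ and $[\overline R,S]$ are Boolean, and the injection $\varphi:[R,S]\to X$ of Lemma~\ref{4.91}, where $X\subseteq[R,\overline R]\times[\overline R,S]$ is cut out by the local support-disjointness condition. The whole corollary amounts to recognizing that condition $(2)$, the cardinality identity, is equivalent to $\varphi$ being onto the \emph{full} product $[R,\overline R]\times[\overline R,S]$, that is, to $X$ being everything.

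For the forward implication, suppose $R\subset S$ is FIP Boolean. Condition $(1)$ is immediate: the halves $R\subseteq\overline R$ and $\overline R\subseteq S$ are subposets of the finite poset $[R,S]$, hence FIP, and they are Boolean by Theorem~\ref{4.9}. For condition $(2)$ I would use the bijection $[R,S]\to X$ and argue that $X=[R,\overline R]\times[\overline R,S]$. Fix $(T',T'')$ in the product; I must show $\mathrm{Supp}_{T'}(\overline R/T')\cap\mathrm{Supp}_{T'}(T''/\overline R)=\emptyset$. The key observation is functoriality of supports under $\mathrm{Spec}(T')\to\mathrm{Spec}(R)$: if $Q\in\mathrm{Spec}(T')$ lies over $P\in\mathrm{Spec}(R)$ and $(\overline R/T')_Q\neq0$, then $(\overline R/R)_P\neq0$, because $(\overline R/R)_P=0$ would force $T'_P=R_P=\overline R_P$ and hence $(\overline R/T')_Q=0$; likewise $(T''/\overline R)_Q\neq0$ forces $(S/\overline R)_P\neq0$, since $(S/\overline R)_P=0$ gives $T''_P=\overline R_P$. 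Thus a prime common to the two supports over $T'$ would contract to a prime common to $\mathrm{Supp}(\overline R/R)$ and $\mathrm{Supp}(S/\overline R)$, contradicting Theorem~\ref{4.9}. Hence $X$ is the whole product and $(2)$ follows from $|[R,S]|=|X|$.

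For the converse, assume $(1)$ and $(2)$. From $(1)$ both halves are FCP, so $R\subseteq S$ is FCP (FCP of an extension is equivalent to FCP of its integral and integrally closed parts), which lets me invoke Lemma~\ref{4.91}: $\varphi:[R,S]\to X$ is a bijection with $X$ finite, so $R\subset S$ has FIP and $|[R,S]|=|X|\leq|[R,\overline R]||[\overline R,S]|$. Condition $(2)$ upgrades this to equality, so the finite set $X$ and the finite product $[R,\overline R]\times[\overline R,S]$ have the same cardinality; since $X$ is a subset of the product, $X$ is the entire product. In particular the pair $(R,S)$ lies in $X$, which is precisely the assertion $\mathrm{Supp}(\overline R/R)\cap\mathrm{Supp}(S/\overline R)=\emptyset$, condition $(1)$ of Theorem~\ref{4.9}. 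Together with the hypothesis that $[R,\overline R]$ and $[\overline R,S]$ are Boolean, Theorem~\ref{4.9} then yields that $R\subset S$ is Boolean.

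I expect the main obstacle to be the forward direction's passage from the single disjointness $\mathrm{Supp}(\overline R/R)\cap\mathrm{Supp}(S/\overline R)=\emptyset$ to the disjointness of the two supports over every intermediate $T'$; the localization bookkeeping there, namely checking that vanishing over $R$ forces the relevant vanishing over $T'$, is the one genuinely computational point, whereas the converse is essentially the counting observation that a subset of a finite product with full cardinality must be the whole product, plus a citation of Theorem~\ref{4.9}. A secondary point to handle with care is justifying that FCP of the two halves gives FCP of $R\subseteq S$, which is needed before Lemma~\ref{4.91} can be applied.
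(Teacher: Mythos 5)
Your proof is correct, and its overall skeleton (Theorem \ref{4.9} plus the bijection $\varphi:[R,S]\to X$ of Lemma \ref{4.91}) is the same as the paper's, but you diverge at two points worth noting. In the forward direction, the paper simply cites \cite[Lemma 3.7]{Pic 3} for the identity $|[R,S]|=|[R,\overline R]||[\overline R,S]|$ once Theorem \ref{4.9} gives $\mathrm{Supp}(\overline R/R)\cap\mathrm{Supp}(S/\overline R)=\emptyset$; you instead make the argument self-contained by proving that $X$ is the \emph{whole} product $[R,\overline R]\times[\overline R,S]$, via the contraction argument that a prime $Q\in\mathrm{Supp}_{T'}(\overline R/T')\cap\mathrm{Supp}_{T'}(T''/\overline R)$ would lie over some $P\in\mathrm{Supp}_R(\overline R/R)\cap\mathrm{Supp}_R(S/\overline R)$ (your localization bookkeeping is sound: $(\overline R/R)_P=0$ forces $T'_P=\overline R_P$, and $(\overline R/T')_Q$ is a further localization of $(\overline R/T')_P$, and dually for the other support). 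This buys independence from the external lemma at the cost of a short computation. In the converse you are actually \emph{more} careful than the paper: the paper's parenthetical ``because $R\subset S$ has FIP'' glosses over why Lemma \ref{4.91} (which needs FCP) applies at all under hypotheses $(1)$ and $(2)$; your route --- FIP of the halves gives FCP of the halves, hence FCP of $R\subseteq S$ by the composition result for FCP across the integral closure (\cite[Theorem 3.13]{DPP2} is the reference you should supply), after which injectivity of $\varphi$ plus the cardinality equality forces $X$ to be the full finite product, so $(R,S)\in X$ recovers Theorem \ref{4.9}(1) --- fills that small expository gap. One cosmetic remark: for condition $(1)$ of the forward direction the paper quotes Proposition \ref{4.021} (intervals of a Boolean lattice are Boolean) rather than Theorem \ref{4.9}; both are legitimate, since Theorem \ref{4.9}(2) asserts exactly the Booleanness of the two halves.
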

\begin{proof}  If $R\subset S$ is FIP Boolean, Proposition \ref{4.021} implies that so are $R\subseteq \overline R$ and $\overline R\subseteq S$, 
 giving (1). Since $\mathrm{Supp}(\overline R/R)\cap\mathrm{Supp}(S/\overline R)=\emptyset$ by Theorem \ref{4.9},  we infer  that $|[R,S]|=|[R,\overline R]||[\overline R,S]|$ \cite[Lemma 3.7]{Pic 3}.

Conversely, assume that conditions (1) and (2) (which are equivalent to those of Theorem \ref{4.9} because $R\subset S$ has FIP) hold. The map $\varphi$ defined in Lemma \ref{4.91} being injective, condition (2) shows that $\varphi([R,S])=[R,\overline R]\times[\overline R,S]$. Therefore, there is $T\in[R,S]$ such that $\varphi(T)=(R,S)$, inducing by the properties of $\varphi$ that $\mathrm{Supp}_R(\overline R/R)\cap\mathrm{Supp}_R(S/\overline R)=\emptyset$, (actually, the condition of Theorem \ref{4.9}(1), asserting that $R\subset S$ is  Boolean).
\end{proof}

\begin{corollary}\label{4.93} Let $R\subset S$ be an FIP Boolean extension, where $R$ is a local ring. Then, $R\subset S$ is either Pr\"ufer, or integral.
\end{corollary}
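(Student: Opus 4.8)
The plan is to leverage the canonical decomposition together with the structural information already extracted in Theorem~\ref{4.9}. Recall that for an FIP Boolean extension, Theorem~\ref{4.9} tells us that $\mathrm{Supp}(\overline R/R)\cap\mathrm{Supp}(S/\overline R)=\emptyset$ and, moreover, that the Pr\"ufer hull $\widetilde R$ is precisely the complement $(\overline R)^{\circ}$ of $\overline R$ in the Boolean lattice $[R,S]$. These two facts are the only inputs I expect to need; everything else reduces to exploiting the hypothesis that $R$ is local.

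First I would record the effect of locality on supports. If $M$ denotes the maximal ideal of $R$, then $R\setminus M$ consists of units, so for every $T\in[R,S]$ the localization $T_M=T_{R\setminus M}$ equals $T$ itself. Consequently, for any $U\subseteq T$ in $[R,S]$, we have $M\in\mathrm{Supp}(T/U)$ if and only if $T\neq U$; in other words, the maximal ideal $M$ detects every strict inclusion in the tower $R\subseteq\overline R\subseteq S$.

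Next I would apply this to that tower. Suppose, for contradiction, that both inclusions are strict, i.e.\ $R\neq\overline R$ and $\overline R\neq S$. By the previous paragraph this forces $M\in\mathrm{Supp}(\overline R/R)$ and $M\in\mathrm{Supp}(S/\overline R)$ simultaneously, contradicting the disjointness furnished by Theorem~\ref{4.9}. Hence at least one of the equalities $\overline R=R$ or $\overline R=S$ must hold.

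Finally I would read off the conclusion in each case. If $\overline R=S$, then $R\subseteq S=\overline R$ is integral by the very definition of integral closure. If instead $\overline R=R$, then the complement relation $\widetilde R=(\overline R)^{\circ}$ from Theorem~\ref{4.9} gives $\widetilde R=R^{\circ}=S$, since the complement of the least element $R$ of a Boolean lattice is its greatest element $S$; thus $R\subseteq S=\widetilde R$ is Pr\"ufer. The only point requiring care is this last step: being integrally closed in $S$ (i.e.\ $\overline R=R$) does not by itself yield the Pr\"ufer property, and one genuinely needs the identification $\widetilde R=(\overline R)^{\circ}$ to upgrade $\overline R=R$ to $\widetilde R=S$. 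This, rather than any computation, is the crux of the argument.
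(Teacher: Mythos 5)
Your proof is correct. It shares its starting point with the paper---both extract from Theorem \ref{4.9} the disjointness $\mathrm{Supp}(\overline R/R)\cap\mathrm{Supp}(S/\overline R)=\emptyset$---but where the paper's proof then simply cites \cite[Proposition 4.16]{Pic 5} (a result about almost-Pr\"ufer extensions over a local base) to conclude, you make that step self-contained: since $R$ is local with maximal ideal $M$, localization at $M$ fixes every $T\in[R,S]$, so $M$ lies in $\mathrm{Supp}(\overline R/R)$ precisely when $R\neq\overline R$ and in $\mathrm{Supp}(S/\overline R)$ precisely when $\overline R\neq S$, and disjointness forces one of the two inclusions to collapse. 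This is a genuine gain in self-containedness: the corollary no longer depends on the quasi-Pr\"ufer machinery of \cite{Pic 5}, only on Theorem \ref{4.9} and a two-line support computation. Your treatment of the case $\overline R=R$ via $\widetilde R=(\overline R)^{\circ}=R^{\circ}=S$ is also valid, the complement of the least element of a Boolean lattice being its greatest element. One remark: your closing claim that $\overline R=R$ ``does not by itself yield the Pr\"ufer property'' is overstated in the present setting---for FIP (indeed FCP) extensions, integrally closed implies Pr\"ufer by \cite[Proposition 6.9]{DPP2}, which is exactly how the paper argues in Proposition \ref{4.10}---so the identification $\widetilde R=(\overline R)^{\circ}$ is a convenient shortcut rather than the crux of the argument; but your use of it is rigorous and nothing in the proof is affected.
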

\begin{proof} Use Theorem \ref{4.9} and \cite[Proposition 4.16]{Pic 5}. 
\end{proof} 

By Corollary \ref{4.93} and Proposition \ref{4.02}, the characterization of Boolean FIP extension $R\subseteq S$ can be reduced to those that are either Pr\"ufer or  integral, with $R$  local. 

For the Pr\"ufer case, we recover and generalize  some Ayache's results on  extensions of integral domains \cite[Proposition 35]{A1}. 

\begin{proposition}\label{4.10} An integrally closed extension $R\subseteq S$  is Boolean  and FIP  if and only if $R\subseteq S$ is a Pr\"ufer extension and $|\mathrm{Supp}(S/R)|=|\mathrm{MSupp}(S/R)|<\infty$. If these conditions hold, $|[R,S]|=2^{|\mathrm{Supp}(S/R)|}$. 
\end{proposition}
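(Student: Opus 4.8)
The plan is to reduce everything to the local picture through Proposition \ref{4.02} and to identify, for integrally closed extensions, the Boolean condition with local minimality. Throughout recall that an FCP integrally closed extension is arithmetic (the Example following Definition \ref{4.2}, case (2)), hence distributive by Proposition \ref{5.4}; so there is no distributivity to verify by hand, and in particular $[R_M,S_M]$ is a chain for every $M$.

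For the forward implication, suppose $R\subseteq S$ is integrally closed, Boolean and FIP (hence FCP). By Proposition \ref{4.02}, $R_M\subseteq S_M$ is Boolean for each $M\in\mathrm{MSupp}(S/R)$, and by arithmeticity $[R_M,S_M]$ is a chain; a chained Boolean extension is minimal by Example \ref{4.01}(1), so $R\subseteq S$ is locally minimal. Since integral closure commutes with localization, each $R_M\subset S_M$ is minimal and integrally closed, hence a flat epimorphism by Theorem \ref{crucial} (it cannot be integral), i.e.\ Pr\"ufer minimal; as the Pr\"ufer property is local, $R\subseteq S$ is Pr\"ufer. Finally, local minimality is exactly the support-disjointness condition of Theorem \ref{6.10}, so Proposition \ref{6.9}(1) gives $\mathrm{Supp}(S/R)\subseteq\mathrm{Max}(R)$, whence $|\mathrm{Supp}(S/R)|=|\mathrm{MSupp}(S/R)|$, which is finite by FIP.

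For the converse, assume $R\subseteq S$ is Pr\"ufer with $|\mathrm{Supp}(S/R)|=|\mathrm{MSupp}(S/R)|<\infty$. Fix $M\in\mathrm{MSupp}(S/R)$; then $R_M\subseteq S_M$ is Pr\"ufer, $R_M$ is local, and since every prime of $\mathrm{Supp}(S/R)$ is maximal we have $\mathrm{Supp}_{R_M}(S_M/R_M)=\{MR_M\}$. The heart of the proof is the local claim that such an extension is minimal. To see it, take $R_M\subseteq T\subseteq S_M$ with $T\neq R_M$; then $R_M\to T$ is a flat epimorphism with $\mathrm{Supp}_{R_M}(T/R_M)=\{MR_M\}$. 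If $MR_M$ lay in the image of $\mathrm{Spec}(T)\to\mathrm{Spec}(R_M)$, then (the other primes already being in the image, since $T_P=(R_M)_P\neq 0$ for $P\neq MR_M$) the map $R_M\to T$ would be surjective on spectra, hence faithfully flat, and a faithfully flat epimorphism is an isomorphism, contradicting $T\neq R_M$. So no prime of $T$ lies over $MR_M$; combined with $(S_M)_P=(R_M)_P=T_P$ for every $P\neq MR_M$, this yields $\mathrm{Supp}_T(S_M/T)=\emptyset$, i.e.\ $T=S_M$. Thus $R_M\subset S_M$ is minimal and $R\subseteq S$ is locally minimal.

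Once local minimality is in hand the remaining assertions are formal. With $\mathrm{MSupp}(S/R)$ finite, the injection $\varphi$ of Section~2 lands in a finite product of two-element lattices, so $R\subseteq S$ has FIP; each $R_M\subset S_M$ is minimal, hence Boolean, so $R\subseteq S$ is Boolean by Proposition \ref{4.02} (equivalently, invoke Proposition \ref{4.0201} via Theorem \ref{6.10}). For the cardinality, $\mathrm{Supp}(S/R)\subseteq\mathrm{Max}(R)$ makes $R\subseteq S$ a $\mathcal B$-extension (Proposition \ref{6.5}), so $\varphi$ is bijective and $|[R,S]|=\prod_{M\in\mathrm{MSupp}(S/R)}|[R_M,S_M]|=2^{|\mathrm{MSupp}(S/R)|}=2^{|\mathrm{Supp}(S/R)|}$; alternatively this is Theorem \ref{4.0} together with the fact that local minimality produces exactly one atom over each maximal support ideal. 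The main obstacle is precisely the local claim in the converse: everything else is a recombination of Proposition \ref{4.02}, Theorem \ref{6.10}, Corollary \ref{5.9} and the counting results, but passing from ``Pr\"ufer with one-point closed support'' to ``minimal'' genuinely requires the flat-epimorphism and faithfully-flat analysis carried out above.
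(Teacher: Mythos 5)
Your proof is correct, and it takes a genuinely more self-contained route than the paper's. The paper's proof is essentially a sequence of citations to \cite{DPP2}: Proposition 6.9 there yields that an integrally closed FIP extension is Pr\"ufer with finite support, Theorem 6.10 there gives that each $[R_M,S_M]$ is chained (so that Boolean forces minimal via Example \ref{4.01}(1) and Proposition \ref{4.02}, the one step you share verbatim), and Proposition 6.12 there supplies, in the converse, both the local fact $|[R_M,S_M]|=2$ and the count $|[R,S]|=2^{|\mathrm{Supp}(S/R)|}$. You replace all three external inputs: chainedness comes from arithmeticity of integrally closed FCP extensions (equivalent to the paper's citation, but internal); the Pr\"ufer property in the forward direction is rebuilt from Theorem \ref{crucial} (a minimal integrally closed extension cannot be integral, hence is a flat epimorphism) plus locality of the Pr\"ufer property; $\mathrm{Supp}(S/R)\subseteq\mathrm{Max}(R)$ comes from Theorem \ref{6.10} and Proposition \ref{6.9}(1); and, most substantially, you prove from scratch the key local claim behind \cite[Proposition 6.12]{DPP2} --- that a Pr\"ufer extension of a local ring with support exactly $\{MR_M\}$ is minimal --- via the fiber computation $T_P/PT_P=\kappa(P)\neq 0$, the fact that a flat map surjective on spectra is faithfully flat, and the classical fact that a faithfully flat epimorphism is an isomorphism; I checked this argument (including the deduction $\mathrm{Supp}_T(S_M/T)=\emptyset$ by localizing $(S_M/T)_P=0$ further at primes of $T$) and it is sound. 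Your counting via the $\mathcal B$-extension criterion of Proposition \ref{6.5} likewise substitutes for \cite[Proposition 6.12]{DPP2}. What your route buys is independence from the quantitative results of \cite{DPP2}, at the price of invoking two facts the paper never states explicitly --- that the Pr\"ufer property localizes and is local (both standard, due to Knebusch--Zhang, and in any case easy since flatness and the epimorphism condition $T\otimes_RT\cong T$ can each be checked at maximal ideals of $R$) --- so a careful write-up should add a reference or the one-line verification for that step.
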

\begin{proof}  
Assume first that 
$|\mathrm{Supp}(S/R)|=|\mathrm{MSupp}(S/R)|<\infty$ and that $R\subseteq S$ is Pr\"ufer. From \cite[Proposition 6.9]{DPP2}, we infer that $R\subseteq S$ has FIP. Moreover, we proved in \cite[Proposition 6.12]{DPP2}, that, under these conditions, $|[R_M,S_M]|= 2$ for each $M\in\mathrm{Supp}(S/R)$, so that $R_M\subset S_M$ is Boolean for each $M\in\mathrm{Supp}(S/R)$ by Example \ref{4.01}(1), and then $[R,S]$ is Boolean by Proposition \ref{4.02}. From \cite[Proposition 6.12]{DPP2}, we deduce that $|[R,S]|=2^{|\mathrm{Supp}(S/R)|}$.

Conversely, suppose that $R\subseteq S$ is Boolean and has FIP. Then, \cite[Proposition 6.9]{DPP2} implies that $R\subseteq S$ is Pr\"ufer and $\mathrm{Supp}(S/R)$ is finite. Since $R\subseteq S$ is Boolean, so is $R _M\subset S_M$ for each $M\in\mathrm{Supp}(S/R)$ by Proposition \ref{4.02}. But, $[R_M,S_M]$ is chained \cite[Theorem 6.10]{DPP2}. It follows that $R_M\subset S_M$ is minimal for each $M\in\mathrm{Supp}(S/R)$ by Example \ref{4.01} (1), so that $|\mathrm{Supp}_{R_M}(S_M/R_M)|=1$ for each $M\in\mathrm {Supp}(S/R)$ and then $\mathrm{Supp}(S/R)\subseteq\mathrm{MSupp}(S/R)$ completes the proof.
\end{proof}

\begin{corollary}\label{4.11} An integrally closed FIP extension $R\subset S$, with $\mathrm{MSupp}(S/R):=\{M_1,\ldots,M_n\}$, is  Boolean if and only if $R\subset S$ is locally minimal. In this case, 
$R\subset S$ is a $\mathcal B$-extension.
\end{corollary}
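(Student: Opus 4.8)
The plan is to reduce the whole statement to the local behaviour at the maximal ideals of $\mathrm{MSupp}(S/R)$, using that an integrally closed FCP extension is automatically arithmetic. The first observation I would record is that, since $R \subseteq S$ is integrally closed and FCP, it is arithmetic by the Example following Definition~\ref{4.2} (item (2)); consequently $[R_M, S_M]$ is a chain for every $M \in \mathrm{MSupp}(S/R)$.

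For the implication ``Boolean $\Rightarrow$ locally minimal'', I would localize: by Proposition~\ref{4.02} each $R_M \subseteq S_M$ is Boolean, and since it is also chained, Example~\ref{4.01}(1) forces $R_M \subset S_M$ to be minimal for every $M \in \mathrm{MSupp}(S/R)$. By the definition of local minimality (minimality at all $M \in \mathrm{MSupp}(S/R)$ being one of its equivalent formulations), this is exactly what is needed. For the converse ``locally minimal $\Rightarrow$ Boolean'', I would run the same localization backwards: local minimality gives each $R_M \subset S_M$ minimal, hence Boolean by Example~\ref{4.01}(1), and Proposition~\ref{4.02} lifts this to $R \subset S$ being Boolean. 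One could instead route through Theorem~\ref{6.10}, which converts local minimality into the disjointness condition $\mathrm{Supp}(T/R)\cap\mathrm{Supp}(S/T)=\emptyset$ for all $T\in[R,S]$, and then invoke Proposition~\ref{4.0201}; but the direct argument via Example~\ref{4.01}(1) is shorter.

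For the closing assertion that $R \subset S$ is a $\mathcal{B}$-extension, I would again start from local minimality, apply Theorem~\ref{6.10} to obtain the disjointness condition, and then quote Proposition~\ref{6.9}(1), which yields both $\mathrm{Supp}(S/R)\subseteq\mathrm{Max}(R)$ and the $\mathcal{B}$-extension property; alternatively, once $\mathrm{Supp}(S/R)\subseteq\mathrm{Max}(R)$ is known, Proposition~\ref{6.5} applies directly. I do not expect a genuine obstacle here: every step is an application of an already-established localization principle, and the only point requiring care is the interplay between the two equivalent formulations of \emph{locally minimal} (over all of $\mathrm{Supp}(S/R)$ versus over $\mathrm{MSupp}(S/R)$), which is harmless because the $\mathcal{B}$-extension conclusion places the entire support inside $\mathrm{Max}(R)$.
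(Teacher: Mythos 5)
Your proof is correct and takes essentially the same route as the paper's: the paper's proof of Corollary~\ref{4.11} simply defers to the proof of Proposition~\ref{4.10}, which is exactly your localization argument --- chainedness of $[R_M,S_M]$ for an integrally closed FCP extension, minimality from Example~\ref{4.01}(1), and globalization via Proposition~\ref{4.02}. The only cosmetic difference is the $\mathcal B$-extension step, where the paper observes $\mathrm{Supp}(S/R)=\mathrm{MSupp}(S/R)$ and invokes the bijectivity criterion directly (in the spirit of Proposition~\ref{6.5}), while you route through Theorem~\ref{6.10} and Proposition~\ref{6.9}(1); both rest on the same fact that $\mathrm{Supp}(S/R)\subseteq\mathrm{Max}(R)$ forces $\varphi$ to be bijective.
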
 
\begin{proof} The two properties  are equivalent by the proof of the above proposition. Since $\mathrm{Supp}(S/R)=\mathrm{MSupp}(S/R)$ holds in this case, \cite [Theorem 3.6]{DPP2} shows that $\varphi$ is a bijection.
\end{proof}

 The following definitions are needed for the sequel.
  
  \begin{definition}\label{1.3} An integral extension $R\subseteq S$ is called {\it infra-integral} \cite{Pic 2} (resp$.;$ {\it subintegral} \cite{S}) if all its residual extensions $\kappa_R(P)\to\kappa_S(Q)$, (with $Q\in\mathrm {Spec}(S)$ and $P:=Q\cap R$) are isomorphisms (resp$.$;and the natural map $\mathrm {Spec}(S)\to\mathrm{Spec}(R)$ is bijective). An extension $R\subseteq S$ is called {\it t-closed} (cf. \cite{Pic 2}) if the relations $b\in S,\ r\in R,\ b^2-rb\in R,\ b^3-rb^2\in R$ imply $b\in R$. The $t$-{\it closure} ${}_S^tR$ of $R$ in $S$ is the smallest element $B\in [R,S]$  such that $B\subseteq S$ is t-closed and the greatest element  $B'\in [R,S]$ such that $R\subseteq B'$ is infra-integral. An extension $R\subseteq S$ is called {\it seminormal} (cf. \cite{S}) if the relations $b\in S,\ b^2\in R,\ b^3\in R$ imply $b\in R$. The {\it seminormalization} ${}_S^+R$ of $R$ in $S$ is the smallest element $B\in [R,S]$ such that $B\subseteq S$ is seminormal and the greatest  element $ B'\in[R,S]$ such that $R\subseteq B'$ is subintegral. 
  The extension $R\subseteq {}_S^+R\subseteq {}_S^tR\subseteq \overline R\subseteq S$ is called the {\it canonical decomposition} of $R\subseteq S$.
 \end{definition}

Three types of minimal integral extensions exist, characterized in the next theorem, (a consequence of  the fundamental lemma of Ferrand-Olivier), so that there are four types of minimal extensions.

\begin{theorem}\label{minimal} \cite [Theorems 2.2 and 2.3]{DPP2} Let $R\subset T$ be an extension and  $M:=(R: T)$. Then $R\subset T$ is minimal and finite if and only if $M\in\mathrm{Max}(R)$ and one of the following three conditions holds:

\noindent (a) {\bf inert case}: $M\in\mathrm{Max}(T)$ and $R/M\to T/M$ is a minimal field extension.

\noindent (b) {\bf decomposed case}: There exist $M_1,M_2\in\mathrm{Max}(T)$ such that $M= M _1\cap M_2$ and the natural maps $R/M\to T/M_1$ and $R/M\to T/M_2$ are both isomorphisms; or, equivalently, there exists $q\in T\setminus R$ such that $T=R[q],\  q^2-q\in M$, and $Mq\subseteq M$.

\noindent (c) {\bf ramified case}: There exists $M'\in\mathrm{Max}(T)$ such that ${M'}^2 \subseteq M\subset M',\  [T/M:R/M]=2$, and the natural map $R/M\to T/M'$ is an isomorphism; or, equivalently, there exists $q\in T\setminus R$ such that $T=R[q],\ q^2\in M$, and $Mq\subseteq M$.
\end{theorem}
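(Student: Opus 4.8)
Theorem \ref{minimal} characterizes minimal finite extensions $R \subset T$ via the conductor $M := (R:T)$, stating that such extensions fall into three integral types (inert, decomposed, ramified) according to the structure of $M$ in $T$ and the associated residual field data.

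Let me think about how to prove this theorem.The plan is to treat this as the Ferrand--Olivier classification and to organize both implications around a single device: once one knows that $M:=(R:T)$ is a maximal ideal of $R$ with $MT=M$, the map $U\mapsto U/M$ is an order-preserving bijection from $[R,T]$ onto the poset of intermediate $\kappa$-subalgebras of $B:=T/M$, where $\kappa:=R/M$, with inverse given by taking preimages in $T$ (every such $U$ contains the $T$-ideal $M$). Under this correspondence $R\subset T$ is minimal if and only if $\kappa\subset B$ is a minimal $\kappa$-algebra extension, and $B$ is a finite-dimensional $\kappa$-algebra. So after establishing maximality of the conductor, everything reduces to classifying minimal finite-dimensional $\kappa$-algebra extensions.

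For necessity, the principal task --- and the main obstacle --- is to prove that $M=(R:T)$ is maximal. First I would apply Theorem \ref{crucial}: a minimal finite extension is integral and crucial, with crucial maximal ideal $\mathfrak m\in\mathrm{Max}(R)$, so that $T_P=R_P$ for every $P\neq\mathfrak m$; this gives $\mathrm{Supp}(T/R)=\{\mathfrak m\}$ and hence $(R:T)\subseteq\mathfrak m$. To obtain equality I would localize at $\mathfrak m$ (minimality, finiteness, and the conductor all localize) and consider the intermediate ring $R+\mathfrak mT$, which is a subring of $T$ because $\mathfrak mT\cdot\mathfrak mT\subseteq\mathfrak m^2T\subseteq\mathfrak mT$. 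Minimality forces $R+\mathfrak mT\in\{R,T\}$; the case $R+\mathfrak mT=T$ would give $T/R=\mathfrak m(T/R)$, contradicting Nakayama applied to the nonzero finite $R_{\mathfrak m}$-module $T/R$. Hence $\mathfrak mT\subseteq R$, i.e.\ $\mathfrak m\subseteq(R:T)$; comparing localizations at every prime then yields $(R:T)=\mathfrak m\in\mathrm{Max}(R)$.

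With $M$ maximal and $MT=M$, I pass to $B=T/M$ over the field $\kappa$. Since a minimal extension is simple, $B=\kappa[q]\cong\kappa[X]/(f)$, and minimality of $\kappa\subset B$ dictates the three cases: if $f$ is irreducible then $B$ is a field and we are in the inert case (a); if $f$ is reducible then the absence of a proper intermediate $\kappa$-subalgebra forces $\deg f=2$, so either $f$ has two distinct roots and $B\cong\kappa\times\kappa$ (decomposed case (b), the two maximal ideals of $T$ coming from the projections, $q$ idempotent modulo $M$) or $f$ is a square and $B\cong\kappa[X]/(X^2)$ (ramified case (c), $q$ nilpotent modulo $M$). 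Lifting the generator back to $T$ produces the stated element $q$ with $Mq\subseteq M$ and $q^2-q\in M$ or $q^2\in M$, and the internal equivalences in (b), (c) are merely the two descriptions of these algebras. Conversely, given $M\in\mathrm{Max}(R)$, $M=(R:T)$, and one of (a)--(c), I would first check $MT=M$ (immediate in (a) since $M$ is a $T$-ideal, and from $Mq\subseteq M$ in (b), (c)), so that $B=T/M$ is respectively a minimal field extension, $\kappa\times\kappa$, or $\kappa[X]/(X^2)$ --- each a minimal $\kappa$-algebra --- whence $R\subset T$ is minimal by the correspondence; finiteness is clear since $T=R[q]$ with $q$ quadratic over $R$ in (b), (c), while in (a) one has $M\in\mathrm{Max}(T)$ with $T/M$ a finite field extension of $\kappa$.
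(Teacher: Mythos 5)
The paper contains no proof of this statement: it is imported wholesale from \cite{DPP2} (Theorems 2.2 and 2.3), ultimately resting on the fundamental lemma of Ferrand--Olivier \cite{FO}, so there is no internal argument to compare against. Your reconstruction is correct and follows essentially the same route as those sources: Theorem \ref{crucial} plus the Nakayama argument on the intermediate ring $R+\mathfrak{m}T$ to identify the conductor with the crucial maximal ideal (note that once $\mathfrak{m}\subseteq (R:T)\subseteq\mathfrak{m}$ is established you are done outright; no further comparison of localizations is needed, and $MT=M$ is automatic for the conductor), then the order isomorphism $[R,T]\cong[\kappa,B]$, $B:=T/M$, valid because $M$ is an ideal shared by $R$ and $T$, reducing everything to classifying minimal finite-dimensional $\kappa$-algebras. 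The one compressed step is your assertion that reducibility of the minimal polynomial $f$ together with minimality of $\kappa\subset B$ forces $\deg f=2$; the clean justification is the Artinian analysis of $B$ constituting Ferrand--Olivier's lemme fondamental (if $B$ is not a field, either $B$ decomposes as $B_1\times B_2$ and the proper subalgebra $\kappa\times B_2$ forces $B\cong\kappa\times\kappa$, or $B$ is local with maximal ideal $N\neq 0$ and the subalgebras $\kappa+N$ and $\kappa+N^2$ force $N^2=0$, $\dim_{\kappa}N=1$, i.e.\ $B\cong\kappa[X]/(X^2)$). This is a routine fill rather than a gap, and the rest of your argument (lifting the generator to obtain $q$ with $q^2-q\in M$ or $q^2\in M$, the converse via the correspondence, and finiteness of minimal field extensions in case (a)) is sound.
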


It remains to solve \cite[Problem 45]{A1}: under which conditions  an integral extension $R\subset S$ is    Boolean and has FIP? The study is quite complicated. We are going to use the canonical decomposition of an integral ring extension
and Proposition \ref{4.02}  which allows us to only
  consider    local rings $R$.  

\begin{proposition}\label{4.12} Let $R\subset S$ be an integral FIP Boolean   extension where $R$ is  local. Then, $R\subset S$ is either infra-integral or  t-closed. 
\end{proposition}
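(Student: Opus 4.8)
The plan is to reduce to a field base by killing the maximal ideal, and then, assuming the extension is mixed (i.e.\ neither infra-integral nor $t$-closed), to manufacture a length-$2$ sub-interval with five elements, which is impossible for a Boolean extension.

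First I would pass to a field. Since $R\subset S$ is integral with $R$ local, write $M$ for the maximal ideal of $R$ and $k:=R/M$. Every step of a maximal chain is integral, so by Theorem \ref{minimal} its conductor is a maximal ideal lying over $M$; in particular each atom $A$ (each minimal $R\subset A$) has $(R:A)=M$, and in all three of the inert, decomposed and ramified cases one checks $MA=M$. As $S$ is the compositum of its atoms (Theorem \ref{4.0}), it follows that $MS=M$, so $M$ is an ideal shared by $R$ and $S$; hence $T\mapsto T/M$ is a lattice isomorphism $[R,S]\xrightarrow{\sim}[k,S/M]$, preserving the Boolean and FIP properties and the canonical decomposition of Definition \ref{1.3}. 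Thus I may assume $R=k$ is a field and $S$ a finite-dimensional $k$-algebra, in which $R\subset S$ is infra-integral exactly when ${}_S^tR=S$ and $t$-closed exactly when ${}_S^tR=R$.

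Next I would locate a ``mixed pair'' of atoms. Suppose for contradiction that $R\subsetneq{}_S^tR\subsetneq S$. Since $[R,S]$ is factorial (Theorem \ref{4.0}) it is atomic, so ${}_S^tR\supsetneq R$ contains an atom $A'$, and infra-integrality is inherited by the subextension $R\subset A'$; over the field $k$ this makes $A'$ decomposed or ramified, i.e.\ $A'\cong k\times k$ or $A'\cong k[\epsilon]$ with $\epsilon^2=0$. On the other hand, if every atom were infra-integral then $S$, being their product, would satisfy $S\subseteq{}_S^tR$, forcing ${}_S^tR=S$; hence some atom $A$ is inert, and over $k$ this is a minimal field extension $A=\ell$ with $[\ell:k]$ a prime $\geq 2$.

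Finally I would use the interval $[R,AA']$. By Proposition \ref{4.021} it is Boolean, and since $AA'$ is the join of the two distinct atoms $A,A'$ it has length $2$, so Theorem \ref{4.0} forces $|[R,AA']|=2^2=4$. But $AA'\cong\ell\otimes_k A'$ is $\ell[\epsilon]$ (ramified case) or $\ell\times\ell$ (decomposed case), and in either case there are at least five distinct intermediate $k$-subalgebras: for $\ell[\epsilon]$ take $k,\ k[\epsilon],\ \ell,\ \ell[\epsilon]$ together with $k\oplus W\epsilon$ for a $k$-line $W\subseteq\ell$ with $W\neq k\cdot 1$ (which exists since $[\ell:k]\geq 2$); for $\ell\times\ell$ take $k,\ k\times k,\ \Delta\ell,\ \ell\times k,\ \ell\times\ell$. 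This contradicts $|[R,AA']|=4$, so the mixed case cannot occur and $R\subset S$ is infra-integral or $t$-closed. I expect the main obstacles to be the two bookkeeping points: justifying $MS=M$ uniformly across the three types of minimal integral extensions (and that the quotient respects the canonical decomposition), and checking that the compositum $AA'$ does not collapse, so that it really has $k$-dimension $2[\ell:k]$ and the listed subalgebras are genuinely distinct; the crux is the observation that $k\oplus W\epsilon$ is a subalgebra for \emph{every} subspace $W$, which is precisely what destroys distributivity.
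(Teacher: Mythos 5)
Your proof is correct, but it takes a genuinely different route from the paper's. The paper works directly at the local ring: it sets $T:={}_S^tR$, uses the complement $T^{\circ}$ to show $R\subseteq T^{\circ}$ is t-closed and $T^{\circ}\subseteq S$ is infra-integral, and then, in the mixed case, produces a minimal infra-integral $R\subset R_1$ and a minimal inert $R\subset R'_1$ (both crucial at $M$) and invokes \cite[Propositions 7.1 and 7.4]{DPPS} to obtain two maximal chains of different lengths from $R$ to $R_1R'_1$, contradicting the Jordan--H\"older property of distributive lattices (Proposition \ref{1.0}). You instead first reduce modulo $M$ --- your justification is sound: Theorem \ref{minimal} gives $(R:A)=M$ and $MA\subseteq M$ for every atom $A$, and since $S$ is the product of the atoms (Theorem \ref{4.0}) one gets $MS=M$, after which Proposition \ref{4.02}(5) transfers Booleanness and the standard fact that seminormalization and t-closure commute with quotients by common ideals transfers the trichotomy --- and then you replace the citation to \cite{DPPS} by an explicit computation: the composite $AA'$ of a mixed pair of atoms is $\ell[\epsilon]$ or $\ell\times\ell$ (your non-collapse check is easy, since the kernel of $\ell\otimes_kA'\to AA'$ is an ideal of $\ell[\epsilon]$ resp. $\ell\times\ell$ that misses $\epsilon$ resp. both idempotents), and $[k,AA']$ then has at least five elements while Theorem \ref{4.0} forces $|[R,AA']|=2^2=4$. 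In effect you reprove the needed special case of the DPPS propositions by hand over the residue field. What each approach buys: the paper's argument is shorter, needs no reduction to a field, and contradicts only distributivity (condition (JH)), so it is available before Booleanness is fully exploited; yours is self-contained and makes the obstruction visible --- the one-parameter family $k\oplus W\epsilon$ of subalgebras, which for infinite $k$ even destroys FIP, not just the count $2^n$. One small slip to fix: a minimal inert extension $k\subset\ell$ need \emph{not} have prime degree (the paper remarks exactly this after Proposition \ref{4.21x}); fortunately your argument only uses $[\ell:k]\geq 2$, so nothing breaks.
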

\begin{proof} Let $T:={}_S^tR$ be the t-closure of the local ring $(R,M)$ in $S$, and let $T^{\circ}\in[R,S]$ be its complement.  Let $T'$ be the t-closure of $R$ in $T^{\circ}$, so that $R\subseteq T'$ is infra-integral, and then  $T' \subseteq T$. It follows that  $ T' \subseteq T^{\circ}\cap T=R$. Then $T' =R$ and $R\subseteq T^{\circ}$  is t-closed. In the same way, let $T''$ be the t-closure of $T^{\circ}$ in $S$, so that $T''\subseteq S$ is t-closed and then $T\subseteq T''$. Hence  $S=T^{\circ}T\subseteq T''$, so that $T'' = S$ and $T^{\circ}\subseteq S$ is infra-integral. 

Assume that $R\subset S$ is neither t-closed, nor infra-integral, so that $T,T^{\circ}\neq R,S$. Then, there are $R_1\in[R,T]$ and $R'_1\in[R,T^{\circ}]$ such that $R\subset R_1$ is minimal infra-integral, and $R\subset R'_1$ is minimal inert, with both the same crucial maximal ideal $M$. By \cite[Propositions 7.1 and 7.4]{DPPS}, there are two maximal chains from $R$ to $R_1R_1'$ with different lengths, and the same statement holds for $R\subset S$. This contradicts Condition (JH) of Proposition \ref{1.0} since $R\subset S$ is  distributive. Then, $R\subset S$ is either infra-integral, or t-closed.
\end{proof}

\begin{remark} Proposition \ref{4.12} is no longer true if $R$ is not local. Take a ring $R$ with two distinct maximal ideals $M_1$ and $M_2$, and two minimal extensions $R\subset T_1$ ramified and $R\subset T_2$ inert with $M_1:=\mathcal{C}(R,T_1)$ and $M_2:=\mathcal{C}(R,T_2)$.  Assume that $S:=T_1T_2$ exists, so that $R_{M_1}\subset S_{M_1}=(T_1)_{M_1}$ is minimal ramified, and then Boolean and $R_{M_2}\subset S_{M_2}=(T_2)_{M_2}$ is minimal inert, and then Boolean. It follows from Proposition \ref{4.02} that $R\subset S$ is Boolean although being neither infra-integral nor t-closed. To get such a situation, we may take $S:=\mathbb{Z}[i],\ T_2:=\mathbb{Z}+2S,\ T_1:=\mathbb{Z}+3S$ and $R:=T_1\cap T_2=\mathbb{Z}+6S$. It is well known that $2$ is ramified in $S$ and $3$ is inert in $S$. Then, $T_2\subset S$ is a minimal ramified extension with conductor $2S$ and $T_1\subset S$ is a minimal inert extension with conductor $3S$. Moreover, $2S$ and $ 3S$ are incomparable. 
 Setting $M_1:=2S\cap R=2T_1$ and  $M_2:=3S\cap R=3T_2$, \cite[Proposition 6.6(a)]{DPPS} shows that  $R\subset T_1$ is  minimal ramified and $R\subset T_2$ is  minimal inert with $M_1:=\mathcal{C}(R,T_1),\ M_2:=\mathcal{C}(R,T_2)$ and  $S=T_1T_2$.
\end{remark}
 
  We first  
  consider the infra-integral case for which  we need the next lemma.

\begin{lemma}\label{4.16}  A subintegral FIP extension (resp. seminormal and infra-integral) $R\subset S$, where $(R,M)$ is local, is  Boolean if and only if $R\subset S$ is  minimal ramified (resp. decomposed).
\end{lemma}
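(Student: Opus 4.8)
The backward implications are immediate: a minimal extension has $[R,S]=\{R,S\}$, the two-element Boolean lattice, and a subintegral (resp.\ seminormal and infra-integral) minimal extension is ramified (resp.\ decomposed) by Theorem \ref{minimal}. So the content is the forward implication, which I would prove by contradiction: assuming $R\subset S$ is Boolean, FIP, of the stated type, $(R,M)$ local, and \emph{not} minimal, I will produce two incomparable atoms whose product cannot sit inside a Boolean lattice.

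Since $R\subset S$ is Boolean, FIP and non-minimal, Theorem \ref{4.0} gives $\ell[R,S]=|\mathcal A|\ge 2$, so there are two distinct atoms $A_1=R[q_1]$, $A_2=R[q_2]$. As $R$ is local each atom is $M$-crucial, and each $R\subset A_i$, being a minimal subextension of a subintegral (resp.\ seminormal infra-integral) extension, is itself of that type, hence ramified (resp.\ decomposed) by Theorem \ref{minimal}; thus I may take $q_i$ with $q_i^2\in M$ (resp.\ $q_i^2-q_i\in M$) and $Mq_i\subseteq M$. Put $B:=A_1A_2$. By Proposition \ref{4.021} the interval $[R,B]$ is Boolean, and since $A_1\cap A_2=R\subset A_1$ is minimal, the upper covering condition (Proposition \ref{cover}) makes $A_2\subset B$ minimal; hence $\ell[R,B]=2$ and $[R,B]=\{R,A_1,A_2,B\}$ is the four-element Boolean lattice, with exactly the two atoms $A_1,A_2$.

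The crux is the reduction to the residue field. Using $q_i^2\in R+Rq_i$ and $Mq_i\subseteq M$ one gets $B=R+Rq_1+Rq_2+Rq_1q_2$ and $MB=M$, so $M$ is an ideal shared by $R$ and $B$ and there is a lattice isomorphism $[R,B]\cong[R/M,B/M]$ (compare Proposition \ref{4.02}). Write $k:=R/M$ and $\overline B:=B/M=k[\overline q_1,\overline q_2]$; since $MB\subseteq R$, the module $B/R$ is a $k$-vector space of dimension $\ell[R,B]=2$, so $\dim_k\overline B=3$. I would then classify $\overline B$. In the subintegral case $\overline q_i^2=0$, so $\overline B$ is local with maximal ideal $\mathfrak m=k\overline q_1\oplus k\overline q_2$ of dimension $2$; if $\mathfrak m^2=0$ every line of $\mathfrak m$ is a subalgebra, giving at least three atoms (the lattice $M_{|k|+1}$), while if $\mathfrak m^2\ne0$ then by Nakayama $\mathfrak m$ is principal and $\overline B\cong k[x]/(x^3)$, whose subalgebra interval is a chain of length $2$ with a \emph{single} atom. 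In the seminormal infra-integral case $\overline q_i^2=\overline q_i$, so $\overline B$ is generated by the idempotents $\overline q_i$, hence a product of copies of $k$; as $\dim_k\overline B=3$, $\overline B\cong k^3$, whose subalgebra lattice is the partition lattice $\Pi_3\cong M_3$, with three atoms.

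In every case $[k,\overline B]\cong[R,B]$ has either one atom or at least three, so it is never the four-element Boolean lattice, contradicting the previous paragraph. Hence $R\subset S$ admits a single atom, so $\ell[R,S]=|\mathcal A|=1$ by Theorem \ref{4.0}, i.e.\ $R\subset S$ is minimal, and therefore ramified (resp.\ decomposed) by Theorem \ref{minimal}, as required. I expect the main difficulty to be this passage to the fibre: verifying that $M$ is genuinely an ideal of $B$ (so that $[R,B]\cong[R/M,B/M]$ applies) and then carrying out the classification of the three-dimensional $k$-algebra $\overline B$, the delicate point being the subintegral dichotomy $\mathfrak m^2=0$ versus $\mathfrak m^2\ne0$, where one must show via Nakayama that $\dim_k\mathfrak m^2\le1$ exhausts the cases and rule out each alternative.
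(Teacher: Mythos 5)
Your proof is correct, and it takes a genuinely different route from the paper's. The paper also reduces to two distinct atoms $R_i=R+Rx_i$ and $R_j=R+Rx_j$, but then works entirely inside the ring: it first proves $x_ix_j\in M$ by a conductor argument (the maximal ideals $M_i=M+Rx_i$ and $M_j=M+Rx_j$ are the conductors of $R_i\subset R_iR_j$ and $R_j\subset R_iR_j$, so $M_i\cap M_j$ is an ideal of $R_iR_j$ contained in $R_i\cap R_j=R$, forcing $x_ix_j\in M$), then exhibits a third atom $R_x:=R+R(x_i+x_j)$ via Theorem \ref{minimal}, and derives the contradiction from distributivity: $R_x=R_x\cap R_iR_j=(R_x\cap R_i)(R_x\cap R_j)=R$. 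You instead quotient by the shared ideal $M$ and classify the three-dimensional residue algebra $\overline B$, replacing the distributivity computation by an atom count in $[k,\overline B]$; your $k^4$-quotient argument in the seminormal case neatly sidesteps the paper's somewhat delicate step ``in the decomposed case, we may choose $x_i$ and $x_j$ in order that $M_i$ and $M_j$ are the needed conductors''. Note that your line $k(\bar q_1+\bar q_2)$ in the case $\mathfrak m^2=0$ is exactly the paper's atom $R_x$ read modulo $M$, and that your branch $\mathfrak m^2\neq 0$ cannot actually occur: the square-zero elements of $k[x]/(x^3)$ form the one-dimensional space $kx^2$, which cannot contain the two independent elements $\bar q_1,\bar q_2$ — indeed the paper's $x_ix_j\in M$ is precisely the statement $\mathfrak m^2=0$; keeping the branch is harmless, since a chain also contradicts the four-element Boolean shape. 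Two glossed steps you should write out, though neither is a gap: (i) $\dim_k(B/R)=\ell[R,B]=2$ holds because each minimal ramified or decomposed step $T\subset T'$ in this infra-integral, residually trivial situation satisfies $T'/T\cong T/\mathcal C(T,T')\cong k$; and (ii) in the subintegral case $\mathfrak m:=k\bar q_1\oplus k\bar q_2$ really is the maximal ideal of $\overline B$, because $R\subset B$ subintegral makes $B$ local with residue field $k$, so the two-dimensional ideal $N/M$ (with $N$ the maximal ideal of $B$) coincides with the span of the independent nilpotents $\bar q_1,\bar q_2$. With these one-line justifications supplied, your argument is complete, and what it buys over the paper's is a transparent structural explanation of the obstruction: with two atoms the fibre is either $k\oplus k^2$ with zero multiplication or $k^3$, each of which has at least three atoms.
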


\begin{proof} One implication is Example \ref{4.01} (1). 

Conversely, assume that $[R,S]$ is a Boolean lattice. The atoms of $[R,S]$ are of the form $R_i:=R+Rx_i$, for $i\in I:=\mathbb{N}_n,\ n:=|\mathcal{A}|$, where $x_i\in S$ is such that $x_i ^2\in M$ (resp. $x_i^2-x_i\in M$) and $x_iM\subseteq M$, because $R\subset R_i$ is minimal ramified (resp.; decomposed), with $M=(R:R_i)$. Then, $S=\prod_{i\in I}R_i$ by Theorem \ref{4.0}. Let $M_i:=M+Rx_i$ be the (resp.; one) maximal ideal of $R_i$. Assume that $n>1$. Let $i,j\in I$ be such that $i\neq j$, so that $R=R_i\cap R_j$, with $x_ix_j\in R_iR_j$. But, $R_i\subset R_iR_j$ and $R_j\subset R_iR_j$ are minimal Theorem \ref{4.0}, so that $x_i\in M_i=(R_i:R_iR_j)$ and $x_j\in M_j=(R_j:R_iR_j)$. In the decomposed case, we may choose $x_i$ and $x_j$ in order that $M_i$ and $M_j$ are the needed conductors. It follows that $M_i$ and $M_j$ are ideals of $R_iR_j$, and so is $M_i\cap M_j$, which contains $M_iM_j$. But $M_i\cap M_j\subseteq R_i\cap R_j=R$. This implies that $x_ix_j\in M_i\cap M_j=M_i\cap R\cap M_j=M$, the maximal ideal of $R$. Then, $x_ix_j\in M$, giving $S=R+\sum_{i\in I}Rx_i$. Set $x:=x_i+x_j\not\in R$ and $R_x:=R+Rx\neq R_i,R_j$. We get that $x^2=x_i^2+x_j^2+2x_ix _j\in M$ (resp. $x^2=x_i+x_j+m=x+m$, where $m\in M$). Moreover, $xM\subseteq x_iM+x_j M\subseteq M$, so that $R\subset R_x$ is minimal by Theorem \ref{minimal}, and $R_x$ is an atom of $[R,S]$. But we have $R_x\subseteq R_iR_j$, so that $R_x=R_x\cap R_iR_j=(R_x\cap R_i)(R_x\cap R_j)=R$, a contradiction. Then, $n=1,\ S=R_1$ and $R\subset S$ is minimal.
\end{proof}

 It may be  asked if  extensions of  Boolean rings and Boolean extensions are linked. Next result shows that they are quite never linked.

\begin{proposition}\label{4.199x} Let $S$ be a Boolean ring and $R$ a subring of $S$ such that $R\subset S$ is a finite extension. Then $R\subset S$ is seminormal infra-integral and  $R\subset S$ is Boolean if and only if $R\subset S$ is locally minimal.
\end{proposition}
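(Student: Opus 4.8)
The statement has two assertions: first, that any finite extension $R \subset S$ of a Boolean ring $S$ is seminormal and infra-integral; second, that under this hypothesis, $R \subset S$ is Boolean iff it is locally minimal. Let me think about both.

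First part. A Boolean ring $S$ satisfies $x^2 = x$ for every $x \in S$. The plan is to verify the defining conditions of seminormality and infra-integrality directly from this idempotency. For infra-integrality, recall (Definition \ref{1.3}) that I must show every residual extension $\kappa_R(P) \to \kappa_S(Q)$ is an isomorphism. The key observation is that in a Boolean ring every element is idempotent, so every residue field $\kappa_S(Q)$ is $\mathbb{F}_2$ (a field that is also Boolean must be $\mathbb{F}_2$), and likewise $\kappa_R(P) = \mathbb{F}_2$; hence the residual extensions are forced to be isomorphisms. The extension is integral because each $x \in S$ satisfies $x^2 - x = 0$, a monic equation over $R$. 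For seminormality, I would check the defining condition: if $b \in S$ with $b^2 \in R$ and $b^3 \in R$, then $b \in R$; but in a Boolean ring $b^2 = b$, so $b^2 \in R$ already forces $b \in R$. Thus seminormality is immediate, and seminormal integral infra-integral is the required conclusion.

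Second part. Given that $R \subset S$ is seminormal infra-integral, I want Boolean $\iff$ locally minimal. One direction is general: a locally minimal extension is Boolean by Proposition \ref{4.0201} together with Theorem \ref{6.10} (or directly, since each $R_P \subset S_P$ minimal is Boolean by Example \ref{4.01}(1), and Proposition \ref{4.02} globalizes). For the converse, the plan is to reduce to the local case via Proposition \ref{4.02}: $R \subset S$ is Boolean iff $R_M \subset S_M$ is Boolean for each $M \in \mathrm{MSupp}(S/R)$. Now $(R_M, MR_M)$ is local, and the extension $R_M \subset S_M$ is still seminormal and infra-integral (these properties localize). By Lemma \ref{4.16}, a seminormal infra-integral FIP extension with local base that is Boolean must be minimal decomposed. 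Thus each $R_M \subset S_M$ is minimal, which is exactly local minimality.

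The main obstacle, and where care is needed, is confirming that seminormality and infra-integrality are preserved under localization so that Lemma \ref{4.16} applies at each $M \in \mathrm{MSupp}(S/R)$, and that the finiteness hypothesis gives FIP (a finite integral extension has FCP, and being infra-integral with all residue fields $\mathbb{F}_2$ should give FIP). I would first note that $R \subset S$ finite and integral implies FCP, hence FIP can be invoked once the relevant local structure is pinned down; alternatively, since $S$ is Boolean and $R \subset S$ is finite, $S$ is a finite $R$-module, so the extension is module-finite and FCP. Then the localization compatibility of the canonical decomposition (each of seminormalization and $t$-closure commutes with localization) guarantees that each $R_M \subset S_M$ remains seminormal infra-integral, so Lemma \ref{4.16} forces minimality in the Boolean case. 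Assembling these gives the equivalence.
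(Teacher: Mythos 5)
Your proof is correct and follows essentially the same route as the paper's: seminormality and infra-integrality via idempotency and the fact that all residue fields are $\mathbb{F}_2$, FCP from module-finiteness, reduction to the local case and Lemma \ref{4.16} for the forward direction, and minimality-is-Boolean (Example \ref{4.01}(1)) globalized by Proposition \ref{4.02} for the converse, which is exactly the content of Corollary \ref{5.9} that the paper cites. The only differences are cosmetic: you make explicit the localization step that the paper leaves implicit in its appeal to Lemma \ref{4.16}, and your tentative remark that infra-integrality with $\mathbb{F}_2$ residue fields ``should give FIP'' can be replaced by the paper's cleaner derivation, namely that a Boolean (hence distributive) FCP extension has FIP by Theorem \ref{4.0}.
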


\begin{proof} Since $S$ is a Boolean ring, $R\subset S$ is  seminormal integral. Because a Boolean local ring is isomorphic to $\mathbb Z/2\mathbb Z$, the residual extensions of $R\subset S$ are isomorphisms, so that $R\subset S$ is infra-integral. Now, $R\subset S$ is finite implies that it has FCP by \cite[Theorem 4.2]{DPP2}. 

Assume that $R\subset S$ is Boolean. Then, $R\subset S$ has FIP by Theorem \ref{4.0} because FCP distributive. Moreover, an appeal to Lemma \ref{4.16} shows that  
 $R\subset S$ is locally minimal. 
 
Conversely, if $R\subset S$ is locally minimal, then $R\subset S$ is Boolean in view of Corollary \ref{5.9}. Indeed, $R\subset S$ has FIP  \cite[Proposition 3.7]{DPP2}. 
\end{proof} 

Next proposition gives a characterization of arbitrary infra-integral   Boolean  FIP extensions. 

\begin{proposition}\label{4.17} An infra-integral FIP extension $R\subset S$, where $(R,M)$ is  local, is Boolean if and only if there exist $x,y\in S$ such that $S=R[x,y]$, where $x^2,xy,y^2-y\in M$ and $xM,yM\subseteq M$.

If these conditions hold, then $R[x,y]=R[x+y]$. 
\end{proposition}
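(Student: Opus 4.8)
The plan is to prove both implications by reducing modulo the maximal ideal $M$ and by exploiting the canonical decomposition together with Lemma~\ref{4.16}.

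For the forward implication, suppose $R\subset S$ is infra-integral, FIP and Boolean with $(R,M)$ local. First I would bound the length. The seminormalization $T:={}_S^+R$ is local (a minimal ramified extension of a local ring is local, and $R\subseteq T$ is subintegral), and by Proposition~\ref{4.021} both $R\subseteq T$ (subintegral) and $T\subseteq S$ (seminormal infra-integral, $T$ local) are Boolean. Lemma~\ref{4.16} then forces each of them to be minimal or trivial, so by the Jordan--H\"older condition (Proposition~\ref{1.0}) $\ell[R,S]=\ell[R,T]+\ell[T,S]\le 2$, whence $|\mathcal A|=\ell[R,S]\le 2$ by Theorem~\ref{4.0}. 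Discarding the cases $\ell\le 1$ (where one takes $x$ and/or $y$ equal to $0$), in the length-two case the two atoms cannot both be ramified (both would lie in $T$, of length $\le 1$) nor both decomposed (else $T=R$ and Lemma~\ref{4.16} would make $R\subset S$ minimal); so they are one ramified atom $R[x]=T$ and one decomposed atom $R[y]=R[x]^{\circ}$, with $S=R[x]R[y]=R[x,y]$ by Theorem~\ref{4.0}. Theorem~\ref{minimal}(c),(b) supplies the generators satisfying $x^2\in M$, $xM\subseteq M$ and $y^2-y\in M$, $yM\subseteq M$.

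The crux is the cross relation $xy\in M$, which I would obtain by a conductor computation. Since $R[x]\cap R[y]=R$ and $R\subset R[y]$ is minimal, Proposition~\ref{cover} gives that $R[x]\subset R[x]R[y]=S$ is minimal, so by Theorem~\ref{crucial} its conductor $\mathfrak m_x:=(R[x]:S)$ is a maximal ideal of $R[x]$, forcibly $\mathfrak m_x=M+Rx$ because $R[x]$ is local; symmetrically $\mathfrak m_y:=(R[y]:S)$ is a maximal ideal of $R[y]$. As $R[y]$ is decomposed it has two maximal ideals over $M$, so after replacing $y$ by $1-y$ if needed (this preserves $y^2-y\in M$ and $yM\subseteq M$, and $R[1-y]=R[y]$) I may assume $y\in\mathfrak m_y$. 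Since $\mathfrak m_x,\mathfrak m_y$ are ideals of $S$, I get $xy\in\mathfrak m_x\cap\mathfrak m_y\subseteq R[x]\cap R[y]=R$, and as $\mathfrak m_x\cap R=M$ this gives $xy\in M$. This bookkeeping, together with the correct choice of decomposed generator, is the main obstacle.

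For the converse, assume $x,y\in S$ satisfy the stated relations. Then $x^2,xy,y^2-y\in M\subseteq R$ give $S=R[x,y]=R+Rx+Ry$ and show that $M$ is a common ideal of $R$ and $S$; by Proposition~\ref{4.02}(5) it suffices to prove $\kappa:=R/M\subseteq S/M$ is Boolean. A direct computation identifies $S/M\cong\kappa\times\kappa[t]/(t^2)$ via $\bar y\mapsto(1,0)$, $\bar x\mapsto(0,t)$, and one checks that its lattice of $\kappa$-subalgebras is exactly $\{\kappa,\kappa[\bar x],\kappa[\bar y],S/M\}$, the four-element Boolean lattice (the degenerate cases $x\in R$ or $y\in R$ collapse this to a chain of length $\le 1$); ruling out a third atom is precisely what this computation provides. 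Hence $R\subset S$ is Boolean. Finally, $R\subset S$ is then distributive and $S$ is the product of the atoms $R[x]$ and $R[y]$, so Proposition~\ref{1.4} yields $R[x,y]=R[x+y]$, a conclusion which also holds in the degenerate cases.
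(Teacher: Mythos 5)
Your proof is correct. The forward direction is essentially the paper's own argument: the same reduction via the seminormalization $T={}_S^+R$, Proposition~\ref{4.021} and Lemma~\ref{4.16} to force $\ell[R,S]\le 2$, the same identification of the two atoms as one ramified and one decomposed via the complement $U=T^{\circ}$, and --- crucially --- the identical conductor trick for the cross relation: the paper also observes that $M':=M+Rx=(T:S)$ and that one ``can assume'' $M''=M+Ry=(U:S)$ (your explicit normalization $y\mapsto 1-y$ makes precise what the paper leaves implicit), then concludes $xy\in M'\cap M''\subseteq T\cap U\cap M'=M$. (One small citation slip: the fact that the conductor of a minimal \emph{integral} extension is a maximal ideal is Theorem~\ref{minimal}, not Theorem~\ref{crucial}, and your use of UCC to get $R[x]\subset S$ minimal is a harmless variant of the paper's direct appeal to Lemma~\ref{4.16}.) Where you genuinely diverge is the converse. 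The paper stays at the level of the lattice $[R,S]$: it invokes \cite[Proposition 7.6]{DPPS} to see that $(M+Rx)(M+Ry)\subseteq M$ forces an infra-integral extension of length $2$, then the classification \cite[Theorem 6.1]{Pic 6} to get $|[R,S]|=4$, and finally Theorem~\ref{4.03}. You instead kill the shared ideal $M$ and classify by hand the $\kappa$-subalgebras of $S/M\cong\kappa\times\kappa[t]/(t^2)$, lifting Booleanness via Proposition~\ref{4.02}(5); this is more elementary and self-contained, avoiding both external references, and it yields slightly more than ``no third atom'' (it exhibits the whole four-element lattice). The price is that the ``direct computation'' you gesture at must include the verification that $1,\bar x,\bar y$ are $\kappa$-linearly independent in the non-degenerate case; this is in fact automatic (if $\bar y=a+b\bar x$ with $x,y\notin R$, then $0=\bar x\bar y=a\bar x$ forces $a=0$, and $\bar y=b\bar x$ gives $\bar y=\bar y^2=0$, a contradiction), but it should be said. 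Your derivation of $S=R[x+y]$ from Proposition~\ref{1.4} is a clean alternative to the paper's counting argument ($R[x+y]\neq R,T,U$ in a simple pair), and both are sound.
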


\begin{proof} If $R\subset S$ is subintegral, we choose $y=0$, if $R\subseteq S$ is seminormal, we choose $x=0$. In both cases, we use Lemma \ref{4.16} to get the equivalence. From now on, we assume that $R\subset S$ is neither subintegral, nor seminormal. 

Assume that $R\subset S$ is Boolean and set $T:={}_S^+R\neq R,S$. From Proposition \ref{4.021}, we deduce that $[R,T]$ and $[T,S]$ are Boolean . Then, $R\subset T$ is minimal ramified, which implies that $T$ is also local. It follows that $T\subset S$ is minimal decomposed by Lemma \ref{4.16}, so that $\ell[R,S]=2$,  because all maximal chains of $[R,S]$ have the same length (\cite[Lemma 5.4]{DPP2}). If $U:=T^\circ$, then, $U\neq R,S$, so that $R\subset U$ and $U\subset S$ are minimal. Since $U\cap T=R$, we get that $R\subset U$ is decomposed, because it cannot be ramified. For the same reason, $U\subset S$ is minimal ramified. Let $x,y\in S$ be such that $T=R[x]$ and $U=R[y]$, so that $S=R [x,y]$, where $x^2,y^2-y\in M$ and $xM,yM\subseteq M$. Set $M':=M+Rx$ which is the only maximal ideal of $T$, so that $M'=(T:S)$. Set $M'':=M+Ry\in\mathrm{Max} (U)$. We can assume that $M''=(U:S)$. Then, $xy\in M'\cap M''\subseteq T\cap U\cap M'=M$. 

Conversely, assume that there exist $x,y\in S$ such that $S=R[x,y]$, where $x^2,xy, y^2-y\in M $ and $xM,yM\subseteq M$, so that $M=(R:S)$. Set $T:=R[x]$ and $ U:=R[y]$.  Then, $R\subset T$ is  minimal ramified and  $M+Rx$  is the maximal ideal of $T$, whereas, $R\subset U$ is  minimal decomposed and $M+Ry\in\mathrm {Max} (U) $. Moreover, $T\cap U=R$ and $TU=S$. 
Since $(M+Rx)(M+Ry)\subseteq M$, it follows from \cite[Proposition 7.6]{DPPS} that $R\subset S$ is an FCP infra-integral extension of length 2, so that $T\subset S$ is minimal decomposed and $U\subset S$ is minimal ramified. 
Hence, $[R, S]=\{R,T,U,S\}$ by \cite[Theorem 6.1]{Pic 6} and  is   Boolean by Theorem~\ref{4.03}. 

Assume that the preceding conditions hold, so that $R\subset S$ is Boolean, whence simple by Theorem \ref{4.0}. Since $R[x+y]\neq R,T,U$, 
we have $S=R[x+y]$. 
\end{proof}

We can now sum up the previous results in order to get a characterization of Boolean FIP extensions.

\begin{theorem}\label{4.18} An FIP extension  $R\subset S$  is  Boolean  if and only if, for each $M\in\mathrm{MSupp}(S/R)$, one of the following conditions holds:

\begin{enumerate}
\item $R_M\subset S_M$ is a minimal extension.

\item There exist $U,T\in[R_M,S_M]$ such that $R_M\subset T$ is minimal ramified, $R_M\subset U$ is minimal decomposed and $[R_M,S_M]=\{R_M,T,U,S_M\}$.

\item $R_M\subset S_M$ is a Boolean t-closed extension 
 (equivalently, $R_M\subset S_M$ is   t-closed  and $\kappa_R(M)\subset \kappa_S(N)$ is a Boolean field extension, where $N$ is the only maximal ideal of $S$ lying above $M$). 
\end{enumerate}
\end{theorem}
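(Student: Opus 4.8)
The plan is to reduce to the local case by Proposition \ref{4.02} and then let the canonical decomposition organize a case analysis. By Proposition \ref{4.02}, $R\subset S$ is Boolean if and only if $R_M\subset S_M$ is Boolean for each $M\in\mathrm{MSupp}(S/R)$. Since each $R_M$ is local, it therefore suffices to characterize, for a local ring, when an FIP extension is Boolean, and to verify that the three listed alternatives are precisely the possibilities. Thus both implications will be obtained by checking the local statement and then invoking Proposition \ref{4.02} once more.

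For the forward implication, fix $M$ and consider the local Boolean extension $R_M\subset S_M$. By Corollary \ref{4.93} it is either Pr\"ufer or integral. In the Pr\"ufer case it is integrally closed, and the argument in the proof of Proposition \ref{4.10} shows that $[R_M,S_M]$ is chained; a chained Boolean extension is minimal by Example \ref{4.01}(1), which is alternative (1). In the integral case, Proposition \ref{4.12} splits the situation further: $R_M\subset S_M$ is either t-closed or infra-integral. If it is t-closed we land in alternative (3), the parenthetical field-theoretic reformulation being exactly the reduction of the t-closed case to field extensions carried out in Section 4. If it is infra-integral, I distinguish according to the seminormalization ${}_{S_M}^+R_M$: when $R_M\subset S_M$ is subintegral or seminormal infra-integral, Lemma \ref{4.16} forces it to be minimal (ramified, respectively decomposed), again alternative (1); and when it is neither subintegral nor seminormal, Proposition \ref{4.17} produces $x,y$ with $S_M=R_M[x,y]$, $x^2,xy,y^2-y\in MR_M$ and $xMR_M,yMR_M\subseteq MR_M$, which is precisely the length-two configuration $[R_M,S_M]=\{R_M,T,U,S_M\}$ of alternative (2).

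For the converse, I check that each alternative renders $R_M\subset S_M$ Boolean: alternative (1) by Example \ref{4.01}(1), alternative (2) by Proposition \ref{4.17}, and alternative (3) by hypothesis. Applying Proposition \ref{4.02} in the reverse direction then yields that $R\subset S$ is Boolean.

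The main obstacle is organizational rather than computational: one must be sure the trichotomy is exhaustive and that the subcases of the infra-integral branch are matched to the correct alternatives, with the pure subintegral and pure seminormal subcases collapsing into the ``minimal'' alternative (1) while only the genuinely mixed subcase produces alternative (2). The only nontrivial external input is the equivalence asserted in the parentheses of alternative (3), whose verification relies on the reduction of t-closed extensions to field extensions developed in Section 4.
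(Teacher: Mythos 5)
Your reduction and case analysis coincide with the paper's own proof: localize via Proposition \ref{4.02}, split the local Boolean case into Pr\"ufer versus integral by Corollary \ref{4.93}, settle the Pr\"ufer case by chainedness plus Example \ref{4.01}(1) (this is exactly the content of Corollary \ref{4.11}), and split the integral case into t-closed versus infra-integral by Proposition \ref{4.12}. The only organizational difference is in the infra-integral branch: the paper runs everything through Proposition \ref{4.17} and then sorts the outcome according to whether one or both of the elements $x,y$ lie outside $R_M$, whereas you first separate the subintegral and seminormal infra-integral subcases (handled by Lemma \ref{4.16}) from the genuinely mixed one. The two bookkeepings are equivalent, and your matching of the subcases to alternatives (1) and (2) is correct (for (2) one should note, as in the proof of Proposition \ref{4.17}, that Boolean of length $2$ forces $|[R_M,S_M]|=4$, so the lattice is exactly $\{R_M,T,U,S_M\}$).

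There is, however, one genuine omission: the parenthetical equivalence in alternative (3) is part of the statement, and you do not prove it. You defer to ``the reduction of t-closed extensions to field extensions carried out in Section 4,'' but Section 4 studies Boolean field extensions in themselves and nowhere establishes this reduction; in the paper the reduction is a \emph{consequence} of the proof of Theorem \ref{4.18}, carried out in-line at the end of that proof, so your citation is circular. The missing argument is short but uses an external input you never invoke: for a t-closed integral FIP extension of the local ring $R_M$, one has $MR_M=(R_M:S_M)$ with $S_M$ local, by \cite[Lemma 3.17]{DPP3}; hence there is a unique $N\in\mathrm{Max}(S)$ lying over $M$, with $\kappa_R(M)=R_M/MR_M$ and $\kappa_S(N)=S_M/MR_M$, and then the shared-ideal equivalences of Proposition \ref{4.02} (its conditions (1), (4) and (5)) give that $R_M\subset S_M$ is Boolean if and only if $\kappa_R(M)\subset\kappa_S(N)$ is a Boolean field extension. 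Without this step, the two formulations inside alternative (3) have not been shown equivalent, so as written your proof establishes the theorem only for the unparenthesized form of (3).
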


\begin{proof} From Proposition \ref{4.02}, we deduce that $[R, S]$ is Boolean if and only if, for each $M\in\mathrm{MSupp}(S/R),\ [R_M,S_M]$ is Boolean. An appeal to Corollary \ref{4.93} shows that this statement is equivalent to the following: for each $M\in\mathrm{MSupp}(S/R)$, either $R_M\subset S_M$ is Boolean and integrally closed $(*)$, or $R_M\subset S_M$ is Boolean integral $(**)$. In case $(*)$ Corollary \ref{4.11}  gives that $R_ M\subset S_M$ is Boolean integrally closed if and only if $R_M\subset S_M$ is Pr\"ufer minimal. In case $(**)$, when $R_M\subset S_M$ is integral, Proposition \ref{4.12} says that $R_M\subset S_M$ is Boolean if and only if either $R_M\subset S_M$ is infra-integral Boolean (a), or $R_M\subset S_M$ is t-closed Boolean (b). To sum up, if $[R, S]$ is Boolean, for each $M\in\mathrm{MSupp}(S/R)$, either $(*)$ or $(**)$ (a) or $(**)$ (b) holds. Case $(*)$ implies (1). Case $(**)$ (a) implies either (1) or (2). Indeed, by Proposition \ref{4.17}, there exist $x,y\in S_M$ such that $S_M=R_M[x,y]$, where $x^2,xy,y^2-y\in MR_M$ and $xMR_M,yMR_M\subseteq MR_M$. Since $M\in\mathrm{MSupp}(S/R)$, we have $R_M\neq S_M$, so that either $x\not\in R_M$ or $y\not\in R_M$. If both $x,y\not\in R_M$, setting $T:=R_M[x]$ and $U:=R_M[y]$, we get (2). If only one of $x,y\not\in R_M$, we get (1). Case $(**)$ (b) is (3). Conversely, if (1) holds, then $R_M\subset S_M$ is Boolean by Example \ref{4.01}(1). If (2) holds, then $R_M\subset S_M$ is Boolean by Proposition \ref{4.17}, setting $T:=R_M[x]$ and $U:=R_M[y]$ for some $x\in T,\ y\in U$ such that $x^2,xy,y^2-y\in MR_M$ and $xMR_M,yMR_M\subseteq MR_M$. At last, (3) is $(**)$ (b). 

 We now  show that, for some $M\in\mathrm{MSupp}(S/R),\ R_M\subseteq S_M$ is Boolean t-closed is equivalent to  $R_M\subseteq  S_M$ is   t-closed  and $\kappa(M)\subset \kappa(N)$ is a Boolean field extension, where $N$ is the only maximal ideal of $S$ lying above $M$. Set $R':=R_M,\ M':=MR_M$ and $S':=S_M$. Since $R'\subset S'$ is   t-closed, $M'=(R':S')$ and $(S',M')$ is local, \cite[Lemma 3.17]{DPP3}.
 It follows that there is only one maximal ideal $N$  in $S$ lying over $M$, so that $S_M=S_N$  (\cite[Proposition 2, page 40]{Bki A1}) and $\kappa_S(N)=S'/M'$. Then  $\kappa_R(M)=R'/M'\subseteq S'/M'=\kappa_S(N)$ is an FIP field  extension. Now, $R'\subseteq S'$ is Boolean   if and only if $R'/M'\subseteq S'/M'$ is Boolean by Proposition \ref{4.02}. 
\end{proof}

It follows that the remaining t-closed case can be reduced to the case of  FIP field extensions.   The case of fields is the subject of the next section, because of its complexity.

\begin{proposition}\label{4.19} A ring extension $R\subseteq S$  has FIP and is Boolean if and only if $R(X)\subseteq S(X)$ has FIP and is Boolean.
\end{proposition}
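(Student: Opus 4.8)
The plan is to transport both properties across the order-preserving correspondence $T \mapsto T(X)$ between $[R,S]$ and $[R(X),S(X)]$, treating the two implications slightly differently according to which side carries the finiteness hypothesis.

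For the direct implication, suppose $R \subseteq S$ has FIP and is Boolean. Since FIP forces FCP, \cite[Corollary 3.5]{DPP3} gives $S(X) = R(X) \otimes_R S$, and then \cite[Theorem 32]{DPP4} tells us that $\psi : [R,S] \to [R(X),S(X)]$, $\psi(T) = T(X)$, is an order-isomorphism. As $[R,S]$ is finite, so is $[R(X),S(X)]$, i.e.\ $R(X) \subseteq S(X)$ has FIP. Moreover $\psi$ respects the lattice operations---it preserves intersections and products exactly as recorded in the proof of Proposition \ref{desc}---so it is a lattice isomorphism; since being Boolean is invariant under lattice isomorphism, $R(X) \subseteq S(X)$ is Boolean.

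For the converse, suppose $R(X) \subseteq S(X)$ has FIP and is Boolean. Here \cite[Theorem 32]{DPP4} is not directly available, because it presupposes FIP on the base side, so I would first establish that $R \subseteq S$ has FCP by a bare injectivity argument: the Nagata construction is functorial, so $T \mapsto T(X)$ maps $[R,S]$ into $[R(X),S(X)]$, and the standard contraction identity $T(X) \cap S = T$ shows this map is injective. Hence $[R,S]$ embeds into the finite set $[R(X),S(X)]$, so $R \subseteq S$ has FIP, and a fortiori FCP. Once FCP is in hand, Proposition \ref{4.1999} applies verbatim: an FCP extension whose Nagata extension is FIP and Boolean is itself FIP and Boolean. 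This closes the equivalence.

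The main obstacle is precisely the converse's first step, namely obtaining FCP of $R \subseteq S$ without already knowing it: one cannot invoke the order-isomorphism of \cite[Theorem 32]{DPP4} there, and must instead rely on the contraction identity $T(X) \cap S = T$ to get the injection $[R,S] \hookrightarrow [R(X),S(X)]$ and hence descend finiteness. The remaining arguments are routine transfers along cited results, the only subtlety being to observe that the order-isomorphism $\psi$ automatically preserves meets (intersections) and joins (products), so that Boolean-ness---a purely lattice-theoretic notion---passes across it in both directions.
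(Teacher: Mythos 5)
Your converse direction is sound: the injection $T\mapsto T(X)$ together with the contraction identity $T(X)\cap S=T$ correctly supplies the FIP (hence FCP) of $R\subseteq S$ that Proposition \ref{4.1999} requires, and this is a legitimate way of filling in what the paper leaves implicit when it simply cites that proposition for this half.

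The forward direction, however, contains a genuine gap. You invoke \cite[Theorem 32]{DPP4} to conclude that $\psi\colon T\mapsto T(X)$ is an order-isomorphism as soon as $S(X)=R(X)\otimes_R S$ holds, and you then deduce FIP of $R(X)\subseteq S(X)$ from the finiteness of $[R,S]$. But that theorem yields bijectivity of $\psi$ only under the hypothesis that $R(X)\subseteq S(X)$ has FIP --- precisely what you are trying to prove; unconditionally one gets only injectivity, i.e.\ $|[R,S]|\le |[R(X),S(X)]|$, which is useless in this direction. Indeed, FIP genuinely fails to ascend to Nagata extensions: take $k:=\mathbb{F}_2$ and $S:=k[y,z]/(y,z)^2$. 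Then $[k,S]$ consists of the five subalgebras $k\oplus V$, where $V$ runs over the subspaces of the square-zero maximal ideal, so $k\subset S$ has FIP; yet $S(X)=k(X)\otimes_k S$ admits infinitely many intermediate rings $k(X)\oplus V$, one for each of the infinitely many $k(X)$-lines. (This extension is not Boolean, so it does not contradict the proposition --- but it refutes your reading of Theorem 32 and shows your argument proves too much.) The real content of the forward direction, which your proof never touches, is to use the Boolean hypothesis to establish FIP upstairs: by \cite[Corollary 4.3]{Pic 4}, $R(X)\subseteq S(X)$ has FIP if and only if $R\subseteq {}_S^+R$ is arithmetic, and the paper verifies this condition by observing that $[R,{}_S^+R]$ is Boolean (Proposition \ref{4.021}), so that Theorem \ref{4.18} forces either $R_M=({}_S^+R)_M$ or $R_M\subset({}_S^+R)_M$ minimal for each $M\in\mathrm{MSupp}(S/R)$. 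Only once FIP of the Nagata extension is secured does Theorem 32 give the order-isomorphism, after which your lattice-theoretic transfer of Booleanness through $\psi$ is fine.
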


\begin{proof} One part of the proof is Proposition \ref{4.1999}. So, assume that $R\subseteq S$ has FIP and is Boolean. Then,    $R(X)\subseteq S(X)$ has FIP if and only if 
 $R\subseteq{}_S^+R$ is arithmetic 
 \cite[Corollary 4.3]{Pic 4}. If this conditions holds,  the map $\psi:[R,S]\to [R(X),S(X)]$ defined by $T\mapsto T(X)$ is an order-isomorphism \cite[Theorem 32]{DPP4}. 
It follows that $R(X)\subseteq S(X)$ is Boolean since Boolean conditions are preserved  through $\psi$. 
 To complete the proof, we need only to show that 
 $R(X)\subseteq S(X)$ has FIP. 
 It follows from Proposition \ref{4.021} that $[R,{}_S^+R]$ is finite and Boolean. But, under these conditions, Theorem \ref{4.18} yields that either $R_M=({}_S^+R)_M$ or $R_M\subset({}_S^+R)_M$ is minimal, for each $M\in\mathrm{MSupp}(S/R)$. This means that $R\subseteq{}_S^+R$ is arithmetic, so  that $R(X)\subseteq S(X)$ has FIP.
\end{proof}

\section {Boolean FIP field extensions}

The characterization of a Boolean extension of fields is quite different from those obtained in Theorem \ref{4.03} and  needs  a special study. 
\subsection{ FIP non separable field extensions}

We will call in this paper {\it radicial} any purely inseparable field extension. We recall that a minimal field extension is either separable or radicial (\cite[p. 371]{Pic}). We will use the separable closure of a FIP algebraic field extension. 
 In this subsection, we  only consider  FIP field extensions. Indeed, a finite algebraic field extension is not necessarily FIP. For instance a radicial extension $k\subseteq  L$ has not FIP, when  $p:=\mathrm{c}(k)$, $L:=k[x,y]$, with $x^p,y^p\in k$ and $[L:k]=p^2$. 

\begin{lemma}\label{5.1} An FIP radicial field extension $k\subseteq K$  is chained.
\end{lemma}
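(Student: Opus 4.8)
The plan is to reduce the statement to the classical structure of simple purely inseparable extensions. First I would observe that a radicial extension is algebraic, hence integral, and that FIP implies FCP; since an FCP integral extension is module-finite, $[K:k]<\infty$, so $k\subseteq K$ is a finite purely inseparable extension. Here $p:=\mathrm{c}(k)>0$ (a nontrivial radicial extension forces $k$ to be imperfect), and there is a least $e$ with $K^{p^e}\subseteq k$.

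Next I would invoke the primitive element theorem in its strong form: a finite field extension is simple if and only if it has only finitely many intermediate fields. As $[k,K]$ is finite by FIP, this yields $K=k(x)$ for some $x\in K$. Since $K=k(x)$, the exponent $e$ fixed above satisfies $x^{p^e}\in k$ and $x^{p^{e-1}}\notin k$, so the minimal polynomial of $x$ over $k$ is $T^{p^e}-x^{p^e}$, giving $[K:k]=p^e$; more generally $x$ has exponent $j$ over an intermediate field $F$ exactly when $[K:F]=p^j$.

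The heart of the argument is then to show that every $F\in[k,K]$ equals $k(x^{p^j})$ for a unique $j$. Given $F$, let $j$ be least with $x^{p^j}\in F$; then $k(x^{p^j})\subseteq F$, and since $x^{p^{j-1}}\notin F$ we have $x^{p^{j-1}}\notin k(x^{p^j})$, so $x$ has exponent exactly $j$ over $k(x^{p^j})$ and $[K:k(x^{p^j})]=p^j$. As $x$ also has exponent $j$ over $F$, we get $[K:F]=p^j=[K:k(x^{p^j})]$; combined with $k(x^{p^j})\subseteq F$ this forces $F=k(x^{p^j})$. Hence $[k,K]=\{\,k(x^{p^j})\mid 0\le j\le e\,\}$, and since $j\le j'$ gives $x^{p^{j'}}\in k(x^{p^j})$, i.e. $k(x^{p^{j'}})\subseteq k(x^{p^j})$, this family is totally ordered. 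Therefore $k\subseteq K$ is chained.

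I expect the main obstacle to be the passage from FIP to simplicity: everything afterwards is routine degree bookkeeping for simple purely inseparable extensions, together with the care needed to see that the least $j$ with $x^{p^j}\in F$ yields exponent exactly $j$ over $k(x^{p^j})$. If one wished to avoid quoting the primitive element theorem, the alternative would be to rule out incomparable intermediate fields directly: from any two incomparable $F_1,F_2$ one would extract a subextension of the forbidden type $E(u,v)$ with $u^p,v^p\in E:=F_1\cap F_2$, which already carries infinitely many intermediate fields (as $E$ is infinite, being imperfect), contradicting FIP. This is exactly the content that the strong primitive element theorem packages for us.
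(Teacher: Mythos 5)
Your proof is correct and follows essentially the same route as the paper: both deduce $K=k(x)$ from FIP via the strong (Steinitz) form of the Primitive Element Theorem and then identify every intermediate field with some $k(x^{p^j})$, giving the chain $k=k(x^{p^e})\subseteq\cdots\subseteq k(x)=K$. The only cosmetic difference is that you pin down $F=k(x^{p^j})$ by comparing degrees ($[K:F]=p^j=[K:k(x^{p^j})]$ plus containment), whereas the paper quotes the fact, from the proof of the Primitive Element Theorem, that $F$ is generated over $k$ by the coefficients of the minimal polynomial of $x$ over $F$.
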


\begin{proof} Since $k\subseteq K$ has FIP, there exists $\alpha\in K$ such that $K=k [\alpha]$ by the Primitive Element Theorem. Moreover, $\mathrm{c}(k)=p$ is a prime integer  because $k\subseteq K$ is  radicial. Then, the monic minimal polynomial of $\alpha$ over $k$ is of the form $f(X):=X^{p^n}-a=(X-\alpha)^{p^n}$, where $a:=\alpha^{p^n}$ for some positive integer $n$.

The map   $\varphi:\{0,\ldots,n\}\to [k,K]$  defined by $\varphi(m):=k[\alpha^{p^m}]$ is  strictly 
 decreasing. Let $L\in[k,K]$ and $g(X)$ be the monic minimal polynomial of $\alpha$ over $L$. Then, $g(X)$ divides $f(X)$ in $K[X]$ and is of the form $g(X)=(X-\alpha)^{p^m}$ for some $m\in\{0,\ldots,n\}$ because $L\subseteq K=L[\alpha]$ is  radicial  and then the degree of $g(X)$ is a power of $p$. It follows that $g(X)=X^{p^m}-\alpha^{p^m}=X^{p^m}-\beta$, where $\beta:=\alpha^{p^m}\in L$. By the proof of the Primitive Element Theorem,  $L$ is generated over $k$ by the coefficients of $g(x)$, so that $L=k[\beta]=k[\alpha^{p^m}]=\varphi(m)$. Then, $\varphi$ is a bijection and $[k,K]$ is chained.
\end{proof}

We here take the opportunity   to correct a miswriting  in the proof of \cite[Proposition 2.3]{Pic 4}. The sentence: ``It follows that there is only one maximal chain composing $K\subseteq L$, and it has length $n$'' has to be replaced with ``It follows that any maximal chain composing $K\subseteq L$ has length $n$''.

\begin{theorem}\label{4.199} An FIP field extension  $k\subset K$, with separable  closure $T$ and radicial closure $U$ such that   $U,T\not\in\{k,K\}$,  is  Boolean  if and only if the following conditions hold: 

\begin{enumerate}
\item $k\subset U$ and $T\subset K$ are minimal.

\item $[k,K]=[k,T]\cup[U,K]$.

\item $k\subset T$ and $k\subset U$ are linearly disjoint.

\item $[k,T]$ is a Boolean lattice.
\end{enumerate}

If these conditions hold, then $[k,T]\cap[U,K]=\emptyset$ and $U = T^\circ$.
\end{theorem}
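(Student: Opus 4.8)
The plan is to use the abstract characterization of Theorem \ref{4.03} as the engine, the genuinely new content being the identification of its atom/co-atom pair with the radicial and separable closures $U$ and $T$. Since $k\subset K$ is algebraic, the integral-closure decomposition behind Theorem \ref{4.9} degenerates (here $\overline k^{K}=K$), so instead the tower $k\subseteq T\subseteq K$ (with $k\subseteq T$ separable and $T\subseteq K$ radicial) together with the subfield $U$ must carry the Boolean structure. Throughout I will use two classical facts: a separable and a purely inseparable subextension of $k$ are linearly disjoint over $k$ and meet in $k$; and a minimal radicial extension has degree exactly $p:=\mathrm c(k)$ (which also follows from Lemma \ref{5.1}).

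For the direct implication, assume $k\subset K$ is Boolean. Since $k\subseteq U$ and $T\subseteq K$ are radicial and FIP, Lemma \ref{5.1} makes $[k,U]$ and $[T,K]$ chained, while Proposition \ref{4.021} makes them Boolean; a chained Boolean extension is minimal by Example \ref{4.01}(1), so $k\subset U$ and $T\subset K$ are minimal, giving (1), and in particular $[U:k]=[K:T]=p$. Condition (4) is again Proposition \ref{4.021}, and (3) is the classical linear disjointness of $T/k$ and $U/k$. It remains to produce (2): here $k\subset U$ minimal means $U$ is an atom, so $U^{\circ}$ is a co-atom, and I claim $U^{\circ}=T$. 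Indeed $U\cap T=k$, while linear disjointness gives $[UT:T]=[U:k]=p=[K:T]$, whence $UT=K$; thus $T$ is a complement of $U$, and by uniqueness of complements in a distributive lattice $T=U^{\circ}$. Applying the ``moreover'' part of Theorem \ref{4.03} to the atom $U$ and its complement $T=U^{\circ}$ yields exactly $[k,K]=[k,T]\cup[U,K]$, i.e.\ (2), together with $[k,T]\cap[U,K]=\emptyset$ and $U=T^{\circ}$.

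For the converse, assume (1)--(4). From (1) one has $[U:k]=p=[K:T]$; since $T$ is the separable closure, $[K:T]=[K:k]_i$, so $[K:k]_i=p=[U:k]$ with $U/k$ purely inseparable. The equality $[U:k]=[K:k]_i$ forces $K/U$ to be separable, and (3) gives $[UT:T]=p=[K:T]$, hence $UT=K$ and $K=U\otimes_k T$. I will verify hypotheses (1), (4), (5) of Theorem \ref{4.03} for the pair $(U,T)$: its (1) is our (2) and its (4) is our (4), so only the bijectivity of $\varphi\colon[k,T]\to[U,K]$, $L\mapsto UL$, remains. Injectivity is immediate, since $UL/L$ is purely inseparable and $T/L$ separable, so $UL\cap T=L$ recovers $L$ from $UL$. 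For surjectivity I intend to pass to a normal closure $N$ of $T/k$: linear disjointness of the separable $N$ and the purely inseparable $U$ gives a restriction isomorphism $\mathrm{Gal}(NU/U)\xrightarrow{\sim}\mathrm{Gal}(N/k)$, under which $K=UT$ corresponds to $\mathrm{Gal}(N/T)$, so the Galois correspondence identifies $[U,K]$ and $[k,T]$ (both the interval of subgroups between $\mathrm{Gal}(N/T)$ and $\mathrm{Gal}(N/k)$) via $V\mapsto V\cap T$ and $L\mapsto UL$. Hence $\varphi$ is bijective, condition (5) holds, and Theorem \ref{4.03} gives that $k\subset K$ is Boolean; the final assertions $[k,T]\cap[U,K]=\emptyset$ and $U=T^{\circ}$ are once more its ``moreover'' part.

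The main obstacle is the surjectivity step in the converse, namely $V=U(V\cap T)$ for every $V\in[U,K]$, equivalently the lattice isomorphism $[U,K]\cong[k,T]$. This is precisely where the hypothesis that $U$ is radicial is essential: it is the descent of the separable intermediate-field lattice through the purely inseparable base change $k\rightsquigarrow U$, which I handle through the normal closure and the restriction isomorphism of Galois groups above. All remaining verifications are routine degree and linear-disjointness bookkeeping.
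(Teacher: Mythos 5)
Your proposal is correct, and it runs on the same engine as the paper -- both directions are routed through Theorem \ref{4.03}, with the radicial closure $U$ identified as an atom and the separable closure $T$ as its unique complement -- but the two delicate verifications are handled by genuinely different arguments. In the forward direction, the paper proves minimality of $k\subset U$ by a hands-on denial argument juggling the complements $U_1^{\circ}$ of intermediate fields $U_1\in\,]k,U[$, and only then extracts $T\subset K$ minimal from condition (3) of Theorem \ref{4.03}; you get both minimalities at once from Lemma \ref{5.1}, Proposition \ref{4.021} and Example \ref{4.01}(1) (a chained Boolean extension is minimal), which is shorter and cleaner. In the converse, the paper verifies conditions (1)+(2)+(3)+(4) of Theorem \ref{4.03}, obtaining the crucial surjectivity statement $L=U(L\cap T)$ for $L\in[U,K]$ by the two-line observation that $L$ is simultaneously separable over $U(L\cap T)$ (since $K=UT$ is separable over $U$ by base change) and radicial over it; you instead verify (1)+(4)+(5), proving injectivity of $\varphi\colon L\mapsto UL$ by the same separable-versus-radicial trick ($UL\cap T=L$) and surjectivity by passing to the normal closure $N$ of $T/k$ and using the restriction isomorphism $\mathrm{Gal}(NU/U)\cong\mathrm{Gal}(N/k)$. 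Your descent argument is heavier machinery than the paper's, but it is sound (the needed facts $N\cap U=k$, $K\cap N=T$ and $[NU:UT]=[N:T]$ all check out) and it buys slightly more: an order-theoretic isomorphism $[U,K]\cong[k,T]$, not merely the bijection that Theorem \ref{4.03}(5) requires. Finally, your systematic appeal to the classical fact that a separable and a purely inseparable extension are automatically linearly disjoint is legitimate and makes condition (3) come for free in the forward direction -- indeed it shows that hypothesis (3) of the theorem is automatic for the separable and radicial closures -- whereas the paper rederives linear disjointness from the degree equality $[TU:T]=[U:k]$ via Bourbaki.
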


\begin{proof} Since $k\subset U$ is radicial,   
  $\mathrm{c}(k)=p$ 
is  a prime integer. 

Assume that $k\subset K$ is Boolean. If $T^{\circ}$ is the complement of $T$ and $T'$ the separable closure of $k$ in $T^{\circ}$, then $T'\subseteq T^{\circ}\cap T=k$ entails that $k\subseteq T^{\circ}$ is radicial, so that $T^{\circ}\subseteq U$. But $K=T^{\circ}T\subseteq UT$ gives $UT=K$. Moreover, $k=U\cap T$ shows that $T^{\circ}=U$.

We claim that $k\subset U$ is minimal. Deny, and let $U_1\in]k,U[$ be such that $k\subset U_1$ is minimal. Since  $k\subset U$ is radicial, $[k,U]$ is a chain by Lemma \ref{5.1}. Let $U^{\circ}_1\in]T, K[$ be the complement of $U_1$ in $[k,K]$. Then, $K=U_1U^{\circ}_1\subseteq UU^{\circ}_1$ implies $K=UU^{\circ}_1$. Moreover, $k\subseteq U\cap U^{\circ}_1\subseteq U\Rightarrow U\cap U^{\circ}_1\in[k,U]$. If $k\neq U\cap U^{\circ}_1$, then $U_1\subseteq U\cap U^{\circ}_1\subseteq U^{\circ}_1\Rightarrow U_1U^{\circ}_1=U^{\circ}_1=K$ is absurd because $U_1\neq k$. Hence, $U\cap U^{\circ}_1=k$, so that $U^{\circ}_1$ is also the complement of $U$, which is absurd. It follows that $k\subset U$ is minimal.  
 Then (1), (2) and (4) hold by Theorem \ref{4.03}(3),(1),(4). We get that $[U:k]=[K:T]=p$ since $k\subset U$ and $T\subset K$ are minimal radicial. But, $K=TU\Rightarrow[TU:T]=[U:k]$, and then $k\subset T$ and $k\subset U$ are linearly disjoint  \cite[Proposition 5, A V.13]{Bki A}. 

Conversely, assume that (1), (2), (3) and (4) hold. Then, the  above conditions (2) and (4) on $T$ and $U$ coincide with  Theorem \ref{4.03}(1),(4), so that we can use the diagram  appearing in  its proof, where $L\in [k,T]$:
$$\begin{matrix}
   k               & \rightarrow  &       L            & \rightarrow &       T            \\
\downarrow &          {}        & \downarrow &        {}         & \downarrow \\
 U                & \rightarrow  &         UL       & \rightarrow &       K
\end{matrix}$$
Then, $L\subseteq LU$ and $L\subseteq T$ are linearly disjoint \cite[Proposition 8, A V.14]{Bki A}. In particular, $[LU:L]=[K:T]=p$ shows that $L\subset LU$ is minimal as in Theorem \ref{4.03}(3). Let $L\in[U,K]$, and set $ L':=L\cap T$, which is the separable closure of $k\subseteq L$. Then, $L'\subseteq L$ is radicial. But, $UL'\subseteq L$ is both separable (because so is $U\subseteq K$ by \cite[Proposition 16, page 45]{Bki A}) and radicial, because $L'\subseteq L$ is radicial, so that $L'U=L=U(L\cap T)$. Then, $[U,K]=\{UL'\mid L'\in[k,T]\}$ implies Theorem \ref{4.03}(2). Finally, $[k,K]$ is Boolean by Theorem ~\ref{4.03}. The missing statement $[k,T]\cap[U,K]=\emptyset$ hold by Theorem~\ref{4.03}.  
\end{proof}

Hence, we need only to consider either radicial or separable Boolean field extensions to have a complete answer. 
 We recall  \cite{GHe} that a finite field extension $k\subset L$ is said to be {\it exceptional} if $k$ is the radicial closure and $L$ is not the separable closure. Then, a finite  exceptional field extension $k\subset L$ is never Boolean. Deny. Using notation of Theorem \ref{4.199}, $U=k=T^{\circ}$ implies $T=L$, a contradiction.

\begin{proposition}\label{4.21x} An  FIP radicial field extension $k\subset K$ is Boolean if and only if $k\subset K$ is  minimal  if and only if  $\mathrm{c}(k)=[K:k]$.
\end{proposition}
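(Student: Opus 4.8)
The plan is to derive both equivalences cheaply from the results already established for radicial extensions, since the chain structure collapses almost everything.

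First I would observe that, because $k\subset K$ is FIP and radicial, Lemma \ref{5.1} guarantees that $[k,K]$ is a chain. Thus the extension is automatically chained, and Example \ref{4.01}(1)---which says that a Boolean chained extension is exactly a minimal one---immediately yields the equivalence ``$k\subset K$ is Boolean if and only if $k\subset K$ is minimal.'' No further work is needed for this half: being chained, the only way $[k,K]$ can be a Boolean lattice is to have no proper intermediate field, for a proper intermediate field $L$ would need a complement $L'$ with $L\cap L'=k$ and $LL'=K$, yet $L$ and $L'$ are forced to be comparable in a chain, so $\{L,L'\}=\{k,K\}$, a contradiction.

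Next I would establish the numerical criterion ``$k\subset K$ is minimal if and only if $\mathrm{c}(k)=[K:k]$.'' Since $k\subset K$ is radicial, $\mathrm{c}(k)=p$ is a prime integer. Using FIP (hence simplicity via the Primitive Element Theorem) I write $K=k[\alpha]$, whose monic minimal polynomial over $k$ has the form $X^{p^{m}}-a$ with $a=\alpha^{p^{m}}\in k$, exactly as in the proof of Lemma \ref{5.1}. Hence $[K:k]=p^{m}$, and the intermediate fields are precisely the $k[\alpha^{p^{j}}]$ for $0\le j\le m$. Therefore $[k,K]$ is minimal (that is, $|[k,K]|=2$) if and only if $m=1$, which is equivalent to $[K:k]=p=\mathrm{c}(k)$. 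Chaining the two equivalences then completes the proof.

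I do not anticipate a genuine obstacle here: the content is carried entirely by Lemma \ref{5.1} and the explicit description of $[k,K]$ it provides. The only point requiring a line of care is the direction ``$[K:k]=p \Rightarrow$ minimal,'' which follows because any proper intermediate field would have degree a proper divisor of the prime $p$, forcing it to equal $k$ or $K$.
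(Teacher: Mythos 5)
Your proof is correct and takes essentially the same route as the paper: the first equivalence is obtained exactly as in the paper's proof, from Lemma \ref{5.1} (the lattice $[k,K]$ is a chain) combined with Example \ref{4.01}(1) (a chained extension is Boolean if and only if it is minimal). For the second equivalence the paper simply cites \cite[Proposition 2.2]{Pic}, whereas you re-derive it inline from the explicit description $[k,K]=\{k[\alpha^{p^{j}}]\mid 0\le j\le m\}$ established in the proof of Lemma \ref{5.1}; this direct computation is sound (including the observation that $[K:k]=p$ forces minimality since any intermediate field has degree dividing the prime $p$) and merely makes self-contained the fact the paper outsources to the citation.
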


\begin{proof} Use Example \ref {4.01} (1) since $[k,K]$ is a chain  by Lemma \ref {5.1} for the first equivalence, the second comes from \cite[Proposition 2.2]{Pic}. 
\end{proof}

A Galois extension $k\subset K$ is  minimal  if and only if  $[K:k]$ is a prime integer  \cite[Proposition 2.2]{Pic}. But this equivalence does not always hold for an arbitrary finite separable extension.
  Theorem  \ref {4.210}  characterizes minimal separable extensions, independently  of Galois Theory, contrary to  Philippe's  methods  \cite{Ph}.   She proved that  a separable extension $k\subset k(x)$ is minimal if and only if the Galois group of the minimal polynomial of $x$ is primitive \cite[Proposition 2.2(3)]{Pic}. See also Cox \cite[page 414]{Co} for the characterization and links between primitive groups and ``primitive'' separable polynomials, a non trivial theory.

\subsection{Finite separable field extensions}
Let $k\subset L$ be a finite separable field extension,
 (whence FIP). 
Then any $T\in[k,L[$ is an intersection of finitely many $\cap$-irreducible elements of $[k,L]$ by Proposition \ref {5.5}. We  give an  upper bound of $|[k,L]|$ and recall a result from Dobbs-Mullins.
 
\begin{proposition}\label{4.21} Let $k\subset L$ be a finite  separable field extension of degree $n$. 
  
(1) Then, $|[k,L]|\leq B_n$, where $B_n$ is the $n$th Bell number.
  
(2)  \cite[Theorem 2.7]{DM} If $k$ is an infinite field, then, $|[k,L]|\leq 2^{n-2}+1$.  
\end{proposition}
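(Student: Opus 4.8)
The plan is to prove only part (1), since part (2) is quoted verbatim from Dobbs and Mullins \cite[Theorem 2.7]{DM}. Recall that the Bell number $B_n$ counts the partitions of an $n$-element set, so it suffices to produce an injection from $[k,L]$ into the set of partitions of some fixed $n$-element set.

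Fixing an algebraic closure $\overline{k}$ with $L\subseteq\overline{k}$, separability gives exactly $n$ distinct $k$-embeddings $\sigma_1,\dots,\sigma_n:L\to\overline{k}$ (the separable degree equals $[L:k]=n$), and I would normalize so that $\sigma_1$ is the inclusion $L\hookrightarrow\overline{k}$. To each $T\in[k,L]$ I associate the equivalence relation $\sim_T$ on the set $E:=\{\sigma_1,\dots,\sigma_n\}$ defined by $\sigma_i\sim_T\sigma_j\iff\sigma_i|_T=\sigma_j|_T$; this yields a partition $\pi_T$ of the $n$-element set $E$ (its number of blocks is $[T:k]$, the number of $k$-embeddings of $T$, and each block has the common size $[L:T]$).

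The heart of the argument, and the step I expect to be the main obstacle, is the injectivity of $T\mapsto\pi_T$. I would establish it by showing that $T$ can be recovered from $\pi_T$ via the formula
\[
T=\{x\in L\mid \sigma_i(x)=\sigma_j(x)\ \text{whenever}\ \sigma_i\sim_T\sigma_j\}.
\]
The inclusion $\subseteq$ is immediate from the definition of $\sim_T$. For $\supseteq$, take $x$ in the right-hand set. The embeddings that fix $T$ pointwise are precisely those $\sigma_i$ with $\sigma_i\sim_T\sigma_1$, and there are $[L:T]$ of them; by the defining property of $x$, each such $\sigma_i$ also fixes $x$, hence fixes $T(x)$. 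Thus $T(x)$ admits at least $[L:T]$ embeddings into $\overline{k}$ over $T$, which forces $[L:T(x)]=[L:T]$ and therefore $T(x)=T$, i.e. $x\in T$. Since the right-hand side depends only on the partition $\pi_T$ together with the fixed family $\{\sigma_i\}$, the equality $\pi_T=\pi_{T'}$ yields $T=T'$.

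Consequently $T\mapsto\pi_T$ embeds $[k,L]$ into the set of partitions of $E$, giving $|[k,L]|\le B_n$ and proving (1). (As a byproduct this re-derives the finiteness of $[k,L]$ for finite separable extensions.)
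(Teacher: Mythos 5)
Your proof is correct, and it takes a genuinely different, more self-contained route than the paper. The paper argues by base change: since $k\subset L$ is finite separable, hence \'etale, $\Omega\otimes_kL\cong\Omega^n$ for an algebraic closure $\Omega$ of $k$ \cite[Proposition 2, page AV.29]{Bki A}, and it then invokes $|[\Omega,\Omega^n]|=B_n$ from \cite[Proposition 4.15]{DPP3}, using implicitly the injectivity of $T\mapsto\Omega\otimes_kT$ (faithfully flat ascent, cf. Proposition \ref{desc}) to conclude $|[k,L]|\leq B_n$. You instead inject $[k,L]$ directly into the set of partitions of the $n$-element set of $k$-embeddings $L\to\overline{k}$ and prove injectivity by an explicit recovery formula. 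The two arguments are conceptually the same count: the primitive idempotents of $\Omega\otimes_kL\cong\Omega^n$ correspond to the embeddings $\sigma_1,\dots,\sigma_n$, and the subalgebra $\Omega\otimes_kT$ corresponds precisely to your partition $\pi_T$; but your version trades the two citations for an elementary degree count and makes explicit the injectivity step that the paper leaves tacit. One phrasing slip worth fixing: where you write that ``$T(x)$ admits at least $[L:T]$ embeddings into $\overline{k}$ over $T$'', what your argument actually establishes (and what you need) is that $L$ admits at least $[L:T]$ embeddings over $T(x)$, i.e. $[L:T(x)]\geq[L:T]$, since the $[L:T]$ embeddings in the block of $\sigma_1$ all fix $T(x)$ pointwise; combined with $T\subseteq T(x)$ this forces $T(x)=T$, exactly as you conclude. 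Finally, deferring (2) to \cite[Theorem 2.7]{DM} is consistent with the paper, whose own proof likewise covers only (1).
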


\begin{proof} (1) $k\subset L$ is \'etale since a finite separable field extension, and then, letting $\Omega$ be an algebraic closure of $k$, \cite[Proposition 2, page AV.29]{Bki A} shows that the $\Omega$-algebra $\Omega\otimes_kL$ is diagonalizable, and then isomorphic to $\Omega^n$, for some integer $n$. But $|[\Omega,\Omega^n]|=B_n$ by \cite[Proposition 4.15]{DPP3}, so that  $[k,L]\leq B_n$. 
\end{proof} 
 
Moreover, if $k \subset L$ is Boolean, any $\cap$-irreducible element is a co-atom by Proposition \ref{4.0202}. In fact, Theorem \ref{4.0203} says that $k \subset L$ is Boolean if and only if any $T\in[k,L[$ is an intersection in a unique way of finitely many co-atoms. Thanks to principal subfields introduced in \cite{HKN} and some of their properties we studied in \cite{Pic 6}, we are able to characterize co-atoms of a finite separable field extension, and then give a characterization of a finite separable Boolean field extension by using \cite{HKN}, 
 from 
van Hoeij, Kl\"uners and Novocin, that gives an algorithm to compute subextensions of a finite separable field extension. 
 We recall the notation of  \cite{Pic 6},
($k_u[X]$ is the set of monic polynomials of $k[X])$. 

From now on, our riding hypotheses for the subsection will be: $L:=k[x]$ is a finite separable (whence FIP) field extension of $k$  with degree $n$ and $f(X)\in k_u[X]$ is 
 {\it the} 
 minimal polynomial of $x$ over $k$. 
If $g(X)\in L_u[X]$ divides $f(X)$, we denote by $K_g$ the $k$-subalgebra of $L$ generated by the coefficients of $g$.  For any $K\in[k,L]$, we denote by $f_K(X)\in K_u[X]$ the minimal polynomial of $x$ over $K$. The proof of the Primitive Element Theorem shows that $K=K_{f_K}\ (*)$.
Of course, $f_K(X)$ divides $f (X)$ in $K[X]$ (and in $L[X]$). 
If $f(X):=(X-x)f_1(X)\cdots f_r(X)$ is the decomposition of $f(X)$ into irreducible factors of $L_u[X]$, we  set $\mathcal F:=\{f_1,\ldots,f_r\}$  because the $f_{\alpha}'$s are different by separability. 
 For each $\alpha\in\mathbb N_r$, we set $L_{\alpha}:=\{g(x)\in L\mid g(X)\in k[X],\  g(X)\equiv g(x)\ (f_{\alpha}(X))$ in $L[X]\}$. 
The $L'_{\alpha}$s are called the {\it principal subfields} of $k\subset L$. 
It may be that $L_{\alpha}=L_{\beta}$ for some $\alpha\neq \beta$ (see \cite[Example 5.17 (1)]{Pic 6}). To get rid of this situation, 
 we defined in \cite{Pic 6} $\Phi:\mathcal F\to [k,L]$ by $\Phi(f_{\alpha})= L_{\alpha}$. If $t:=|\Phi(\mathcal F)|$, we set $\Phi(\mathcal F):=\{E_1,\ldots,E_t\}:=\mathcal E$. We denote by $E_{\beta}$  the common value of these $L_{\alpha}$'s and by $\mathcal E:=\Phi(\mathcal F)$  the set of all distinct principal subfields of $k \subset L$.  
 We set  $m_{\beta}:=f_{E_{\beta}}$ for $\beta\in\mathbb N_t$.

For $K\in[k,L[$ , we set $I(K):=\{\alpha\in  \mathbb N_r \mid f_K(X)/ f_{\alpha}(X) \in L_u[X] \}$.
 We also set $J(K):=\{\beta\in  \mathbb N_t\mid \Phi(f_{\alpha})=E_{\beta}$ for all $\alpha\in I(K)\}$.  
 For each $\beta\in\mathbb N_t$, we set $\mathcal{F}_{\beta}:=\{f_{\alpha}\in \mathcal{F}\mid f_{\alpha}$ divides $m_{\beta}$ in $L[X]\}$.

\begin{theorem}\label{4.210} \cite[Theorem 5.5]{Pic 6} Let $K\in[k,L[$. Then,  $f_K(X)=(X-x)\prod_{\alpha\in I(K)}f_{\alpha}(X)$ and $K=\{g(x)\in L\mid g(X)\in k[X], g(X)\equiv g(x)\ (f_K(X))$ in $K[X]\}=\cap_{\beta\in J(K)}E_{\beta} =\cap_{\alpha\in I(K)}L_{\alpha}$. In particular, $|[k,L]|\leq 2^t$ 
 and $k\subset L$ is minimal if and only if $t=1$. 
\end{theorem}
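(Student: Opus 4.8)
The plan is to combine the factorization of $f$ over $L$ with a separable-embedding criterion for membership in an intermediate field, and then to read off the two quantitative consequences by a counting argument.

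First I would prove the factorization of $f_K$. Since $K\in[k,L[$ and $L=K[x]$, the minimal polynomial $f_K$ of $x$ over $K$ divides $f$ in $K[X]$, hence in $L[X]$. As $f=(X-x)f_1\cdots f_r$ is a product of pairwise distinct monic irreducibles of $L[X]$ (distinct because $k\subset L$ is separable), unique factorization forces $f_K$ to be the product of a subset of these factors; the factor $X-x$ occurs because $x$ is a root of $f_K$. By the definition of $I(K)$ as the set of $\alpha$ with $f_\alpha\mid f_K$ in $L_u[X]$, this gives exactly $f_K(X)=(X-x)\prod_{\alpha\in I(K)}f_\alpha(X)$.

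The heart of the proof, and the step I expect to be the main obstacle, is the membership criterion: for $h\in k[X]$ one has $h(x)\in K$ if and only if $f_K(X)\mid h(X)-h(x)$ in $L[X]$. The forward direction is immediate, since then $h(X)-h(x)\in K[X]$ has $x$ as a root. For the converse, separability is decisive: writing $d:=\deg f_K=[L:K]$, the separable extension $K\subset L$ has exactly $d$ distinct $K$-embeddings $\sigma_1,\dots,\sigma_d$ of $L$ into an algebraic closure of $K$, with $\sigma_i(x)$ ranging over the $d$ distinct roots $x_1,\dots,x_d$ of $f_K$, and an element $\xi\in L$ lies in $K$ precisely when $\sigma_1(\xi)=\cdots=\sigma_d(\xi)$. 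Since $f_K\mid h(X)-h(x)$ makes each $x_i$ a root of $h(X)-h(x)$, we get $\sigma_i(h(x))=h(x_i)=h(x)$ for every $i$, whence $h(x)\in K$. Together with the relation $K=K_{f_K}$ this yields the displayed set-theoretic description of $K$, and combined with the first step it gives $K=\cap_{\alpha\in I(K)}L_\alpha$: each element of $K$ lies in every $L_\alpha$ with $\alpha\in I(K)$ because $f_\alpha\mid f_K$, while if $h(x)$ lies in all those $L_\alpha$ then $f_\alpha\mid h(X)-h(x)$ for $\alpha\in I(K)$ and $(X-x)\mid h(X)-h(x)$, so their product $f_K$ divides $h(X)-h(x)$ and the criterion places $h(x)$ in $K$. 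Regrouping the equal principal subfields $L_\alpha=E_\beta$ rewrites this intersection as $\cap_{\beta\in J(K)}E_\beta$.

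Finally I would deduce the two numerical claims. Every $K\in[k,L]$ equals $\cap\{E\in\mathcal E\mid K\subseteq E\}$ (use $K=\cap_{\alpha\in I(K)}L_\alpha$ for $K\neq L$, and the empty intersection for $K=L$, noting that each principal subfield is proper since $x\notin L_\alpha$), so the map $K\mapsto\{E\in\mathcal E\mid K\subseteq E\}$ is injective into the power set $\mathcal P(\mathcal E)$; as $|\mathcal E|=t$ this gives $|[k,L]|\leq 2^t$. For the last equivalence, if $t=1$ then any proper subfield $K$ has $I(K)\neq\emptyset$ (as $K\neq L$), so $K=\cap_{\beta\in J(K)}E_\beta=E_1$; applying this to $K=k$, for which $I(k)=\mathbb N_r$ and hence $k=\cap_{\beta=1}^tE_\beta=E_1$, shows $[k,L[=\{k\}$, so $k\subset L$ is minimal. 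Conversely, if $k\subset L$ is minimal then $[L:k]\geq 2$ forces $r\geq 1$, each $L_\alpha$ is a proper subfield and therefore equals $k$, so $\mathcal E=\{k\}$ and $t=1$.
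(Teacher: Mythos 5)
Your proof is correct, but it is substantially more self-contained than what the paper actually does, so the comparison is worth recording. In the paper, the main assertions of the statement --- the factorization $f_K(X)=(X-x)\prod_{\alpha\in I(K)}f_{\alpha}(X)$ and the identities $K=\{g(x)\in L\mid g\in k[X],\ g(X)\equiv g(x)\ (f_K(X))\}=\cap_{\beta\in J(K)}E_{\beta}=\cap_{\alpha\in I(K)}L_{\alpha}$ --- are not proved there at all: they are quoted from \cite[Theorem 5.5]{Pic 6} (resting on the principal-subfield machinery of van Hoeij--Kl\"uners--Novocin \cite{HKN}), and the in-text proof covers only the ``in particular'' clause, by exactly the counting argument you give: every $K\in[k,L]$ is an intersection of principal subfields, so $K\mapsto\{E\in\mathcal E\mid K\subseteq E\}$ injects $[k,L]$ into a power set of cardinality $2^t$ (each $E_\beta$ being proper since $x\notin L_\alpha$), and minimality corresponds to $t=1$. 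What you add is a complete proof of the quoted part, whose engine is the separable-embedding criterion: $h(x)\in K$ if and only if $f_K\mid h(X)-h(x)$ in $L[X]$, proved by letting the $[L:K]$ distinct $K$-embeddings of $L$ act and using that an element of $L$ on which they all agree lies in $K$; this uses separability essentially, which is the standing hypothesis anyway, and it makes the theorem independent of the external references. One small step you should make explicit in the converse inclusion $\cap_{\alpha\in I(K)}L_{\alpha}\subseteq K$: membership of $\xi$ in $L_{\alpha}$ is a priori witnessed by a polynomial $g_{\alpha}$ depending on $\alpha$, whereas you apply the divisibilities $f_{\alpha}\mid h(X)-h(x)$ to one common representative $h$. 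This is harmless because membership in $L_{\alpha}$ does not depend on the representative: if $g,h\in k[X]$ satisfy $g(x)=h(x)$, then $f\mid g-h$ in $k[X]$, hence $f_{\alpha}\mid g-h$ in $L[X]$, so the congruence modulo $f_{\alpha}$ holds for one representative exactly when it holds for all. With that one-line remark inserted, your argument is airtight, and for the portion the paper does prove your reasoning coincides with the paper's.
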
 

 \begin{proof}  The inequality comes from that any $K\in[k,L]$ is an intersection of some principal subfields and gives the equivalence.
\end{proof} 

The inequality in Theorem \ref{4.210} gives a better bound than the one of Proposition \ref{4.21} because $2^t\leq 2^{n-1}\leq B_n$ thanks to the induction formula $B_{n+1}=\sum_{i=0}^n\mathrm C_n^{i}B_i$. In case $k\subset L$ is not a Galois extension, we get a better bound than the 
  bound of
 \cite[Theorem 2.7]{DM}.

\begin{corollary}\label{Bell} Let $k\subset L $ be a finite  separable field extension which is not Galois and of degree $n$. Then, $|[k,L]|\leq  2^{n-2}$. 
\end{corollary}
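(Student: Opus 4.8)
The plan is to combine the bound $|[k,L]| \le 2^t$ from Theorem \ref{4.210} with a degree count that forces $t$ to be small once $k \subset L$ fails to be Galois. Recall that $t = |\Phi(\mathcal F)|$ is the number of distinct principal subfields, and since $\Phi \colon \mathcal F \to [k,L]$ is defined on the set $\mathcal F = \{f_1,\ldots,f_r\}$ of irreducible factors of $f(X)/(X-x)$ in $L_u[X]$, we have immediately $t \le r$. Hence it suffices to show $r \le n-2$.

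First I would pass to the factorization $f(X) = (X-x)f_1(X)\cdots f_r(X)$ in $L_u[X]$ and compare degrees: since the factors are monic, $n = \deg f = 1 + \sum_{\alpha=1}^r \deg f_\alpha$. By separability the roots of $f$ are distinct, so this is genuinely a factorization into $r+1$ distinct monic irreducible polynomials of $L_u[X]$.

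The key step is the observation that $k \subset L$ is Galois if and only if every $f_\alpha$ is linear. Indeed, $k \subset L$ being separable, it is Galois precisely when it is normal, that is, when $f$ splits completely in $L$; and $f$ splits completely in $L$ exactly when each of the irreducible factors $(X-x), f_1,\ldots,f_r$ has degree $1$. Therefore, if $k \subset L$ is not Galois, at least one $f_\alpha$ satisfies $\deg f_\alpha \ge 2$, whence $\sum_{\alpha=1}^r \deg f_\alpha \ge (r-1)+2 = r+1$ and so $n \ge r+2$, i.e. $r \le n-2$.

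Putting these together yields $|[k,L]| \le 2^t \le 2^r \le 2^{n-2}$, as desired. There is no serious obstacle here; the only point requiring care is the standard Galois-theoretic equivalence ``Galois $\iff$ $f$ splits in $L$ $\iff$ all $f_\alpha$ linear'', which converts the hypothesis ``not Galois'' into the combinatorial statement that the degree sum of the $f_\alpha$ strictly exceeds $r$, thereby dropping the exponent from the Galois-case bound $2^{n-1}$ (attained when $r=n-1$) down to $2^{n-2}$.
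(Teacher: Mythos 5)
Your proof is correct and follows essentially the same route as the paper: both invoke the bound $|[k,L]|\leq 2^t$ with $t\leq r$ from Theorem \ref{4.210}, and both use the observation that the non-Galois hypothesis forces some irreducible factor $f_\alpha$ of $f(X)/(X-x)$ in $L_u[X]$ to have degree at least $2$, giving $n\geq r+2$ and hence $|[k,L]|\leq 2^t\leq 2^r\leq 2^{n-2}$. Your explicit justification of the equivalence ``Galois $\Leftrightarrow$ all $f_\alpha$ linear'' is a detail the paper leaves implicit, but the argument is the same.
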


\begin{proof} Let $f(X)$ be the  minimal polynomial of the extension $k\subset L$ and set $f(X) :=(X-x)f_1(X)\cdots f_r(X)$  the decomposition of $f(X)$ into irreducible factors of $L_u[X]$. Since $k\subset L$ is not  Galois, at least one $f_i$ is not of degree 1, so that $n=1+\sum_{i=1}^r\deg(f_i)\geq 1+2+(r-1)=r+2$. It follows that  $t\leq r\leq n-2$ implies $|[k,L]|\leq  2^t\leq 2^{n-2}$.
\end{proof}

 In the following, we write $K_{\alpha}:=K_{g_{\alpha}}$, where $g_{\alpha}(X):=(X-x)f_{\alpha}(X)$.

\begin{lemma}\label{4.22}  
For $K,K'\in[k,L]$, $K\subseteq K'\Leftrightarrow f_{K'}(X) | f_K(X)$ in $L[X]$. 
\end{lemma}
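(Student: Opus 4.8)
The plan is to establish the two implications separately, relying on the minimal-polynomial description of subfields for the forward direction and on Theorem \ref{4.210} for the converse. Throughout I keep in mind that $L=K[x]$ for every $K\in[k,L]$, that $f_K$ is the minimal polynomial of $x$ over $K$ with $\deg f_K=[L:K]$, and that $f_K$ divides $f$ in $L[X]$.

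For the implication $K\subseteq K'\Rightarrow f_{K'}\mid f_K$, I would argue directly from the definition of the minimal polynomial. If $K\subseteq K'$, then $f_K\in K[X]\subseteq K'[X]$ is a polynomial of $K'[X]$ vanishing at $x$, so it is divisible by the minimal polynomial $f_{K'}$ of $x$ over $K'$; hence $f_{K'}\mid f_K$ already in $K'[X]$, a fortiori in $L[X]$. This direction requires no restriction on $K,K'$ and covers the extreme cases $K=L$ or $K'=L$ as well.

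For the converse $f_{K'}\mid f_K\Rightarrow K\subseteq K'$, the key idea is to translate divisibility over $L$ into an inclusion of index sets. Writing $f(X)=(X-x)f_1(X)\cdots f_r(X)$ for the factorization into monic irreducibles of $L_u[X]$, which are pairwise distinct by separability, Theorem \ref{4.210} gives $f_K(X)=(X-x)\prod_{\alpha\in I(K)}f_\alpha(X)$ and the analogous expression for $K'$. Since $(X-x)$ and the $f_\alpha$ are pairwise coprime in $L[X]$ and each occurs to the first power (again by separability, as $x$ is a simple root so $f_\alpha(x)\neq 0$), unique factorization in $L[X]$ shows that $f_{K'}\mid f_K$ holds precisely when $I(K')\subseteq I(K)$. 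The intersection formula $K=\bigcap_{\alpha\in I(K)}L_\alpha$ of Theorem \ref{4.210}, combined with the elementary fact that intersecting over a larger index set yields a smaller field, then gives $K=\bigcap_{\alpha\in I(K)}L_\alpha\subseteq\bigcap_{\alpha\in I(K')}L_\alpha=K'$, as desired.

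The step I would treat most carefully — and the only genuinely delicate point — is the equivalence $f_{K'}\mid f_K\Leftrightarrow I(K')\subseteq I(K)$: it rests entirely on the separability of $f$, which guarantees that $(X-x)$ and the $f_\alpha$ are distinct irreducible factors, so that the common factor $(X-x)$ of $f_K$ and $f_{K'}$ cancels cleanly and divisibility reduces to containment of the remaining factor sets. The boundary cases are immediate and fit the same pattern: if $K=L$ then $f_K=X-x$ and $I(K)=\emptyset$, and $f_{K'}\mid(X-x)$ forces $f_{K'}=X-x$, hence $K'=L\supseteq K$; and if $K'=L$ then $f_{K'}=X-x$ always divides $f_K$ while $K\subseteq L=K'$ holds trivially, consistent with the convention that the empty intersection equals $L$.
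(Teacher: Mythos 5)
Your proposal is correct and follows essentially the same route as the paper: the forward implication by viewing $f_K$ as a polynomial of $K'[X]$ vanishing at $x$, and the converse by combining the factorization $f_K=(X-x)\prod_{\alpha\in I(K)}f_\alpha$ and the intersection formula $K=\bigcap_{\alpha\in I(K)}L_\alpha$ from Theorem \ref{4.210} to deduce $I(K')\subseteq I(K)$ and hence $K\subseteq K'$. Your extra care with separability (so that divisibility cleanly reduces to containment of the index sets) and with the boundary cases $K=L$, $K'=L$ (where Theorem \ref{4.210} is stated only for proper subfields) is a minor refinement of the paper's argument, not a different method.
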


\begin{proof} 
Assume that $K\subseteq K'$. Then, $f_K(X)\in K'[X]$  satisfies $f_K(x)=0$, so that $f_{K'}(X)$ divides $f_K(X)$ in $K'[X]$, and also in  $L[X]$. 

Conversely, assume that $f_{K'}$ divides $f_K$ in $L[X]$. Since Theorem \ref{4.210} implies $K=\cap_{\alpha\in I(K)}L_{\alpha},\ K'=\cap_{\alpha\in I(K')}L_{\alpha}$ and $f_{K'}$ divides $f_K$ in $L [X]$, 
any $f_{\alpha}$ which divides $f_{K'}$ divides $f_K$, so that $I(K')\subseteq I(K)$ and then 
$K=\cap_{\alpha\in I(K)}L_{\alpha}\subseteq\cap_{\alpha\in I(K)}L_{\alpha}=K'$. 
\end{proof}

\begin{proposition}\label{4.2501} If $K,K'\in[k,L]$, then {\rm lcm}$(f_K,f_{K'})$ divides $f_{K\cap K'}$ and $f_{KK'}$ divides {\rm gcd}$(f_K,f_{K'})$ in $L[X]$.
\end{proposition}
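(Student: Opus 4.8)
The plan is to reduce the whole statement to Lemma~\ref{4.22}, which converts inclusions of intermediate fields into divisibilities of their minimal polynomials, combined with the elementary characterizing properties of the gcd and lcm in $L[X]$. Here $L$ is a field, so $L[X]$ is a principal ideal domain, and since each $f_K$ is monic the monic gcd and lcm of any finite family of such polynomials are well defined; I would take these monic representatives throughout. The only point demanding attention is keeping the \emph{direction} of divisibility straight: by Lemma~\ref{4.22}, the larger of two comparable fields carries the minimal polynomial that \emph{divides} the one attached to the smaller field.

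First I would treat the lcm assertion. Since $K\cap K'$ is the infimum of $K$ and $K'$ in the lattice $[k,L]$, we have $K\cap K'\subseteq K$ and $K\cap K'\subseteq K'$. Applying Lemma~\ref{4.22} to each inclusion, with $K\cap K'$ playing the role of the smaller field, yields $f_K\mid f_{K\cap K'}$ and $f_{K'}\mid f_{K\cap K'}$ in $L[X]$. Thus $f_{K\cap K'}$ is a common multiple of $f_K$ and $f_{K'}$, and the defining property of the least common multiple gives $\mathrm{lcm}(f_K,f_{K'})\mid f_{K\cap K'}$.

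The gcd assertion is handled dually. The compositum (product) $KK'$ is the supremum of $K$ and $K'$, so $K\subseteq KK'$ and $K'\subseteq KK'$. A second application of Lemma~\ref{4.22}, now with $KK'$ as the larger field, produces $f_{KK'}\mid f_K$ and $f_{KK'}\mid f_{K'}$. Hence $f_{KK'}$ is a common divisor of $f_K$ and $f_{K'}$, and by the defining property of the greatest common divisor we conclude $f_{KK'}\mid \mathrm{gcd}(f_K,f_{K'})$.

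I do not expect a genuine obstacle: all the mathematical content is carried by Lemma~\ref{4.22}, and what remains is the formal divisibility bookkeeping in the PID $L[X]$. It is worth emphasizing that the statement is phrased with ``divides'' rather than ``equals'' on purpose, since in general $\mathrm{lcm}(f_K,f_{K'})$ may be a \emph{proper} divisor of $f_{K\cap K'}$ and $f_{KK'}$ a proper divisor of $\mathrm{gcd}(f_K,f_{K'})$; only the one-sided divisibilities follow formally from inclusions, and nothing in the argument forces equality.
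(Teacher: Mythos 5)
Your proof is correct and is essentially the paper's own argument: the paper's entire proof reads ``Use Lemma \ref{4.22} applied to $K\cap K'\subseteq K,K'\subseteq KK'$,'' and you have simply spelled out the same two applications of that lemma together with the defining properties of $\gcd$ and $\mathrm{lcm}$ in $L[X]$, with the divisibility directions handled correctly. Your closing remark that only one-sided divisibilities (not equalities) follow is also consonant with the paper, which reserves the identification of $\sup$ and $\inf$ with $f_{K\cap K'}$ and $f_{KK'}$ for the poset $\mathcal D$ in Corollary \ref{4.251} rather than claiming equality with the $\mathrm{lcm}$ and $\gcd$ in $L[X]$.
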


\begin{proof} Use Lemma \ref{4.22} applied to $K\cap K'\subseteq K,K'\subseteq KK'$. 
\end{proof}

 We set $\mathcal D:=\{f_K\mid K\in[k,L]\}$.
 Then, $(\mathcal D,\leq)$ is a poset for the order $\leq$ defined as follows: if $f_K,f_{K'}\in\mathcal D$, then $f_K\leq f_{K'}$ if and only if $f_K|f_{K'}$ in $L[X]$, which is equivalent to $ K'\subseteq K$ by Lemma \ref{4.22}. In particular, $\sup$ and $\inf$ are respectively {\rm lcm} and {\rm gcd} in $\mathcal D$.  

\begin{corollary}\label{4.251} The map  $\varphi :[k,L]\to\mathcal D$ defined by $K\mapsto f_K$ is a reversing order bijection such that $\sup(f_K,f_{K'})=f_{K\cap K'}$ and $f_{KK'}=\inf(f_K,f_{K'})$ for $K,K'\in[k,L]$.
\end{corollary}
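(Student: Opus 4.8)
The plan is to verify in turn that $\varphi$ is a bijection, that it reverses order, and finally that it carries the meet and join of $[k,L]$ to the join and meet of $(\mathcal D,\le)$. First I would settle bijectivity. Surjectivity is immediate, since $\mathcal D$ was defined as the image $\{f_K\mid K\in[k,L]\}$. For injectivity I would invoke the relation $(*)$ coming from the Primitive Element Theorem, namely $K=K_{f_K}$: if $f_K=f_{K'}$, then $K=K_{f_K}=K_{f_{K'}}=K'$, so $\varphi$ is one-to-one. The order-reversing property is then exactly Lemma \ref{4.22} read through the definition of $\le$ on $\mathcal D$: for $K,K'\in[k,L]$ one has $K\subseteq K'\iff f_{K'}\mid f_K$ in $L[X]\iff f_{K'}\le f_K$, that is $\varphi(K')\le\varphi(K)$. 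Thus $\varphi$ is an order anti-isomorphism from the complete (finite) lattice $[k,L]$, whose meet is $\cap$ and whose join is the product, onto $(\mathcal D,\le)$, and in particular $(\mathcal D,\le)$ is itself a lattice.

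It remains to identify the extremal elements. Since $K\cap K'\subseteq K$ and $K\cap K'\subseteq K'$, Lemma \ref{4.22} gives $f_K\mid f_{K\cap K'}$ and $f_{K'}\mid f_{K\cap K'}$, so $f_{K\cap K'}$ is a common multiple, hence an upper bound of $f_K$ and $f_{K'}$ in $\mathcal D$. To see that it is the least such, I would let $f_{K''}\in\mathcal D$ satisfy $f_K\le f_{K''}$ and $f_{K'}\le f_{K''}$; by Lemma \ref{4.22} this means $K''\subseteq K$ and $K''\subseteq K'$, whence $K''\subseteq K\cap K'$, and applying Lemma \ref{4.22} once more yields $f_{K\cap K'}\mid f_{K''}$. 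Therefore $\sup(f_K,f_{K'})=f_{K\cap K'}$. Exchanging the roles of intersection and product, the symmetric argument built on $K,K'\subseteq KK'$ gives $\inf(f_K,f_{K'})=f_{KK'}$. Both equalities also follow in one stroke from the anti-isomorphism of the first paragraph, which necessarily swaps $\inf$ and $\sup$.

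The computation is essentially routine, and the only point requiring genuine care is that $\sup$ and $\inf$ here denote the suprema and infima taken inside the poset $(\mathcal D,\le)$, and not the naive $\mathrm{lcm}$ and $\mathrm{gcd}$ computed in $L_u[X]$, which need not lie in $\mathcal D$. Indeed, Proposition \ref{4.2501} supplies only the divisibilities $\mathrm{lcm}(f_K,f_{K'})\mid f_{K\cap K'}$ and $f_{KK'}\mid\mathrm{gcd}(f_K,f_{K'})$; the reverse estimates, and hence the exact identification of the poset extrema with $f_{K\cap K'}$ and $f_{KK'}$, must come from the least-upper-bound and greatest-lower-bound characterizations obtained above via Lemma \ref{4.22}, rather than from Proposition \ref{4.2501} alone.
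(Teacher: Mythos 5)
Your proof is correct and follows essentially the same route as the paper's: surjectivity by definition of $\mathcal D$, injectivity via $K=K_{f_K}$ from $(*)$, order-reversal by Lemma \ref{4.22}, and the identification of $\sup$ and $\inf$ by exhibiting $f_{K\cap K'}$ and $f_{KK'}$ as bounds and then checking the universal property against an arbitrary $f_{K''}\in\mathcal D$ (the paper cites Proposition \ref{4.2501} for the bounding step, which you simply re-derive from Lemma \ref{4.22} --- exactly how that proposition is proved anyway). Your closing caveat that $\sup$ and $\inf$ are taken in the poset $(\mathcal D,\le)$ and need not coincide with the naive $\mathrm{lcm}$ and $\mathrm{gcd}$ in $L_u[X]$ is a sound and worthwhile clarification of the paper's terminology.
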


\begin{proof} $\varphi$ is obviously surjective and is injective since $K=K_{f_K}$ by $(*)$. It is reversing order by Lemma \ref{4.22}. Let $K,K'\in[k,L]$, we deduce from Proposition \ref{4.2501} that $f_{KK'}\leq f_K,f_{K'}\leq f_{K\cap K'}$. Let $K_1\in[k,L]$ be such that $f_{K_1}\leq f_K,f_{K'}$. It follows that $K,K'\subseteq K_1$ so that $KK'\subseteq K_1$, whence  $f_{K_1}\leq f_{KK'}$ and then, $f_{KK'}=\inf(f_K,f_{K'})$. A similar proof shows that $\sup(f_K,f_{K'})=f_{K\cap K'}$.
\end{proof}

We denote by  $\mathcal {CA}:=\{K\in[k,L]\mid K\subset L\ {\rm minimal}\}$ the set of co-atoms of $[k,L]$. 

\begin{proposition}\label{4.26}  Assume that $k\subset L$ is not minimal and let $K\in]k,L[$. If $K\in{\mathcal {CA}}$, there is some $\beta\in\mathbb N_t$ such that $K=E_{\beta}$. 
Moreover, for any $\beta\in\mathbb N_t$, the following conditions are equivalent:

\begin{enumerate}
\item $E_{\beta}\in{\mathcal {CA}}$.

\item  $\mathcal{F}_{\beta}$ is a minimal element in the set $\{\mathcal{F}_{\gamma}\mid \gamma\in\mathbb N_t\}$.

\item $\varphi(E_{\beta})$ is a minimal element in $\mathcal D\setminus\{X-x\}$.
\end{enumerate}
\end{proposition}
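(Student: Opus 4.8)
The plan is to transport everything through the reversing-order bijection $\varphi:[k,L]\to\mathcal D$, $K\mapsto f_K$, of Corollary \ref{4.251}, combined with the structural identity $K=\cap_{\beta\in J(K)}E_\beta$ supplied by Theorem \ref{4.210}. The first payoff is that $\varphi$ sends the top element $L$ of $[k,L]$ to the bottom element $\varphi(L)=X-x$ of $\mathcal D$, so it restricts to an anti-isomorphism from $[k,L[$ onto $\mathcal D\setminus\{X-x\}$.

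First I would dispose of the preliminary assertion. Let $K\in\,]k,L[\,$ be a co-atom. By Theorem \ref{4.210}, $K=\cap_{\beta\in J(K)}E_\beta$, and $J(K)\neq\emptyset$ because $f_K\neq X-x$. Each principal subfield $E_\beta$ is a \emph{proper} subfield of $L$: writing $E_\beta=L_\alpha$ for a suitable irreducible factor $f_\alpha$ of $f/(X-x)$, we have $x\notin L_\alpha$, since $x\in L_\alpha$ would produce $g\in k[X]$ with $g(x)=x$ and $f_\alpha\mid(g(X)-x)$, whence (using $f\mid(g(X)-X)$, hence $f_\alpha\mid(g(X)-X)$) one gets $f_\alpha\mid(X-x)$, impossible as $f_\alpha$ is distinct from $X-x$ among the separable, hence distinct, irreducible factors of $f$. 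Thus $K\subseteq E_\beta\subsetneq L$ for each $\beta\in J(K)$, and maximality of the co-atom $K$ in $[k,L[$ forces $K=E_\beta$.

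Next I would establish $(1)\Leftrightarrow(3)$ directly: a co-atom is precisely a maximal element of $[k,L[$, and under the anti-isomorphism $\varphi$ maximal elements of $[k,L[$ correspond to minimal elements of $\mathcal D\setminus\{X-x\}$; since $\varphi(E_\beta)=m_\beta$, this is exactly condition $(3)$. For $(2)\Leftrightarrow(3)$ I would first note that $\mathcal F_\beta\mapsto m_\beta=(X-x)\prod_{f_\alpha\in\mathcal F_\beta}f_\alpha$ is an order isomorphism of $(\{\mathcal F_\gamma\mid\gamma\in\mathbb N_t\},\subseteq)$ onto $\{m_\gamma\mid\gamma\in\mathbb N_t\}$ ordered by divisibility, because $m_\beta\mid m_\gamma$ holds if and only if $\mathcal F_\beta\subseteq\mathcal F_\gamma$; hence $\mathcal F_\beta$ is minimal in $\{\mathcal F_\gamma\}$ if and only if $m_\beta$ is minimal in $\{m_\gamma\}$. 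The remaining and decisive point is that the minimal elements of $\mathcal D\setminus\{X-x\}$ are exactly the minimal elements of the sub-poset $\{m_\gamma\mid\gamma\in\mathbb N_t\}$. For this I would use that every $f_K\in\mathcal D$ equals $\mathrm{lcm}_{\beta\in J(K)}m_\beta$ (apply Corollary \ref{4.251} to $K=\cap_{\beta\in J(K)}E_\beta$): if $f_K$ is minimal in $\mathcal D\setminus\{X-x\}$, then any $m_\beta$ with $\beta\in J(K)\neq\emptyset$ divides $f_K$ and differs from $X-x$, so minimality gives $f_K=m_\beta$; conversely, if $m_\beta$ is minimal in $\{m_\gamma\}$ and $f_K\mid m_\beta$ with $f_K\neq X-x$, then each $m_\gamma$ with $\gamma\in J(K)$ divides $m_\beta$, so $m_\gamma=m_\beta$ by minimality and $f_K=m_\beta$. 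Chaining the order isomorphism with this identification yields $(2)\Leftrightarrow(3)$, and with $(1)\Leftrightarrow(3)$ the proof is complete.

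The hard part will be precisely this last identification: a generic element of $\mathcal D\setminus\{X-x\}$ is not itself some $m_\gamma$ but only a least common multiple of several of them, so the two posets cannot be compared naively. It is the lcm-decomposition forced by Theorem \ref{4.210} together with Corollary \ref{4.251} that pins the minimal elements down inside $\{m_\gamma\}$, and everything else is a routine application of the anti-isomorphism $\varphi$.
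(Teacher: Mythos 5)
Your proof is correct, and its architecture genuinely differs from the paper's at two points, even though the underlying ingredients (Lemma \ref{4.22}/Corollary \ref{4.251} and the decomposition $K=\cap_{\beta\in J(K)}E_{\beta}$ of Theorem \ref{4.210}) are the same. For the preliminary claim, the paper observes that a co-atom is $\cap$-irreducible and cites \cite[Lemma 5.10]{Pic 6}; you instead prove from scratch that every principal subfield is proper (if $x\in L_{\alpha}$, then $f_{\alpha}$ would divide $(g(X)-x)-(g(X)-X)=X-x$, forcing $f_{\alpha}=X-x$, excluded since the irreducible factors of $f$ are distinct by separability) and then let maximality of the co-atom inside $K\subseteq E_{\beta}\subsetneq L$ conclude; this makes the step self-contained where the paper leans on an external lemma. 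For the equivalences, the paper runs the cycle $(1)\Rightarrow(2)\Rightarrow(3)\Rightarrow(1)$ via three separate contradiction arguments; you use $(3)$ as a hub: $(1)\Leftrightarrow(3)$ is an order-theoretic triviality once one notes $\varphi(L)=X-x$, so that $\varphi$ restricts to an anti-isomorphism $[k,L[\,\to\mathcal D\setminus\{X-x\}$ matching maximal with minimal elements, and $(2)\Leftrightarrow(3)$ follows from the order isomorphism $\mathcal F_{\beta}\mapsto m_{\beta}$ together with your lemma that the minimal elements of $\mathcal D\setminus\{X-x\}$ are exactly the minimal elements of $\{m_{\gamma}\mid\gamma\in\mathbb N_t\}$ --- a single clean statement that isolates the content the paper's steps $(2)\Rightarrow(3)$ and $(3)\Rightarrow(1)$ each redo by hand; what you buy is structural transparency, what the paper buys is three short direct verifications. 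One small caveat: your identity $f_K=\mathrm{lcm}_{\beta\in J(K)}m_{\beta}$ must be read as the join in the poset $\mathcal D$ (that is, as $f_{\cap_{\beta\in J(K)}E_{\beta}}$, which is what Corollary \ref{4.251} gives), not as the polynomial lcm in $L[X]$: for a subfamily of the $m_{\beta}$'s these can differ (for $x=\sqrt2+\sqrt3$ over $\mathbb Q$, the polynomial lcm of $f_{\mathbb Q[\sqrt2]}$ and $f_{\mathbb Q[\sqrt3]}$ has degree $3$, while $f_{\mathbb Q[\sqrt2]\cap\mathbb Q[\sqrt3]}=f$ has degree $4$). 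This does not damage your argument, since the only consequence you actually use is that each $m_{\beta}$ with $\beta\in J(K)$ divides $f_K$, which is Lemma \ref{4.22} applied to $K\subseteq E_{\beta}$, and in your converse direction $m_{\gamma}\mid f_K\mid m_{\beta}=m_{\gamma}$ forces $f_K=m_{\beta}$ outright for monic polynomials.
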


\begin{proof} By \cite[Lemma 5.10]{Pic 6}, $K=E_{\beta}$ for some $\beta\in\mathbb N_t$ since $K$ is $\cap$-irreducible. Moreover, $m_{\beta}(X)=(X-x)\prod_{f_{\alpha}\in\mathcal{F}_{\beta}}f_{\alpha}(X)$ by the definition of $\mathcal{F}_{\beta}$. 

(1) $\Rightarrow$ (2) Assume that $E_{\beta}\subset L$ is minimal. We claim that $\mathcal{F}_{\beta}$ is minimal in the poset $\{\mathcal{F}_{\gamma}\mid \gamma\in\mathbb N_t\}$. Deny, then there is some $\beta '\in\mathbb N_t$ such that $\mathcal{F}_{\beta '}\subset\mathcal{F}_{\beta}$, so that $m_{\beta '}$ divides strictly $m_{\beta}$. We get $E_{\beta}\subset E_{\beta '}\subset L$, contradicting  $E_{\beta}\subset L$  minimal and  (2)  holds.

(2) $\Rightarrow$ (3) Let $E_{\beta}$ for some $\beta\in\mathbb N_t$ satisfying (2) and assume that $\varphi(E_{\beta})$ is not minimal in $\mathcal D\setminus\{X-x\}$. Then, there is some $K\in[k,L]$ such that $f_K$ divides strictly $m_{\beta}$. It follows that $E_{\beta}\subset K$ by Lemma \ref{4.22}. But $K$ is an intersection of some $E_{\gamma}$'s by Theorem \ref{4.210}. In particular, we have $E_{\beta}\subset K\subseteq E_{\gamma}$ which implies that $m_{\gamma}$ divides strictly $m_{\beta}$, so that $\mathcal{F}_{\gamma}\subset\mathcal{F}_{\beta}$, a contradiction with (2).

(3) $\Rightarrow$ (1) Let $E_{\beta}$ for some $\beta\in\mathbb N_t$ satisfying (3). Assume that $E_{\beta}\subset L$ is not minimal, so that there exists some $K\in[k,L]$ such that $E_{\beta}\subset K\subset L$. Using again Theorem \ref{4.210}, we exhibit some $E_{\gamma}$ such that $K\subseteq E_{\gamma}\subset L$, giving $E_{\beta}\subset K\subseteq E_{\gamma}$. Then, $m_{\gamma}$ divides strictly $m_{\beta}$, contradicting (3) and then, $E_{\beta}\in\mathcal {CA}$.
\end{proof}

 In case $k\subset L$ is Galois, we can give a characterization of $\mathcal{CA}$ from the Galois group of the extension.

\begin{proposition}\label{4.269}  Let $k\subset L$ be a finite Galois extension with $n:=[L:k]=\prod _{i\in \mathbb N_m}p_i^{e_i}$ and Galois group $G$. 
\begin{enumerate}
\item Let $K\in[k,L[$. Then $K\in \mathcal{CA}\Leftrightarrow$ there exists some $i\in \mathbb N_m$ such that $[L:K]=p_i\Leftrightarrow$ there exists some subgroup $H$ of $G$ 
 of order $p_i$
 such that $K$ is the fixed field of $H$ in $L$.

\item $|\mathcal{CA}|\geq m$.
\end{enumerate}
\end{proposition}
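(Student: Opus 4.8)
The plan is to translate the whole statement into group theory by means of the fundamental theorem of Galois theory. Since $k\subset L$ is Galois, there is an inclusion-reversing bijection $K\mapsto H_K:=\mathrm{Gal}(L/K)$ between $[k,L]$ and the set of subgroups of $G$, under which $L/K$ is again Galois, $K=L^{H_K}$, and $[L:K]=|H_K|$. Moreover the intermediate fields of $K\subset L$ correspond exactly to the subgroups of $G$ lying between $\{1\}$ and $H_K$, that is, to the subgroups of $H_K$.

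First I would record the key group-theoretic observation: $K\subset L$ is minimal (i.e. $K\in\mathcal{CA}$) if and only if $H_K$ has no proper nontrivial subgroup, if and only if $H_K$ is cyclic of prime order. The last equivalence is the standard fact that a finite group with no proper nontrivial subgroup is precisely $\mathbb{Z}/p\mathbb{Z}$ for a prime $p$: any non-identity element generates the group, so it is cyclic, and a cyclic group of composite order has a proper nontrivial subgroup.

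For statement (1), I would combine this with Lagrange's theorem. If $K\in\mathcal{CA}$, then $|H_K|=p$ is prime and divides $|G|=n$, whence $p=p_i$ for some $i\in\mathbb{N}_m$ and $[L:K]=|H_K|=p_i$. Conversely, if $[L:K]=p_i$ is prime, then $|H_K|=p_i$ is prime, so $H_K$ has no proper nontrivial subgroup and $K\subset L$ is minimal, i.e. $K\in\mathcal{CA}$. The passage from $[L:K]=p_i$ to the third formulation is immediate via the bijection: taking $H:=H_K$ gives a subgroup of order $p_i$ with $K=L^{H}$, and conversely $K=L^{H}$ with $|H|=p_i$ forces $[L:K]=|H|=p_i$.

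Finally, for statement (2), I would exhibit one co-atom for each distinct prime divisor $p_i$ of $n$ using Cauchy's theorem: since $p_i\mid|G|$, there is a subgroup $H$ of $G$ of order $p_i$, and then $K:=L^{H}$ satisfies $[L:K]=p_i$, so $K\in\mathcal{CA}$ by statement (1). Co-atoms attached to distinct primes are distinct because the degrees $[L:K]$ differ, so this produces at least $m$ distinct co-atoms and yields $|\mathcal{CA}|\geq m$. I do not expect a genuine obstacle here; the only points needing care are the characterization of minimal subgroups as those of prime order and the pairwise distinctness of the co-atoms counted in (2), both of which are routine.
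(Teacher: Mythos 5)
Your proof is correct and takes essentially the same route as the paper's: the Fundamental Theorem of Galois Theory reduces everything to the observation that $K\in\mathcal{CA}$ exactly when $\mathrm{Gal}(L/K)$ has no proper nontrivial subgroup, hence has prime order $p_i$, and a subgroup of order $p_i$ for each $i\in\mathbb{N}_m$ (Cauchy) yields $|\mathcal{CA}|\geq m$. You merely make explicit a few points the paper leaves implicit (Lagrange, the explicit appeal to Cauchy's theorem, and the pairwise distinctness of the $m$ co-atoms via their degrees), which is fine.
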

\begin{proof}  (1) An appeal to the Fundamental Theorem of Galois Theory shows that $K\in \mathcal{CA}\Leftrightarrow$ the group $H$ of $K$-automorphisms of $L$ has no proper subgroup $\Leftrightarrow |H|=p_i$ for some $i\in \mathbb N_m$ since $|G|=n$ $Ê\Leftrightarrow$ there exists some $i\in \mathbb N_m$ such that $[L:K]=|H|=p_i$ because $K$ is the fixed field of $H$. 

(2) For each $i\in \mathbb N_m$, there exists a subgroup of $G$ of order $p_i$ and therefore   an element of $\mathcal{CA}$ by (1), which yields $|\mathcal{CA}|\geq m$.
\end{proof} 

Since each element of $\mathcal{CA}$ is some $E_{\beta}$, we can reorder them so that $\mathcal{CA}=\{E_1,\dots,E_s\}$ with $s\leq t$.

If $k\subset L$ is a finite separable  field extension, Theorem \ref{4.210} tells us that any $K\in[k,L]$ is an intersection of  some $E_{\beta}$s, that we can suppose $\cap$-irreducible. In order to have a Boolean extension, any irreducible $E_{\beta}$ must belong to $\mathcal{CA}$.

 \begin{remark}\label{4.329} An $\cap$-irreducible element is not necessarily a co-atom. It is enough to take a Galois cyclic extension $k\subset L$ such that $[L:k]=p^n,\ n\geq 3$, where $p$ is a prime integer. Then the Galois group of the extension is a chain, and so is $[k,L]$, with $\ell[k,L]=n$. Any $K\in[k,L]$ such that $\ell[k,K]\in\{1,\ldots,n-2\}$ is  $\cap$-irreducible but not a co-atom.
\end{remark} 

 We have seen in Lemma \ref{4.22} that for each $K\in[k,L]$, there exists $g(X)=(X-x)g'(X)$, where $g'(X)\in L[X]$ is a product of some of the $f_{\alpha}(X)$, and satisfying $g=f_K$. Let $g(X)=(X-x)g'(X)$, where $g'(X)\in L[X]$ is a product of some of the $f_{\alpha}(X)$. A necessary and sufficient condition in order that there is $K\in[k,L]$ such that $g=f_K$ is gotten  for $k=\mathbb Q$ in  \cite[Remark 6]{HKN}, a result  without proof that we supply for an arbitrary field $k$.   

\begin{proposition}\label{4.24} If $g(X)\in L_u[X]$, there exists $K\in[k,L]$ such that $g=f_K$
 $\Leftrightarrow g\in \mathcal D\Leftrightarrow g(x)=0$ and $[L:K_g]=\deg (g)$.
\end{proposition}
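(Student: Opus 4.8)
The first equivalence is purely a matter of unwinding definitions: by construction $\mathcal D=\{f_K\mid K\in[k,L]\}$, so the assertions ``there exists $K\in[k,L]$ with $g=f_K$'' and ``$g\in\mathcal D$'' say exactly the same thing. The whole substance therefore lies in the equivalence of these with the concrete criterion $g(x)=0$ and $[L:K_g]=\deg(g)$. In both directions the natural subfield to test is $K:=K_g$, the $k$-subalgebra of $L$ generated by the coefficients of $g$; I would first record that $K_g$, being a finite $k$-subalgebra of the field $L$, is a finite-dimensional integral domain over $k$, hence a field, so $K_g\in[k,L]$ and $g(X)\in K_g[X]$ by the very definition of $K_g$.

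For the forward implication I would assume $g=f_K$ for some $K\in[k,L]$. Then $g(x)=f_K(x)=0$ because $f_K$ is the minimal polynomial of $x$ over $K$. Moreover $(*)$ gives $K_g=K_{f_K}=K$, and since $L=K[x]$ I get $[L:K_g]=[L:K]=[K[x]:K]=\deg(f_K)=\deg(g)$. This yields criterion (3) immediately.

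For the reverse implication I would suppose $g(x)=0$ and $[L:K_g]=\deg(g)$, and show $g=f_{K_g}$. Since $k\subseteq K_g\subseteq L$ and $L=k[x]$, we have $L=K_g[x]$, so the minimal polynomial $f_{K_g}$ of $x$ over $K_g$ has degree $[L:K_g]$. On the other hand $g\in K_g[X]$ satisfies $g(x)=0$, so $f_{K_g}$ divides $g$ in $K_g[X]$, hence in $L[X]$. Both polynomials are monic, and by hypothesis $\deg(f_{K_g})=[L:K_g]=\deg(g)$; a monic polynomial dividing another monic polynomial of the same degree must equal it, so $g=f_{K_g}\in\mathcal D$, which is (1).

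The argument is short, and the only point I would treat as the genuine obstacle is the bookkeeping around $K_g$: checking that it really is a field (so that $f_{K_g}$ and the degree $[L:K_g]$ are meaningful), that $L=K_g[x]$ so that $\deg(f_{K_g})=[L:K_g]$, and that the degree equality is precisely what upgrades the divisibility $f_{K_g}\mid g$ to the equality $g=f_{K_g}$. Without the degree hypothesis one only obtains $f_{K_g}\mid g$, i.e.\ the candidate field $K_g$ could be strictly larger than the field cut out by $g$; the condition $[L:K_g]=\deg g$ is exactly what rules this out.
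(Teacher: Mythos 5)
Your proof is correct and follows essentially the same route as the paper's: both directions hinge on taking $K:=K_g$, using the identity $K=K_{f_K}$ (the paper's $(*)$) for the forward implication, and upgrading the divisibility $f_{K_g}\mid g$ to equality via the degree hypothesis $[L:K_g]=\deg(g)$ together with monicity. Your extra remarks --- that $K_g$ is indeed a field and that the first equivalence is purely definitional --- are points the paper leaves implicit, but they change nothing in substance.
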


\begin{proof}  If $g=f_K$ for some $K\in[k,L]$, then, $K=K_g$ by $(*)$. Obviously, $g(x)=0$.  Moreover, $[L:K]=\deg (f_K)=\deg (g)=[L:K_g]$.

Conversely, if  $g(x)=0$ and $[L:K_g]=\deg (g)$ hold, set $K:=K_g$. Then, $f_K(X)$ divides $g(X)$ in $K[X]$ since $g(x)=0$. Moreover, $[L:K]=\deg (f_K)=[L:K_g]=\deg (g)$, so that $g=f_K$. 
 
 If $g(X)=X-x$, we get that $L=K$. 
\end{proof} 

This result allows to characterize Boolean and finite separable  extensions using only polynomials with the following result.

\begin{theorem}\label{4.33} Let $k\subset L:=k[x]$ be a finite separable  field extension. The following conditions are equivalent:

\begin{enumerate}
\item $k\subset L$ is a Boolean extension;

\item For any $K\in[k,L[$, there is a unique subset $T$ of $\mathcal{CA}$ such that $f_ K=\sup\{m_{\beta}\mid E_{\beta}\in T\}$;

\item For any $g\in\mathcal D\setminus\{X-x\}$, there is a unique subset $I\subseteq\mathbb N_s$ such that $g=\sup\{m_{\beta}\mid   \beta\in I\}$. 
\end{enumerate} 
\end{theorem}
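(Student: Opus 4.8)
The plan is to transport the whole problem to the poset $\mathcal D$ through the order-reversing bijection $\varphi\colon[k,L]\to\mathcal D$, $K\mapsto f_K$, of Corollary~\ref{4.251}. Recall that $\varphi$ sends $K\cap K'$ to $\sup(f_K,f_{K'})$ and $KK'$ to $\inf(f_K,f_{K'})$, hence carries every finite intersection in $[k,L]$ to the corresponding finite $\sup$ in $\mathcal D$. Because $\varphi$ reverses order, it maps the co-atoms of $[k,L]$ bijectively onto the atoms of $\mathcal D$; as $\varphi(E_\beta)=f_{E_\beta}=m_\beta$, these atoms are exactly $m_1,\dots,m_s$ (cf.\ Proposition~\ref{4.26}(3)). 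Finally $\varphi(L)=f_L=X-x$ is the least element of $\mathcal D$, so $\varphi$ restricts to a bijection $[k,L[\ \to\ \mathcal D\setminus\{X-x\}$, with $f_K=X-x$ exactly when $K=L$.

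With this dictionary, $(2)\Leftrightarrow(3)$ is a pure translation. As $K$ ranges over $[k,L[$ the polynomial $f_K$ ranges bijectively over $\mathcal D\setminus\{X-x\}$, and a subset $T\subseteq\mathcal{CA}=\{E_1,\dots,E_s\}$ is the same datum as the index set $I\subseteq\mathbb N_s$ with $T=\{E_\beta\mid\beta\in I\}$. Under these identifications the equation $f_K=\sup\{m_\beta\mid E_\beta\in T\}$ of~(2) becomes verbatim $g=\sup\{m_\beta\mid\beta\in I\}$ of~(3), and the existence-and-uniqueness of $T$ becomes that of $I$.

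For $(1)\Leftrightarrow(2)$ I would read~(2) back through $\varphi^{-1}$: since $\varphi^{-1}$ turns $\sup\{m_\beta\mid E_\beta\in T\}$ into $\bigcap_{E_\beta\in T}E_\beta$, condition~(2) says precisely that every $K\in[k,L[$ is, in a unique way, an intersection of a subfamily of the co-atoms of $[k,L]$. Assuming~(2), the unique subfamily realizing a given $K$ must be irredundant, for deleting a redundant co-atom would yield a second such subfamily; adjoining the empty intersection $L=\bigcap\emptyset$, this says that $k\subset L$ is co-factorial, whence Boolean by Theorem~\ref{4.0203}, i.e.\ (1). Conversely, if $[k,L]$ is Boolean, then each $K\in[k,L[$ is the intersection of the family $T_0:=\{c\in\mathcal{CA}\mid K\subseteq c\}$ of all co-atoms lying above it, and no other subfamily of $\mathcal{CA}$ has intersection $K$; translating this through $\varphi$ gives~(2).

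The two translations through $\varphi$ are routine bookkeeping; the one genuinely substantive point, and the main obstacle, is the uniqueness assertion in $(1)\Rightarrow(2)$, that in the Boolean lattice $[k,L]$ the full family $T_0$ is the only family of co-atoms with intersection $K$. This is exactly where the complement structure is needed: writing $K^{\circ}$ as a product of atoms and noting that complementation exchanges atoms and co-atoms and reverses order, one checks simultaneously that $\bigcap T_0=(K^{\circ})^{\circ}=K$ and that dropping any co-atom from $T_0$ strictly enlarges the intersection, so that any subfamily $T'\subseteq\mathcal{CA}$ with $\bigcap T'=K$ must satisfy $T'=T_0$.
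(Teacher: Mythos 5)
Your proof is correct and follows essentially the same route as the paper's: both reduce $(1)\Leftrightarrow(2)$ to the co-factorial characterization of Boolean FCP extensions (Theorem \ref{4.0203}) and obtain $(2)\Leftrightarrow(3)$ by translating intersections of co-atoms into sups of the $m_{\beta}$ through the order-reversing bijection $\varphi$ of Corollary \ref{4.251}. Your final paragraph simply makes explicit a uniqueness point (unique subset of $\mathcal{CA}$ versus unique irredundant representation) that the paper absorbs into its citation of Theorem \ref{4.0203}, so it is a welcome clarification rather than a divergence.
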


\begin{proof}    Theorem \ref{4.0203}, 
states that $k\subset L$ is  Boolean if and only if each $K\in[k,L[$ is of the form 
 $K=\cap_{E_{\beta}\in T}E_{\beta}$ for some unique $T\subseteq \mathcal{CA}$.
 
(1) $\Rightarrow$ (2) Assume first that $k\subset L$ is Boolean. 
Let $K\in[k,L[$ and $T:=\{E_{\gamma}\in\mathcal{CA}\mid K=\cap_{E_{\gamma}\in T}E_{\gamma}\}$, which is unique. Corollary \ref{4.251} yields that $f_K=\sup\{m_{\beta}\mid E_{\beta}\in T\}$. Let $T'\subseteq\mathcal {CA}$ be such that $f_K=\sup\{m_{\gamma}\mid E_{\gamma}\in T'\}$. Then, $K=\cap_{E_{\gamma}\in T'}E_{\gamma}$ and $T=T'$ follows  from the uniqueness property. 

(2) $\Rightarrow$ (1) Assume that for any $K\in[k,L[$, there is a unique subset $T$ of $\mathcal{CA}$ such that $f_K=\sup\{m_{\gamma}\mid E_{\gamma}\in T\}$. This implies that $K=\cap_{E_{\gamma}\in T}E_{\gamma}$ by Corollary \ref{4.251}.
Assume that there is some $T'\neq T$ such that $K=\cap_{E_{\gamma}\in T'}E_{\gamma} $ with $T'\subseteq \mathcal {CA}$. It follows that $f_K=\sup\{m_{\gamma}\mid E_{\gamma}\in T'\}$ and $T=T'$ because of the assumption on $T$. Therefore, $k\subset L$ is  Boolean by Theorem \ref{4.0203}. 
 
(2) $\Leftrightarrow$ (3) Use Corollary \ref{4.251} and the bijection $\varphi$. 
\end{proof}

 {\bf Scholium}. Here are the different steps in order to check that a finite separable extension $k\subset L$, with minimal polynomial $f(X)$, is Boolean according to Theorem \ref{4.33}:
\begin{enumerate}
\item Decompose   $f(X)$ into irreducible elements of $L[X]$.
\item Determine the set $\mathcal{E}$ of principal subfields.
\item Determine $\mathcal{CA}$ using Proposition \ref{4.26}.
\item Determine $\mathcal{D}$ using Proposition \ref{4.24}.
\item Check if condition (3) of Theorem \ref{4.33} holds.
\end{enumerate} 

 \begin{remark}\label{4.331} 
(1) Assume that $k\subset L$ is a finite separable Boolean field extension of degree $n$. Using the previous notation, we have $|\mathcal{CA}|=s$, which is also the value of $|\mathcal {A}|$. Using Theorem \ref{4.0} and Proposition \ref{4.21}, we get that $2^s\leq B_n$. It follows that if we want to calculate the elements of $\mathcal {CA}$, it is enough to calculate the $E_1,\dots,E_t$, and to stop as soon as we get $r$ distinct elements of $\mathcal {CA}$ such that $B_n<2^{r+1}$. 
 
(2) Let $k\subset L:=k[x]$ be a finite separable Boolean field extension and let $\mathcal{CA}=\{E_1,\dots,E_s\}$ be the set of co-atoms of the extension. Let $K:=k[z]\in]k,K[$. Then Example \ref{4.01} implies that, if $K=\cap [E_{\alpha}\in Y]$, where $Y\subseteq\mathcal{CA}$, we have $Y=\{E_{\alpha}\in\mathcal {CA}\mid z\in E_{\alpha}\}$. Moreover, $K^{\circ}=\cap [E_{\beta}\in\mathcal{CA}\setminus Y]$. But, since the extension is finite separable, there exists $y\in L$ such that $K^{\circ}=k[y]$. Then, $\mathcal{CA}\setminus Y=\{E_{\beta}\in \mathcal{CA}\mid y\in E_{\beta}\}=\{E_{\beta}\in\mathcal {CA}\mid z\not\in E_{\beta}\}$. It follows that $f_K=\sup(m_{\alpha}\mid E_{\alpha}\in Y)$ and $f_{k[y]}=\sup(m_{\beta}\mid E_{\beta}\in\mathcal{CA}\setminus Y)$. We recall that the $\sup$ is considered in $\mathcal D$, the set of the minimal polynomials of the elements of $[R,S]$.
\end{remark}

\begin{proposition}\label{4.34}  A finite separable  field extension $k\subset L:=k[x]$ such that $\mathcal D=\{g(X)\in L_u[X]\mid g(x)=0,\ g(X)|f(X)$ in $L[X]\}$ is  Boolean. If, in addition $k\subset L$ is Galois 
and  $k$ an infinite field, then $k\subset L$ is minimal of degree 2. 
\end{proposition}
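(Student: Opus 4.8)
The plan is to read off the Boolean structure of $[k,L]$ from the order-reversing bijection $\varphi\colon[k,L]\to\mathcal D$ of Corollary \ref{4.251}. First I would note that the inclusion $\mathcal D\subseteq\{g\in L_u[X]\mid g(x)=0,\ g\mid f\ \text{in}\ L[X]\}$ holds unconditionally, since each $f_K$ divides $f$ in $L[X]$ and satisfies $f_K(x)=0$; the hypothesis is exactly the reverse inclusion. By separability, $f(X)=(X-x)f_1(X)\cdots f_r(X)$ is a product of $r+1$ pairwise distinct irreducible factors of $L_u[X]$, so in the UFD $L[X]$ a monic divisor $g$ of $f$ with $g(x)=0$ is precisely a polynomial $g_A:=(X-x)\prod_{\alpha\in A}f_\alpha$ for a unique $A\subseteq\mathbb N_r$ (the condition $g(x)=0$ forces the factor $X-x$). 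Hence the hypothesis says exactly $\mathcal D=\{g_A\mid A\subseteq\mathbb N_r\}$, and since $g_A\mid g_B$ if and only if $A\subseteq B$, the poset $(\mathcal D,\mid)$ is isomorphic to the Boolean lattice $(2^{\mathbb N_r},\subseteq)$.

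Next I would transport this across $\varphi$. Writing $f_K=g_{I(K)}$, Lemma \ref{4.22} shows that $K\mapsto I(K)$ is an order-reversing bijection $[k,L]\to 2^{\mathbb N_r}$; composing with the order-reversing complementation $A\mapsto\mathbb N_r\setminus A$ gives an order-preserving bijection $K\mapsto\mathbb N_r\setminus I(K)$ of $[k,L]$ onto $(2^{\mathbb N_r},\subseteq)$. Thus $[k,L]$ is isomorphic to a finite Boolean lattice, i.e. $k\subset L$ is Boolean. (One could instead verify condition (3) of Theorem \ref{4.33}, but the explicit isomorphism $[k,L]\cong 2^{\mathbb N_r}$ is shorter.) I expect the only real obstacle to be this recognition step --- seeing that the hypothesis turns $\mathcal D$ into the full divisor lattice and that the order-reversal $\varphi$ then carries the Boolean property to $[k,L]$; the rest is formal.

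For the additional Galois conclusion, let $n:=[L:k]$ and assume $k\subset L$ is Galois with $k$ infinite. Normality and separability make $f$ split into $n$ distinct linear factors in $L[X]$, so every $f_\alpha$ in $f=(X-x)f_1\cdots f_r$ is linear and $r=n-1$. By the first part, $|[k,L]|=|\mathcal D|=2^{r}=2^{\,n-1}$. On the other hand, Proposition \ref{4.21}(2) gives the Dobbs--Mullins bound $|[k,L]|\le 2^{\,n-2}+1$ since $k$ is infinite. Combining these, $2^{\,n-1}\le 2^{\,n-2}+1$, i.e. $2^{\,n-2}\le 1$, which forces $n\le 2$; as $k\subset L$ is proper, $n=2$. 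An extension of prime degree has no proper intermediate field, so $k\subset L$ is minimal of degree $2$, completing the proof.
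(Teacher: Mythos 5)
Your proof is correct, and while it runs on the same underlying machinery as the paper's, the final packaging of the first assertion is genuinely different. Both arguments hinge on the same decisive observation: separability makes $f=(X-x)f_1\cdots f_r$ a product of pairwise distinct monic irreducibles in $L_u[X]$, so the hypothesis says exactly that $\mathcal D$ consists of all $g_A:=(X-x)\prod_{\alpha\in A}f_\alpha$ with $A\subseteq\mathbb N_r$ (your parenthetical that $g(x)=0$ forces the factor $X-x$ is right, since no $f_\alpha$ can vanish at $x$). The paper then feeds this into its characterization Theorem \ref{4.33}: using the hypothesis it produces, for each $g_\alpha\in\mathcal D$, a subfield $K$ with $f_K=g_\alpha$, shows $K\subset L$ is minimal by \cite[Lemma 5.7]{Pic 6} (so the co-atoms are identified concretely among the principal subfields $E_\gamma$), and checks that each $f_{K'}$ is in a unique way the lcm of the relevant $m_\alpha$; it then gets $\ell[k,L]=r$ and, in the Galois case, $|[k,L]|=2^{n-1}$ via Theorem \ref{4.0}. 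You bypass Theorem \ref{4.33} entirely: since $(\mathcal D,\mid)$ is visibly the power-set lattice on $\mathbb N_r$ and $\varphi$ of Corollary \ref{4.251} is an order-reversing bijection, composing with complementation gives an order isomorphism $[k,L]\cong 2^{\mathbb N_r}$, and a finite order isomorphism preserves meets and joins, so $[k,L]$ is Boolean with $|[k,L]|=2^r$ read off immediately. What the paper's route buys is the explicit description of the co-atoms, in the spirit of its algorithmic Scholium; what yours buys is brevity and the cardinality count without the detour through length. The Galois endgame is identical in the two proofs: $r=n-1$, so $2^{n-1}=|[k,L]|\le 2^{n-2}+1$ by the Dobbs--Mullins bound of Proposition \ref{4.21}(2), forcing $n=2$ (properness rules out $n=1$), whence minimality.
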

\begin{proof}
For $\alpha\in\mathbb N_r$, we have $g_{\alpha}(X)=(X-x)f_{\alpha}(X)\in {\mathcal D}$, giving that there exists some $K\in[k,L]$ such that $g_{\alpha}=f_K$  with $L\neq K$ because $f_L(X)=X-x$. It follows that $K\subset L$ is minimal by \cite[Lemma 5.7]{Pic 6}. Hence, $K=E_{\gamma}$ for some $\gamma\in\mathbb N_s$.
Let $K'\in[k, L]$ and set $f_{K'}(X):=(X-x)\prod_{\alpha\in I}f_{\alpha}(X)$ for some $I\subseteq\mathbb N_r$. Moreover, $f_{K'}=$lcm$_{\alpha\in I}(\{m_{\alpha}\})$, for a unique $I$, and then a unique subset $T=\{L_{\alpha}\}_{\alpha\in I}$ of $\mathcal{CA}$ satisfying the hypotheses of Theorem \ref{4.33}.  (In fact, the $L_{\alpha}$ are 
 {\color{blue} all }
distinct and are the $E_{\beta}$.) Therefore, $k\subset L$ is Boolean and $\ell[k,L]=r$ by Theorem~\ref{4.0}, because $|\mathcal {CA}|=|\mathcal A|=r $.

Now if  $k\subset L$ is Galois, any $f_{\alpha}$ has degree 1. Set $n:=\deg(f)$, so that $r=n-1$, with the previous notation.
 Then $\ell[k,L]=n-1$ and $|[k,L]|=2^{n-1}$ by Theorem~\ref{4.0}. But $k$ is infinite, which implies that $2^{n-1}=|[k,L]|\leq 2^{n-2} +1$ by Proposition~\ref{4.21}, which gives $n=2$.
\end{proof}
There exist finite separable Boolean extensions $k\subset L$    such that $\mathcal D=\{g(X)\in L_u[X]\mid g(x)=0,\ g(X)|f(X)$ in $L[X]\}$ and  $k\subset L$ is not Galois. Take $k:=\mathbb{Q}$ and $L:=k[x]$, where $x:=\root 3\of 2$. Then $k\subset L$ is finite separable and not Galois, because not normal. Indeed, the minimal polynomial of $x$ is $f(X)=X^3-2=(X-x)(X^2+xX+x^2)$, with $X^2+xX+x^2$ irreducible in $L[X]$. Then, $k\subset L$ is Boolean by Proposition ~\ref{4.34}. 

Here is an example of Boolean extension where we show how the irreducible divisors of the minimal polynomial provides the subextensions of a finite separable extension of fields.

\begin{example}\label{4.35}  \cite[Remark 5.19]{Pic 4} Let $k:=\mathbb{Q}$,Ê$\ x:=\root 6\of 2$ and set $L:=k[x]$, which is a finite separable extension of $k$, 
 but not Galois. 

 The monic minimal polynomial of $x$ over $k$ is $f(x):=X^6-2=(X-x)(X+x)(X^2+x X+x^2)(X^2-x X+x^2)$, which is its decomposition into irreducible polynomials over $L$. Set $f_1(X):=X+x,\ f_2(X):=X^2+x X+x^2,\ f_3(X):=X^2-x X+x^2$ and $g_{\alpha}(X):=(X-x)f_{\alpha}(X)$, for $\alpha=1,2,3$. Then, $g_1(X)=X^2-x^2 =X^2-\root 3\of 2,\ g_2(X)=X^3-x^3=X^3-\sqrt 2$ and $g_3(X)=X^3-2x X^2 +2x^2X-x^3$. It follows that $K_1=k[\root 3\of 2],\ K_2=k[\sqrt 2]$ and $K_ 3=L$. Then, $L_1=K_1$ and $L_2=K_2$ by \cite[Lemma 5.10]{Pic 6}. Moreover, no subextension $K\in [k,L[ $ is such that $g_3=f_K$ since  $K_3=L$. Let $K\in[k,L]$ be such that $g_3$ divides strictly $f_K$ in $L[X] $. Then, $[L:K]=\deg (f_K)> 3$ gives that $[L:K]=6$, so that $K=k=L_3$ because $f_{L_3}=f$ \cite[Proposition 5.8]{Pic 6}. To end, $L_1\cap L_2=k=L_3$.
  Hence, $[k,L]=\{k,L_1,L_2,K\}$ is a Boolean lattice by Theorem ~\ref{4.03}, and we get the following diagram:

\centerline{$\begin{matrix}
   {}  &        {}      & L         &       {}       & {}     \\
   {}  & \nearrow & {}         & \nwarrow & {}     \\
L_1 &       {}       & {}         &      {}        & L_2 \\
  {}   & \nwarrow & {}        & \nearrow & {}     \\
  {}   &      {}        & k=L_3 & {}             & {} 
\end{matrix}$}
\end{example}
\begin{remark} \cite[Example 5.17 (2)]{Pic 6} Let $k:=\mathbb{Q}$ and $L=k[x]$, where $x:=\sqrt 2+\sqrt 3$. The monic minimal polynomial of $x$ over $k$ is $f(X)=X^4-10X^2+1=(X-x)(X+x)(X-x^{-1})(X+x^{-1})$. Set $f_1(X):=X+x,\ f_2(X)=X-x^{-1},\ f_3(X)=X+x^{-1}$. We get $K_1=L_1=k[\sqrt 6],\  K_2=L_2=k[\sqrt 3]$ and $K_3=L_3 =k[\sqrt 2]$. In particular, 
$L_{\alpha}=E_{\alpha}$ for each $\alpha$ and $L_{\alpha}\cap L_{\beta}=k$ for $\alpha\neq \beta,\ \alpha,\ \beta\in\mathbb N_3$, which shows that $[k,L]=\{k,L_1,L_2,L_3,L\}$ is not  Boolean
 because $\ell[k,L]=2$ and $|[k,L]|=5\neq 2^2$, but
 $k\subset L$ is  Galois. Therefore, although $k\subset L_{\alpha}$ is  Boolean for $\alpha\in\mathbb N_2$, the product $k\subset L_1L_2=L$ is not Boolean. We also observe that despite the fact that $k\subset L_{\alpha}$ and $L_{\alpha}\subset L$ 
 are Boolean 
for $\alpha\in\mathbb N_3,\ k\subset L$ is not  Boolean.
\end{remark}

However, we are able to  characterize   Boolean Galois extensions.

\begin{theorem}\label{4.37}  Let $k\subset L$ be  a finite  separable extension with normal closure  $N$. If $k\subset N$ is a cyclic extension with a square free degree, then $k\subset L$  is a Boolean extension.

In particular, a finite Galois extension $k\subset L$ with Galois group $G$ is  Boolean  if and only if $k\subset L$ is  cyclic  whose degree is square free.
\end{theorem}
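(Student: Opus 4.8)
The plan is to reduce everything to the case where $k\subset L$ is Galois, since the first assertion follows from the second together with Proposition \ref{4.021}. Indeed, if $N$ is the normal closure of the separable extension $k\subset L$, then $k\subset N$ is finite Galois, and the hypothesis that $k\subset N$ is cyclic of square-free degree will (by the Galois case) force $k\subset N$ to be Boolean; since $k\subseteq L\subseteq N$, Proposition \ref{4.021} then immediately yields that $k\subset L$ is Boolean. So the heart of the matter is the ``in particular'' statement.

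For the Galois case I would use the Fundamental Theorem of Galois Theory to transport the problem to the subgroup lattice $\mathrm{Sub}(G)$. The assignment $T\mapsto\mathrm{Gal}(L/T)$ is an order-\emph{reversing} bijection of $[k,L]$ onto $\mathrm{Sub}(G)$, hence a lattice anti-isomorphism. Because distributivity and the existence of complements are self-dual conditions, $[k,L]$ is Boolean if and only if $\mathrm{Sub}(G)$ is Boolean. The key external input is Ore's theorem: $\mathrm{Sub}(G)$ is distributive exactly when $G$ is locally cyclic, hence for finite $G$ exactly when $G$ is cyclic. Thus if $k\subset L$ is Boolean, then $G$ is cyclic; conversely, a cyclic $G$ automatically has distributive subgroup lattice.

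It then remains, for cyclic $G$ of order $n=\prod_{i=1}^{m}p_i^{e_i}$, to decide when $[k,L]$ is Boolean, and here I would invoke the numerical criterion of Theorem \ref{4.0}. A finite Galois extension is FIP, hence FCP, and distributivity is now in hand, so $[k,L]$ is Boolean if and only if $\ell[k,L]=|\mathcal{A}|$. Under the anti-isomorphism the atoms of $[k,L]$ correspond to the maximal subgroups of $G$, of which a cyclic group has exactly $m$ (one of each prime index $p_i$), while $\ell[k,L]$ equals the height $\sum_{i=1}^{m}e_i$ of the divisor lattice of $n$; Proposition \ref{4.269} can be used to make this count explicit, since it identifies co-atoms with prime-order subgroups. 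As $\sum_i e_i\geq m$ with equality precisely when every $e_i=1$, the Boolean condition $\ell[k,L]=|\mathcal{A}|$ holds exactly when $n$ is square-free.

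The main obstacle is the implication ``$[k,L]$ Boolean $\Rightarrow G$ cyclic'': this is precisely Ore's theorem and must be cited rather than reproved. The only other point requiring care is the bookkeeping of the Galois correspondence—one must keep in mind that it reverses order and check that the Boolean (equivalently distributive) property survives this duality, which it does, being self-dual. Everything else reduces to the elementary combinatorics of the divisor lattice of $n$ and the counting of atoms, both of which are already supported by Theorem \ref{4.0} and Proposition \ref{4.269}.
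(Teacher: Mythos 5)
Your proposal is correct, and its overall frame coincides with the paper's: both reduce the first assertion to the Galois case via the normal closure and Proposition \ref{4.021}, and both transport the problem through the Galois correspondence, observing that the order-reversing bijection between $[k,L]$ and the subgroup lattice of $G$ preserves the Boolean property since distributivity and complementation are self-dual. Where you diverge is in the group-theoretic core. The paper finishes in one stroke by citing Whitson \cite[Corollary 2]{W}: a finite group has Boolean subgroup lattice if and only if it is cyclic of square-free order. You instead factor this into two pieces: Ore's theorem (the subgroup lattice of a finite group is distributive iff the group is cyclic), which handles ``Boolean $\Rightarrow$ cyclic''; and then, for cyclic $G$ of order $n=\prod p_i^{e_i}$, the paper's own numerical criterion of Theorem \ref{4.0} applied to the divisor lattice of $n$, counting $|\mathcal{A}|=m$ atoms (maximal subgroups, one per prime $p_i$, via the anti-isomorphism) against $\ell[k,L]=\sum_i e_i$, with equality exactly when $n$ is square-free. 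Your bookkeeping here is right, including the optional appeal to Proposition \ref{4.269} for the co-atom count. What your route buys is a more self-contained argument: it needs only the classical Ore theorem plus machinery already in the paper, rather than the more specialized Whitson result; it is also consonant with what the paper itself does later in Proposition \ref{cyclic}, where distributivity of a Galois extension is shown to force cyclicity by citing \cite[Page 97]{R}. The one external input you cannot avoid is Ore's theorem itself, which, as you say, must be cited rather than reproved; with that citation in place the proof is complete.
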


\begin{proof}  We begin to prove the second part of the Theorem. Let $\mathcal G$ be the set of subgroups of $G$. For $H,H'\in \mathcal G$, we denote by $<H,H'>$ the subgroup of $\mathcal G$ generated by $H$ and $H '$. Define $\varphi:[k,L]\to\mathcal G$ by $\varphi (K):=$Aut$_K(L)$, the group of $K$-automorphisms of $L$, for each $K\in[k,L]$ and $\psi:\mathcal G\to[k,L]$  by $\psi(H):=$Fix$(H)$, the fixed field of $H$ in $L$, for each $H\in\mathcal G$. The Fundamental Theorem of Galois Theory for finite extensions shows that $\varphi$ and $\psi$ are reversing order isomorphisms of lattices, with $\varphi=\psi^{-1}$. 
 Therefore,  $[k,L]$ is a Boolean lattice if and only if $\mathcal G$ is a Boolean lattice. 
To conclude, use 
 \cite[Corollary 2]{W} which says that for    a finite group $G$,  the lattice of its subgroups is a Boolean lattice if and only if $G$ is a cyclic group whose order is square free.

Now, if $k\subset L$ is  a finite  separable extension whose normal closure is $N$ such that $k\subset N$ is a cyclic extension with a square free degree, then $k\subset L$  is Boolean by Proposition \ref{4.021} because $k\subset N$ is  Boolean.
\end{proof}

\begin{remark} \label{4.372} The last  part  of the Theorem has no converse, unless adding some new assumptions as in Theorem \ref{4.03}. Consider Example \ref{4.35}. The normal closure $N$ of the extension $k\subset L$ is generated over $k$ by $x$ and its conjugates, which are the zeroes $\{\pm x,\pm jx,\pm j^2x\}$, of $f(X)=X^6-2$, where $j=(1+i\sqrt 3)/2$. Then, $N=k[x,jx,j^2x]$. Moreover, $k\subset N$ is Galois. Assume that $k\subset N$ is Boolean. Then, $k\subset N$ is a cyclic extension by Theorem \ref{4.37}. Set $K_1':=k[j^2x^2]\subset N$. Since $(j^2x^2)^3=2$, we get that $[K'_1:k]=3=[K_1:k]$, and we have two subextensions of $k\subset N$ of degree 3, a contradiction for a cyclic extension (see \cite[AV, page 81]{Bki A}). Then, $k\subset N$ is not Boolean. 
\end{remark}

The two next examples exhibit Galois Boolean field extensions.

\begin{example}\label{4.38} (1) Let $n$ be a positive integer, $n\geq 2$. In view of \cite[AV.152, Exercice 3)]{Bki A}, there exists a cyclic extension of $\mathbb{Q}$ of degree $n$. It is enough to take a square free integer $n$ and to use Theorem \ref{4.37} to get a Boolean extension.

(2) Let $k:=\mathbb{F}_2=\mathbb{Z}/2\mathbb{Z}$ be the finite field with two elements, and let $K_n$ be the 
 cyclic 
extension of $k$ of degree $n$. Set $L:=K_{30}$. The subfields of $L$ are the $K_n$, where $n$ divides 30. In view of Theorem \ref{4.37}, $k\subset L$ is a Boolean extension, because cyclic of degree a square free integer, and $[k,L]=\{K_n\mid  n=1,2,3,5,6,10,15,30\}$. 
\end{example}

 \begin{proposition}\label{4.373} Let $k\subset L$ be a finite   Galois Boolean extension and let $T,U\in]R,S[$. Then $U=T^{\circ}$ if and only if $k\subset T$and $k\subset U$ are linearly disjoint with $L=TU$.
\end{proposition}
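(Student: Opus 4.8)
The plan is to reduce the statement to two facts. First, since $k\subset L$ is Galois and Boolean, Theorem \ref{4.37} forces $k\subset L$ to be cyclic, so $G:=\mathrm{Gal}(L/k)$ is abelian; consequently every subgroup is normal and every intermediate field $k\subset K$ is itself Galois. Second, I will use the classical criterion that for a Galois extension $T/k$ and an arbitrary intermediate field $U$, the extensions $k\subset T$ and $k\subset U$ are linearly disjoint if and only if $T\cap U=k$. Granting these, the proposition becomes a short lattice-theoretic manipulation.

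I would begin by unwinding the definitions: by the definition of complement recalled before Theorem \ref{4.0}, the equality $U=T^{\circ}$ means exactly $T\cap U=k$ and $TU=L$, and this complement is unique because $[k,L]$ is distributive \cite[Exercise 9, page 33]{NO}. Thus it suffices to establish the equivalence between the pair of conditions ``$T\cap U=k$ and $TU=L$'' and the pair ``$k\subset T,\,k\subset U$ linearly disjoint and $TU=L$''. For the forward implication, assume $U=T^{\circ}$, so $T\cap U=k$ and $TU=L$; since $G$ is abelian, $k\subset T$ is Galois, and the criterion above turns $T\cap U=k$ into linear disjointness of $k\subset T$ and $k\subset U$. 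For the converse, linear disjointness of $k\subset T$ and $k\subset U$ always forces $T\cap U=k$ (a $k$-basis of $T$ containing $1$ and any element of $(T\cap U)\setminus k$ would fail to remain $U$-linearly independent); combined with the hypothesis $TU=L$, this exhibits $U$ as a complement of $T$, whence $U=T^{\circ}$ by uniqueness.

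The only non-formal ingredient is the linear-disjointness criterion for Galois extensions, which I would either cite from the circle of results \cite[Proposition 5, A V.13]{Bki A} already used in Theorem \ref{4.199}, or prove directly in two lines via the Galois correspondence. Writing $T=\mathrm{Fix}(H)$ and $U=\mathrm{Fix}(K)$ with $H,K\leq G$, one has $TU=\mathrm{Fix}(H\cap K)$ and $T\cap U=\mathrm{Fix}(HK)$ (the product $HK$ being a subgroup since $G$ is abelian), so $[TU:k]=[G:H\cap K]$ while $[T:k][U:k]=[G:H][G:K]$; using $|HK|=|H||K|/|H\cap K|$, these two degrees coincide precisely when $HK=G$, that is, when $T\cap U=k$.

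This group-theoretic reformulation in fact makes the whole statement transparent: $U=T^{\circ}$ corresponds to $HK=G$ together with $H\cap K=\{1\}$, which is exactly linear disjointness (the condition $HK=G$) together with $TU=L$ (the condition $H\cap K=\{1\}$). The main, and rather modest, obstacle is simply to guarantee the Galois hypothesis on the subfields $T$ and $U$, and this is exactly the point at which cyclicity of $k\subset L$—equivalently, Theorem \ref{4.37}—is invoked; without it the criterion fails and the equivalence need not hold.
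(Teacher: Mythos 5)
Your proof is correct, but it takes a genuinely different route from the paper's in the nontrivial direction. The paper handles the easy implication the same way you do (linear disjointness forces $T\cap U=k$, so $U$ is the complement since $TU=L$); for the converse, however, it argues with degrees: by Theorem \ref{4.37} the extension is cyclic of square-free degree $n$, it sets $m:=[T:k]$ and $l:=n/m$, notes $(m,l)=1$ by square-freeness, takes the intermediate field $V$ of degree $l$ supplied by cyclicity, checks $V\cap T=k$ and $TV=L$ by degree arithmetic, concludes $V=T^{\circ}=U$ by uniqueness of complements, and then reads off linear disjointness from $[TU:k]=ml=[T:k][U:k]$. You instead use only that the Galois group is abelian (so every intermediate field is Galois over $k$) together with the classical criterion that for $T/k$ Galois, linear disjointness of $T$ and $U$ over $k$ is equivalent to $T\cap U=k$; the proposition then follows immediately from the definition of complement, and your Galois-correspondence verification (complementation corresponds to $HK=G$ together with $H\cap K=\{1\}$, with $HK$ a subgroup by abelianness) is a correct self-contained substitute for the citation. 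What each approach buys: the paper's argument is constructive, exhibiting $T^{\circ}$ explicitly as the unique subfield of complementary degree $n/[T:k]$, and this is where square-freeness is used; yours is shorter, never invokes square-freeness, and actually proves the equivalence for any finite abelian Galois extension in which the complement exists, showing that the Boolean hypothesis enters only through Theorem \ref{4.37} and the existence of $T^{\circ}$. One small overstatement at the end: your claim that without cyclicity ``the criterion fails'' is too strong, since the correspondence argument goes through whenever one of the two subgroups is normal; but nothing in your proof depends on that remark.
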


\begin{proof} If $k\subset T$ and $k\subset U$ are linearly disjoint, then $T\cap U=k$, so that $U=T^{\circ}$ since $TU=L$.

Conversely, assume that $U=T^{\circ}$. We are going to show how $U$ is build from $T$. Since $k\subset L$ is a finite Galois Boolean extension, Theorem \ref{4.37} shows that $k\subset L$ is cyclic, $n:=[L:k]$ is square free, and so is $[T:k]$. Set $n=p_1\cdots p_kp_{k+1}\cdots p_r$ where the $p_i$'s are distinct prime integers ordered such that $m:=[T:k]=p_1\cdots p_k$, and set $l:=n/m$. Since $k\subset L$ is cyclic, there exists $V\in[k,L]$ such that $[V:k]=l$. It follows that $(m,l)=([T:k],[V:k])=1$, so that $V\cap T=k$ and $TV:=L$. Indeed, $T,V\subset TV\subseteq L$ shows  that $m,l$ dividing $[TV:k]$, gives that $[L:k]=n=ml$ divides $[TV:k]$, which leads to $TV=L$. Then, $V=T^{\circ}=U$. Under these conditions, we have $[TU:k]=[L:k]=ml=[T:k][U:k]$, which shows that $k\subset T$ and $k\subset U$ are linearly disjoint.
\end{proof}

 We can say more about distributive Galois extension non necessarily Boolean, involving a   result from Dobbs-Mullins. 

\begin{proposition}\label{cyclic} Let $k\subset L$ be a finite  Galois extension with degree  $n ( =\prod_{i=1}^mp_i^{e_i}$ the factorization into prime integers).
 
(1) \cite[Proposition 2.2]{DM} If $k\subset L$ is Abelian, then $\ell[k,L]=\sum_{i=1}^me_i$.

 (2)  If in addition  $k\subset L$ is distributive, then $k\subset L$ is cyclic, $\ell[k,L]=\sum_{i=1}^me_i$ and $|[k,L]|=\tau(n)$, where $\tau(n)$ is the number of divisors of $n$.
 
 (3)  If in addition $k\subset L$ is Boolean,  $\ell[k,L]=m$ and $|[k,L]|=2^m$. 
  
\end{proposition}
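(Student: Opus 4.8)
The plan is to transport the hypotheses to the subgroup lattice of $G:=\mathrm{Gal}(L/k)$ via the Fundamental Theorem of Galois Theory, exactly as in the proof of Theorem \ref{4.37}. There we obtained order-reversing lattice isomorphisms $\varphi:[k,L]\to\mathcal G$ and $\psi:\mathcal G\to[k,L]$ (with $\mathcal G$ the lattice of subgroups of $G$) satisfying $\varphi=\psi^{-1}$. Because a lattice is distributive (resp.\ Boolean) if and only if its order dual is, distributivity and Booleanness transfer between $[k,L]$ and $\mathcal G$: under the hypotheses of (2) the lattice $\mathcal G$ is distributive, and under those of (3) it is Boolean.

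For (2) I would first note that distributivity of $\mathcal G$ already forces $G$ to be cyclic by Ore's theorem (a group with distributive subgroup lattice is locally cyclic, hence cyclic when finite); the abelian hypothesis inherited from (1) is not even needed for this implication. Thus $k\subset L$ is cyclic. The subgroup lattice of a cyclic group of order $n$ is the divisor lattice of $n$, so $|[k,L]|=|\mathcal G|$ equals the number of divisors $\tau(n)$, and a maximal chain corresponds to a maximal chain of divisors of $n=\prod_{i=1}^m p_i^{e_i}$, of length $\sum_{i=1}^m e_i$; this recovers $\ell[k,L]=\sum_{i=1}^m e_i$ (also immediate from (1), a cyclic extension being abelian).

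For (3) I add the Boolean hypothesis and apply Theorem \ref{4.37}: a finite Galois extension is Boolean if and only if it is cyclic of square-free degree. Hence $n=p_1\cdots p_m$ with every $e_i=1$, so $\ell[k,L]=\sum_{i=1}^m e_i=m$, and $|[k,L]|=\tau(n)=2^m$. The value $2^m$ also drops out of Theorem \ref{4.0}, which gives $|[k,L]|=2^{\ell[k,L]}$ for a Boolean FCP extension.

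The only external ingredient is Ore's characterization of distributivity of a subgroup lattice by cyclicity, so that is where I expect the main work to lie. If a self-contained treatment is preferred, one can argue directly in the abelian case: distributivity rules out a subgroup $\mathbb Z/p\times\mathbb Z/p$, whose subgroup lattice is the non-distributive diamond $M_{p+1}$, whence every Sylow subgroup of $G$ is cyclic and the finite abelian group $G$ is therefore cyclic. The remaining assertions are then routine bookkeeping on the divisor lattice of $n$ combined with Theorems \ref{4.0} and \ref{4.37}.
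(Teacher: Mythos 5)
Your proposal is correct and follows essentially the same route as the paper: pass to the subgroup lattice via the Galois correspondence, invoke the Ore-type result that a finite group with distributive subgroup lattice is cyclic (the paper's citation \cite[Page 97]{R}), count via the divisor lattice of $n$, and settle (3) by Theorems \ref{4.37} and \ref{4.0}. Your observations that the Abelian hypothesis is superfluous for cyclicity and that one can argue self-containedly by excluding a $\mathbb{Z}/p\times\mathbb{Z}/p$ subgroup are accurate refinements, but they do not change the argument's structure.
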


\begin{proof} (2) Let $G$ be the Galois group of $k\subset L$ and $\mathcal G$ be the set of subgroups of $G$. Since $k\subset L$ is  distributive, so is $\mathcal G$. Then, $G$ is cyclic by \cite[Page 97]{R}, and so is the extension $k\subset L$. The first part of (2) comes from (1) since a cyclic extension is Abelian. Moreover, there is a bijection between the subgroups of a cyclic group of order $n$ and the divisors of $n$, as there is a bijection between  $\mathcal G$ and $[k,L]$. This gives the last equality. 

(3) Assume that $k\subset L$ is Boolean,  then cyclic by Theorem  \ref{4.37} and $n$ is square free, so that $e_i=1$ for each $i$. Hence, $\ell[k,L]=m$ and $|[k,L]|=2^m$, since $k\subset L$ is Boolean by Theorem \ref{4.0}. 
\end{proof} 

In a recent paper \cite{Pic 6}, we characterized ring extensions $R\subset S$ of length 2 and gave the value of $|[R,S]|$. It is then easy to characterize an extension of length 2 which is Boolean. 

\begin{proposition}\label{4.340} Let $R\subset S$ be an FIP   extension of length 2. Then $R\subset S$ is 
Boolean if and only if $|[R,S]|=4$,  and,  if and only if one of the following condition holds:
 \begin{enumerate}
\item  $|\mathrm{Supp}(S/R)|=2$ and $ \mathrm{Supp}(S/R)\subseteq \mathrm{Max}(R)$. 

\item $R\subset S$ is infra-integral such that $\mathrm{Supp}(S/R)=\{M\},\ {}_S^+R\neq R,S$ and $ (R:S)= M$. 

\item  $R\subset S$ is  t-closed integral such that $\mathrm{Supp}(S/R)=\{M\},\ M=(R:S)\in \mathrm{Max}(S)$, and one of the following conditions holds:
\end{enumerate}

\noindent (a) $R/M\subset S/M$ is neither radicial nor separable, nor exceptional.

\noindent (b) $R/M\subset S/M$ is a finite separable field extension and $t=2$, where $t$ is the number of principal subfields of $S/M$ different from $R/M$.

\end{proposition}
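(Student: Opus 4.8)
The plan is to prove the two advertised equivalences in turn. The first, that $R\subset S$ is Boolean iff $|[R,S]|=4$, is immediate from our general theory. If $R\subset S$ is Boolean it is FCP and distributive, so Theorem \ref{4.0} applied with $n=\ell[R,S]=2$ gives $|[R,S]|=2^2=4$. Conversely, if $|[R,S]|=4$, write $[R,S]=\{R,S,T_1,T_2\}$ with $T_1,T_2\in\,]R,S[$. The hypothesis $\ell[R,S]=2$ forces $R\subset T_i$ and $T_i\subset S$ to be minimal (a proper refinement of either would give a chain of length $3$) and forbids $T_1,T_2$ from being comparable (that too would give a chain of length $3$); hence $T_1\cap T_2=R$ and $T_1T_2=S$, so $[R,S]$ is the diamond $2^2$, which is a Boolean lattice. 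This settles the first equivalence, so it suffices to decide when $|[R,S]|=4$.

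For the second equivalence I would read off the answer from the length-$2$ classification and the count of $|[R,S]|$ obtained in \cite{Pic 6}, organizing the discussion by the size of $\mathrm{Supp}(S/R)$, which is $1$ or $2$ by Lemma \ref{1.9}. If $|\mathrm{Supp}(S/R)|=2$ with both primes maximal, then $\mathrm{Supp}(S/R)\subseteq\mathrm{Max}(R)$ gives a $\mathcal B$-extension by Proposition \ref{6.5}; localizing, each crucial ideal survives in exactly one localization, so $R_{M_i}\subset S_{M_i}$ is minimal and $|[R,S]|=\prod_i|[R_{M_i},S_{M_i}]|=2\cdot 2=4$, which is condition (1). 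If instead some support prime is not maximal, then localizing at a maximal ideal above it produces a local length-$2$ extension whose support still contains a non-maximal prime, and such an extension fails the criterion $|\mathrm{Supp}|=|\mathrm{MSupp}|$ of Proposition \ref{4.10} (and cannot be integral by Proposition \ref{6.5}), so $R\subset S$ is not Boolean. In the remaining cases $\mathrm{Supp}(S/R)=\{M\}$; here I localize at $M$ and use Proposition \ref{4.02} to reduce to the local situation, where Proposition \ref{4.12} shows the integral part is either infra-integral or t-closed. In the infra-integral local case Proposition \ref{4.17} yields $S=R[x,y]$ with $x^2,xy,y^2-y\in M$ and $xM,yM\subseteq M$, equivalently ${}_S^+R\neq R,S$ and $(R:S)=M$, which is condition (2). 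In the t-closed integral local case Theorem \ref{4.18}(3) gives $M=(R:S)\in\mathrm{Max}(S)$ and reduces Booleanness to the residual field extension $R/M\subset S/M$ being Boolean of length $2$.

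It then remains to decide when a length-$2$ field extension $R/M\subset S/M$ is Boolean, which I expect to be the main obstacle since it requires the full field-theoretic apparatus of Section 4. A radicial extension is chained by Lemma \ref{5.1}, hence of length $2$ it is never the diamond and never Boolean, and an exceptional extension is never Boolean by the remark following Theorem \ref{4.199}; both are therefore correctly excluded. If the extension is neither radicial nor separable nor exceptional, the three exclusions amount exactly to both the separable closure $T$ and the radicial closure $U$ being proper intermediate fields (not radicial forces $T\neq R/M$ and $U\neq S/M$; not separable forces $T\neq S/M$; not separable together with not exceptional forces $U\neq R/M$), the standing hypothesis of Theorem \ref{4.199}; $T$ and $U$ are incomparable (an inclusion $U\subseteq T$ would make $U$ both radicial and separable over $R/M$, hence trivial), so in length $2$ the lattice is the diamond and Theorem \ref{4.199} certifies it is Boolean, giving condition (3)(a). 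If the extension is separable, the principal-subfield description of Theorem \ref{4.210} writes every intermediate field as an intersection of the distinct principal subfields $E_1,\dots,E_t$; since $R/M$ is their total intersection, $t=2$ forces the two proper principal subfields to be incomparable with intersection $R/M$, i.e.\ the diamond, whereas $t=1$ gives a minimal extension and $t\geq 3$ gives the non-distributive lattice $M_t$. Hence the separable Boolean case is exactly $t=2$, which is condition (3)(b). Assembling these subcases with the split case and the infra-integral case above matches $|[R,S]|=4$ precisely with conditions (1), (2) and (3), completing the proof.
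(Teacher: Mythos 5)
Your first equivalence is correct and essentially the paper's: Boolean implies $|[R,S]|=2^2=4$ by Theorem \ref{4.0}, and your direct diamond argument for the converse is a fine substitute for the paper's citation of Theorem \ref{4.03}. For the second equivalence, however, the paper does no case analysis at all: it simply invokes \cite[Theorem 6.1]{Pic 6}, the external classification of length-two extensions together with the value of $|[R,S]|$ in each case. You instead attempt to re-derive that classification from the internal results of this paper (Propositions \ref{6.5}, \ref{4.02}, \ref{4.12}, \ref{4.17}, Theorems \ref{4.199}, \ref{4.210}), which is a genuinely different and more self-contained route, and the skeleton of your case analysis --- the split case (1), the local reduction, the infra-integral/t-closed dichotomy, the field case via principal subfields --- is the right one.

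But two of your steps assert exactly the content that needs proof. First, in the infra-integral local case you pass from the criterion of Proposition \ref{4.17} ($S=R[x,y]$ with $x^2,xy,y^2-y\in M$ and $xM,yM\subseteq M$) to condition (2) with the single word ``equivalently.'' The forward direction is easy, but the backward direction --- producing, from ${}_S^+R\neq R,S$ and $(R:S)=M$ alone, generators with the crucial extra relation $xy\in M$ --- is precisely what the paper outsources to \cite[Theorem 6.1]{Pic 6}. It is true, but it needs an argument: e.g., $(R:S)=M$ gives $MS=M$, so $S/M$ is a $3$-dimensional $R/M$-algebra with square-zero radical $k\bar x$, and one lifts an idempotent $e$ with $e\bar x=0$ (replacing $e$ by $1-e$ if necessary) to get $y$. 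Second, in case (3)(a) you deduce ``the lattice is the diamond'' from the incomparability of $T$ and $U$ and then invoke Theorem \ref{4.199}; but hypothesis (2) of that theorem, $[k,K]=[k,T]\cup[U,K]$, is exactly the assertion that there is no third intermediate field, and incomparability of $T,U$ does not exclude an $M_3$ configuration (which is not Boolean). The missing ingredient is that a minimal field extension is either separable or radicial (\cite[p. 371]{Pic}): every middle field of a length-two extension is minimal over $k$, hence contained in $T$ or in $U$, hence equal to $T$ or $U$. Two smaller slips: in the local case with $\mathrm{Supp}(S/R)=\{M\}$ you never rule out the integrally closed alternative permitted by Corollary \ref{4.93} (a local Pr\"ufer FCP extension is chained, as in the proof of Proposition \ref{4.10}, so in length $2$ it has $|[R,S]|=3$ and falls under none of (1)--(3)); and ``$t=1$ gives a minimal extension'' conflates the paper's count of all distinct principal subfields with the proposition's count of those different from $R/M$ --- with the latter convention $t=1$ yields a chained length-two extension, not a minimal one, though this does not affect your conclusion that the separable Boolean case is exactly $t=2$.
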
 

\begin{proof} Assume that $R\subset S$ is Boolean. Then $|[R,S]|=4$ by Theorem \ref{4.0}. Conversely, if $|[R,S]|=4$, then $[R,S]=\{R,T,U,S\}$ for some $U,T\in]R,S[$, where $T$ and $U$ are incomparable, so that $R\subset S$ is Boolean by Theorem \ref{4.03}. Now the second equivalence comes from \cite[Theorem 6.1]{Pic 6} which gives the different cases for a length 2 extension $R\subset S$ to satisfy $|[R,S]|=4$.
\end{proof}

\end{document}